\newcommand{\R}{{\mathbb R}}
\newtheorem{theorem}{Theorem}[section]
\newtheorem{corollary}[theorem]{Corollary}
\newtheorem{thm}[theorem]{Theorem}
\newtheorem{definition}[theorem]{Definition}
\newtheorem{remark}[theorem]{Remark}
\newtheorem{lemma}[theorem]{Lemma}
\newtheorem{proposition}[theorem]{Proposition}
\newtheorem{claim}[theorem]{Claim}
\begin{document}

\title{Representation formula for symmetric symplectic capacity\\ and applications}
\date{April 16, 2020}
\author{Rongrong Jin and Guangcun Lu
\thanks{Corresponding author
\endgraf \hspace{2mm} Partially supported
by the NNSF  11271044 of China.
\endgraf\hspace{2mm} 2010 {\it Mathematics Subject Classification.}
 53D35, 53C23 (primary), 70H05, 37J05, 57R17 (secondary).}}
  \maketitle \vspace{-0.3in}

 \abstract{This is the second installment in a series of papers aimed at generalizing symplectic
 capacities and homologies. We study symmetric versions of symplectic capacities
 for real symplectic manifolds, and obtain corresponding results for them to those of
 the first \cite{JinLu19} of this series (such as representation formula, a theorem by Evgeni Neduv, Brunn-Minkowski type inequality and Minkowski billiard trajectories proposed by Artstein-Avidan-Ostrover). }
\vspace{-0.1in}

\medskip\vspace{12mm}
%\tableofcontents

\section{Introduction and main results}\label{sec:Intro}

When constructing symplectic capacities for symplectic manifolds with some kinds of symmetry,
if the symmetry is considered then it is natural to obtain the refined symplectic capacities
which can be used to yield better results. For example, Liu and Wang \cite{LiuW12}
introduced a symmetric version of the Hofer-Zehnder symplectic capacity on a real symplectic
manifold,   Figalli-Palmer-Pelayo \cite{FigPP18} defined symplectic $G$-capacities and
studied their applications for integrable systems.  In this paper we introduce  a symmetric
version of the Ekeland-Hofer symplectic capacity on
$2n$-dimensional Euclid space with standard symplectic structure and linear anti-symplectic involutions
and give
representation formula for the symmetric capacities and corresponding results
  to the authors' article \cite{JinLu19}.

\vspace*{4pt}\noindent{\bf Notation.} The closure of a set $S$ is denoted by $\overline{S}$ or $Cl(S)$.
 The transpose of a matrix $A$ is denoted by $A^T$ without special statements.
 We always use $J_0$ to denote  standard  complex structure on $\mathbb{R}^{2n}$, which is given by the matrix
\begin{equation}\label{e:standcompl}
J_0=\left(
           \begin{array}{cc}
             0 & -I_n \\
             I_n & 0 \\
           \end{array}
         \right)
\end{equation}
in the linear coordinates $(q_1,\cdots,q_n,p_1,\cdots,p_n)$,
where $I_n$ denotes the identity matrix of order $n$.
That is, $J_0(q_1,\cdots, q_n, p_1,\cdots, p_n)=(-p_1,\cdots,-p_n,q_1,\cdots,q_n)$.
Moreover, for each natural number $m$ we use  $\langle\cdot,\cdot\rangle_{\mathbb{R}^{m}}$
to denote  the standard inner product in $\mathbb{R}^{m}$, and
$|\cdot|$ the induced norm.

\subsection{Symmetrical Hofer-Zehnder capacity}\label{sec:Intro1}

A {\bf real symplectic manifold} is a triple $(M,\omega,\tau)$
consisting of a symplectic manifold $(M,\omega)$ and an
anti-symplectic involution $\tau$ on $(M,\omega)$, i.e.
$\tau^\ast\omega=-\omega$ and $\tau^2=id_M$.
The fixed point set $L:={\rm Fix}(\tau)$ of $\tau$ is
called the {\bf real part} of $M$. It is either empty or a Lagrange
submanifold (cf. \cite{Vi99}). The standard linear symplectic space
$(\mathbb{R}^{2n},\omega_0)$ with $\omega_0=\sum_{i=1}^{n}dx_i\wedge dy_i$
is real with respect to the canonical involution
$\tau_0\in \mathcal{L}(\mathbb{R}^{2n})$ given by
\begin{equation}\label{e:1.can-inv}
\tau_0(x,y)=(x,-y),
\end{equation}
 and $L_0:={\rm Fix}(\tau_0)=\{(x,y)\in\mathbb{R}^{2n}\,|\,y=0\}$.
(We also denote $\hat{\tau_0}$ by the anti-symplectic involution on $(\mathbb{R}^{2n},\omega_0)$
given  by $\hat{\tau_0}(q,p)=(-q,p)$.
Clearly, $\tau_0$ and $\hat{\tau_0}$ are symmetric with respect to $\langle\cdot,\cdot\rangle_{\mathbb{R}^{2n}}$.)
%Hereafter $\langle\cdot,\cdot\rangle_{\mathbb{R}^{2n}}$ denotes the standard inner product
%in $\mathbb{R}^{2n}$.)
Moreover, without occurring of confusions  we also use $\tau_0$ to denote
 the canonical  anti-symplectic involutions on the standard
 symplectic spaces  $\mathbb{R}^{2l}$ of different dimensions.
By Lemma 2.29 in \cite{R12}, for every linear anti-symplectic involution $\tau$
on $(\mathbb{R}^{2n},\omega_0)$ there exists a linear symplectic isomorphism $\Psi$
of $(\mathbb{R}^{2n},\omega_0)$ such that
$\Psi \tau_0=\tau\Psi$, i.e., $\Psi$ is a linear real symplectic isomorphism
from $(\mathbb{R}^{2n},\omega_0,\tau)$ to $(\mathbb{R}^{2n},\omega_0,\tau_0)$.
Moreover, in this paper without special statements we always
%$\bullet$ use $J_0$ to denote the standard complex structure on $\mathbb{R}^{2n}$ given by  $J_0(x,y)=(-y,x)$,\\
%$\bullet$
identify a linear anti-symplectic involution $\tau$
on $(\mathbb{R}^{2n},\omega_0)$ with an anti-symplectic matrix $\tau\in\mathbb{R}^{2n\times 2n}$
(i.e. $\tau^TJ_0\tau=-J_0$) satisfying $\tau^2=I_{2n}$.
Hereafter  $\tau^T$ always denote  the adjoint of $\tau$  with respect to
the standard inner product $\langle\cdot,\cdot\rangle_{\mathbb{R}^{2n}}$.

%(Actually, there exist many anti-symplectic involutions on
%$(\mathbb{R}^{2n},\omega_0)$ for which  $(\mathbb{R}^{2n},\omega_0)$ is real.)

For a real symplectic manifold $(M,\omega,\tau)$ with nonempty real part $L$,
let $\mathcal{H}(M,\omega,\tau)$ denote the set of $\tau$-invariant smooth functions $H \colon M\to\R$ for which
there exists a nonempty open subset $U=U(H)$ with $L\cap U\ne\emptyset$ and a compact subset
$K=K(H)\subset M\setminus\partial M$ such that $H|_U=0$,
 $H|_{M\setminus K}=m(H):=\max H$ and  $0\leq H\leq m(H)$.
%\begin{description}
%  \item[(i)] $H|_U=0$,
%\item[(ii)] $H|_{M\setminus K}=\max H$,
% \item[(iii)]  $0\leq H\leq m(H)$,
% \item[(iv)] H(
%\end{description}
For $H\in\mathcal{H}(M,\omega,\tau)$, since $\tau^2=id_M$ the associated Hamiltonian vector field
 $X_H$  defined by $\omega(X_H, v)=-dH(v)$ for $v \in TM$ satisfies $X_H=-\tau^\ast X_H$, i.e.,
$X_{H}(x)=-d\tau(\tau(x))X_{H}(\tau(x))\;\;\forall
x\in M$. A $T$-periodic trajectory $x(t)$ of $X_H$ is called
a {\bf $\tau$-brake orbit} if
\begin{equation}\label{e:brake}
 x(T-t)=x(-t)=\tau(x(t)),\;\;\forall t\in\R.
\end{equation}
(This implies that $x(0), x(T/2)\in L$). A function $H\in\mathcal{H}(M,\omega,\tau)$ is said to be
{\bf admissible} if $X_H$
 has no nonconstant $\tau$-brake orbit with period $T\in (0,1]$.
Denote  by  $\mathcal{H}_{ad}(M,\omega,\tau)$ the set of admissible Hamiltonian functions.
As a refinement of the Hofer-Zehnder symplectic capacity for the
real symplectic manifold $(M,\omega,\tau)$, the quantity
  \begin{equation}\label{e:symcapacity}
  c_{\rm HZ,\tau}(M,\omega):=\sup\{m(H)\,|\,H\in\mathcal{H}_{ad}(M,\omega,\tau)\}
  \end{equation}
is called the {\bf symmetrical Hofer-Zehnder symplectic capacity} of a real symplectic manifold $(M,\omega,\tau)$, which was introduced and denoted by $c_\tau$ in \cite{LiuW12}.

\begin{remark}\label{rem:variantDefinition}
{\rm There exists a variant of $c_{\rm HZ,\tau}$.
 If the assumption ``$\tau$-invariant" is removed
in the definition of $\mathcal{H}(M,\omega,\tau)$ then the corresponding function space obtained
 is denoted by $\mathcal{H}^L(M,\omega)$. For a $H\in \mathcal{H}^L(M,\omega)$
 and  a solution  of $\dot\gamma=X_H(\gamma)$ with $\gamma(0)\in L$,
 $\gamma:\mathbb{R}\to M$, the {\bf return time} of $\gamma$ is defined by
 $T_\gamma=\inf\{t\,|\,t>0,\,\gamma(t)\in L\}$. We call $H\in\mathcal{H}^L(M,\omega)$
 {\bf admissible} if any solution $\gamma$ of $\dot\gamma=X_H(\gamma)$ with $\gamma(0)\in L$
 is either constant or has the return time $T_\gamma>1/2$. Denote by
 $\mathcal{H}^L_{ad}(M,\omega)$ the set of all admissible functions in
 $\mathcal{H}^L(M,\omega)$. Define
 \begin{equation}\label{e:symcapacityL}
  c^L_{\rm HZ}(M,\omega):=\sup\{m(H)\,|\,H\in\mathcal{H}^L_{ad}(M,\omega)\}.
  \end{equation}
Clearly, $c_{\rm HZ,\tau}(M,\omega)\le c^L_{\rm HZ}(M,\omega)$.
Note that $c^L_{\rm HZ}(M,\omega)$ is exactly the two times of
the coisotropic capacity $c(M, L,\omega,\sim)$ with trivial equivalence relation $\sim$ on $L$
 defined by Lisi and Rieser \cite[Definition~1.13]{LiRi13}. Thus
 some of our results can be naturally extended to their coisotropic capacity. }
\end{remark}

The following proposition collects some properties of
 $c_{\rm HZ,\tau}$.

\begin{proposition}\label{MonComf}
%\begin{description}
\begin{enumerate}
  \item[\bf (i)]{\rm (Conformality)}. $c_{\rm HZ,\tau}(M,\alpha\omega)=|\alpha|c_{\rm HZ,\tau}(M,\omega)$ for any $\alpha\in\mathbb{R}\setminus\{0\}$.
\vspace*{4pt}  \item[\bf (ii)]{\rm (Monotonicity)}. For two real symplectic manifolds $(M_1,\omega_1,\tau_1)$ and  $(M_2,\omega_2,\break\tau_2)$, if there exists a {\bf real symplectic embedding} $\psi:(M_1,\omega_1,\tau_1)\rightarrow (M_2,\omega_2,\break\tau_2)$ (i.e., a symplectic embedding $\psi$ satisfying $\psi\circ \tau_1=\tau_2\circ\psi$), then $c_{\rm HZ,\tau_1}(M_1,\break\omega_1)\le c_{\rm HZ,\tau_2}(M_2,\omega_2)$.
\vspace*{4pt}  \item[\bf (iii)]{\rm (Continuity)}.
    Given a bounded $\tau_0$-invariant convex domain $A\subset\mathbb{R}^{2n}$,  for every $\varepsilon>0$ there exists some $\delta>0$ such that
 \begin{equation}\label{realcap4+}
 |c_{\rm HZ,\tau_0}(O,\omega_0)-c_{\rm HZ,\tau_0}(A,\omega_0)|<\varepsilon
\end{equation}
for all bounded $\tau_0$-invariant convex domain $O\subset\mathbb{R}^{2n}$ whose
Hausdorff distance to $A$, $d_{\rm H}(A,O)$, is less than $\delta$.
%, it holds that
%provided that $A$ and $O$ have  the

\vspace*{4pt}\item[\bf (iv)]{\rm (Inner regularity)}.
\begin{equation}\label{e:innerregu}
c_{\rm HZ,\tau}(M,\omega)=\sup\{c_{\rm HZ,\tau}(U,\omega)\,|\,U\subset M\,\hbox{open},\,\tau U=U\,\hbox{and}\;U\in M\setminus\partial M\}.
\end{equation}
\end{enumerate}
%\end{description}
\end{proposition}

\vspace*{3pt}\begin{proof}
(i) and (ii) were proved in \cite[Theorem~2.4]{LiuW12}.
 For (iii), let $p\in A\cap L_0$. Replacing $A$ and $O$
with $A-p$ and $O-p$ respectively, we can assume $0\in A$. For any $0<\epsilon\ll1$,
by \cite[Lemma~1.8.14]{Sch93} there exists $\delta>0$ such that any bounded $\tau_0$-invariant convex domain $O\subset\mathbb{R}^{2n}$
with $d_{\rm H}(A,O)<\delta$ satisfies
$$
(1-\epsilon)A\subset O\subset (1+\epsilon)A.
$$
Then (iii) may easily follow from this and (i)--(ii).

 To prove (iv), we first assume that $c_{\rm HZ,\tau}(M,\omega)<\infty$. Then for any $\varepsilon>0$ there exists $H\in\mathcal{H}_{ad}(M,\omega,\tau)$ such that $m(H)>c_{\rm HZ,\tau}(M,\omega)-\varepsilon$. Since $H\circ\tau=H$, $\widehat{K}:={\rm supp}(H-m(H))$ is $\tau$-invariant. Let $\widehat{U}$ be a $\tau$-invariant open neighborhood of $\widehat{K}$. Then $H\in \mathcal{H}_{ad}(\widehat{U},\omega,\tau)$
and it follows that
$$
c_{\rm HZ,\tau}(\widehat{U},\omega)\ge m(H)>c_{\rm HZ,\tau}(M,\omega)-\varepsilon.
$$
 This shows (\ref{e:innerregu}). When $c_{\rm HZ,\tau}(M,\omega)=\infty$, similar arguments also leads to
(\ref{e:innerregu}).
\end{proof}\vspace*{3pt}

Recall that a smooth connected compact hypersurface $\mathcal{S}$ in $(\mathbb{R}^{2n},\omega_0)$
is said to be of {\bf restricted contact type} if there exists a vector field  $X$ on $\mathbb{R}^{2n}$
  which is transverse to $\mathcal{S}$  and  satisfies $\mathcal{L}_X\omega_0=\omega_0$.
 A vector field $X$ with the latter property is called  a global Liouville vector field on $\mathbb{R}^{2n}$.
 For a hypersurface $\mathcal{S}$ of restricted contact type in $(\mathbb{R}^{2n},\omega_0)$
 let $B_{\mathcal{S}}$ be the bounded component of $\mathbb{R}^{2n}\setminus \mathcal{S}$.
  Suppose that $B_{\mathcal{S}}$ is $\tau_0$-invariant, $B_{\mathcal{S}}\cap L_0\neq\emptyset$
  and $X(\tau_0z)=\tau_0(X(z))$ for all $z$ near ${\mathcal S}$.
 As in the proof of \cite[Proposition~2.3]{Ba95} we deduce that $c_{\rm HZ,\tau_0}$ has
the following exterior regularity:
{\small$$
c_{\rm HZ,\tau_0}(B_{\mathcal{S}},\omega_0)=\inf\{c_{\rm HZ,\tau_0}(V,\omega_0)\,|\,V\subset \mathbb{R}^{2n}\;\hbox{is open and $\tau_0$-invariant, and}\;\overline{B_{\mathcal{S}}}\subset V\}.
$$}
\vspace*{-4pt}
%(i) and (ii) were proved in \cite[Theorem~2.4]{LiuW12}, and (iii) follows from them easily. To prove (iv), we first assume that $c_{\rm HZ,\tau}(M,\omega)<\infty$. Then for any $\varepsilon>0$ there exists $H\in\mathcal{H}_{ad}(M,\omega,\tau)$ such that $m(H)>c_{\rm HZ,\tau}(M,\omega)-\varepsilon$. Since $H\circ\tau=H$, $\widehat{K}:={\rm supp}(H-m(H))$ is $\tau$-invariant. Let $\widehat{U}$ be a $\tau$-invariant open neighborhood of $\widehat{K}$. Then $H\in \mathcal{H}_{ad}(\widehat{U},\omega,\tau)$
%and it follows that $c_{\rm HZ,\tau}(\widehat{U},\omega)\ge m(H)>c_{\rm HZ,\tau}(M,\omega)-\varepsilon$. This shows (\ref{e:innerregu}). When $c_{\rm HZ,\tau}(M,\omega)=\infty$, similar arguments also leads to
%(\ref{e:innerregu}).

Note that $c_{\rm HZ,\tau}(M,\omega)$ is only invariant for symplectomorphisms on $(M,\omega)$
commuting with $\tau$.

 A {\bf $\tau$-brake closed characteristic} ({\bf $\tau$-BCC}, for short) on a $\tau$-invariant smooth hypersurface $\mathcal{ S}$ satisfying $\mathcal{S}\cap{\rm Fix}(\tau)\ne\emptyset$ in a real symplectic manifold $(M,\omega,\tau)$ is  a $C^1$ embedding $x$ from $\mathbb{R}/T\mathbb{Z}$ {\rm (for some $T>0$)} into $\mathcal{S}$ satisfying
 $$
 \dot{x}(t)\in({\mathcal L}_{\mathcal{S}})_{x(t)}\quad\hbox{and}\quad x(T-t) =\tau(x(t)),\;\forall t\in \mathbb{R},
$$
where ${\mathcal L}_{\mathcal{S}}$ is the characteristic line bundle on $\mathcal{S}$
whose fiber at $x\in \mathcal{S}$ is given  by
$$
({\mathcal L}_{\mathcal{S}})_x=\{v\in T_x\mathcal{S}\,|\,\omega_0(v,u)=0\;\forall u\in T_x\mathcal{S}\}.
$$
%defined by
%$({\mathcal L}_{\mathcal{S}})_x=\{v\in T_x\mathcal{S}\,|\,\omega_0(v,u)=0\;\forall u\in T_x\mathcal{S}\}$ for any $x\in \mathcal{S}$.
Moreover, given  a linear anti-symplectic involution  $\tau$ on $(\mathbb{R}^{2n},\omega_0)$
and a $\tau$-invariant convex body $D\subset \mathbb{R}^{2n}$ with boundary $\mathcal{S}$,
a nonconstant  absolutely continuous curve $x:\mathbb{R}/T\mathbb{Z}\to \mathcal{S}$ (for some $T>0$)
 is called a {\bf generalized $\tau$-brake closed characteristic}
 ({\bf generalized $\tau$-BCC}, for short)  on $\mathcal{S}$ if
  $$
  \dot{x}(t)\in J_0N_{\mathcal S}(x(t))\;\hbox{a.e. on $\mathbb{R}$}\quad\hbox{and}\quad x(T-t)=\tau (x(t))\;\forall t\in\mathbb{R},
  $$
   where  $N_{\mathcal S}(x)=\{y\in\mathbb{R}^{2n}\,|\, \langle u-x, y\rangle\le 0\;\forall u\in D\}$
    is the normal cone to $D$ at $x\in\mathcal{S}$.

Clearly, if ${\mathcal S}$ in the latter case is also  $C^{1,1}$  then  any
generalized $\tau$-brake closed characteristic on $\mathcal{S}$ may become a $\tau$-brake closed characteristic on $\mathcal{S}$
via reparame\-trization.
Let us define the {\bf action} of a $W^{1,1}$ curve
 $x:[0,T]\rightarrow \mathbb{R}^{2n}$  by
\begin{equation}\label{e:action1}
A(x)=\frac{1}{2}\int_0^T\langle-J_0\dot{x},x\rangle_{\mathbb{R}^{2n}} dt=
-\frac{1}{2}\int_0^T\omega_0(\dot{x},x)dt.
\end{equation}
 Note that $A(x)$ is invariant under reparameterization of $x$.
The following is  an analogue of the representation formula for $c_{\rm HZ}$ due to Hofer and Zehnder \cite[Propposition~4]{HoZe90},
and will be proved in Section~\ref{sec:convex}.

\begin{thm}\label{th:convex}
Let $\tau$ be a linear anti-symplectic involution on $(\mathbb{R}^{2n},\omega_0)$,
and let $D\subset\mathbb{R}^{2n}$ be a $\tau$-invariant convex bounded domain. Then there is a generalized $\tau$-brake characteristic
$x^{\ast}$ on $\partial D$ such that
\begin{eqnarray}
A(x^{\ast})&=&\min\{A(x)>0\,|\,x\;\text{is a generalized $\tau$-BCC on}\;\partial D\}\label{e:action2}\\
&=&c_{\rm HZ,\tau}(D,\omega_0)\label{e:action-capacity1}.
\end{eqnarray}
If $\partial D$ is of class $C^{1,1}$, then
there is a $\tau$-BCC $x^{\ast}$ on $\partial D$ such that
   \begin{equation}\label{e:action-capacity2}
   c_{\rm HZ,\tau}(D,\omega_0)=A(x^{\ast})=\min\{A(x)>0\,|\,x\;\text{is a $\tau$-BCC on}\;\partial D\}.
   \end{equation}
   \end{thm}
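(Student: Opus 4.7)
The plan is to adapt Hofer–Zehnder's proof of the classical convex representation formula \cite[Proposition~4]{HoZe90} to the $\tau$-equivariant setting, in four steps.

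First, by \cite[Lemma 2.29]{R12} there is a linear symplectic isomorphism $\Psi$ with $\Psi\tau_0 = \tau\Psi$; since $\Psi$ preserves $\omega_0$, the action \eqref{e:action1}, the class of (generalized) brake characteristics (intertwining $\tau$ with $\tau_0$), and $c_{\rm HZ,\tau}$ by Proposition~\ref{MonComf}(ii), I may assume $\tau=\tau_0$ throughout. Approximating $D$ from inside and outside by $\tau_0$-invariant convex domains $D_k^\pm$ with $C^{1,1}$ boundary (obtained by convolving the gauge function $j_D$ with a $\tau_0$-invariant mollifier) and invoking Proposition~\ref{MonComf}(iii), it suffices to treat the $C^{1,1}$ case and then take a Hausdorff limit.

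For $D$ with $C^{1,1}$ boundary, set $H_D = \tfrac12 j_D^2$, so that $H_D$ is $\tau_0$-invariant, $C^{1,1}$, and positively $2$-homogeneous. On the $\tau_0$-symmetric loop space
\[
E_{\tau_0} = \Bigl\{x \in W^{1,2}(\mathbb{R}/T\mathbb{Z};\mathbb{R}^{2n}) \,\Bigm|\, x(T-t) = \tau_0 x(t),\ \int_0^T x\,dt = 0\Bigr\},
\]
I would minimize the Clarke–Ekeland dual action $\int_0^T H_D^\ast(-J_0\dot x)\,dt$ subject to $A(x) = 1$. Coercivity, weak lower semicontinuity, and invariance of both functionals under the involution $x(t)\mapsto \tau_0 x(T-t)$ (which uses $\tau_0^T J_0\tau_0 = -J_0$) produce a minimizer $x^\ast\in E_{\tau_0}$. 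The associated Euler–Lagrange equation, after a time reparametrization, yields a classical solution of $\dot x = J_0\nabla H_D(x)$ lying on $\partial D$ and satisfying the brake relation, hence a $\tau_0$-brake closed characteristic; by $2$-homogeneity of $H_D$, the minimum of the dual action on $E_{\tau_0}$ equals the reciprocal of the right-hand side of \eqref{e:action2}, establishing the minimality in \eqref{e:action2} and \eqref{e:action-capacity2}.

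The identification $A(x^\ast) = c_{\rm HZ,\tau_0}(D,\omega_0)$ then splits into two matching estimates. For $c_{\rm HZ,\tau_0}(D,\omega_0) \ge A(x^\ast)$, given $a < A(x^\ast)$ I would construct a $\tau_0$-invariant $H = f(H_D)$ with $m(H) = a$, where $f$ vanishes near $0$ and equals $a$ in a neighborhood of $\partial D$, with $f'$ so small that any nonconstant $\tau_0$-brake orbit of $X_H = f'(H_D)X_{H_D}$ reparametrizes to a $\tau_0$-brake closed characteristic on some level $\{H_D = c\}\subset D$ of action $<A(x^\ast)$, which is excluded by \eqref{e:action2}; hence $H\in\mathcal{H}_{ad}(D,\omega_0,\tau_0)$. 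For $c_{\rm HZ,\tau_0}(D,\omega_0) \le A(x^\ast)$, a hypothetical $H\in\mathcal{H}_{ad}(D,\omega_0,\tau_0)$ with $m(H) > A(x^\ast)$ is pinched between $0$ and a sharp $\tau_0$-invariant cutoff of $H_D$, and the variational machinery of step two applied to this modified Hamiltonian produces a nonconstant $\tau_0$-brake orbit with period $\in(0,1]$, contradicting admissibility. Finally, passing $D_k^\pm\to D$, weak $W^{1,2}$-compactness of the minimizing sequence and convex-analytic lower semicontinuity deliver a generalized $\tau_0$-brake closed characteristic on $\partial D$ realizing \eqref{e:action-capacity1}.

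The principal obstacle is the lower-bound step for the capacity: choosing $f$ equivariantly so that no nonconstant $\tau_0$-brake orbit of $X_H$ has period $\le 1$ requires careful bookkeeping, since brake orbits are constrained to hit $L_0$ at $t = 0$ and $t = T/2$ while the admissibility threshold in $\mathcal{H}_{ad}$ is phrased in terms of the full period $T\in(0,1]$. Verifying that the factor of $2$ implicit in the brake parametrization survives each rescaling, mollification, and Hausdorff limit — and that the generalized $\tau_0$-brake characteristic produced in the limit indeed satisfies the normal-cone inclusion and the brake symmetry pointwise — is where the proof actually does work.
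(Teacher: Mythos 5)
Your overall strategy — reduce to $\tau=\tau_0$, use the Clarke dual variational principle on a $\tau_0$-symmetric mean-zero loop space to produce the minimal brake characteristic, then sandwich $c_{\rm HZ,\tau_0}(D)$ by two one-sided estimates — matches the paper's blueprint, and your treatment of the dual minimization and the capacity lower bound are essentially correct. (Two smaller points: the paper proves~\eqref{e:action2} directly for every bounded convex $\tau_0$-invariant $D$, with no $C^{1,1}$ hypothesis and no limiting, since the Legendre transform $H^\ast$ of $j_D^2$ is always continuous and satisfies two-sided quadratic bounds; you do not need to push a Hausdorff limit through that part. And the paper proves~\eqref{e:action-capacity1} in the $C^{1,1}$ case and then recovers the general case by squeezing the monotone dual minima $\min_{\mathcal A}I_k^\pm$ between the corresponding capacities, using Proposition~\ref{MonComf}(ii)--(iii); your weak-$W^{1,2}$ limit of minimizers is a valid alternative but harder to control than the scalar squeeze.)

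The genuine gap is in the capacity upper bound $c_{\rm HZ,\tau_0}(D,\omega_0)\le A(x^\ast)$. You propose to "apply the variational machinery of step two" — the Clarke--Ekeland dual minimization — to a cutoff/extension of $H_D$. That cannot work: the Clarke dual principle is tied to Hamiltonians that are convex and positively $2$-homogeneous, and the cutoff $\overline H = f(j_D^2)$ needed here (flat on a neighborhood of $\partial D$, then growing roughly like $(\mathcal A(x^\ast)+\epsilon)\,j_D^2$ at infinity) is neither convex nor homogeneous, so its Legendre transform is useless. The paper's Section~\ref{sec:convex3}, Step~2 instead works with the \emph{direct} action functional $\Phi_{\overline H}=\mathfrak a-\mathfrak b_{\overline H}$ on the $\tau_0$-invariant fractional Sobolev space $\mathbb{E}=E^{1/2}_{\tau_0}$ built in Section~\ref{section:space}, proves a Palais--Smale condition (Lemma~\ref{lem:PS}, resting on the empty interior of the symmetric action spectrum $\Sigma^{\tau_0}_{\mathcal S}$), produces lower and upper estimates on $\Phi_{\overline H}$ along $\Gamma_\alpha$ and $\partial\Sigma_\tau$ (Lemmas~\ref{lem:HZ5}--\ref{lem:HZ7}), and then verifies a linking condition $\phi^t(\Sigma_\tau)\cap\Gamma_\alpha\ne\emptyset$ for the negative gradient flow via a Leray--Schauder degree computation (Lemma~\ref{positive+}) before invoking the Minimax Lemma. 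None of this is subsumed by your step two, and you have also misdiagnosed where the difficulty sits: you call the lower bound "the principal obstacle", but once~\eqref{e:action2} is known, the lower bound is the short half (one checks that $H=f(j_D^2)$ with $0\le f' < A(x^\ast)$ has no nonconstant $\tau_0$-brake orbit of period $\le 1$ since any such orbit would force an action in $\Sigma^{\tau_0}_{\mathcal S}$ below the minimum), while the upper bound is where the Hilbert-space linking, degree theory, and Palais--Smale analysis actually occur.
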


%\begin{remark}\label{rem:convex}
%{\rm By Lemma 2.29 in \cite{R12}, for every linear anti-symplectic involution $\tau$
%on $(\mathbb{R}^{2n},\omega_0)$ there exists a linear real symplectic isomorphism $\Psi$
%from $(\mathbb{R}^{2n},\omega_0,\tau)$ to $(\mathbb{R}^{2n},\omega_0,\tau_0)$.
%Note that $D$ is $\tau_0$-invariant if and only if $\Psi(D)$  is $\tau$-invariant,
%and that $x$ is a (generalized) $\tau_0$-brake closed characteristic on $\partial D$
%if and only if $\Psi(x)$ a (generalized) $\tau$-brake closed characteristic on $\partial \Psi(D)=\Psi(\partial D)$.
% Moreover, it is clear that $A(\Psi(x))=A(x)$ by (\ref{e:action1}).
%These, Proposition~\ref{MonComf}(ii) and Theorem~\ref{th:convex} imply:
% Theorem~\ref{th:convex} also holds if $\tau_0$ is replaced by any
% linear anti-symplectic involution $\tau$ on $(\mathbb{R}^{2n},\omega_0)$.
%}
%\end{remark}

\subsection{Symmetrical Ekeland-Hofer capacity}\label{sec:1.EH}

For $s\ge 0$ and $S^1=\mathbb{R}/\mathbb{Z}$ consider the Hilbert space
$$
E^{s}=\left\{x\in L^2(S^1;\mathbb{R}^{2n})\,\Bigm|\,x=\sum_{j\in\mathbb{Z}}e^{2\pi jtJ_0}x_j,\,x_j\in\mathbb{R}^{2n},\,\sum_{j\in\mathbb{Z}}
|j|^{2s}|x_j|^2<\infty\right\}
$$
with inner product and associated norm given by
\begin{eqnarray}\label{innerproduct}
&&\langle x,y\rangle_s=\langle x_0,y_0\rangle_{\mathbb{R}^{2n}}+ 2\pi \sum_{j\in\mathbb{Z}}
|j|^{2s}\langle x_j, y_j\rangle_{\mathbb{R}^{2n}},\nonumber\\
&&\|x\|_s^2=\langle x,x\rangle_s
\end{eqnarray}
(cf. \cite{HoZe94}). $\tau_0$ induces a natural Hilbert space isomorphism
$$
\check{\tau}_0:E^{s}\to E^{s},\;x=\sum_{j\in\mathbb{Z}}e^{2\pi jtJ_0}x_j\mapsto \sum_{j\in\mathbb{Z}}e^{2\pi jtJ_0}\tau_0x_j.
$$
Its fixed point set is the following closed linear subspace
\begin{equation}\label{e:spacees}
E^s_{\tau_0}=\left\{x\in L^2(S^1;\mathbb{R}^{2n})\,\Bigm|\,x=\sum_{j\in\mathbb{Z}}e^{2\pi jtJ_0}x_j,\,x_j\in L_0,\,\sum_{j\in\mathbb{Z}}
|j|^{2s}|x_j|^2<\infty\right\}.
\end{equation}
For the sake of simplicity, from now on we write $\mathbb{E}:=E^{1/2}_{\tau_0}$.   It  has the orthogonal splitting
\begin{equation}\label{e:spaceDecomposition}
\mathbb{E}=\mathbb{E}^{-}\oplus \mathbb{E}^0 \oplus \mathbb{E}^{+},
\end{equation}
where $\mathbb{E}^0=L_0$ and
$$
\mathbb{E}^-=\{x\in \mathbb{E}\,|\,x=\sum_{j<0}e^{2\pi jtJ_0}x_j\},\qquad \mathbb{E}^+=\{x\in \mathbb{E}\,|\,x=\sum_{j>0}e^{2\pi jtJ_0}x_j\}.
$$
Denote the associated projection on them by $P^-, P^0$ and $P^+$.
 For $x\in \mathbb{E}$, write
$x=x^-+x^0+x^+$, where $x^-\in \mathbb{E}^-$, $x^0\in \mathbb{E}^0$ and $x^+\in \mathbb{E}^+$.
Let $S^+$ be the unit sphere in $\mathbb{E}^+$.

%Let $\mathbb{E}\equiv E^{1/2}_{\tau_0}$ be as in (\ref{e:spacees}). It has the orthogonal splitting (\ref{e:spaceDecomposition}).
We closely follow  Sikorav's approach (\cite{Sik90}) to define Ekeland-Hofer capacity in \cite{EH89}.

\begin{definition}\label{def:deform}
  A continuous map $\gamma:\mathbb{E}\rightarrow \mathbb{E}$ is called an {\bf admissible deformation} if there exists an homotopy $(\gamma_u)_{0\le u\le 1}$ such that $\gamma_0={\rm id}$, $\gamma_1=\gamma$ and  the following conditions hold:
%\begin{description}
\begin{enumerate}
\item[\bf (i)] $\forall u\in [0,1]$, $\gamma_u(\mathbb{E}\setminus(\mathbb{E}^-\oplus \mathbb{E}^0))=\mathbb{E}\setminus(\mathbb{E}^-\oplus \mathbb{E}^0)$, i.e. for any $x\in \mathbb{E}$ such that $x^+\neq 0$, there holds $\gamma_u(x)^+\neq 0$.
\item[\bf (ii)] $\forall u\in [0,1]$, $\gamma_u(x)=a(x,u)x^++b(x,u)x^0+c(x,u)x^-+K(x,u)$, where $(a,b,c,\break K)$ is a continuous map from $\mathbb{E}\times [0,1]$ to $(0,+\infty)^3\times \mathbb{E}$ and maps any bounded sets to precompact sets.
    \end{enumerate}
%\end{description}
\end{definition}
Let $\Gamma$ be the set of all admissible deformations, and $\mathbb{R}_{+}=\{\lambda\in\mathbb{R}\,|\,\lambda\ge 0\}$ below.
For $H\in C^0(\mathbb{R}^{2n},\mathbb{R}_{+})$ satisfying:
%\begin{description}
\begin{enumerate}
\item[\bf (H1)] $H(z)=H(\tau_0z)\;\forall z\in\mathbb{R}^{2n}$,
\item[\bf (H2)] ${\rm Int}(H^{-1}(0))\cap L_0\ne \emptyset$,
\item[\bf (H3)] there exist $z_0\in L_0$, real numbers $a> \pi$ and $b$
such that $H(z)=a|z|^2+ \langle z, z_0\rangle+ b$ outside a compact subset of $\mathbb{R}^{2n}$,
\end{enumerate}
%\end{description}
we define $\Phi_H:\mathbb{E}\to\mathbb{R}$ by
\begin{eqnarray}\label{e:EH.1.1}
\Phi_H(x)= \frac{1}{2}(\|x^+\|^2_{1/2}-\|x^-\|^2_{1/2})-\int_0^1H(x(t))dt,
\end{eqnarray}
 and the {\bf $\tau_0$-symmetrical Ekeland-Hofer capacity} of $H$ by
\begin{eqnarray}\label{e:EH.1.2}
c_{\rm EH,\tau_0}(H)=\sup_{h\in\Gamma}\inf_{x\in h(S^+)}\Phi_H(x).
\end{eqnarray}
Then (H3) implies $c_{\rm EH,\tau_0}(H)<+\infty$  by
Proposition~\ref{prop:EH.1.3}. In fact there exists a constant $C>0$
such that $H(z)-a|z|^2-\langle z,z_0\rangle-b\ge -C$.
%Since  $a>2\pi$,
%using the inequality
%$$
%a|z|^2+\langle z,z_0\rangle+b\ge\frac{a}{2}|z|^2-\frac{|z_0|^2}{2a}+b
%$$
%we deduce that
%$$
%\pi |z_1|^2-H(z)\le \pi |z|^2-H(z)\le \pi |z|^2-(\frac{a}{2}|z|^2-\frac{|z_0|^2}{2a}+b)+C\le \frac{|z_0|^2}{2a}-b+C<\infty.
%$$
Since  $a>\pi$,
fixing $0<\varepsilon<a-\pi$ and using the inequality
$$
|\langle z,z_0\rangle|\le\varepsilon|z|^2+\frac{1}{4\varepsilon}|z_0|^2
$$
we deduce that
$$
-H(z)\le -a|z|^2-\langle z,z_0\rangle-b+ C\le -a|z|^2+ \varepsilon|z|^2+\frac{1}{4\varepsilon}|z_0|^2-b+ C
$$
and hence
\begin{eqnarray*}
\pi |z_1|^2-H(z)\le \pi |z|^2-H(z)&\le& \pi |z|^2-a|z|^2+ \varepsilon|z|^2+\frac{1}{4\varepsilon}|z_0|^2-b+ C\\
&\le&\frac{1}{4\varepsilon}|z_0|^2-b+ C<\infty.
\end{eqnarray*}
Moreover,  (H2)-(H3) imply $c_{\rm EH,\tau_0}(H)>0$  by Proposition~\ref{prop:EH.1.4}.
It is easy to prove:

\begin{proposition}\label{prop:EH.1.2}%[\hbox{\cite[Prop.3.2.1]{Sik90}}]
Let $H, K\in C^0(\mathbb{R}^{2n},\mathbb{R}_{+})$ satisfy (H1)-(H3). The value $c_{\rm EH,\tau_0}$ has the following properties:
%\begin{description}
\begin{enumerate}
\item[\bf (i)]{\rm (Monotonicity).} If $H\le K$ then $c_{\rm EH,\tau_0}(H)\ge c_{\rm EH,\tau_0}(K)$.
\item[\bf (ii)] {\rm (Continuity).} $|c_{\rm EH,\tau_0}(H)-c_{\rm EH,\tau_0}(K)|\le \sup_{z\in\mathbb{R}^{2n}}|H(z)-K(z)|$.
\item[\bf (iii)] {\rm (Homogeneity).}  $c_{\rm EH,\tau_0}(\lambda^2H(\cdot/\lambda))=\lambda^2 c_{\rm EH,\tau_0}(H)$ for $\lambda> 0$.
\end{enumerate}
%\end{description}
\end{proposition}
Let
\begin{eqnarray}\label{e:EH.1.5.1}
&&\mathscr{F}(\mathbb{R}^{2n},\tau_0)=\{H\in C^0(\mathbb{R}^{2n},\mathbb{R}_+)\,|\,H\;\hbox{satisfies (H1)-(H3)}\},
\\
&&\mathscr{F}(\mathbb{R}^{2n},\tau_0, B)=\{H\in \mathscr{F}(\mathbb{R}^{2n}, \tau_0)\,|\,H\;\hbox{vanishes near $\overline{B}$}\}\label{e:EH.1.5.2}
\end{eqnarray}
for each $B\subset\mathbb{R}^{2n}$ such that $\tau_0B=B$ and $B\cap L_0\neq \emptyset$.
 We define
\begin{equation}\label{e:EH.1.6}
c_{\rm EH,\tau_0}(B)=\inf\{c_{\rm EH,\tau_0}(H)\,|\, H\in \mathscr{F}(\mathbb{R}^{2n},\tau_0, B)\}
\end{equation}
if $B$ is bounded, $\tau_0$-invariant, and intersecting with $L_0$.
%$B\cap L_0\neq \emptyset$.
And define
\begin{equation}\label{e:EH.1.7}
c_{\rm EH,\tau_0}(B)=\sup\{c_{\rm EH,\tau_0}(B_0)\,|\, B_0\subset B,\;\hbox{$B_0$ is bounded, $\tau_0B=B$ and $B_0\cap L_0\neq \emptyset$}\}
\end{equation}
if $B$ is unbounded, $\tau_0$-invariant, and intersecting with $L_0$.
%$\tau_0B=B$ and satisfies $B\cap L_0\neq \emptyset$.
The value $c_{\rm EH,\tau_0}(B)$ is called the {\bf $\tau_0$-symmetrical Ekeland-Hofer capacity} of $B$.

We say $H\in C^2(\mathbb{R}^{2n},\mathbb{R}_+)$ to be $\tau_0$-{\bf nonresonant} if it satisfies (H3) with $a\notin \mathbb{Z}\pi$.
For each $B\subset\mathbb{R}^{2n}$ such that $\tau_0B=B$ and $B\cap L_0\ne\emptyset$, we write
\begin{eqnarray*}
\mathscr{E}(\mathbb{R}^{2n},\tau_0, B)=\{H\in \mathscr{F}(\mathbb{R}^{2n},\tau_0, B)\,|\,H\;\hbox{is $\tau_0$ -nonresonant}\}.
\end{eqnarray*}
Notice that
$\mathscr{E}(\mathbb{R}^{2n},\tau_0, B)$ is a {\bf cofinal family} of $\mathscr{F}(\mathbb{R}^{2n},\tau_0,
B)$, that is, for any $H\in\mathscr{F}(\mathbb{R}^{2n},\tau_0, B)$ there exists a function
$G\in \mathscr{E}(\mathbb{R}^{2n},\tau_0, B)$ such that $G\ge H$.
\begin{remark}\label{rem:EH6}
{\rm
%\begin{description}
\begin{enumerate}
\item[\bf (i)] $c_{\rm EH,\tau_0}(B)=c_{\rm EH,\tau_0}(\overline{B})$.
\item[\bf (ii)]  $\mathscr{F}(\mathbb{R}^{2n},\tau_0, B)$ in  (\ref{e:EH.1.6})-(\ref{e:EH.1.7}) can be replaced by its cofinal subset $\mathscr{E}(\mathbb{R}^{2n},\tau_0,\break B)$, and
    can also be replaced by a smaller cofinal subset
   $\mathscr{E}(\mathbb{R}^{2n},\tau_0, B)\cap C^\infty(\mathbb{R}^{2n},\mathbb{R}_+)$.
%\item[(iii)]  $c_{\rm EH,\tau_0}(B+w)=c_{\rm EH,\tau_0}(B)\;\forall w\in L_0$,
%where $B+w=\{z+w\,|\, z\in B\}$.
   %   \end{description}
   \end{enumerate}}
\end{remark}

%[\hbox{\cite[Prop.4.1.1, Th.4.2.1]{Sik90}}]

\begin{proposition}\label{prop:EH.1.7}
Suppose that $\tau_0$-invariant subsets $B\subset B'\subset\mathbb{R}^{2n}$
have nonempty intersection with $L_0$. Then the following holds:
%\begin{description}
\begin{enumerate}
\item[\bf (i)]{\rm (Translation invariance by elements of $L_0$)}. $c_{\rm EH,\tau_0}(B+w)=c_{\rm EH,\tau_0}(B)\;\forall w\in L_0$,
where $B+w=\{z+w\,|\, z\in B\}$.
\item[\bf (ii)]{\rm (Monotonicity)}. $c_{\rm EH,\tau_0}(B)\le c_{\rm EH,\tau_0}(B')$;
\item[\bf (iii)] {\rm (Conformality)}. $c_{\rm EH,\tau_0}(\lambda B)=\lambda^2 c_{\rm EH,\tau_0}(B)\;\forall
\lambda\in\mathbb{R}_+$;
\item[\bf (iv)] {\rm (Exterior regularity)}. $c_{\rm EH,\tau_0}(B)=\inf\{c_{\rm EH,\tau_0}(U_\epsilon(B))\,|\,\epsilon>0\}$,
where $U_\epsilon(B)$ is the $\epsilon$-neighborhood of $B$,  which is invariant under $\tau_0$
since $\tau_0$ is an isometry.
 \end{enumerate}
%\end{description}
\end{proposition}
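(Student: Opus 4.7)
The plan is to reduce each of the four assertions to the corresponding analytic property of the functional $H\mapsto c_{\rm EH,\tau_0}(H)$ collected in Proposition~\ref{prop:EH.1.2}, by transferring the set operation $B\mapsto B^{\sharp}$ into a bijection $H\mapsto H^{\sharp}$ of function families $\mathscr{F}(\mathbb{R}^{2n},\tau_0,B)\leftrightarrow \mathscr{F}(\mathbb{R}^{2n},\tau_0,B^{\sharp})$, and then passing to the $\inf$ in (\ref{e:EH.1.6}) or, when $B$ is unbounded, to the $\sup$ in (\ref{e:EH.1.7}). In every case the check that $H^{\sharp}$ really lies in the target family is a direct verification of $\tau_0$-invariance (using $\tau_0 w=w$ for $w\in L_0$ and the linearity of $\tau_0$), preservation of the asymptotic normal form (H3) (with the quadratic coefficient $a>2\pi$ unchanged), and vanishing near $\overline{B^{\sharp}}$.

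The real work sits in~(i). Given $w\in L_0$, set $K(z):=H(z-w)$. Since $w$, viewed as a constant loop, lies in $\mathbb{E}^0$ (the subspace of constant loops landing in $L_0$), the splitting satisfies $(x-w)^{\pm}=x^{\pm}$, and hence $\Phi_K(x)=\Phi_H(x-w)$ pointwise on $\mathbb{E}$. To compare the minimax values I claim that the affine map $\sigma_s\colon x\mapsto x+sw$, $s\in[0,1]$, is itself an admissible deformation: condition~(i) of Definition~\ref{def:deform} holds because $(x+sw)^+=x^+$, while condition~(ii) holds with $a\equiv b\equiv c\equiv 1$ and constant compact term $sw$ ranging in the finite-dimensional line $\mathbb{R}w\subset\mathbb{E}^0$. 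Concatenating the homotopy of any $h\in\Gamma$ with that of $\sigma_{-1}$ and reorganising the resulting decomposition into positive coefficients plus a compact remainder (the correction to the compact part being a bounded multiple of $w$) shows $\sigma_{-1}\circ h\in\Gamma$, and $(\sigma_{-1}\circ h)(S^+)=h(S^+)-w$. Consequently
\begin{equation*}
\inf_{y\in(\sigma_{-1}\circ h)(S^+)}\Phi_H(y)\;=\;\inf_{x\in h(S^+)}\Phi_H(x-w)\;=\;\inf_{x\in h(S^+)}\Phi_K(x),
\end{equation*}
so taking $\sup$ over $h\in\Gamma$ yields $c_{\rm EH,\tau_0}(K)\le c_{\rm EH,\tau_0}(H)$; the symmetric argument with $\sigma_{+1}$ gives the reverse, and passing to the infimum over $\mathscr{F}(\mathbb{R}^{2n},\tau_0,B)$ produces~(i).

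For~(ii), monotonicity follows at once from the inclusion $\mathscr{F}(\mathbb{R}^{2n},\tau_0,B')\subset\mathscr{F}(\mathbb{R}^{2n},\tau_0,B)$ (which reverses infima) in the bounded case, and from (\ref{e:EH.1.7}) in the unbounded case. For~(iii), the dilation $H_\lambda(z):=\lambda^2 H(z/\lambda)$ sets up a bijection between $\mathscr{F}(\mathbb{R}^{2n},\tau_0,B)$ and $\mathscr{F}(\mathbb{R}^{2n},\tau_0,\lambda B)$ for $\lambda>0$ (the new $z_0$ is just $z_0/\lambda\in L_0$, and $a$ is preserved), and Proposition~\ref{prop:EH.1.2}(iii) then transports through the $\inf$/$\sup$; the case $\lambda=0$ reduces via~(iv) and the $\lambda>0$ conformality to $c_{\rm EH,\tau_0}(\{0\})=0$. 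For~(iv), if $B$ is bounded and $H\in\mathscr{F}(\mathbb{R}^{2n},\tau_0,B)$ vanishes on an open neighbourhood $V$ of the compact set $\overline{B}$, then $V$ contains $\overline{U_{\epsilon_0}(B)}$ for some $\epsilon_0>0$, so $H\in\mathscr{F}(\mathbb{R}^{2n},\tau_0,U_{\epsilon_0/2}(B))$ and hence $\inf_{\epsilon}c_{\rm EH,\tau_0}(U_\epsilon(B))\le c_{\rm EH,\tau_0}(H)$; taking $\inf$ over $H$ and combining with~(ii) gives equality. The unbounded case is handled by showing that any bounded $\tau_0$-invariant $B_0'\subset U_\epsilon(B)$ with $B_0'\cap L_0\ne\emptyset$ is contained in $U_\epsilon(B\cap\overline{B_R(0)})$ for $R$ large enough that $B\cap\overline{B_R(0)}\cap L_0\ne\emptyset$, and then applying the bounded case of~(iv) together with~(ii) and the $\sup$-definition.

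The main obstacle is the admissibility bookkeeping in~(i): one must confirm not only $\sigma_{\pm 1}\in\Gamma$ but that $\Gamma$ is closed under composition with $\sigma_{\pm 1}$ — this requires concatenating homotopies and re-expressing $(\sigma_{\pm 1}\circ h)(x)$ in the form demanded by Definition~\ref{def:deform}(ii) with strictly positive $(\tilde a,\tilde b,\tilde c)$ and compact $\tilde K$. Once that affine-translation invariance of $\Gamma$ is in hand, the other three items are essentially formal, with only the mild set-theoretic approximation in the unbounded half of~(iv) requiring separate care.
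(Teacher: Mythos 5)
Your treatment of~(i) is essentially the paper's argument: both exploit that a constant loop $w\in L_0$ lies in $\mathbb{E}^0$, so that $(x-w)^{\pm}=x^{\pm}$ and hence $\Phi_{H(\cdot-w)}(x)=\Phi_H(x-w)$, and then both replace $h(S^+)$ by $h(S^+)-w$ inside the minimax. The paper carries out this last change of variables silently in a chain of equalities, taking for granted that translation by $-w$ maps $\Gamma$ to itself; you make it explicit by checking that $\sigma_s\colon x\mapsto x+sw$ is an admissible deformation and then appealing to Proposition~\ref{prop:EH.1.1}(i) for closure under composition. That is the same idea, just unpacked; the side-remark about having to ``reorganise the decomposition'' is superfluous once Proposition~\ref{prop:EH.1.1}(i) is invoked. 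Your arguments for~(ii), for~(iii) including $\lambda=0$, and for the bounded case of~(iv) are the routine set-to-function reductions that the paper declares ``easy,'' and they are correct.

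The unbounded case of~(iv) does not close as described. You reduce a bounded admissible $B_0'\subset U_\epsilon(B)$ to $B_0'\subset U_\epsilon(B_R)$ with $B_R:=B\cap\overline{B_R(0)}$ and then say you ``apply the bounded case of~(iv) together with~(ii) and the $\sup$-definition.'' But the bounded regularity only gives, for each \emph{fixed} $R$, that $c_{\rm EH,\tau_0}(U_\epsilon(B_R))\to c_{\rm EH,\tau_0}(B_R)$ as $\epsilon\to 0$, whereas the supremum defining $c_{\rm EH,\tau_0}(U_\epsilon(B))$ over test sets $B_0'$ forces $R\to\infty$ at fixed $\epsilon$. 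To conclude $\inf_\epsilon c_{\rm EH,\tau_0}(U_\epsilon(B))\le c_{\rm EH,\tau_0}(B)$ one would need the bounded-case convergence as $\epsilon\to 0$ to be uniform in $R$, which you have not established; what your reduction actually supplies is $\sup_R\inf_\epsilon c_{\rm EH,\tau_0}(U_\epsilon(B_R))=c_{\rm EH,\tau_0}(B)$, and the general inequality $\sup\inf\le\inf\sup$ points the wrong way for the desired bound on $\inf_\epsilon\sup_R$. The paper gives no proof of~(iv) at all, so it may be that exterior regularity is really intended only for bounded $B$; in any case the limit interchange in your unbounded reduction is a genuine gap.
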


Let $\mathcal{S}, B_{\mathcal{S}}\subset \mathbb{R}^{2n}$
 and $X$  be as below the proof of Proposition~\ref{MonComf}.
Using the similar proof to that of \cite[Proposition~2.3]{Ba95} we can obtain
the following inner regularity of $c_{\rm EH,\tau_0}$:
$$
c_{\rm EH,\tau_0}(B_{\mathcal{S}},\omega_0)=\sup\left\{c_{\rm EH,\tau_0}(V,\omega_0)\;\Bigg|\!\!\!\!\begin{array}{ll}
&V\subset \mathbb{R}^{2n}\;\hbox{is open and
$\tau_0$-invariant},\\
&\overline{V}\subset B_{\mathcal{S}},\;V\cap L_0\ne\emptyset
\end{array}\right\}.
$$

\begin{proof}[Proof of Proposition~\ref{prop:EH.1.7}]
We only prove (i). The others are easy.
In fact, for any $H\in\mathscr{F}(\mathbb{R}^{2n},\tau_0,B)$, define $\widehat{H}(z):=H(z-w)$, $\forall z\in\mathbb{R}^{2n}$. By the assumption $H$ is $\tau_0$-invariant and vanishes near $B$, and there holds $H(z)=a|z|^2+\langle z,z_0\rangle +b$ for $|z|$ sufficiently large, where $a>2\pi$, $z_0\in L_0$. Since $w\in L_0$, it follows that
$$
\widehat{H}(\tau_0z)=H(\tau_0z-w)=H(\tau_0(z-w))=H(z-w)=\widehat{H}(z).
$$
It is obvious that $\widehat{H}$ vanishes near $B+w$. Moreover,
when $|z|$ is sufficiently large,
\begin{eqnarray*}
\widehat{H}(z)&=&H(z-w)=a|z-w|^2+\langle z-w,z_0\rangle+b\\
&=&a|z|^2+\langle z,z_0-2aw\rangle+a|w|^2-\langle w,z_0\rangle+b,
\end{eqnarray*}
where $z_0-2aw\in L_0$. Hence $\widehat{H}\in\mathscr{F}(\mathbb{R}^{2n},\tau_0,B+w)$. Also we have
{\small\begin{eqnarray*}
c_{\rm EH,\tau_0}(\widehat{H})&=&\sup_{h\in\Gamma}\inf_{x\in h(S^+)}\Phi_{\widehat{H}}(x)\\
&=&\sup_{h\in\Gamma}\inf_{x\in h(S^+)}\frac{1}{2}(\|x^+\|^2_{1/2}-\|x^-\|^2_{1/2})-\int_0^1\widehat{H}(x(t))dt\\
&=& \sup_{h\in\Gamma}\inf_{x\in h(S^+)}\frac{1}{2}(\|(x-w)^+\|^2_{1/2}-\|(x-w)^-\|^2_{1/2})-\int_0^1H(x(t)-w)dt\\
&=& \sup_{h\in\Gamma}\inf_{x\in h(S^+)-w}\frac{1}{2}(\|x^+\|^2_{1/2}-\|x^-\|^2_{1/2})-\int_0^1H(x(t))dt\\
&=& \sup_{h\in\Gamma}\inf_{x\in h(S^+)}\Phi_H(x)\\
&=& c_{\rm EH,\tau_0}(H).
\end{eqnarray*}}Hence $c_{\rm EH,\tau_0}(B+w)\le c_{\rm EH,\tau_0}(B)$.
This and arbitrariness of choices of $w\in L_0$ lead to the conclusion.
\end{proof}

In Section~\ref{sec:EH.2} we shall prove the following result, which gives the
variational explanation for $c_{\rm EH,\tau_0}$.

\begin{thm}\label{th:EH.1.6}%[\hbox{\cite[Prop.3.4.1]{Sik90}}]
If $H\in C^\infty(\mathbb{R}^{2n},\mathbb{R})$ satisfies (H1)-(H3)
in Section~\ref{sec:1.EH}
and is also nonresonant, then $c_{\rm EH,\tau_0}(H)$ is a positive critical value of $\Phi_H$ on $\mathbb{E}$.
\end{thm}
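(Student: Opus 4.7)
The plan is to verify that $c := c_{\rm EH,\tau_0}(H)$ is a positive critical value of $\Phi_H$ by the standard minimax principle, using the positive gradient flow of $\Phi_H$ as the deformation tool. The finiteness of $c$ was already established in the excerpt using hypothesis (H3) with $a>2\pi$, and its positivity follows from Proposition~\ref{prop:EH.1.4} (linking of $S^+$ with $\mathbb{E}^-\oplus\mathbb{E}^0$). So only criticality remains.

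Argue by contradiction: assume $c$ is not a critical value of $\Phi_H$. Because $H$ is nonresonant, Proposition~\ref{prop:PSmale} gives the (PS) condition, and a standard compactness argument yields constants $\epsilon_0, \delta > 0$ such that
\[
\|\nabla\Phi_H(x)\|_{1/2} \geq \delta \qquad \text{for every } x \in \Phi_H^{-1}([c-\epsilon_0, c+\epsilon_0]).
\]
Let $\phi^t$ denote the flow of $-\nabla\Phi_H$ from Proposition~\ref{prop:flow}, which exists for all $t\in\mathbb{R}$. Fix $\epsilon \in (0, \epsilon_0/2)$ and set $T := 2\epsilon/\delta^2$. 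Along the reversed flow $\phi^{-t}$ one has $\frac{d}{dt}\Phi_H(\phi^{-t}(x)) = \|\nabla\Phi_H(\phi^{-t}(x))\|_{1/2}^2 \geq 0$, and as long as the trajectory remains inside the slab $\Phi_H^{-1}([c-\epsilon_0, c+\epsilon_0])$, this rate is at least $\delta^2$. A straightforward case analysis then shows $\Phi_H(\phi^{-T}(x)) \geq c+\epsilon$ for every $x$ with $\Phi_H(x) \geq c-\epsilon$.

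By the definition of $c$ as a supremum, pick $h \in \Gamma$ with $\inf_{x \in h(S^+)} \Phi_H(x) > c - \epsilon$. Then the composed map $\phi^{-T} \circ h$ satisfies $\inf_{x \in (\phi^{-T}\circ h)(S^+)} \Phi_H(x) \geq c + \epsilon$, so if $\phi^{-T} \circ h \in \Gamma$, this contradicts the supremum definition of $c$ and forces $c$ to be a critical value. To check admissibility, use the explicit form $\phi^{-T}(x) = e^{-T}x^- + x^0 + e^{T}x^+ + K(-T,x)$ supplied by Proposition~\ref{prop:flow}. The family $(\phi^{-uT})_{u\in[0,1]}$ provides a homotopy from $\text{id}$ to $\phi^{-T}$ whose coefficients $a=e^{uT}$, $b=1$, $c=e^{-uT}$ on $x^+, x^0, x^-$ are strictly positive, with compact remainder $K(-uT,\cdot)$; concatenating with a homotopy exhibiting $h\in\Gamma$ verifies clause (ii) of Definition~\ref{def:deform} for $\phi^{-T}\circ h$.

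The main obstacle is clause (i) of Definition~\ref{def:deform}: showing that $\phi^{-uT}\circ h_u$ preserves $\mathbb{E}\setminus(\mathbb{E}^-\oplus\mathbb{E}^0)$ for every $u\in[0,1]$, i.e., that the compact perturbation $K(-uT,\cdot)$ cannot create $x^+$-zero crossings along the homotopy. In the linear model the $x^+$-coefficient $e^{uT}\geq 1$ is clearly nonzero, but a genuine argument using compactness of $K$, the fact that $\phi^{-uT}$ is a diffeomorphism of $\mathbb{E}$, and a degree/continuation argument is required; this is exactly parallel to the one in Sikorav~\cite{Sik90} and in the authors' \cite{JinLu19}, now adapted to the $\check{\tau}_0$-invariant Hilbert space $\mathbb{E}=E^{1/2}_{\tau_0}$ via Palais's principle of symmetric criticality (compare Proposition~\ref{solution}). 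Once this admissibility is established, the contradiction above shows $c$ is a critical value, and positivity of $c$ completes the proof.
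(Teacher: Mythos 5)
Your deformation--contradiction framework matches the paper's intent, and your finiteness, positivity, and (PS)\nobreakdash-via\nobreakdash-nonresonance ingredients are all correct. The genuine gap is exactly the step you flag yourself: showing that clause (i) of Definition~\ref{def:deform} holds for the time-$T$ map of the (positive) gradient flow. You propose to close it by a ``degree/continuation argument,'' but that cannot work for the \emph{unmodified} gradient flow. On $\mathbb{E}^-\oplus\mathbb{E}^0$ one has $\nabla\Phi_H(x)=-x^--\nabla\mathfrak{b}_H(x)$, and $\nabla\mathfrak{b}_H(x)=j^\ast\nabla H(x)$ generically has a nonzero $\mathbb{E}^+$-component, so the gradient field is not tangent to $\mathbb{E}^-\oplus\mathbb{E}^0$; hence the genuine gradient flow does not preserve $\mathbb{E}^-\oplus\mathbb{E}^0$, does not preserve its complement, and a point with $x^+\neq 0$ can be carried to one with $x^+=0$. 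A degree or continuation argument cannot repair a deformation that literally violates the required invariance.

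The paper closes precisely this gap by a cutoff, not by degree theory. It restricts attention to $F=\gamma(S^+)\in\mathcal{F}$ with $\alpha:=\inf(\Phi_H|F)>0$, and replaces $\nabla\Phi_H$ by the field $V(x)=x^+-x^--\rho(\Phi_H(x))\nabla\mathfrak{b}_H(x)$, where $\rho$ vanishes on $(-\infty,0]$ and equals $1$ on $[\alpha,\infty)$. Since $H\ge 0$ forces $\Phi_H\le 0$ on $\mathbb{E}^-\oplus\mathbb{E}^0$, one has $V|_{\mathbb{E}^-\oplus\mathbb{E}^0}(x)=-x^-$, so the flow $\gamma^u$ of $V$ preserves $\mathbb{E}^-\oplus\mathbb{E}^0$ and (being a homeomorphism for each $u$) its complement as well, giving clause (i) for all $u$; its representation as identity plus compact gives clause (ii). Since $V=\nabla\Phi_H$ on $\{\Phi_H\ge\alpha\}$, $\gamma^u(F)=\phi^u(F)$ for $u\ge 0$, and together with closedness of $\Gamma$ under composition (Proposition~\ref{prop:EH.1.1}(i)) this makes $\mathcal{F}$ positively invariant under the gradient flow, after which the standard minimax argument and (PS) conclude. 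So to repair your proposal, replace the ``degree/continuation'' step by this cutoff modification of the vector field; everything else you wrote then goes through.
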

This theorem is very important
for proofs of subsequent Theorems~\ref{th:EHconvex},~\ref{th:EHproduct} and \ref{th:EHcontact}.

%
%In Theorem~\ref{th:EH.1.6} we shall give the
%variational explanation for $c_{\rm EH,\tau_0}$,

By Lemma 2.29 in \cite{R12}, for every linear anti-symplectic involution $\tau$
on $(\mathbb{R}^{2n},\omega_0)$ there exists a symplectic matrix $\Psi$
of order $2n$  such that $\Psi \tau=\tau_0\Psi$. If $B\subset\mathbb{R}^{2n}$ is a
$\tau$-invariant subset (and thus $\Psi B$ is $\tau_0$-invariant), we define
 \begin{equation}\label{e:EH.1.8}
c_{\rm EH,\tau}(B)=c_{\rm EH,\tau_0}(\Psi B).
 \end{equation}
 In order to ensure that this is well-defined we need to prove
 \begin{equation}\label{e:EH.1.9}
c_{\rm EH,\tau_0}(\Psi_1 B)=c_{\rm EH,\tau_0}(\Psi_2 B)
\end{equation}
 for any two symplectic matrixs $\Psi_i$ of order $2n$  satisfying $\Psi_i \tau=\tau_0\Psi_i$, $i=1,2$.
 Since $\Psi:=\Psi_2\Psi_1^{-1}$ is a symplectic matrix
  satisfying $\Psi\tau_0=\tau_0\Psi$ and $\Psi(\Psi_1B)=\Psi_2B$,
   (\ref{e:EH.1.9}) can be assured by the following proposition.

\begin{proposition}\label{prop:EH.1.8}
  For a $\tau_0$-invariant set $B\subset\mathbb{R}^{2n}$  and a symplectic matrix $\Psi$ commuting with $\tau_0$, there holds
  $$
  c_{\rm EH,\tau_0}(B)=c_{\rm EH,\tau_0}(\Psi B).
  $$
\end{proposition}

This will be proved at the end of Section~\ref{sec:EH.2}.

\begin{proposition}\label{prop:EH.1.9}
 Proposition~\ref{prop:EH.1.7} is still true if $\tau_0$ and $L_0$
 are replaced by any linear anti-symplectic involution $\tau$ on $(\mathbb{R}^{2n},\omega_0)$
and $L={\rm Fix}(\tau)$, respectively.
\end{proposition}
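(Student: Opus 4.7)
The plan is to transfer each clause of Proposition~\ref{prop:EH.1.7} through the symplectic conjugation $\Psi$ supplied by Lemma 2.29 of \cite{R12} (with $\Psi\tau=\tau_0\Psi$), exploiting the very definition $c_{\rm EH,\tau}(B)=c_{\rm EH,\tau_0}(\Psi B)$ in (\ref{e:EH.1.8}), which is well-defined by Proposition~\ref{prop:EH.1.8}. The key preliminary observation is that $\Psi$ and $\Psi^{-1}$ interchange $\tau$-invariant and $\tau_0$-invariant subsets, and map $L={\rm Fix}(\tau)$ bijectively onto $L_0={\rm Fix}(\tau_0)$: if $w\in L$ then $\tau_0\Psi w=\Psi\tau w=\Psi w$, so $\Psi w\in L_0$, and similarly for $\Psi^{-1}$. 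Consequently the hypotheses ``$\tau$-invariant and meeting $L$'' on $B$ translate exactly to the corresponding hypotheses on $\Psi B$, so the $\tau_0$-statements of Proposition~\ref{prop:EH.1.7} are applicable to $\Psi B$ and $\Psi B'$.

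Assuming this, (i) follows at once: for $w\in L$ put $w':=\Psi w\in L_0$; then $\Psi(B+w)=\Psi B+w'$, and the $L_0$-translation invariance of $c_{\rm EH,\tau_0}$ gives
\[
c_{\rm EH,\tau}(B+w)=c_{\rm EH,\tau_0}(\Psi B+w')=c_{\rm EH,\tau_0}(\Psi B)=c_{\rm EH,\tau}(B).
\]
For (ii), $B\subset B'$ yields $\Psi B\subset\Psi B'$, and monotonicity of $c_{\rm EH,\tau_0}$ gives the result. For (iii), the linearity $\Psi(\lambda B)=\lambda\Psi B$ combined with the $\tau_0$-conformality yields $c_{\rm EH,\tau}(\lambda B)=c_{\rm EH,\tau_0}(\lambda\Psi B)=\lambda^2 c_{\rm EH,\tau_0}(\Psi B)=\lambda^2 c_{\rm EH,\tau}(B)$.

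The delicate point is (iv), since for a general linear anti-symplectic involution $\tau$ the standard Euclidean $\epsilon$-neighbourhood $U_\epsilon(B)$ of a $\tau$-invariant set need not be $\tau$-invariant. I would handle this by choosing, once and for all, a $\tau$-invariant inner product $\langle\cdot,\cdot\rangle_\tau$ on $\mathbb{R}^{2n}$, for example by decomposing $\mathbb{R}^{2n}=L\oplus\ker(\tau+I)$ and declaring the two summands orthogonal with the standard inner product on each factor (equivalently, the $\Psi$-pull-back of $\langle\cdot,\cdot\rangle_{\mathbb{R}^{2n}}$). Write $U_\epsilon^\tau(B)$ for the $\epsilon$-neighbourhood of $B$ in this metric; it is automatically $\tau$-invariant. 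Since $\langle\cdot,\cdot\rangle_\tau$ and the standard inner product are equivalent norms, for every $\delta>0$ there exist $\epsilon_-,\epsilon_+>0$ with
\[
\Psi\bigl(U_{\epsilon_-}^\tau(B)\bigr)\subset U_\delta(\Psi B)\subset\Psi\bigl(U_{\epsilon_+}^\tau(B)\bigr),
\]
and letting $\delta,\epsilon_\pm\to 0$ the squeeze together with monotonicity (ii) and the $\tau_0$-exterior regularity gives $c_{\rm EH,\tau}(B)=\inf_{\epsilon>0}c_{\rm EH,\tau}(U_\epsilon^\tau(B))$. The main obstacle is exactly this last point: one must be explicit that the ``$\epsilon$-neighbourhood'' in (iv) has to be interpreted in a $\tau$-adapted metric (or equivalently as the $\Psi$-preimage of a standard neighbourhood on the $\tau_0$-side); once that convention is fixed, the remainder of (iv), like (i)--(iii), is simply a transport along $\Psi$.
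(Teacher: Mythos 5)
Your proof is correct and reflects what the paper leaves implicit: Proposition~\ref{prop:EH.1.9} is stated without proof, and the intended argument is plainly to transport each clause of Proposition~\ref{prop:EH.1.7} through the conjugating symplectic matrix $\Psi$ via the definition $c_{\rm EH,\tau}(B)=c_{\rm EH,\tau_0}(\Psi B)$ in (\ref{e:EH.1.8}), exactly as you do for (i)--(iii). You have also caught a genuine subtlety in (iv) that the paper glides past: a linear anti-symplectic involution $\tau$ is an orthogonal matrix precisely when $\tau=\tau^T$, so for a non-symmetric $\tau$ the Euclidean $\epsilon$-neighbourhood of a $\tau$-invariant set need not itself be $\tau$-invariant, and thus the statement of (iv) must be read with $U_\epsilon(B)$ replaced by a $\tau$-invariant neighbourhood system --- for instance the $\Psi$-preimages of standard $\epsilon$-neighbourhoods of $\Psi B$. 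Your resolution is sound; the only small slip is the word ``equivalently,'' since the orthogonal-decomposition inner product on $L\oplus\ker(\tau+I)$ and the pullback metric $\langle\Psi\cdot,\Psi\cdot\rangle_{\mathbb{R}^{2n}}$ are in general \emph{distinct} $\tau$-invariant inner products, and only the latter makes $\Psi\bigl(U^\tau_\epsilon(B)\bigr)=U_\epsilon(\Psi B)$ on the nose, whereas the former genuinely requires the two-sided Lipschitz squeeze you describe. Either choice carries the argument.
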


The following result will be proved in Section~\ref{sec:EH.3.1}.

\begin{thm}\label{th:EHconvex}
For any linear anti-symplectic involution $\tau$ on $(\mathbb{R}^{2n},\omega_0)$
and any $\tau$-invariant convex bounded domain $D\subset \mathbb{R}^{2n}$  with
 $C^{1,1}$  boundary $S=\partial D$, then  there exists a $\tau$-brake closed characteristic $x^{\ast}$ on $\partial D$ such that
 \begin{eqnarray}\label{e:fixpt}
A(x^{\ast})&=&\min\{A(x)>0\,|\,x\;\text{is a }\;\hbox{$\tau$-brake closed characteristic on}\;\mathcal{S}\}\nonumber\\
&=&c_{\rm EH,\tau}(D)\\
&=&c_{\rm EH,\tau}(\partial D).\label{e:fixpt.1}
\end{eqnarray}
\end{thm}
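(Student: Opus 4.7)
The plan is to reduce to the canonical involution $\tau_0$ and then argue variationally, paralleling both the original Ekeland--Hofer construction and the Hofer--Zehnder analogue Theorem~\ref{th:convex}. By Lemma~2.29 of \cite{R12} there is a symplectic matrix $\Psi$ with $\Psi\tau=\tau_0\Psi$; the map $x\mapsto \Psi x$ sends $\tau$-brake closed characteristics on $\partial D$ bijectively to $\tau_0$-brake closed characteristics on $\partial(\Psi D)$ and preserves the action $A$ (because $\Psi$ is symplectic). Combined with the definition (\ref{e:EH.1.8}), this reduces the theorem to the case $\tau=\tau_0$. Using translation invariance by $L_0$ (Proposition~\ref{prop:EH.1.7}(i), or its generalization Proposition~\ref{prop:EH.1.9}(i)) we may further normalize so that $0\in\Int(D)\cap L_0$.

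For existence and the identification with the capacity, I would approximate from above. Using Remark~\ref{rem:EH6}(ii) and the infimum defining $c_{\rm EH,\tau_0}(D)$, choose a sequence $H_k\in\mathscr{E}(\mathbb{R}^{2n},\tau_0,D)\cap C^\infty(\mathbb{R}^{2n},\mathbb{R}_+)$, each vanishing on an open $\tau_0$-invariant neighborhood of $\overline{D}$ that shrinks to $\overline{D}$, with $c_{\rm EH,\tau_0}(H_k)\to c_{\rm EH,\tau_0}(D)$. By the variational characterization announced as Theorem~\ref{th:EH.1.6}, each $c_{\rm EH,\tau_0}(H_k)$ is a (positive) critical value of $\Phi_{H_k}$ attained at some $\tau_0$-symmetric critical point $x_k\in\mathbb{E}$, which is a $1$-periodic $\tau_0$-brake solution of $\dot{x}=J_0\nabla H_k(x)$. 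Exploiting the quadratic growth (H3), the uniform bound $\Phi_{H_k}(x_k)\to c_{\rm EH,\tau_0}(D)>0$, and the convexity/boundedness of $D$, I would extract a subsequence which, after reparametrization, converges to a nonconstant absolutely continuous curve $x^{\ast}$ whose image lies on $\partial D$ and which satisfies the generalized inclusion $\dot{x}^{\ast}\in J_0 N_{\partial D}(x^{\ast})$ a.e., with $x^{\ast}(T-t)=\tau_0 x^{\ast}(t)$ and $A(x^{\ast})=c_{\rm EH,\tau_0}(D)$. Because $\partial D$ is $C^{1,1}$, the normal cone is one-dimensional at each point, so $x^{\ast}$ reparametrizes to a genuine $\tau_0$-brake closed characteristic.

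To upgrade $A(x^{\ast})$ to the \emph{minimum} action, I would run the reverse inequality: for any $\tau_0$-brake closed characteristic $y$ on $\partial D$ the minimax definition of $c_{\rm EH,\tau_0}$, applied to a Hamiltonian built from a smoothing of $(A(y)/\pi)\,j_D^{\,2}$ where $j_D$ is the Minkowski gauge of $D$ (normalized so that $y$ is a $1$-periodic orbit after rescaling), yields $c_{\rm EH,\tau_0}(D)\le A(y)$, exactly as in the Hofer--Zehnder analogue. Combined with $A(x^{\ast})=c_{\rm EH,\tau_0}(D)>0$ this gives (\ref{e:fixpt}). For (\ref{e:fixpt.1}), the inclusion $\mathscr{F}(\mathbb{R}^{2n},\tau_0,D)\subset\mathscr{F}(\mathbb{R}^{2n},\tau_0,\partial D)$ gives $c_{\rm EH,\tau_0}(\partial D)\le c_{\rm EH,\tau_0}(D)$ directly, while the reverse inequality follows by a $\tau_0$-equivariant truncation: any $H\in\mathscr{F}(\mathbb{R}^{2n},\tau_0,\partial D)$ vanishing on a collar of $\partial D$ can be modified on $\Int(D)$ to vanish on all of $\overline{D}$ without increasing $c_{\rm EH,\tau_0}(H)$, since the critical points constructed above are driven by the dynamics in a neighborhood of $\partial D$.

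The main obstacle is the compactness/limit step in the second paragraph: passing from the critical points $x_k$ of $\Phi_{H_k}$ in the fractional Hilbert space $\mathbb{E}$ to a bona fide generalized characteristic on $\partial D$ as the Hamiltonians degenerate toward the nonsmooth gauge-squared function. One must uniformly bound $\|x_k\|_{1/2}$, rule out collapse to a constant or escape to infinity, and verify that the limit is concentrated on $\partial D$; this is where the coefficient $a_k$ in (H3) approaching $2\pi$ from above, together with the preservation of the $\tau_0$-symmetry throughout the approximation (so that the Fadell--Rabinowitz-type minimax class $h(S^+)$ remains meaningful in the limit), plays the delicate role.
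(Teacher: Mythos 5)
The reduction to $\tau=\tau_0$ and the normalization $0\in \Int(D)\cap L_0$ are correct and identical to the paper. But from there you diverge completely from the paper's route, and your route has two genuine gaps.

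First, the compactness step you flag yourself is precisely where your plan does not close. You take $H_k\in\mathscr{E}(\mathbb{R}^{2n},\tau_0,D)$ vanishing on shrinking $\tau_0$-invariant neighborhoods of $\overline{D}$ and want to pass the critical points $x_k$ of $\Phi_{H_k}$ to a generalized characteristic on $\partial D$. But as the vanishing neighborhood shrinks, $\nabla H_k$ has no uniform bound in any fixed collar of $\partial D$; on the region where $x_k$ actually moves, the Lipschitz constants of $H_k$ blow up. So you get no uniform $W^{1,\infty}$ (or even $L^\infty$) bound on $\dot{x}_k$, and neither Arzel\'a--Ascoli nor a weak-$W^{1,2}$ argument gives a nonconstant limit sitting on $\partial D$. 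Ruling out collapse to a constant, escape along the reparametrization, or concentration away from $\partial D$ is exactly the hard content, and you have not supplied the estimates. The paper avoids this entirely: it works with the single nonsmooth Hamiltonian $H=j_D^2$, applies the Clarke dual variational principle on the $W^{1,2}$ class $\mathcal{A}$ (\emph{not} on the fractional space $\mathbb{E}$), obtains a minimizer $w$ with $I(w)=a=\min I|_{\mathcal{A}}$, and then derives the pointwise inequality $\int_0^1 H(x(t))\,dt\ge\left(\tfrac12\int_0^1\langle x,-J_0\dot y\rangle\right)^2$ (with $y=w/\sqrt{a}$). This inequality, together with the intersection property $h(S^+)\cap(\mathbb{E}^-\oplus\mathbb{E}^0\oplus\mathbb{R}_{>0}y)\ne\emptyset$, gives $\mathfrak{a}(x)\le a\int_0^1 H(x)$ and hence $c_{\rm EH,\tau_0}(D)\le a$ cleanly (Lemma~\ref{lem:EH11}), while a cofinal family $\mathscr{E}_\epsilon(\mathbb{R}^{2n},\tau_0,D)$ with $f'\notin\Sigma^{\tau_0}_{\mathcal{S}}$ above level $\epsilon$ gives $c_{\rm EH,\tau_0}(D)\ge a$ (Lemma~\ref{lem:EH10}); no limit of dynamical orbits is ever taken.

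Second, your argument for (\ref{e:fixpt.1}) has a sign error. Truncating $H\in\mathscr{F}(\mathbb{R}^{2n},\tau_0,\partial D)$ to a $K$ vanishing on all of $\overline{D}$ produces $K\le H$, and by monotonicity (Proposition~\ref{prop:EH.1.2}(i)) $c_{\rm EH,\tau_0}(K)\ge c_{\rm EH,\tau_0}(H)$ --- the opposite of what you assert. So your truncation does not show $c_{\rm EH,\tau_0}(D)\le c_{\rm EH,\tau_0}(\partial D)$; it proves nothing in either direction (or, if anything, re-derives the easy inequality you already have). The paper instead introduces a second cofinal family $\mathscr{E}_\epsilon(\mathbb{R}^{2n},\tau_0,\mathcal{S})$ of functions nonzero on \emph{both} sides of $\partial D$ (with $f'(s)\le 0$ for $s\le 1$) and proves $c_{\rm EH,\tau_0}(\overline{H})>\min\Sigma^{\tau_0}_{\mathcal{S}}-\epsilon$ for these directly, giving $c_{\rm EH,\tau_0}(\partial D)\ge a$, which combined with $c_{\rm EH,\tau_0}(\partial D)\le c_{\rm EH,\tau_0}(D)=a$ finishes the proof.
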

%%%%%%%%%%%%%%%%%%%%%%%%%%%%
%\begin{remark}
%  {\rm We can replace $\tau_0$ in the above theorem by any linear anti-symplectic involution $\tau$ on $(\mathbb{R}^{2n},\omega_0)$.
%   }
%\end{remark}
%%%%%%%%%%%%%%%%%%%%%%%%%%%

\begin{remark}\label{rm:EHconvex}
{\rm   The argument at the beginning of Section~\ref{sec:convex} shows that the convexity and $\tau$-invariance of $D$ %and $\tau D=D$
imply that $\tau\partial D=\partial D$, $D\cap {\rm Fix}(\tau)\neq \emptyset$ and $\partial D\cap{\rm Fix}(\tau)\ne \emptyset$.}
 \end{remark}

Note that Proposition~\ref{prop:EH.1.7}(ii)-(iii)  leads to the
continuity of $c_{\rm EH,\tau_0}$ (and thus $c_{\rm EH,\tau}$)  in the category of convex sets.
As in Section~\ref{sec:convex4} an approximation argument shows that
the condition ``$C^{1,1}$" for $D$ in  Theorem~\ref{th:EHconvex} is
not needed if ``brake closed characteristic" is replaced by
``generalized brake closed characteristic".
Thus  Theorems~\ref{th:convex} and \ref{th:EHconvex} imply
$c_{\rm EH,\tau}(D)=c_{\rm HZ,\tau}(D,\omega_0)$
for any linear anti-symplectic involution $\tau$ on $(\mathbb{R}^{2n},\omega_0)$
and any $\tau$-invariant bounded convex  domain
$D\subset \mathbb{R}^{2n}$. By the definitions of both $c_{\rm EH,\tau}$ and $c_{\rm HZ,\tau}$
we deduce that for any $\tau$-invariant convex domain $D\subset \mathbb{R}^{2n}$,
\begin{eqnarray}\label{e:fixpt.2}
c_{\rm EH,\tau}(D)=c_{\rm HZ,\tau}(D,\omega_0).
\end{eqnarray}
Hereafter we shall use $c_{\rm EHZ,\tau}(D)$ to denote $c_{\rm EH,\tau}(D)=c_{\rm HZ,\tau}(D,\omega_0)$
  without special statements. A generalized $\tau$-brake closed characteristic on $\mathcal{S}$
   is  called a {\bf $c_{\rm EHZ,\tau}$-carrier} if its
   action is equal to $c_{\rm EHZ,\tau}(D)$.
The proof of Theorem~\ref{th:convex} also shows that
a generalized $\tau$-brake closed characteristic  on $\mathcal{S}$
 is a $c_{\rm EHZ,\tau}$-carrier for $D$ if and only if
it may be reparametrized as a $T$-periodic solution $x$ of
$-J_0\dot{x}(t)\in  \partial H(x(t))$ with $T=c_{\rm EHZ,\tau}(D)$ and satisfying
$x(T-t) =\tau(x(t))\;\forall t\in \mathbb{R}$, where $H=j^2_D$ and $j_D:\mathbb{R}^{2n}\to\mathbb{R}$ is the
{\bf Minkowski functional} (or {\bf gauge function}) of $D$ defined by
\begin{equation}\label{e:gauge}
j_D(x)=\inf\{\lambda\ge 0\,|\, x\in\lambda D\}.
\end{equation}
Hence from (\ref{e:homog3}) and Arzela-Ascoli theorem it follows that
all $c_{\rm EHZ,\tau}$-carriers for $D$ form a compact subset in $C^0(\mathbb{R}/\mu\mathbb{Z},\mathcal{S})$
(and $C^1(\mathbb{R}/\mu\mathbb{Z},\mathcal{S})$ if $\mathcal{S}$ is $C^1$), where $\mu=c_{\rm EHZ,\tau}(D)$.

Let $E(q):=\{z\in\mathbb{R}^{2n}\,|\, q(z)<1\}$  be the ellipsoid given
by a positive definite quadratic form $q(z)=\frac{1}{2}\langle Sz, z\rangle_{\mathbb{R}^{2n}}$ on $\mathbb{R}^{2n}$,
where $S\in\mathbb{R}^{2n\times 2n}$ is a positive definite symmetric matrix.
By the Williamson theorem there exists a symplectic matrix $\Psi$ (which is unique up to
compositions with orthogonal symplectic matrixes) such that
\begin{eqnarray}\label{e:ellipsoid.1}
\Psi^TS\Psi={\rm diag}\left(1/{r_1^2},\cdots, 1/{r_n^2}, 1/{r_1^2},\cdots, 1/{r_n^2}\right)
\end{eqnarray}
with $0\le r_1\le\cdots\le r_n$. It follows that
\begin{eqnarray}\label{e:ellipsoid.2}
\Psi^{-1}(E(q))=E(q\circ\Psi)=\left\{(x,y)\in\mathbb{R}^{2n}\,\Bigg|\, \sum^n_{j=1}
(x_j^2+y_j^2)/{r_j^2}<1\right\}.
\end{eqnarray}
Since $q$ is the square of the Minkowski functional of $E(q)$, for a linear anti-symplectic involution $\tau$
on $(\mathbb{R}^{2n},\omega_0)$ it is easily proved that
\begin{eqnarray}\label{e:ellipsoid.3}
\tau(E(q))=E(q)\quad\Longleftrightarrow\quad q\circ\tau=q\quad \Longleftrightarrow\quad \tau^TS\tau=S.
\end{eqnarray}
If $\tau$ is also symmetric with respect to $\langle\cdot,\cdot\rangle_{\mathbb{R}^{2n}}$, the last equality above becomes $S\tau=\tau S$, i.e. $S$ commutes with $\tau$.

By \cite[Lemma~1.17]{R12} there always exists a $\omega_0$-compatible linear complex structure on
$J$ on $\mathbb{R}^{2n}$ with $\tau J=-J\tau$. (Note that $J$ can be chosen as the
standard complex structure $J_0$ if $\tau$ is symmetric with respect to the standard inner product
$\langle\cdot,\cdot\rangle_{\mathbb{R}^{2n}}=\omega_0(\cdot,J_0\cdot)$ on $\mathbb{R}^{2n}$.)
Let $u_1,\cdots,u_n$ be a unit orthogonal basis of $L:={\rm Fix}(\tau)$
with respect to the metric $g_J=\omega_0(\cdot,J\cdot)$, and $v_j=Ju_j$, $j=1,\cdots,n$.
Then $u_1,\cdots,u_n,v_1,\cdots,v_n$ form a symplectic basis of $(\mathbb{R}^{2n},\omega_0)$
which is also orthogonal with respect to $g_J$.
(If $\tau=\tau_0$ and $J=J_0$ we may choose $u_1,\cdots,u_n, v_1,\cdots,v_n$ to be
the standard symplectic basis $e_1,\cdots,e_n,f_1,\cdots,f_n$ in $(\mathbb{R}^{2n},\omega_0)$.)
Then the linear symplectic isomorphism $\Psi:(\mathbb{R}^{2n},\omega_0)\to
(\mathbb{R}^{2n},\omega_0)$ defined by $\Psi(u_j)=e_j$ and $\Psi(v_j)=f_j$, $j=1,\cdots,n$,
satisfies $\Psi \tau=\tau_0\Psi$. (See \cite[Lemma~2.29]{R12} for a different proof of this fact).
In particular, if $\tau$ is symmetric with respect to $\langle\cdot,\cdot\rangle_{\mathbb{R}^{2n}}$
  we may take $J=J_0$ so that this $\Psi$ is also an orthogonal transformation
with respect to $\langle\cdot,\cdot\rangle_{\mathbb{R}^{2n}}$.

As a consequence of Theorem~\ref{th:convex} or Theorem~\ref{th:EHconvex}
we get:

\begin{corollary}\label{cor:ellipsoid}
Let $E(q)=\{z\in\mathbb{R}^{2n}\,|\, q(z)<1\}$  be the ellipsoid given
by a positive definite symmetric matrix $S$ as above.
For a symplectic matrix $\Psi$ satisfying (\ref{e:ellipsoid.1})
and any linear anti-symplectic involution $\tau$
on $(\mathbb{R}^{2n},\omega_0)$ preserving $E(q)$ it holds that
\begin{eqnarray}\label{e:ellCap}
{c}_{\rm EHZ,\tau}(E(q))={c}_{\rm EHZ,\Psi^{-1}\tau\Psi}(\Psi^{-1}(E(q))).
\end{eqnarray}
In particular, if $\Psi^{-1}\tau\Psi=\tau_0$ then ${c}_{\rm EHZ,\tau}(E(q))=\pi r_1^2$. Moreover
  \begin{equation}\label{e:ball}
  c_{\rm EHZ,\tau}(B^{2n}(r))=\pi r^2\quad\forall r>0
  \end{equation}
for  any linear anti-symplectic involution $\tau$
on $(\mathbb{R}^{2n},\omega_0)$ which is symmetric with respect to $\langle\cdot,\cdot\rangle_{\mathbb{R}^{2n}}$.
\end{corollary}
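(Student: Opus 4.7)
The plan is to address the three assertions of the corollary in turn, relying on naturality of $c_{\rm EHZ,\tau}$ under linear real symplectic isomorphisms and on the representation formula of Theorem~\ref{th:convex}. For (\ref{e:ellCap}), the map $\Psi^{-1}\colon\mathbb{R}^{2n}\to\mathbb{R}^{2n}$ is a linear symplectomorphism intertwining $\tau$ and $\tau':=\Psi^{-1}\tau\Psi$, i.e.\ $\Psi^{-1}\circ\tau=\tau'\circ\Psi^{-1}$. Its restriction is therefore a real symplectic isomorphism $(E(q),\omega_0,\tau)\to(\Psi^{-1}E(q),\omega_0,\tau')$; applying Proposition~\ref{MonComf}(ii) in both directions, together with the identification $c_{\rm EHZ,\tau}=c_{\rm HZ,\tau}(\cdot,\omega_0)$ on convex domains recorded in (\ref{e:fixpt.2}), yields (\ref{e:ellCap}).

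Assume now $\tau'=\tau_0$. By (\ref{e:ellipsoid.2}) the image $E':=\Psi^{-1}(E(q))$ is the standard ellipsoid with semi-axes $0<r_1\le\cdots\le r_n$ and smooth boundary, so Theorem~\ref{th:convex} identifies $c_{\rm EHZ,\tau_0}(E')$ with the minimal positive action among $\tau_0$-brake closed characteristics on $\partial E'$. The Hamiltonian vector field of $H(z)=\sum_j(x_j^2+y_j^2)/r_j^2$ rotates the $j$th coordinate two-plane at angular speed $2/r_j^2$, so
\begin{equation*}
x^{\ast}(t)=\bigl(r_1\cos(2t/r_1^2),\;r_1\sin(2t/r_1^2),\;0,\ldots,0\bigr),\qquad t\in[0,\pi r_1^2],
\end{equation*}
is a closed characteristic on $\partial E'$. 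A direct computation from (\ref{e:action1}) gives $A(x^{\ast})=\pi r_1^2$, and the evenness of cosine combined with the oddness of sine yields $x^{\ast}(T-t)=\tau_0(x^{\ast}(t))$ with $T=\pi r_1^2$, so $x^{\ast}$ is a $\tau_0$-brake closed characteristic; hence $c_{\rm EHZ,\tau_0}(E')\le\pi r_1^2$. For the reverse inequality, every $\tau_0$-brake closed characteristic is in particular a closed characteristic on $\partial E'$, and the classical integration of the linear flow in action-angle coordinates shows that the positive actions of such characteristics form the set $\{k\pi r_j^2:k\in\mathbb{N},\,1\le j\le n\}$, whose minimum is $\pi r_1^2$. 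Combining with (\ref{e:ellCap}) gives $c_{\rm EHZ,\tau}(E(q))=\pi r_1^2$.

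For the ball $B^{2n}(r)$ the hypothesis that $\tau$ is symmetric with respect to $\langle\cdot,\cdot\rangle_{\mathbb{R}^{2n}}$ forces $\tau$ to be orthogonal, hence to preserve $B^{2n}(r)$. The construction recalled just before the corollary then produces a symplectic $\Psi$ with $\Psi\tau=\tau_0\Psi$ that is simultaneously orthogonal, so $\Psi^{-1}(B^{2n}(r))=B^{2n}(r)$ is the standard round ball, i.e.\ the standard ellipsoid with all $r_j=r$. Applying (\ref{e:ellCap}) and the previous paragraph with $r_j\equiv r$ yields (\ref{e:ball}). The only step requiring genuine content is the lower bound $c_{\rm EHZ,\tau_0}(E')\ge\pi r_1^2$ in the ellipsoid computation, which is the natural candidate for the main obstacle; however the observation that $\tau_0$-brake closed characteristics form a subset of all closed characteristics reduces it to the classical action spectrum of $\partial E'$, so no new hard analysis is needed.
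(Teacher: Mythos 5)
Your argument is correct and follows essentially the same route the paper intends: (\ref{e:ellCap}) comes from naturality of $c_{\rm HZ,\tau}$ under real symplectic isomorphisms together with (\ref{e:fixpt.2}), the ellipsoid value $\pi r_1^2$ comes from the representation formula of Theorem~\ref{th:convex} applied to the diagonal ellipsoid (exhibiting the short brake characteristic in the first coordinate plane and pinning the lower bound by the classical action spectrum), and (\ref{e:ball}) then follows because a symmetric anti-symplectic involution is orthogonal, which lets one choose $\Psi$ orthogonal so that $\Psi^{-1}B^{2n}(r)=B^{2n}(r)$. One small imprecision: the positive action spectrum of $\partial E'$ is $\{k\pi r_j^2\}$ only when the $r_j^2$ are pairwise rationally independent, but since you only invoke its minimum, $\pi r_1^2$, the conclusion is unaffected.
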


Equation (\ref{e:ball}) can be obtained as follows.
Since any linear involution on $\mathbb{R}^{2n}$
which is  symmetric with respect to $\langle\cdot,\cdot\rangle_{\mathbb{R}^{2n}}$
must be an orthogonal transformation with respect to $\langle\cdot,\cdot\rangle_{\mathbb{R}^{2n}}$,
we obtain that the involution $\tau$ satisfying the final assumptions is
an orthogonal transformation and hence
preserves $B^{2n}(r)$. For such $\tau$
we have showed above Corollary~\ref{cor:ellipsoid} that there exists a linear orthogonal and symplectic isomorphism $\Psi:(\mathbb{R}^{2n},\omega_0)\to (\mathbb{R}^{2n},\omega_0)$
satisfies $\Psi \tau=\tau_0\Psi$. It follows from (\ref{e:ellCap}) that
$$
 c_{\rm EHZ,\tau}(B^{2n}(r))=c_{\rm EHZ,\tau_0}(B^{2n}(r))=\pi r^2\quad\forall r>0.
$$
%For $c_{\rm HZ,\tau}(B^{2n}(r),\omega_0)$ this was proved in \cite[Theorem~2.4(C)]{LiuW12}.\\
The equality $c_{\rm HZ,\tau}(B^{2n}(r),\omega_0)=\pi r^2$ was proved in \cite[Theorem~2.4(C)]{LiuW12}.\\

% For a linear anti-symplectic involution $\tau$
%on $(\mathbb{R}^{2n},\omega_0)$,  (\ref{e:ellipsoid.3}) implies that
%\begin{eqnarray}\label{e:ball.1}
%\tau(B^{2n}(1))=B^{2n}(1)\quad\Longleftrightarrow\quad \tau^T\tau=I_{2n}\quad \Longleftrightarrow\quad \tau^T=\tau.
%\end{eqnarray}
As a generalization of (\ref{e:ball}) we have the next corollary.

%preserves $B^{2n}(1)$ if and only if it is also
%orthogonal (and so symmetric).

\begin{corollary}\label{cor:ellipsoid+}
Let the ellipsoid $E(q)=\{z\in\mathbb{R}^{2n}\,|\, q(z)<1\}$  be as in Corollary~\ref{cor:ellipsoid}.
For any linear anti-symplectic involution $\tau$ on $(\mathbb{R}^{2n},\omega_0)$,
%which is symmetric with respect to $\langle\cdot,\cdot\rangle_{\mathbb{R}^{2n}}$,
 if $\tau$ preserves $E(q)$
   then
\begin{equation}\label{e:ellipsoid}
{c}_{\rm EHZ,\tau}(E(q))=c_{\rm EHZ}(E(q)),
\end{equation}
where $c_{\rm EHZ}(E(q))$ denotes the common value of the Ekeland-Hofer capacity and
the Hofer-Zehnder capacity of $E(q)$.
\end{corollary}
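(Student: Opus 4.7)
The strategy is to reduce to Corollary~\ref{cor:ellipsoid} by exhibiting a single symplectic matrix $\Psi$ that simultaneously puts $S$ into Williamson normal form (\ref{e:ellipsoid.1}) and satisfies $\Psi^{-1}\tau\Psi=\tau_0$. Once this is accomplished, Corollary~\ref{cor:ellipsoid} yields $c_{\rm EHZ,\tau}(E(q))=\pi r_1^2$; combined with the classical calculation $c_{\rm EH}(E(q))=c_{\rm HZ}(E(q))=\pi r_1^2$ due to Ekeland--Hofer and Hofer--Zehnder, this gives (\ref{e:ellipsoid}).

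I would proceed in two steps. First, apply Lemma~2.29 of \cite{R12} to pick a linear symplectic $\Psi_1$ with $\Psi_1^{-1}\tau\Psi_1=\tau_0$, and set $S_1:=\Psi_1^T S\Psi_1$, so that $\Psi_1^{-1}(E(q))$ is the $\tau_0$-invariant ellipsoid defined by the positive definite symmetric matrix $S_1$. The $\tau_0$-invariance translates into $\tau_0^T S_1\tau_0=S_1$; since $\tau_0^T=\tau_0$, this simplifies to $\tau_0 S_1=S_1\tau_0$.

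The main technical step is to establish a $\tau_0$-equivariant Williamson normal form: there exists a symplectic matrix $\Psi_2$ commuting with $\tau_0$ such that $\Psi_2^T S_1\Psi_2=\operatorname{diag}(1/r_1^2,\ldots,1/r_n^2,1/r_1^2,\ldots,1/r_n^2)$. The key observations are that $S_1$ preserves the $\tau_0$-eigenspace splitting $\mathbb{R}^{2n}=L_0\oplus J_0L_0$ (the $\pm 1$-eigenspaces of $\tau_0$), while $J_0 S_1$ anticommutes with $\tau_0$ because $\tau_0 J_0=-J_0\tau_0$. Consequently, the complex eigenspaces of $J_0 S_1$ for each conjugate pair $\pm i/r_j$ are interchanged by $\tau_0$. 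This compatibility lets one carry out the proof of Williamson's theorem while keeping every choice inside the splitting $L_0\oplus J_0L_0$, producing a symplectic, $g_{J_0}$-orthogonal basis $u_1,\ldots,u_n\in L_0$ and $v_1,\ldots,v_n\in J_0L_0$ in which $S_1$ takes the desired diagonal shape. The linear map $\Psi_2$ sending this basis to the standard symplectic basis $e_1,\ldots,e_n,f_1,\ldots,f_n$ then commutes with $\tau_0$ because it respects the $\tau_0$-eigenspace splitting on both sides.

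Setting $\Psi:=\Psi_1\Psi_2$, we then have $\Psi^T S\Psi$ in Williamson normal form (\ref{e:ellipsoid.1}) and $\Psi^{-1}\tau\Psi=\Psi_2^{-1}\tau_0\Psi_2=\tau_0$, so Corollary~\ref{cor:ellipsoid} applies and the conclusion follows. The main obstacle is the equivariant normal form in the third paragraph: ordinary Williamson's theorem does not give a symplectic normalization that commutes with $\tau_0$, and one must exploit the commutation/anticommutation relations above to refine the argument so that all choices made in the inductive construction of the symplectic eigenbasis stay compatible with the involution. Everything else is bookkeeping.
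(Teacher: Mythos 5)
Your strategy is correct and leads to the right conclusion: reduce to $\tau=\tau_0$ via Lemma~2.29 of \cite{R12}, establish a $\tau_0$-equivariant Williamson normal form, and then feed the result into Corollary~\ref{cor:ellipsoid}. The only genuine content that needs proving is your middle step — the existence of a symplectic $\Psi_2$ commuting with $\tau_0$ such that $\Psi_2^T S_1\Psi_2$ is in Williamson form — and this is precisely where your route diverges from the paper's. You sketch a spectral/complexification argument: since $\tau_0 S_1=S_1\tau_0$ and $\tau_0 J_0=-J_0\tau_0$, the $i\lambda$- and $-i\lambda$-eigenspaces of $J_0S_1$ in $\mathbb{C}^{2n}$ are swapped by $\tau_0$, so the inductive construction of a symplectic eigenbasis in Williamson's theorem can be carried out compatibly with the $\tau_0$-eigenspace splitting $L_0\oplus J_0L_0$. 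That is a legitimate, if somewhat involved, refinement of the usual proof. The paper instead proves the same pivotal lemma by pure real linear algebra (Proposition~\ref{prop:xu}): $S_1$ commutes with $\tau_0$ if and only if it is block-diagonal $\operatorname{diag}(S_{11},S_{22})$ in the $L_0\oplus J_0L_0$ splitting, and one then applies a classical simultaneous-congruence diagonalization theorem (Theorem~\ref{th:xu}) to the positive definite pair $(S_{11},S_{22}^{-1})$, obtaining an invertible $P$ with $P^TS_{11}P$ diagonal and $P^TS_{22}^{-1}P$ diagonal; the block matrix $\operatorname{diag}(P,(P^T)^{-1})$ is then both symplectic and commutes with $\tau_0$. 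This avoids complexification entirely and reduces the whole step to one cited linear-algebra fact, which is why the paper packages it as an appendix proposition. The downstream bookkeeping is also slightly different — the paper applies Theorem~\ref{th:convex} directly to compute both $c_{{\rm HZ},\tau_0}$ and $c_{\rm HZ}$ of the normal-form ellipsoid, rather than funneling through Corollary~\ref{cor:ellipsoid} — but that amounts to the same calculation. In short: correct strategy, genuinely different (spectral vs.\ simultaneous-congruence) proof of the key equivariant Williamson step; the paper's version is more elementary and self-contained, while yours, if fully fleshed out, recovers the same conclusion with more machinery.
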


%For $E(q)=B^{2n}(1)$ this was proved in \cite[Theorem~2.4(C)]{LiuW12}. Notice that a linear anti-symplectic involution $\tau$ on $(\mathbb{R}^{2n},\omega_0)$ preserves $B^{2n}(1)$ if and only if $\tau'=\tau$.

\begin{proof}
By the arguments above Corollary~\ref{cor:ellipsoid}
 there exists a symplectic matrix $\Psi$ such that $\Psi \tau=\tau_0\Psi$.  Then
 Proposition~\ref{MonComf}(ii) implies
  \begin{equation}\label{e:1.11.1}
  c_{\rm HZ,\tau}(E(q),\omega_0)=c_{\rm HZ,\tau_0}(\Psi E(q),\omega_0),
  \end{equation}
    where $\Psi(E(q))=\{z\in\mathbb{R}^{2n}\,|\,\langle(\Psi^{-1})^T S\Psi^{-1} z,z\rangle_{\mathbb{R}^{2n}}\le 1\}$ is also an ellipsoid.  Notice that $\tau_0(\Psi(E(q)))=\Psi\tau(E(q))=\Psi (E(q))$, i.e. $\tau_0$ preserves $\Psi(E(q))$. As claimed below (\ref{e:ellipsoid.3}) we obtain that
   $(\Psi^{-1})^T S\Psi^{-1} $ commutes with $\tau_0$.
   Using Proposition~\ref{prop:xu}, we can find a symplectic matrix ${\Phi}$ commuting with $\tau_0$ and satisfying
   $$
   \Phi^T(\Psi^{-1})^T S\Psi^{-1}\Phi={\rm diag}(r_1^2,\cdots,r_n^2, r_1^2,\cdots, r_n^2),
   $$
   where $0<r_1\le\cdots\le r_n$. That is,
   $$
   {\Phi}^{-1}\Psi(E(q))=\left\{(x,y)\in\mathbb{R}^{2n}\,\Bigg|\, \sum^n_{j=1}
(x_j^2+y_j^2)/{r_j^2}<1\right\}=:E(r_1,\cdots,r_n).
   $$
   Then Proposition~\ref{MonComf}(ii) and Theorem~\ref{th:convex} lead to
   \begin{eqnarray}\label{e:1.11.2}
c_{\rm HZ,\tau_0}(\Psi(E(q)),\omega_0)&=&c_{\rm HZ,\tau_0}({\Phi}^{-1}\Psi(E(q)),\omega_0)\nonumber\\
&=&c_{\rm HZ,\tau_0}((E(r_1,\cdots,r_n)),\omega_0)=\pi r_1^2.
   \end{eqnarray}
   Moreover,  because of  symplectic invariance of  the Hofer-Zehnder capacity, we have
   \begin{eqnarray}\label{e:1.11.3}
   c_{\rm HZ}(E(q),\omega_0)&=&c_{\rm HZ}({\Phi}^{-1}\Psi E(q),\omega_0)\nonumber\\
   &=&c_{\rm HZ}(E(r_1,\cdots, r_n),\omega_0)=\pi r_1^2
   \end{eqnarray}
   (\cite{HoZe90}).
   Hence  (\ref{e:1.11.1}), (\ref{e:1.11.2}) and (\ref{e:1.11.3}) yield
   $$
   c_{\rm HZ,\tau}(E(q),\omega_0)=c_{\rm HZ}(E(q),\omega_0).
   $$
(\ref{e:ellipsoid}) follows from this and (\ref{e:fixpt.2}).
\end{proof}

As stated below (\ref{e:1.can-inv}) %Without special statements
  sometimes we still use $\tau_0$ to denote
 the canonical  antisymplectic involutions on the standard
 symplectic spaces  $\mathbb{R}^{2l}$ of different dimensions.
The following theorem will be proved in Section~\ref{sec:EH.3.2}.

\begin{thm}\label{th:EHproduct}
 For $\tau_0$-invariant compact convex subsets $D_i\subset\mathbb{R}^{2n_i}$, $i=1,\cdots,k$,
 it holds that
 \begin{equation}\label{e:product1}
 c_{\rm EH,\tau_0}(D_1\times\cdots\times D_k)=\min_ic_{\rm EH,\tau_0}(D_i)=c_{\rm EH,\tau_0}(\partial D_1\times\cdots\times \partial D_k).
 \end{equation}
 \end{thm}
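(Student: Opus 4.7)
The plan is to proceed by induction on $k$, reducing the problem to $k = 2$; write $D := D_1 \times D_2 \subset \mathbb{R}^{2n}$ with $n = n_1 + n_2$. By \eqref{e:fixpt.2}, on $\tau_0$-invariant convex bodies $c_{\rm EH,\tau_0}$ and $c_{\rm HZ,\tau_0}$ coincide (denote the common value $c_{\rm EHZ,\tau_0}$), and by Theorem~\ref{th:convex} this equals the minimum of $A(x) > 0$ taken over generalized $\tau_0$-brake closed characteristics $x$ on the boundary. I will use this representation throughout to compute both $c_{\rm EHZ,\tau_0}(D)$ and each $c_{\rm EHZ,\tau_0}(D_i)$.

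For the upper bound in the first equality, fix $i$ and let $x_i^*$ be a minimizing generalized $\tau_0$-brake closed characteristic on $\partial D_i$. Pick $p_j \in \Int(D_j) \cap L_0$ for $j \ne i$, which is possible because $D_j \cap L_0$ is a convex compact subset of $L_0$ with nonempty relative interior. Let $y^*(t) \in \mathbb{R}^{2n}$ be the curve placing $x_i^*(t)$ in slot $i$ and the constant $p_j$ in slot $j$. The product normal-cone formula $N_D(z_1, z_2) = N_{D_1}(z_1) \times N_{D_2}(z_2)$ collapses to $N_{D_i}(z_i) \times \{0\}$ when $p_j \in \Int D_j$, so the generalized-characteristic ODE for $y^*$ reduces to that of $x_i^*$; the brake condition follows from $\tau_0 p_j = p_j$. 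Since $A(y^*) = A(x_i^*) = c_{\rm EHZ,\tau_0}(D_i)$, this yields $c_{\rm EHZ,\tau_0}(D) \le \min_i c_{\rm EHZ,\tau_0}(D_i)$.

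For the lower bound, let $x^* = (x_1^*, x_2^*)$ minimize action on $\partial D$; by Theorem~\ref{th:convex} it can be reparametrized as a $T$-periodic solution of $-J_0 \dot x \in \partial j_D^2(x)$ with $T = c_{\rm EHZ,\tau_0}(D)$ and $j_D = \max(j_{D_1}, j_{D_2})$. At any $t$ with $x_1^*(t) \in \Int D_1$, the boundary constraint forces $x_2^*(t) \in \partial D_2$ and $\partial j_D^2(x^*(t)) = \{0\} \times \partial j_{D_2}^2(x_2^*(t))$, so $\dot x_1^*(t) = 0$. A connectedness argument on $\mathbb{R}/T\mathbb{Z}$ (a proper component of the open set $\{t : x_1^*(t) \in \Int D_1\}$ would possess a boundary point at which $x_1^*$ is simultaneously in $\Int D_1$ by continuity and in $\partial D_1$ by definition of the component, a contradiction) yields the dichotomy: either $x_1^* \equiv v_1 \in \Int D_1 \cap L_0$, or $x_1^*(t) \in \partial D_1$ for all $t$; symmetrically for $x_2^*$. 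Since $A(x^*) > 0$, at least one $x_i^*$ is a nonconstant generalized $\tau_0$-brake closed characteristic on $\partial D_i$, so from $A(x^*) = A(x_1^*) + A(x_2^*) \ge 0$ we conclude $A(x^*) \ge \min_i c_{\rm EHZ,\tau_0}(D_i)$, completing the first equality.

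For the second equality, monotonicity (Proposition~\ref{prop:EH.1.7}(ii)) combined with Remark~\ref{rem:EH6}(i) gives $c_{\rm EH,\tau_0}(\partial D_1 \times \partial D_2) \le \min_i c_{\rm EH,\tau_0}(D_i)$. For the reverse, repeat the upper-bound construction with $p_j \in \partial D_j \cap L_0$ instead (nonempty because $D_j \cap L_0$ has relative boundary contained in $\partial D_j$); the resulting characteristic then has image entirely in $\partial D_1 \times \partial D_2$. Given $H \in \mathscr{F}(\mathbb{R}^{2n}, \tau_0, \partial D_1 \times \partial D_2)$, the restriction $\bar H(\cdot) := H(\cdot, p_j)$ lies in $\mathscr{F}(\mathbb{R}^{2n_i}, \tau_0, \partial D_i)$ and inherits (H1)--(H3); showing $c_{\rm EH,\tau_0}(H) \ge c_{\rm EH,\tau_0}(\bar H)$ and then choosing $i$ to minimize delivers the lower bound. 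The main obstacle is this last comparison: a naive product lift of admissible deformations from the smaller space to the larger one fails the single-scalar-coefficient form required in Definition~\ref{def:deform}(ii), so the transfer of minimax values across factors must be carried out via the refined variational characterization of $c_{\rm EH,\tau_0}$ supplied by Theorem~\ref{th:EH.1.6}.
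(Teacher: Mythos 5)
Your argument for the first equality is correct and genuinely different from the paper's. The paper proves $c_{\rm EH,\tau_0}(D_1\times\cdots\times D_k)\le\min_i c_{\rm EH,\tau_0}(D_i)$ via Lemma~\ref{lem:9.1} (stabilization: $c_{\rm EH,\tau_0}(D\times\mathbb{R}^{2m})=c_{\rm EH,\tau_0}(D)$) and monotonicity, and the reverse inequality by dominating an arbitrary test Hamiltonian by a split sum $\widehat H(z)=\sum_i\widehat H_i(z_i)$, invoking the linking Lemma~\ref{lem:9.2} in each factor, and assembling with the product deformation $\gamma_1\times\cdots\times\gamma_k$. You instead reduce both inequalities, through Theorem~\ref{th:convex} and the $c_{\rm EHZ,\tau}$-carrier description following (\ref{e:fixpt.2}), to direct facts about generalized $\tau_0$-brake characteristics on $\partial(D_1\times\cdots\times D_k)$: embed a minimizer of $\partial D_i$ with interior $L_0$-points in the other slots for ``$\le$'', and use the constant-or-confined-to-$\partial D_i$ dichotomy for each component of a minimizer for ``$\ge$''. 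This is cleaner and bypasses the deformation machinery once Theorem~\ref{th:convex} is at hand. (It implicitly uses that each $D_i$ has nonempty interior, which the paper arranges at the outset by approximation; you should also note that a nonconstant component confined to $\partial D_i$ automatically has positive action because $0\in\Int D_i$, which is what lets you discard the other summand.)

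Your treatment of the second equality, $c_{\rm EH,\tau_0}(\partial D_1\times\cdots\times\partial D_k)=\min_i c_{\rm EH,\tau_0}(D_i)$, has a real gap. Placing $x_i^\ast$ in slot $i$ and boundary points $p_j\in\partial D_j\cap L_0$ elsewhere does produce a legitimate generalized $\tau_0$-brake characteristic on $\partial(D_1\times\cdots\times D_k)$ whose image lies in the corner $\partial D_1\times\cdots\times\partial D_k$ (because $0\in N_{D_j}(p_j)$ always), but this says nothing about $c_{\rm EH,\tau_0}(\partial D_1\times\cdots\times\partial D_k)$: the corner has codimension $k$, is not a hypersurface, and neither Theorem~\ref{th:convex} nor Theorem~\ref{th:EHconvex} gives a characteristic representation formula for it. Your fallback through $\bar H_i(\cdot):=H(\cdot,p_j)$ then rests entirely on the unproved inequality $c_{\rm EH,\tau_0}(H)\ge c_{\rm EH,\tau_0}(\bar H_i)$; since $\bar H_i$ is a \emph{slice} of $H$, not a minorant or majorant, this is not an instance of Proposition~\ref{prop:EH.1.2}(i). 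In fact it cannot hold for every $i$: that would give $c_{\rm EH,\tau_0}(\partial D_1\times\cdots\times\partial D_k)\ge\max_i c_{\rm EH,\tau_0}(D_i)$, contradicting the monotonicity upper bound you already wrote down unless all $D_i$ have equal capacity. The paper handles ``$\ge$'' for the corner exactly as for the solid product: dominate $H\in\mathscr{F}(\mathbb{R}^{2n},\tau_0,\partial D_1\times\cdots\times\partial D_k)$ by $\sum_i\widehat H_i$ with $\widehat H_i\in\mathscr{F}(\mathbb{R}^{2n_i},\tau_0,\partial D_i)$, apply Lemma~\ref{lem:9.2} on each factor, and assemble via $\gamma_1\times\cdots\times\gamma_k$; no slice of $H$ is ever taken. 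The single-scalar-coefficient concern you raise about products of admissible deformations is a fair technical question, but note it is a question about the paper's Step~1 and Step~2 (which genuinely use a product deformation), not about your own plan, which never reaches that step, and it does not supply the missing comparison.
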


\begin{corollary}\label{cor:cylinder}
Let $\tau_1$ and $\tau_2$ be linear anti-symplectic involutions on $\mathbb{R}^2$
and $\mathbb{R}^{2n-2}$, respectively. If  $\tau_1$ is also symmetric with respect to $\langle\cdot,\cdot\rangle_{\mathbb{R}^{2}}$ then
$$
c_{\rm HZ,\tau_1\times \tau_2}(Z^{2n}(1),\omega_0)=c_{\rm EH,\tau_1\times \tau_2}(Z^{2n}(1))=\pi.
$$
\end{corollary}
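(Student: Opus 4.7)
The plan is three-part: reduce $\tau_1\times\tau_2$ to the canonical $\tau_0$ on $\mathbb{R}^{2n}$ by a symplectic conjugation that preserves $Z^{2n}(1)$; evaluate $c_{\rm EH,\tau_0}(Z^{2n}(1))$ by sandwiching with bounded product sets via Theorem~\ref{th:EHproduct}; and finally transfer the value to $c_{\rm HZ,\tau_1\times\tau_2}$ using the identification (\ref{e:fixpt.2}) combined with the inner regularity in Proposition~\ref{MonComf}(iv).

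For the reduction, the hypothesis that $\tau_1$ is symmetric with respect to $\langle\cdot,\cdot\rangle_{\mathbb{R}^2}$ is used twice. First, by (\ref{e:ball.1}) it forces $\tau_1$ to preserve $B^2(1)$. Second, a symmetric involution on $\mathbb{R}^2$ that is simultaneously anti-symplectic is an orthogonal reflection, so there is a rotation $\Psi_1\in SO(2)$ (simultaneously symplectic and orthogonal in dimension two) with $\Psi_1\tau_1=\tau_0\Psi_1$ and $\Psi_1(B^2(1))=B^2(1)$. Combining with any symplectic $\Psi_2$ on $\mathbb{R}^{2n-2}$ satisfying $\Psi_2\tau_2=\tau_0\Psi_2$ (guaranteed by Lemma~2.29 of~\cite{R12}), the product $\Psi=\Psi_1\times\Psi_2$ conjugates $\tau_1\times\tau_2$ to $\tau_0$ and maps $Z^{2n}(1)=B^2(1)\times\mathbb{R}^{2n-2}$ onto itself. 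Hence by (\ref{e:EH.1.8}) we may replace $\tau_1\times\tau_2$ by $\tau_0$ throughout. For the computation I would use (\ref{e:EH.1.7}): any bounded $\tau_0$-invariant $B_0\subset Z^{2n}(1)$ meeting $L_0$ lies in $\overline{B^2(1)}\times\overline{B^{2n-2}(R)}$ for some $R>0$, whose capacity is $\min(\pi,\pi R^2)=\pi$ by Theorem~\ref{th:EHproduct}, (\ref{e:ball}) and monotonicity; conversely $B_0=\overline{B^2(1-\epsilon)}\times\overline{B^{2n-2}(R)}$ with large $R$ has capacity $\pi(1-\epsilon)^2$ by Theorem~\ref{th:EHproduct}, yielding the matching lower bound as $\epsilon\to 0$. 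Thus $c_{\rm EH,\tau_1\times\tau_2}(Z^{2n}(1))=\pi$.

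For the Hofer--Zehnder side, Proposition~\ref{MonComf}(iv) expresses $c_{\rm HZ,\tau_1\times\tau_2}(Z^{2n}(1),\omega_0)$ as a supremum over $(\tau_1\times\tau_2)$-invariant open $U\Subset Z^{2n}(1)$. Each such $U$ fits inside some bounded convex $(\tau_1\times\tau_2)$-invariant open $V=B^2(1-\epsilon)\times W\subset Z^{2n}(1)$, where $W$ is chosen as a $\tau_2$-invariant ball in a $\tau_2$-adapted inner product on $\mathbb{R}^{2n-2}$ (which exists because any linear involution is orthogonal in a suitable basis) large enough to contain the projection of $U$. Then (\ref{e:fixpt.2}) gives $c_{\rm HZ,\tau_1\times\tau_2}(V,\omega_0)=c_{\rm EH,\tau_1\times\tau_2}(V)\le\pi$, and the lower bound $\pi$ follows by using the same family of $V$'s as test domains and letting $\epsilon\to 0$. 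The main obstacle to anticipate is precisely this last transfer: (\ref{e:fixpt.2}) is available only for \emph{bounded} convex domains, so one cannot invoke it directly on the unbounded cylinder; the interposition of the convex approximations $V$, whose $(\tau_1\times\tau_2)$-invariance relies on the product form together with the existence of a $\tau_2$-invariant inner product, is exactly what circumvents this.
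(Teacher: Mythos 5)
Your proposal is correct and follows essentially the same route as the paper's proof: conjugate to $\tau_0$ via $\Psi_1\times\Psi_2$ with $\Psi_1\in SO(2)$ preserving $B^2(1)$, apply the symmetric product formula (Theorem~\ref{th:EHproduct}) together with $c_{\rm EHZ,\tau_0}(B^2(1))=\pi$, and transfer between the Ekeland--Hofer and Hofer--Zehnder capacities via inner regularity and (\ref{e:fixpt.2}). Your use of a $\tau_2$-adapted ball $W$ (rather than a Euclidean ball $B^{2n-2}(R)$, which need not be $\tau_2$-invariant when $\tau_2$ is not symmetric) even fills a small invariance detail that the paper's written proof glosses over.
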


With the similar arguments to those of \cite{HoZe90, HoZe94} it was proved in \cite{LiuW12} that
$c_{\rm HZ,\tau_0}(Z^{2n}(1),\omega_0)=\pi$.

\begin{proof}[Proof of Corollary~\ref{cor:cylinder}]
By the inner regularity of the symmetric Hofer-Zehnder capacity and the definition of the symmetric
 Ekeland-Hofer capacity of unbounded sets, we deduce
%\begin{eqnarray*}
%c_{\rm HZ,\tau_1\times \tau_2}(Z^{2n}(1),\omega_0)&=&c_{\rm EH,\tau_1\times \tau_2}(Z^{2n}(1))\\
%&=&\sup\{c_{\rm EHZ,\tau_1\times\tau_2}(B^{2}(1)\times B^{2n-2}(R))\,|\,R>0\}.
%\end{eqnarray*}
\begin{eqnarray*}
c_{\rm HZ,\tau_1\times \tau_2}(Z^{2n}(1),\omega_0)&=&c_{\rm EH,\tau_1\times \tau_2}(Z^{2n}(1))\\
&=&\sup\{c_{\rm EHZ,\tau_1\times\tau_2}(B^{2}(1)\times C^{2n-2}_{\tau_2}(R))\,|\,R>0\},
\end{eqnarray*}
where $C^{2n-2}_{\tau_2}(R):=B^{2n-2}(R)+\tau_2B^{2n-2}(R)$,
which is $\tau_2$-invariant and satisfies $C^{2n-2}_{\tau_2}(R)=RC^{2n-2}_{\tau_2}(1)$
for any $R>0$.
By the assumptions we have an orthogonal symplectic matrix
 $\Psi_1\in{\rm Sp(2,\mathbb{R})}$ and a symplectic matrix $\Psi_2\in{\rm Sp}(2n-2,\mathbb{R})$
 such that $\Psi_1\tau_1=\tau_0\Psi_1$ and $\Psi_2\tau_2=\tau_0\Psi_2$. Then
%\begin{eqnarray*}
%&&c_{\rm EHZ,\tau_1\times\tau_2}(B^{2}(1)\times B^{2n-2}(R))\\
%&=&c_{\rm EHZ,\tau_0}(\Psi_1 B^{2}(1)\times \Psi_2B^{2n-2}(R))\\
%&=&\min\{c_{\rm EHZ,\tau_0}(B^{2}(1)),c_{\rm EHZ,\tau_0}(\Psi_2B^{2n-2}(R))\}\\
%&=&\min\{c_{\rm EHZ,\tau_0}(B^{2}(1)),R^2c_{\rm EHZ,\tau_0}(\Psi_2B^{2n-2}(1))\}\\
%&=&c_{\rm EHZ,\tau_0}(B^{2}(1))=\pi
%\end{eqnarray*}
%for sufficiently large $R>0$ (since $c_{\rm EHZ,\tau_0}(\Psi_2B^{2n-2}(1))>0$),
\begin{eqnarray*}
&&c_{\rm EHZ,\tau_1\times\tau_2}(B^{2}(1)\times C^{2n-2}_{\tau_2}(R))\\
&=&c_{\rm EHZ,\tau_0}(\Psi_1 B^{2}(1)\times \Psi_2C^{2n-2}_{\tau_2}(R))\\
&=&\min\{c_{\rm EHZ,\tau_0}(B^{2}(1)),c_{\rm EHZ,\tau_0}(\Psi_2C^{2n-2}_{\tau_2}(R))\}\\
&=&\min\{c_{\rm EHZ,\tau_0}(B^{2}(1)),R^2c_{\rm EHZ,\tau_0}(\Psi_2C^{2n-2}_{\tau_2}(1))\}\\
&=&c_{\rm EHZ,\tau_0}(B^{2}(1))=\pi
\end{eqnarray*}
for sufficiently large $R>0$ (since $c_{\rm EHZ,\tau_0}(\Psi_2C^{2n-2}_{\tau_2}(1))>0$),
where the second equality follows from Theorem~\ref{th:EHproduct} and the third equality is a consequence of conformality of capacity. Finally, by the monotonicity of capacities we get
\begin{eqnarray*}
c_{\rm HZ,\tau_1\times \tau_2}(Z^{2n}(1),\omega_0)&=&c_{\rm EH,\tau_1\times \tau_2}(Z^{2n}(1))\\
&=&\lim_{R\to\infty}c_{\rm EHZ,\tau_1\times\tau_2}(B^{2}(1)\times C^{2n-2}_{\tau_2}(R))=\pi.
\end{eqnarray*}
\end{proof}
% \hfill$\Box$\vspace{2mm}

\begin{corollary}\label{cor:CrokeW1}
Let $\tau$ be a linear anti-symplectic involution on $\mathbb{R}^{2n}$ which is
symmetric with respect to the standard inner product $\langle\cdot,\cdot\rangle_{\mathbb{R}^{2n}}$,
 and let $D\subset\mathbb{R}^{2n}$ be a $\tau$-invariant convex bounded domain. Suppose that $p$ is a fixed point of $\tau$.
 Then the following holds.
%\begin{description}
\begin{enumerate}
\item[\bf (i)] If $D$ contains a ball $B^{2n}(p,r)$,  then for any  generalized
$\tau$-brake closed characteristic $x$ on $\partial D$ with positive action
  it holds that
\begin{equation}\label{e:croke1}
A(x)\ge \pi r^2.
\end{equation}
\item[\bf (ii)] If $D\subset B^{2n}(p,R)$,  there exists a
generalized  $\tau$-brake closed characteristic $x^\star$ on $\partial D$ such that
 \begin{equation}\label{e:croke2}
 0<A(x^\star)\le \pi R^2.
\end{equation}
\end{enumerate}
%\end{description}
 \end{corollary}
 \begin{proof}
   The conclusions follow easily from Theorem~\ref{th:convex} (or Theorem~\ref{th:EHconvex}) and Corollary~\ref{cor:ellipsoid}.
\end{proof}

\begin{thm}\label{th:EHcontact}
Let $\tau$ be a linear anti-symplectic involution  on $(\mathbb{R}^{2n},\omega_0)$,
and let $\mathcal{S}\subset\mathbb{R}^{2n}$ be a hypersurface of restricted contact type
that admits a globally defined Liouville vector field $X$ transversal to it such that
\begin{equation}\label{e:EHcontact}
 X(\tau (z))=\tau X(z),\,\forall z\in\mathbb{R}^{2n}.
 \end{equation}
Suppose that the bounded component  $B$  of $\mathbb{R}^{2n}\setminus\mathcal{S}$
is $\tau$-invariant and intersects with $L={\rm Fix}(\tau)$.  Then  $c_{\rm EH,\tau}(B)=c_{\rm EH,\tau}(\mathcal{S})$ belongs to
 \begin{equation}\label{e:EHcontact+}
  \Sigma^{\tau}_{\mathcal{S}}:=\{
   A(x)>0\,|\,x \;\hbox{is a $\tau$-brake closed characteristic on $\mathcal{ S}$}\}.
  \end{equation}
\end{thm}

This theorem  will be proved in Section~\ref{sec:EH.4}.
%\begin{remark}
%  We can replace $\tau_0$ by any linear anti-symplectic involution $\tau$ on $(\mathbb{R}^{2n},\omega_0)$.
%\end{remark}

\begin{remark}
%\begin{description}
%\item [(i)]
{\rm $\tau B=B$ and $B\cap L\ne\emptyset$ imply that $\tau\mathcal{S}=\mathcal{S}$ and $\mathcal{S}\cap L\ne\emptyset$.}
%\item[(ii)]
%  If $B$ is  simply connected, then $\tau B=B$ implies that $B\cap L\ne\emptyset$.
%  \end{description}}
\end{remark}

 Bates \cite{Ba98} extended \cite[Proposition~6]{EH89} to certain domains
 whose boundaries are not of restricted contact type. The corresponding  generalizations of
Theorem~\ref{th:EHcontact} are also possible.

\subsection{Evgeni Neduv theorem for symmetric capacities}\label{sec:1.Neduv}

Let $\tau$ be a linear anti-symplectic involution  on $(\mathbb{R}^{2n},\omega_0)$,
and let $\mathscr{H}\in C^2(\mathbb{R}^{2n},\mathbb{R}_+)$ be a $\tau$-invariant
proper and strictly convex Hamiltonian
such that $\mathscr{H}(0)=0$ and $\mathscr{H}''>0$ (and so
$\mathscr{H}\ge 0$ by the Taylor's formula).
If $e_0> 0$ is a regular value of $\mathscr{H}$ with
$\mathscr{H}^{-1}(e_0)\ne\emptyset$, then for each number $e$ near $e_0$
the set $D(e):=\{\mathscr{H}<e\}$ is a $\tau$-invariant strictly convex bounded domain
 in $\mathbb{R}^{2n}$ with $0\in D(e)$ and
with $C^2$-boundary $\mathcal{S}(e)=\mathscr{H}^{-1}(e)$.
For any $e$ near $e_0$ let $\mathscr{C}_\tau(e):=c_{\rm EHZ,\tau}(D(e))$.
As remarked below (\ref{e:fixpt.2}) all $c_{\rm EHZ,\tau}$-carriers for $D(e)$ form a compact subset
in $C^1(\mathbb{R}/(\mathscr{C}_\tau(e)\mathbb{Z}), \mathcal{S}(e))$. Hence
\begin{equation}\label{e:convexDiff}
\mathscr{I}_\tau(e):=\left\{T_x=2\int^{\mathscr{C}_\tau(e)}_0\frac{dt}{\langle\nabla\mathscr{H}(x(t)), x(t)\rangle_{\mathbb{R}^{2n}}}\,\Big|\,
 \hbox{$x$ is a $c_{\rm EHZ,\tau}$-carrier for $D(e)$}\right\}
\end{equation}
is a compact subset in $\mathbb{R}$. Denote by $T^{\max}_\tau(e)$ and $T^{\min}_\tau(e)$
 the largest and smallest numbers in $\mathscr{I}_\tau(e)$.
By the reparameterization every $c_{\rm EHZ,\tau}$-carrier $x$ for $D(e)$ yields  a $T_x$-periodic $\tau$-brake orbit of
\begin{equation}\label{e:convexDiff+}
-J_0\dot{y}(t)=\nabla\mathscr{H}(y(t)),
\end{equation}
which sits in $\mathcal{S}(e)=\mathscr{H}^{-1}(e)$ and has $T_{x}$ as the minimal period.
Slightly modifying the proof of  \cite[Theorem~4.4]{Ned01} in the setting of
 Section~\ref{sec:convex} we can obtain the following corresponding result with
 \cite[Theorem~4.4]{Ned01}.

\begin{thm}\label{th:convexDiff}
Under the above assumptions  $\mathscr{C}_\tau(e)$ has the left and right derivatives
at $e_0$, $\mathscr{C}'_{\tau-}(e_0)$ and $\mathscr{C}'_{\tau+}(e_0)$, and they satisfy
\begin{eqnarray*}
&&\mathscr{C}'_{\tau-}(e_0)=\lim_{\epsilon\to0-}T^{\max}_\tau(e_0+\epsilon)=T^{\max}_\tau(e_0)\quad\hbox{and}\\
&&\mathscr{C}'_{\tau+}(e_0)=\lim_{\epsilon\to0+}T^{\min}_\tau(e_0+\epsilon)=T^{\min}_\tau(e_0).
\end{eqnarray*}
Moreover, if  $[a,b]\subset (0,\sup\mathscr{H})$ is an regular interval of $\mathscr{H}$ such that
$\mathscr{C}'_{\tau+}(a)<\mathscr{C}'_{\tau-}(b)$, then for any  $r\in (\mathscr{C}'_{\tau+}(a),
\mathscr{C}'_{\tau-}(b))$ there exist $e'\in (a,b)$ such that $\mathscr{C}_\tau(e)$
is differentiable at $e'$ and $\mathscr{C}'_{\tau-}(e')=\mathscr{C}'_{\tau+}(e')=r=T^{\max}_\tau(e')=T^{\min}_\tau(e')$.
\end{thm}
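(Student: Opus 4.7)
The plan is to adapt Evgeni Neduv's argument from \cite[Thm.~4.4]{Ned01} to the $\tau$-symmetric brake-orbit setting. Three ingredients are available from what has been developed so far: (i) monotonicity and continuity of $e\mapsto\mathscr{C}_\tau(e)$, from Proposition~\ref{MonComf}(ii)--(iii) applied to the nested $\tau$-invariant convex domains $D(e)$; (ii) the variational characterization $\mathscr{C}_\tau(e)=\min\{A(x)>0\,|\,x\text{ a generalized }\tau\text{-brake characteristic on }\mathcal{S}(e)\}$ from Theorem~\ref{th:convex}; and (iii) compactness of the $c_{\rm EHZ,\tau}$-carrier set noted just below (\ref{e:fixpt.2}), which via the identity (\ref{e:convexDiff}) makes $\mathscr{I}_\tau(e)$ a compact subset of $\mathbb{R}$ and lets us speak of $T^{\max}_\tau(e)$ and $T^{\min}_\tau(e)$.

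For the right derivative I would first prove $\mathscr{C}'_{\tau+}(e_0)\le T^{\min}_\tau(e_0)$. Pick a $c_{\rm EHZ,\tau}$-carrier $x_0$ for $D(e_0)$ whose reparametrization $y_0$ of (\ref{e:convexDiff+}) has minimal period $T^{\min}_\tau(e_0)$. Using the Hamiltonian flow of $\mathscr{H}$ and an implicit-function argument along the gradient direction, construct a nearby $\tau$-brake orbit $y_\epsilon$ on $\mathcal{S}(e_0+\epsilon)$ with period $T^{\min}_\tau(e_0)+O(\epsilon)$; the symmetry $y_\epsilon(T-t)=\tau(y_\epsilon(t))$ is preserved because $\mathscr{H}\circ\tau=\mathscr{H}$ and one can restrict the perturbation to the symmetric slice through $y_0(0),y_0(T/2)\in L$. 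A first-order expansion of the action, using (\ref{e:convexDiff}) to identify the infinitesimal change of the action with the period, yields $A(y_\epsilon)\le\mathscr{C}_\tau(e_0)+\epsilon T^{\min}_\tau(e_0)+o(\epsilon)$, and minimality gives the bound. For the converse lower bound $\mathscr{C}'_{\tau+}(e_0)\ge\liminf_{\epsilon\to 0+}T^{\min}_\tau(e_0+\epsilon)$ I would choose carriers $x_\epsilon$ on $\mathcal{S}(e_0+\epsilon)$ realizing $T^{\min}_\tau(e_0+\epsilon)$ and renormalize them via the Minkowski functional of $D(e_0)$ to sit near $\mathcal{S}(e_0)$; by the compactness above, a subsequence converges in $C^1$ to a $c_{\rm EHZ,\tau}$-carrier $x_\ast$ for $D(e_0)$, with the $\tau$-brake property surviving by continuity of $\tau$. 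Combining both inequalities with compactness of $\mathscr{I}_\tau$ yields upper semicontinuity of $T^{\min}_\tau$ from the right and the first derivative formula. The left derivative and the formula with $T^{\max}_\tau$ are obtained analogously by reversing the role of $\epsilon$.

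For the Darboux/intermediate-value statement on $[a,b]$: since $\mathscr{C}_\tau$ is nondecreasing and continuous, Lebesgue's theorem gives a dense set of differentiability points on which $\mathscr{C}'_\tau(e')=T^{\max}_\tau(e')=T^{\min}_\tau(e')$ by the first part. A standard monotonicity argument using upper semicontinuity of $T^{\max}_\tau$ from the left and lower semicontinuity of $T^{\min}_\tau$ from the right then shows that the range of $\mathscr{C}'_\tau$ on this differentiability set contains every $r\in(\mathscr{C}'_{\tau+}(a),\mathscr{C}'_{\tau-}(b))$. The main obstacle I expect is confirming that the brake/symmetry condition is genuinely preserved by the competitor construction used for the upper bound on the derivative: one must choose the perturbation in a $\tau$-equivariant way so that $y_\epsilon(0),y_\epsilon(T/2)\in L$ is not destroyed, and verify that the implicit-function argument can be run with these symmetry constraints. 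Once this is set up, every other step is a routine translation of Neduv's original proof into the symmetric language.
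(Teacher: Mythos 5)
Your plan diverges from what the paper (which just says ``slightly modifying the proof of [Ned01, Thm.~4.4] in the setting of Section~4'') has in mind, and the divergence introduces a real gap. Neduv's argument is a Clarke-duality envelope argument: Section~\ref{sec:convex1} shows
\[
\mathscr{C}_\tau(e)=c_{\rm EHZ,\tau}(D(e))=\min_{u\in\mathcal{A}}\int_0^1 H^\ast_{D(e)}(-J_0\dot{u}(t))\,dt ,
\]
where the constraint set $\mathcal{A}$ of~(\ref{e:constrant2}) is \emph{independent of $e$}; only the integrand $H^\ast_{D(e)}$ changes. To bound $\mathscr{C}_\tau(e_0+\epsilon)$ from above one simply evaluates the \emph{new} functional $I_{e_0+\epsilon}$ at the \emph{old} minimizer $u_0$, expands $\epsilon\mapsto H^\ast_{D(e_0+\epsilon)}(-J_0\dot u_0)$ to first order, and identifies $\partial_e I_e(u_0)\big|_{e_0}$ with $T_{x_0}$ through~(\ref{e:convexDiff}); taking the minimum over the (compact) carrier set gives $T^{\min}_\tau(e_0)$. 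The lower bound is the matching Danskin-type semicontinuity estimate, again inside the fixed space $\mathcal{A}$, using that minimizing sequences for $I_{e_0+\epsilon}$ sub-converge to minimizers of $I_{e_0}$. At no point does one construct a new brake orbit on a nearby level set.

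Your proposal instead tries to produce, via an implicit-function argument, a genuine $\tau$-brake orbit $y_\epsilon$ on $\mathcal{S}(e_0+\epsilon)$ close to $y_0$. This step cannot be carried out in the stated generality: it needs nondegeneracy of $y_0$ as a (symmetric) periodic orbit, which is neither assumed nor implied by $\mathscr{H}''>0$ (positivity of the Hessian says nothing about the linearized Poincar\'e map). Indeed, the theorem allows $T^{\min}_\tau(e_0)<T^{\max}_\tau(e_0)$, i.e.\ several carriers at the same level, which already signals degeneracy; your construction would silently exclude exactly these cases. The secondary concern you flag --- equivariance of the perturbation --- is the easier half; the primary obstruction is that the continuation of orbits may simply not exist. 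Your lower-bound step has a parallel problem: rescaling a carrier on $\mathcal{S}(e_0+\epsilon)$ by $j_{D(e_0)}$ sends it to $\mathcal{S}(e_0)$ as a curve, but the rescaled curve is no longer a generalized brake characteristic of $\partial D(e_0)$ unless $D(e_0+\epsilon)$ is a dilate of $D(e_0)$; so passing to a $C^1$ limit and calling it a carrier is unjustified as stated. Both difficulties evaporate if you work in the fixed dual space $\mathcal{A}$, where the convergence takes place directly at the level of minimizers and the carrier structure is recovered \emph{after} the limit via Remark~\ref{rem:Brun.1.5}/Step 3 of Section~\ref{sec:convex1}.

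Your treatment of the intermediate-value (Darboux) half is essentially correct --- monotonicity and continuity of $\mathscr{C}_\tau$, one-sided semicontinuity of $T^{\max}_\tau,\,T^{\min}_\tau$ inherited from carrier compactness, and the almost-everywhere differentiability of a monotone function --- and matches the translation of Neduv's argument intended by the authors.
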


As a monotone function on an regular interval $[a,b]$ of $\mathscr{H}$ as above,
  we have $\mathscr{C}'_{\tau-}(e)=\mathscr{C}'_{\tau+}(e)$
   for almost all values of $e\in [a, b]$.
Actually, the first claim of Theorem~\ref{th:convexDiff} and a recent result \cite[Corollary~6.4]{Bl14}
imply that both $T^{\max}_\tau$ and $T^{\min}_\tau$ have only at most countable discontinuous
points and are also Riemann integrable on $[a,b]$.

By Theorem~\ref{th:convexDiff}, for any regular interval $[a,b]\subset (0,\sup \mathscr{H})$
 of $\mathscr{H}$ with $\mathscr{C}'_{\tau+}(a)\le\mathscr{C}'_{\tau-}(b)$,
if $T\in [\mathscr{C}'_{\tau+}(a), \mathscr{C}'_{\tau-}(b)]$ then
(\ref{e:convexDiff+}) has a $T$-periodic $\tau$-brake orbit with
$T$ as the minimal period. For example, we have the following.

 \begin{corollary}\label{cor:convexDiff}
 Let $\tau$ be a linear anti-symplectic involution  on $(\mathbb{R}^{2n},\omega_0)$
 which is symmetric with respect to $\langle\cdot,\cdot\rangle_{\mathbb{R}^{2n}}$.
Suppose that a $\tau$-invariant proper and strictly convex Hamiltonian $\mathscr{H}\in C^2(\mathbb{R}^{2n},\mathbb{R}_+)$
satisfies the following conditions:
%\begin{description}
\begin{enumerate}
\item[\bf (i)] $\mathscr{H}(0)=0$, $\mathscr{H}''>0$ and every $e>0$ is a regular value of $\mathscr{H}$,
\item[\bf (ii)] there exist two positive definite symmetric matrixes $S_0, S_\infty\in\mathbb{R}^{2n\times 2n}$
commuting with $\tau$ such that $\mathscr{H}(x)$ is equal to  $q(x):=\frac{1}{2}\langle S_0x, x\rangle_{\mathbb{R}^{2n}}$ (resp.
    $Q(x):=\frac{1}{2}\langle S_\infty x, x\rangle_{\mathbb{R}^{2n}}$) for $|x|$ small (resp. large) enough.
%\end{description}
\end{enumerate}
Then when $c_{\rm EHZ}(E(q))\le c_{\rm EHZ}(E(Q))$,
for every $T\in [c_{\rm EHZ}(E(q)), c_{\rm EHZ}(E(Q))]$ the corresponding system
(\ref{e:convexDiff+}) has a $T$-periodic $\tau$-brake orbit with
$T$ as the minimal period.
\end{corollary}

Indeed, if $e>0$ is small (resp. large) enough then
$D(e)$ is equal to $D_q(e):=\{q<e\}=\sqrt{e}E(q)$ (resp. $D_Q(e):=\{Q< e\}=\sqrt{e}E(Q)$)
and so Corollary~\ref{cor:ellipsoid} implies
\begin{eqnarray*}
&&\hbox{$c_{\rm EHZ,\tau}(D(e))=c_{\rm EHZ,\tau}(D_q(e))=ec_{\rm EHZ}(E(q))$}\\
&&\hbox{(resp. $c_{\rm EHZ,\tau}(D(e))=c_{\rm EHZ,\tau}(D_Q(e))=ec_{\rm EHZ}(E(Q))$)}.
\end{eqnarray*}
 It follows that $\mathscr{C}'_\tau(a)=c_{\rm EHZ}(E(q))$ for $a>0$ small enough
and  $\mathscr{C}'_\tau(b)=c_{\rm EHZ}(E(Q))$ for $b>0$ large enough.  The conclusions of Corollary~\ref{cor:convexDiff}  follows from
Theorem~\ref{th:convexDiff} immediately.

\subsection{A Brunn-Minkowski inequality for $c_{\rm EHZ,\tau}$-capacity of convex domains}\label{sec:1.AAO}

 Recall that the associated support function of a {\bf convex body} $K\subset \mathbb{R}^{n}$
  (i.e., a compact convex subset with  interior points)  is defined by
 \begin{equation}\label{e:supp.1}
h_K(w)=\sup\{\langle x,w\rangle_{\mathbb{R}^n}\,|\,x\in K\},\quad\forall w\in\mathbb{R}^{n}.
\end{equation}
If $K$ contains $0$ in its interior,
 $K^{\circ}=\{x\in\mathbb{R}^{n}\,|\, \langle x,y\rangle_{\mathbb{R}^n}\le 1\;\forall y\in K\}$
is the polar body of $K$, and $j_{K^{\circ}}$ is
the gauge function of $K^{\circ}$ defined by (\ref{e:gauge}),
 then $h_K$ and $(h_K)^2$ are equal to
 $j_{K^{\circ}}$ (cf. \cite[Theorem~1.7.6]{Sch93}) and
the four times of the Legendre transform $H_K^{\ast}$ of $H_K:=(j_K)^2$
(see \cite[\S9.1]{JinLu19}), respectively. We say a convex body $K$
 to be {\bf centrally symmetric} if $-K=K$. The mean-width of such a convex body $K\subset \mathbb{R}^{n}$
 is defined by
  \begin{equation}\label{e:mean-width}
M^\ast(K)=\int_{S^{n-1}}h_K(x)d\sigma_n(x)=
\int_{O(n)}h_K(Ax)d\mu_n(A)\quad\forall x\in S^{n-1},
 \end{equation}
where $\sigma_n$ is the normalized rotation invariant measure on $S^{n-1}$
and $\mu_n$ is the Haar measure (cf. \cite{AAO08} and \cite[(17)]{BoLiMi88}).

For two convex bodies $D, K\subset \mathbb{R}^{n}$ containing  $0$ in their interiors
 and a real number $p\ge 1$,  there exists a unique convex body
 $D+_pK\subset\mathbb{R}^{n}$ with support function
   \begin{equation}\label{e:supp.2}
\mathbb{R}^{n}\ni w\mapsto h_{D+_pK}(w)=(h^p_{D}(w)+h_{K}^p(w))^{\frac{1}{p}}
 \end{equation}
  (cf. \cite[Theorem~1.7.1]{Sch93}),  $D+_pK$ is called
   the {\bf $p$-sum} of $D$ and $K$ by Firey (cf. \cite[(6.8.2)]{Sch93}).
Recently, Artstein-Avidan and Ostrover \cite{AAO08}  established the following
Brunn-Minkowski-type inequality for the Ekeland-Hofer-Zehnder capacity $c_{\rm EHZ}$.

\begin{thm}[\hbox{\cite{AAO08, AAO14}}]\label{th:Brun.1}
   Let $D, K\subset \mathbb{R}^{2n}$ be two convex bodies  containing  $0$ in their interiors. Then for any real $p\ge 1$ it holds that
   $$
   \left(c_{\rm EHZ}(D+_pK)\right)^{\frac{p}{2}}\ge \left(c_{\rm EHZ}(D)\right)^{\frac{p}{2}}+ \left(c_{\rm EHZ}(K)\right)^{\frac{p}{2}}.
   $$
   Moreover, the equality holds if and only if there exist
    $c_{\rm EHZ}$-carriers for $D$ and $K$,
      $\gamma_D:[0,T]\rightarrow \partial D$ and  $\gamma_K:[0,T]\rightarrow \partial K$, such that
      they coincide up to translation and dilation, i.e.,
   $\gamma_D=\alpha\gamma_K+ {\bf b}$ for some
   $\alpha\in\mathbb{R}$ and ${\bf b}\in\mathbb{R}^{2n}$.
  \end{thm}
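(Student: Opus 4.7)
The plan is to deduce Theorem~\ref{th:Brun.1} from a reverse Minkowski integral inequality by means of the Clarke-dual variational characterisation of the EHZ-capacity on convex bodies. For a convex body $K\subset\mathbb{R}^{2n}$ with $0\in\operatorname{Int}(K)$, closed characteristics on $\partial K$ are the critical points (after reparametrization) of the Clarke dual
$$
\Psi_K(\gamma) = \tfrac14\int_0^1 h_K^2\bigl(-J_0\dot\gamma(t)\bigr)\,dt - A(\gamma),
$$
and by homogeneity of both terms in $\gamma$ this yields the formula
$$
c_{\rm EHZ}(K) = \min\Bigl\{\tfrac14\int_0^1 h_K^2(-J_0\dot\gamma)\,dt : A(\gamma)=1\Bigr\},
$$
equivalently the pointwise bound $\int_0^1 h_K^2(-J_0\dot\gamma)\,dt \ge 4\,c_{\rm EHZ}(K)\,A(\gamma)$, valid for every loop and saturated exactly on the reparametrized $c_{\rm EHZ}$-carriers. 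This representation is tailor-made for Firey's $p$-sum, since the only datum of $K$ appearing is the support function and $h_{D+_pK}^p=h_D^p+h_K^p$ by (\ref{e:supp.2}).

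First I would fix a $c_{\rm EHZ}$-carrier $\gamma^{\ast}$ for $D+_pK$ with $A(\gamma^{\ast})=1$, and set $u(t):=h_D^2(-J_0\dot\gamma^{\ast}(t))$, $v(t):=h_K^2(-J_0\dot\gamma^{\ast}(t))$. The Firey identity then becomes
$$
4\,c_{\rm EHZ}(D+_pK) = \int_0^1 \bigl(u(t)^{p/2}+v(t)^{p/2}\bigr)^{2/p}\,dt,
$$
while the pointwise bound, applied to $D$ and $K$ using the same test loop $\gamma^{\ast}$, yields $\int_0^1 u\,dt \ge 4\,c_{\rm EHZ}(D)$ and $\int_0^1 v\,dt \ge 4\,c_{\rm EHZ}(K)$. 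For $p\ge 2$ the conclusion follows from the reverse Minkowski inequality in $L^{2/p}$ (valid since $2/p\le 1$), which applied to $f=u^{p/2}$ and $g=v^{p/2}$ reads
$$
\Bigl(\int_0^1(f+g)^{2/p}\,dt\Bigr)^{p/2} \ge \Bigl(\int_0^1 f^{2/p}\,dt\Bigr)^{p/2}+\Bigl(\int_0^1 g^{2/p}\,dt\Bigr)^{p/2};
$$
combining with the two lower bounds and dividing by $4^{p/2}$ produces $c_{\rm EHZ}(D+_pK)^{p/2}\ge c_{\rm EHZ}(D)^{p/2}+c_{\rm EHZ}(K)^{p/2}$.

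The main obstacle is the range $1\le p<2$, where $2/p>1$ and the standard Minkowski inequality points the wrong way. For that range I would abandon the single-test-loop argument and instead construct an admissible loop for $D+_pK$ out of the optimal carriers $\gamma_D,\gamma_K$ furnished by Theorem~\ref{th:EHconvex}: one seeks a common reparametrization in which $-J_0\dot\gamma_D$ and $-J_0\dot\gamma_K$ become collinear at each time, so that the first-variation identity $-J_0\dot\gamma\in\partial H_{D+_pK}(\gamma)$ with $H_{D+_pK}=j_{D+_pK}^2$ realizes a critical loop for $D+_pK$ as a scalar combination of the critical loops for $D$ and $K$, after which the Firey identity lets one read off its action in terms of those of $\gamma_D$ and $\gamma_K$. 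Finally, for the equality case: the reverse-Minkowski equality forces $u^{p/2}$ and $v^{p/2}$ to be a.e.\ proportional, hence $h_D(-J_0\dot\gamma^{\ast})=\lambda\,h_K(-J_0\dot\gamma^{\ast})$ a.e., while equality in the two variational lower bounds forces $\gamma^{\ast}$ to be simultaneously a $c_{\rm EHZ}$-carrier for $D$ and for $K$; tracing back through the Hamiltonian equations on the respective boundaries then produces $\gamma_D=\alpha\gamma_K+\mathbf{b}$ after a common parameter change.
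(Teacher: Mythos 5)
Your reverse-Minkowski argument for $p\ge 2$ is correct and genuinely different from what the paper does: starting from the Clarke-dual characterisation $c_{\rm EHZ}(K)=\min\{\tfrac14\int_0^1 h_K^2(-J_0\dot\gamma)\,dt : A(\gamma)=1\}$ and feeding the normalized carrier $\gamma^\ast$ of $D+_pK$ back into the functionals for $D$ and $K$, you obtain the two integral lower bounds, and the reverse Minkowski inequality in $L^{2/p}$ (which indeed holds since $2/p\le1$) closes the argument. This is a clean and self-contained route for that range — but it only covers $p\ge2$ and in particular misses the most important case $p=1$ (ordinary Minkowski sum, Corollary~\ref{cor:Brun.1}) and the entire interval $1\le p<2$.

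For $1\le p<2$ your plan breaks down in a way that is not repairable by the approach you outline: constructing a critical loop for $D+_pK$ out of the carriers $\gamma_D,\gamma_K$ would at best produce an element of $\Sigma^{\tau_0}_{\partial(D+_pK)}$, hence an \emph{upper} bound on $c_{\rm EHZ}(D+_pK)$, whereas the Brunn--Minkowski inequality requires a \emph{lower} bound; and there is no reason for $-J_0\dot\gamma_D$ and $-J_0\dot\gamma_K$ to become collinear after a common reparametrization for generic $D,K$. What is missing is exactly the device introduced by Artstein-Avidan--Ostrover and reproduced in the paper as Propositions~\ref{prop:Brun.1}--\ref{prop:Brun.2}: fix any $p_1>1$ and prove that for \emph{every} $p_2\ge1$
$$
\bigl(c_{\rm EHZ}(D)\bigr)^{p_2/2}=\min_{x\in\mathcal{A}_{p_1}}\frac{1}{2^{p_2}}\int_0^1 \bigl(h_D(-J_0\dot x)\bigr)^{p_2}\,dt,
$$
the content being that the same constraint manifold $\mathcal{A}_{p_1}$ detects the capacity via the $p_2$-power of the support function for \emph{any} $p_2\ge1$ (the proof goes by Hölder/Jensen between exponents plus a limiting argument for $p_2=1$). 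Once this is in hand the inequality is immediate from Firey's identity $h^p_{D+_pK}=h^p_D+h^p_K$ and the triviality $\min(f+g)\ge\min f+\min g$, with no Minkowski or reverse-Minkowski inequality at all — this is what makes the argument uniform over all $p\ge1$ and also controls the equality case (for $p>1$) via the accompanying Hölder-equality analysis, which you cannot reach from your $p\ge2$ branch alone.
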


Let $\tau$ be a linear anti-symplectic involution on $\mathbb{R}^{2n}$.
% and let $\tau^T$ be the adjoint of $\tau$  with respect to the standard inner product $\langle\cdot,\cdot\rangle_{\mathbb{R}^{2n}}$.
Then $h_K$ is $\tau$-invariant if and only if $K$ is $\tau^T$-invariant.
Notice that $D+_pK$ is $\tau$-invariant if both $D$ and $K$ are both $\tau$-invariant. As an analogy of Theorem~\ref{th:Brun.1}, we have

\begin{thm}\label{th:Brun.2}
Let $\tau$ be a linear anti-symplectic involution on $\mathbb{R}^{2n}$,
and let $D, K\subset \mathbb{R}^{2n}$ be two $\tau$-invariant convex bodies  containing  $0$ in their interiors. Then for any real $p\ge 1$ it holds that
   \begin{equation}\label{e:BrunA}
   \left(c_{\rm EHZ,\tau}(D+_pK)\right)^{\frac{p}{2}}\ge \left(c_{\rm EHZ,\tau}(D)\right)^{\frac{p}{2}}+ \left(c_{\rm EHZ,\tau}(K)\right)^{\frac{p}{2}}.
   \end{equation}
   Moreover, the equality holds if  there exist
    $c_{\rm EHZ,\tau}$-carriers for $D$ and $K$,
      $\gamma_D:[0,T]\rightarrow \partial D$ and  $\gamma_K:[0,T]\rightarrow \partial K$, such that
      they coincide up to dilation and translation by elements in $L={\rm Fix}(\tau)$, i.e.,
   $\gamma_D=\alpha\gamma_K+ {\bf b}$ for some
   $\alpha\in\mathbb{R}$ and ${\bf b}\in L$; and in the case $p>1$
   the latter condition is also necessary for the existence of equality in (\ref{e:BrunA}).
  \end{thm}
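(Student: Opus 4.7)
The plan is to adapt the proof of Theorem~\ref{th:Brun.1} by Artstein-Avidan and Ostrover \cite{AAO08, AAO14} to the $\tau$-symmetric setting, incorporating the $\tau$-brake constraint at every stage. I would proceed in three parts.

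First, I would set up a Clarke-type dual variational characterization of $c_{\rm EHZ,\tau}(D)$, in analogy with the one used in \cite[\S9]{JinLu19} for the nonsymmetric capacity. For a $\tau$-invariant convex body $D\subset\mathbb{R}^{2n}$ with $0\in\mathrm{Int}(D)$, the Legendre transform of $H_D:=j_D^2$ is $H_D^{\ast}=\tfrac{1}{4}h_D^2$, and the goal is to show
$$
c_{\rm EHZ,\tau}(D)^{1/2} \;=\; \tfrac{1}{2}\inf\left\{\int_0^1 h_D(\xi(t))\,dt \;:\; \xi\in\mathcal{X}_\tau,\;\mathcal{A}(\xi)=1\right\},
$$
where $\mathcal{X}_\tau$ denotes the class of mean-zero $L^2$-momenta $\xi=-J_0\dot\gamma$ coming from $W^{1,2}$ $\tau$-brake curves $\gamma(1-t)=\tau\gamma(t)$ (equivalently, $\xi(1-t)=\tau^T\xi(t)$), and $\mathcal{A}(\xi)$ is the dual action, equal to $A(\gamma)$ for the primitive $\gamma$. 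Minimizers should correspond, after suitable reparametrization, to $c_{\rm EHZ,\tau}$-carriers on $\partial D$. The $\tau^T$-invariance of $h_D$ (a consequence of $\tau D=D$) is exactly what makes $\mathcal{X}_\tau$ the natural restricted domain.

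Second, I would take $c_{\rm EHZ,\tau}$-carriers $\gamma_D,\gamma_K$ for $D$ and $K$, with corresponding momenta $\xi_D,\xi_K$, reparametrized to $[0,1]$. Using the $p$-sum identity $h_{D+_pK}=(h_D^p+h_K^p)^{1/p}$ from \eqref{e:supp.2} together with the Minkowski inequality in $L^p([0,1])$, one gets
$$
\left(\int_0^1 h_{D+_pK}(\xi)\,dt\right)^p \;\ge\;\left(\int_0^1 h_D(\xi)\,dt\right)^p+\left(\int_0^1 h_K(\xi)\,dt\right)^p
$$
for any common test $\xi\in\mathcal{X}_\tau$. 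Taking $\xi$ to be a positive combination of $\xi_D$ and $\xi_K$ optimally balanced (and using the variational characterization from the first step to bound each side) would give exactly \eqref{e:BrunA}.

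Third, for the equality claim: if $\gamma_D=\alpha\gamma_K+\mathbf{b}$ with $\mathbf{b}\in L$, then $\tau\mathbf{b}=\mathbf{b}$ ensures that $\gamma_K+\mathbf{b}/\alpha$ remains a $\tau$-brake curve, so $\xi_D$ and $\xi_K$ are positively proportional as elements of $\mathcal{X}_\tau$; this is precisely the equality case of Minkowski's inequality and hence produces equality in \eqref{e:BrunA}. For the converse when $p>1$, the strict convexity of the $L^p$-norm forces the proportionality $\xi_D=\lambda\xi_K$, and hence $\gamma_D=\alpha\gamma_K+\mathbf{b}$ for some $\mathbf{b}\in\mathbb{R}^{2n}$; imposing the brake symmetry $\gamma_D(1-t)=\tau\gamma_D(t)$ and $\gamma_K(1-t)=\tau\gamma_K(t)$ on both sides then forces $\tau\mathbf{b}=\mathbf{b}$, i.e.\ $\mathbf{b}\in L$. (For $p=1$ no such rigidity in Minkowski's inequality is available, consistent with the theorem requiring $p>1$ for the necessity.)

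The principal obstacle will be the first stage: establishing the Clarke dual characterization in the $\tau$-brake setting. The delicate point is that the brake antisymmetry of momenta involves $\tau^T$ rather than $\tau$, but this is compensated by the $\tau^T$-invariance of $h_D$; the initial condition $\gamma(0)\in L$ needs to be implemented as a choice of integration constant lying in the fixed-point set $L$. Once this variational framework is in place, the Brunn-Minkowski inequality becomes a formal consequence of the $L^p$ triangle inequality, and the equality analysis reduces to symmetry bookkeeping around the translation by $\mathbf{b}$.
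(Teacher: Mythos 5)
Your proposal follows essentially the same strategy as the paper: reduce to $\tau=\tau_0$, establish a Clarke dual variational characterization of $c_{\rm EHZ,\tau_0}$ over $W^{1,p_1}$ $\tau_0$-brake loops with vanishing mean, then exploit the pointwise support-function identity $h_{D+_pK}^p = h_D^p + h_K^p$ to split the dual functional. The two differ only in where the exponent $p$ is placed. The paper proves (Proposition~\ref{prop:Brun.2}) that
$\bigl(c_{\rm EHZ,\tau_0}(D)\bigr)^{p_2/2}=\min_{x\in\mathcal{A}_{p_1}}\frac{1}{2^{p_2}}\int_0^1 h_D(-J_0\dot x)^{p_2}\,dt$
for all $p_2\ge 1$ (any $p_1>1$), and then with $p_2=p$ the support-function identity makes the three integrands add pointwise, so (\ref{e:BrunA}) drops out of the elementary inequality $\min(f+g)\ge\min f+\min g$. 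You instead use the $p_2=1$ case and then need the Minkowski \emph{integral} inequality $\int(f^p+g^p)^{1/p}\ge\bigl((\int f)^p+(\int g)^p\bigr)^{1/p}$; that inequality is correct and your route works, but it is an extra nontrivial ingredient the paper's $p_2=p$ choice avoids (the Hölder step is instead absorbed into proving the $p_2$-characterization). On the equality cases your account is a bit off in emphasis: equality in (\ref{e:BrunA}) forces the minimizer $u$ of the $D+_pK$ functional to simultaneously minimize both the $D$- and $K$-functionals, and it is \emph{this} simultaneity (not the strict convexity of the $\ell^p$ norm) that is really used; the carriers $\gamma_D,\gamma_K$ are then both read off from the same $u$ via the explicit translation formula in Remark~\ref{rem:Brun.1.5}, with the translation constant lying in $L_0$ by the Lagrange-multiplier identity, which is how the constraint ${\bf b}\in L$ emerges. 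The strict convexity / Minkowski equality-case you invoke only yields the pointwise proportionality $h_D(\xi)/h_K(\xi)=$ const, which is neither quite the same statement nor what one actually needs. Finally, in the first stage the normalization is the mean-zero condition $\int\gamma=0$ rather than $\gamma(0)\in L$; the translation constant that appears from the Lagrange multiplier is what lands in $L_0$. These are technical corrections within the same proof architecture as the paper's.
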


The proof of this theorem will be given in Section~\ref{sec:Brunn.1}.

\begin{corollary}\label{cor:Brun.1}
Let $\tau$ be a linear anti-symplectic involution on $\mathbb{R}^{2n}$,
and let $D, K\subset \mathbb{R}^{2n}$ be two $\tau$-invariant convex bodies.
%  containing
%a fixed point $q$ of $\tau$ as their common interior point.
 Then
\begin{equation}\label{e:BrunA+}
   \left(c_{\rm EHZ,\tau}(D+K)\right)^{\frac{1}{2}}\ge \left(c_{\rm EHZ,\tau}(D)\right)^{\frac{1}{2}}+ \left(c_{\rm EHZ,\tau}(K)\right)^{\frac{1}{2}},
   \end{equation}
   and the equality holds if  there exist
    $c_{\rm EHZ,\tau}$-carriers for $D$ and $K$ which coincide up to  dilation and translation by elements in
    $L={\rm Fix}(\tau)$.
\end{corollary}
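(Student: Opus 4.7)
The plan is to reduce to Theorem~\ref{th:Brun.2} applied with $p=1$ by translating $D$ and $K$ so that $0$ lies in their respective interiors, using that translations by elements of $L=\mathrm{Fix}(\tau)$ leave $c_{\rm EHZ,\tau}$ unchanged. The only reason Theorem~\ref{th:Brun.2} was stated with the assumption ``$0$ in the interior'' is that the $p$-sum requires the support functions to be nonnegative; for $p=1$ this restriction disappears after a harmless shift inside $L$.

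First I would choose base points. By Remark~\ref{rm:EHconvex} every $\tau$-invariant convex body intersects $L$, so we may pick $w_D\in\mathrm{Int}(D)\cap L$ and $w_K\in\mathrm{Int}(K)\cap L$ (interior because $D,K$ are convex bodies and their interiors are also $\tau$-invariant). Set $D_0:=D-w_D$ and $K_0:=K-w_K$; these are $\tau$-invariant convex bodies containing $0$ in their interiors, and $(D+K)-(w_D+w_K)=D_0+K_0$ with $w_D+w_K\in L$. By Proposition~\ref{prop:EH.1.9} (translation invariance of $c_{\rm EH,\tau}$ by elements of $L$) together with the identification (\ref{e:fixpt.2}), we obtain
\[
c_{\rm EHZ,\tau}(D)=c_{\rm EHZ,\tau}(D_0),\quad c_{\rm EHZ,\tau}(K)=c_{\rm EHZ,\tau}(K_0),\quad c_{\rm EHZ,\tau}(D+K)=c_{\rm EHZ,\tau}(D_0+K_0).
\]
Applying Theorem~\ref{th:Brun.2} with $p=1$ to the pair $(D_0,K_0)$ then yields (\ref{e:BrunA+}) immediately.

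For the equality assertion, suppose we have $c_{\rm EHZ,\tau}$-carriers $\gamma_D:[0,T]\to\partial D$ and $\gamma_K:[0,T]\to\partial K$ satisfying $\gamma_D=\alpha\gamma_K+{\bf b}$ for some $\alpha\in\mathbb{R}$ and ${\bf b}\in L$. The translation invariance argument above shows that $\widetilde{\gamma}_D:=\gamma_D-w_D$ and $\widetilde{\gamma}_K:=\gamma_K-w_K$ are $c_{\rm EHZ,\tau}$-carriers for $D_0$ and $K_0$ respectively (the action, the Hamiltonian inclusion, and the $\tau$-brake condition are all preserved under translation by elements of $L$, since conjugating $\tau$ by such a translation is again $\tau$). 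A direct computation gives $\widetilde{\gamma}_D=\alpha\widetilde{\gamma}_K+{\bf b}'$ with ${\bf b}':={\bf b}-w_D+\alpha w_K\in L$, so the equality clause of Theorem~\ref{th:Brun.2} for the pair $(D_0,K_0)$ produces equality in (\ref{e:BrunA+}).

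The only real obstacle is bookkeeping: verifying that translation by a vector in $L$ sends $\tau$-brake carriers to $\tau$-brake carriers with the same action. This is not deep, but it is where the hypothesis ${\bf b}\in L$ (rather than ${\bf b}\in\mathbb{R}^{2n}$) is essential, because an arbitrary translate of a $\tau$-brake orbit need not satisfy $x(T-t)=\tau(x(t))$ unless the shift commutes with $\tau$ in the appropriate sense, which holds precisely for shifts in $L$.
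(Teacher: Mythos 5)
Your proposal is correct and takes essentially the same route as the paper: translate $D$ and $K$ by fixed points of $\tau$ lying in their interiors so that $0\in\operatorname{Int}(D_0)\cap\operatorname{Int}(K_0)$, apply Theorem~\ref{th:Brun.2} with $p=1$, and invoke translation invariance of $c_{\rm EHZ,\tau}$ under shifts in $L=\operatorname{Fix}(\tau)$ to conclude. Your verification in the equality clause that $L$-translations send $\tau$-brake carriers to $\tau$-brake carriers with the same action is correct and is exactly the bookkeeping the paper leaves implicit (``Other claims easily follows from the arguments therein'').
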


Since $D, K\subset \mathbb{R}^{2n}$ are $\tau$-invariant, for each
$x\in{\rm Int}(D)$ (resp. ${\rm Int}(K)$),
$(x+\tau x)/2$ is a fixed point of $\tau$ contained in ${\rm Int}(D)$ (resp. ${\rm Int}(K)$).
Take $p\in{\rm Fix}(\tau)\cap{\rm Int}(D)$ and
$q\in{\rm Fix}(\tau)\cap{\rm Int}(K)$. Then
(\ref{e:BrunA}) implies
\begin{eqnarray*}
 \left(c_{\rm EHZ,\tau}(D+K-p-q)\right)^{\frac{1}{2}}&=&  \left(c_{\rm EHZ,\tau}((D-p)+(K-q))\right)^{\frac{1}{2}}\\
 &\ge& \left(c_{\rm EHZ,\tau}(D-p)\right)^{\frac{1}{2}}+ \left(c_{\rm EHZ,\tau}(K-q)\right)^{\frac{1}{2}},
 \end{eqnarray*}
Since $p+q$ is a fixed point of $\tau$, it is clear that
$c_{\rm EHZ,\tau}(D+K-p-q)=c_{\rm EHZ,\tau}(D+K)$, $c_{\rm EHZ,\tau}(D-p)=
c_{\rm EHZ,\tau}(D)$
and $c_{\rm EHZ,\tau}(K-q)=c_{\rm EHZ,\tau}(K)$ by
Proposition~\ref{MonComf}(ii).
%the arguments
%at the beginning of Section~\ref{sec:convex}.
Other claims easily follows from the arguments therein.

As in \cite{AAO08, AAO14}, from Corollary~\ref{cor:Brun.1} we can derive:

\begin{corollary}\label{cor:Brun.2}
 Let $D$, $K$ and $\tau$ be as in Corollary~\ref{cor:Brun.1}.
% \begin{description}
 \begin{enumerate}
\item[\bf (i)] For any $x,y\in{\rm Fix}(\tau)$ and $0\le \lambda\le 1$ it holds that
\begin{eqnarray}\label{e:BrunB}
&&\lambda\left(c_{\rm EHZ,\tau}(D\cap (x+K))\right)^{1/2}+
(1-\lambda)\left(c_{\rm EHZ,\tau}(D\cap (y+K))\right)^{1/2}\nonumber\\
&\le&\left(c_{\rm EHZ,\tau}(D\cap(\lambda x+(1-\lambda)y+K))\right)^{1/2}.
\end{eqnarray}
  In particular, $c_{\rm EHZ,\tau}(D\cap(x+K))\le c_{\rm EHZ,\tau}(D\cap K)$ provided that $D$ and $K$ are
   centrally symmetric.
\item[\bf (ii)] The limit
\begin{equation}\label{e:BrunC}
\lim_{\varepsilon\to 0+}\frac{c_{\rm EHZ,\tau}(D+\varepsilon K)-c_{\rm EHZ,\tau}(D)}{\varepsilon}
\end{equation}
exists, denoted by $d_{K,\tau}(D)$, and it holds that
\begin{eqnarray}\label{e:BrunD}
2(c_{\rm EHZ,\tau}(D))^{1/2}(c_{\rm EHZ,\tau}(K))^{1/2}\le d_{K,\tau}(D)\le
\inf_{z_D}\int_0^1h_K(-J\dot{z}_D(t)),
\end{eqnarray}
where $z_D:[0,1]\to\partial D$ takes over all  $c_{\rm EHZ,\tau}$-carriers  for $D$.
%\end{description}
\end{enumerate}
\end{corollary}

For the sake of clarity we shall give the proof of this corollary in Section~\ref{sec:Brunn.2}.

In view of \cite{AAO08, AAO14} we call ${\rm length}_{JK^\circ}(z_D)=\int_0^1j_{JK^\circ}(\dot{z}_D(t))$
 the length of $z_D$ with respect to the convex body $JK^\circ$.
In the case $0\in{\rm int}(K)$, since $H^\ast_K(-Jv)=(j_{JK^\circ}(v))^2/4$,  (\ref{e:BrunD}) implies
$$
d_{K,\tau}(D)\le 2\inf_{z_D}\int_0^1(H_K^{\ast}(-J\dot{z}_D(t)))^{\frac{1}{2}}=
\inf_{z_D}\int_0^1j_{JK^\circ}(\dot{z}_D(t))
$$
and hence
\begin{eqnarray}\label{e:BrunE}
4c_{\rm EHZ,\tau}(D,\omega_0)c_{\rm EHZ,\tau}(K,\omega_0)\le
\inf_{z_D}({\rm length}_{JK^\circ}(z_D))^2.
\end{eqnarray}
In particular, if the involution $\tau$  is symmetric with respect to $\langle\cdot,\cdot\rangle_{\mathbb{R}^{2n}}$,
taking $K=B^{2n}(1)$ we derive from (\ref{e:BrunE}) and (\ref{e:ball}) that
\begin{equation}\label{e:BrunF}
4\pi c_{\rm EHZ,\tau}(D,\omega_0)\le
\left(\int^T_0|\dot{\gamma}(t)|dt\right)^2.
   \end{equation}
where $\gamma:\mathbb{R}/T\mathbb{Z}\to \partial D$ is a $c_{\rm EHZ,\tau}$-carrier for $D$.
A similar result to \cite[(1.4.6)]{AAO08} can also be obtained.

Let $D\subset\mathbb{R}^{2n}$ be a $\tau_0$-invariant  convex body.
Lemma~\ref{lem:alb} yields a Lie group homomorphism
\begin{equation}\label{e:BrunF.1}
O(n)\to {\rm Sp}(2n,\mathbb{R})\cap O(2n)\equiv U(n),\;A\mapsto\Psi_A=\left(
  \begin{array}{cc}
    A & 0 \\
    0 & A \\
  \end{array}
\right).
 \end{equation}
 Since $\Psi_A\tau_0=\tau_0\Psi_A$, $c_{\rm EHZ,\tau_0}(D)=c_{\rm EHZ,\tau_0}(\Psi_AD)\;\forall A\in O(n)$.
 Thus for $A_i\in O(n)$, $i=1,\cdots,m$, it follows from (\ref{e:BrunA+}) that
 \begin{eqnarray*}
c_{\rm EHZ,\tau_0}(D)=\left(\sum^m_{i=1}\left(c_{\rm EHZ,\tau_0}(\frac{1}{m}\Psi_{A_i}D)\right)^{\frac{1}{2}}\right)^{2}
\le c_{\rm EHZ,\tau_0}\left(\frac{1}{m}\sum^m_{i=1}\Psi_{A_i}D\right)
\end{eqnarray*}
and so
  \begin{eqnarray}\label{e:BrunF+}
c_{\rm EHZ,\tau_0}(D)\le c_{\rm EHZ,\tau_0}\left(\int_{O(n)}\Psi_{A}D d\mu_n(A)\right),
\end{eqnarray}
where the integral is with respect to the  Haar measure $\mu_n$ on $O(n)$.
 By properties of the support function (cf. \S1.7 and Theorem~1.8.11 in \cite{Sch93}) it is easy to show that
 the integral in (\ref{e:BrunF+}) as a convex body in $\mathbb{R}^{2n}$ has the support function
  $$
\mathbb{R}^{2n}\ni v\mapsto F(v):=\int_{O(n)}h_{\Psi_{A}D}(v) d\mu=\int_{O(n)}h_{D}(\Psi_{A^t}v) d\mu_n(A),
$$
where $h_{\Psi_{A}D}=h_{D}\circ\Psi_{A^t}$ is the support function of $\Psi_{A}D$.
As claimed above \cite[Th.1.7.1]{Sch93}, this support function $F$ is a sublinear convex function.
By  \cite[Th.1.7.1]{Sch93} and the second proof of it there  is a unique convex
body in $\mathbb{R}^{2n}$ with support function $F$, and
this convex body is given by $\{u\in\mathbb{R}^{2n}\,|\, \langle u, v\rangle_{\mathbb{R}^n}\le F(v)\;\forall v\in \mathbb{R}^{2n}\}$.
Hence we obtain
%By the proof of \cite[Th.1.7.1]{Sch93} we see
 \begin{eqnarray}\label{e:BrunG}
\int_{O(n)}\Psi_{A}D d\mu=\left\{u\in\mathbb{R}^{2n}\,\Bigm|\, \langle u, v\rangle_{\mathbb{R}^n}\le
\int_{O(n)}h_{D}(\Psi_{A^t}v) d\mu_n(A)\;\forall v\in \mathbb{R}^{2n}\right\}.
\end{eqnarray}
Clearly, it is invariant under subgroup $\{\Psi_A\,|\,A\in O(n)\}\subset {\rm Sp}(2n,\mathbb{R})\cap O(2n)$.
(We guess that the convex body in (\ref{e:BrunG}) is the product $B^n(r_1)\times B^n(r_2)$ for some $r_1>0,r_2>0$.)
%Equation
Inequality (\ref{e:BrunF+}) is an analogue of \cite[Corollary~1.7]{AAO14}, which claimed
\begin{eqnarray}\label{e:BrunG+}
c_{\rm EHZ}(D)\le\pi(M^\ast(D))^2
\end{eqnarray}
for any centrally symmetric convex body $D\subset \mathbb{R}^{2n}$.

If the above $D\subset\mathbb{R}^{2n}$ has some special form we can improve (\ref{e:BrunF+}).
To this end let us state a beautiful result recently proved
 by  Artstein-Avidan,  Karasev, and Ostrover in \cite[Theorem~1.7]{AAKO14}:
  \begin{equation}\label{e:BrunH}
 c_{\rm HZ}(\Delta\times \Delta^\circ)=4
 \end{equation}
 for any centrally symmetric convex body $\Delta\subset\mathbb{R}^n_q$.
 Note that $\Delta=-\Delta$ implies $\Delta^\circ=-\Delta^\circ$.
 By \cite[Prop.4]{HoZe90} and Theorem~\ref{th:convex}
it is easily seen that $c_{\rm EHZ,\tau_0}(\Delta\times \Delta^\circ)\ge 4$
and $c_{\rm EHZ,\hat{\tau}_0}(\Delta\times \Delta^\circ)\ge 4$.
The proof of \cite[Theorem~1.7]{AAKO14} can still yield converse inequalities.
 Hence we obtain
\begin{equation}\label{e:BrunI}
 c_{\rm EHZ,\tau_0}(\Delta\times \Delta^\circ)=c_{\rm EHZ,\hat{\tau}_0}(\Delta\times \Delta^\circ)=4.
 \end{equation}

  Let $\Delta\subset\mathbb{R}^n_q$ and  $\Lambda\subset\mathbb{R}^n_p$  be two  convex bodies containing
  the origin in their interiors, and let $D=\Delta\times\Lambda$ be their Lagrangian product.
  Define
 %$r_\Delta:=r(\widehat{\Delta})$ with $\widehat{\Delta}:=\frac{1}{2}(\Delta+(-\Delta))$, $r_\Lambda:=r(\widehat{\Lambda})$ with $\widehat{\Lambda}:=\frac{1}{2}(\Lambda+(-\Lambda))$ and
\begin{eqnarray}\label{e:BrunJ}
&&r_\Delta=M^\ast(\widehat{\Delta})\quad\hbox{with}\quad\widehat{\Delta}:=\frac{1}{2}(\Delta+(-\Delta)),\\
&&r_\Lambda=M^\ast(\widehat{\Lambda})\quad\hbox{with}\quad\widehat{\Lambda}:=\frac{1}{2}(\Lambda+(-\Lambda)).
\label{e:BrunK}
\end{eqnarray}
Clearly, if $\Delta$ (resp. $\Lambda$) is centrally symmetric then $r_\Delta=M^\ast(\Delta)$
(resp. $r_\Lambda=M^\ast(\Lambda)$). Note that
$h_{\widehat{\Delta}}=(h_{\Delta}+ h_{(-\Delta)})/2$ and
$h_{\widehat{\Lambda}}=(h_{\Lambda}+ h_{(-\Lambda)})/2$
by Theorems~1.7.5 and 1.7.6 in \cite{Sch93}).
We have the following corollary, which will be proved in Section~\ref{sec:Brunn.2}.

\begin{corollary}\label{cor:Brun.4}
 Let $\Delta$ and $\Lambda$ be as above. Then
 \begin{eqnarray}\label{e:BrunL}
c_{\rm EHZ}(\Delta\times\Lambda)\le 4r_\Delta r_\Lambda.
\end{eqnarray}
Moreover, if $\Lambda$ (resp. $\Delta$) is centrally symmetric, then
 \begin{eqnarray}\label{e:BrunM}
c_{\rm EHZ,\tau_0}(\Delta\times\Lambda)\le 4r_\Delta r_\Lambda
\quad\hbox{\rm (resp. $c_{\rm EHZ, \hat{\tau}_0}(\Delta\times\Lambda)\le 4r_\Delta r_\Lambda$)}.
\end{eqnarray}
  \end{corollary}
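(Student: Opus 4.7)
The plan is to adapt the derivation of (\ref{e:BrunF+}) by averaging over the orthogonal group $O(n)$ via the block-diagonal symplectic embedding $\Psi_A(q,p) = (Aq, Ap)$ from (\ref{e:BrunF.1}), and then to reduce the resulting Haar-averaged body to a product of Euclidean balls whose capacity is already known from (\ref{e:BrunH})--(\ref{e:BrunI}). Since $\Psi_A \in {\rm Sp}(2n,\mathbb{R}) \cap O(2n)$ commutes with both $\tau_0$ and $\hat{\tau}_0$, it is a real symplectic isomorphism for each of the three involutions we need to handle, so the capacity is $\Psi_A$-invariant in every case.

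First I would establish the averaging inequality
\begin{equation*}
c(\Delta \times \Lambda) \;\le\; c\!\left( \int_{O(n)} \Psi_A(\Delta \times \Lambda)\, d\mu_n(A) \right),
\end{equation*}
where $c$ denotes any one of $c_{\rm EHZ}$, $c_{\rm EHZ,\tau_0}$, or $c_{\rm EHZ,\hat{\tau}_0}$ (with the appropriate central-symmetry hypothesis on $\Lambda$ or $\Delta$ ensuring that $\Delta\times\Lambda$ is invariant under the corresponding involution). This is proved exactly as in the paragraph containing (\ref{e:BrunF+}): the Brunn-Minkowski-type inequality (Theorem~\ref{th:Brun.1} in the non-symmetric case and Corollary~\ref{cor:Brun.1} in the two symmetric cases), iterated over $m$ rotations $A_1, \dots, A_m \in O(n)$, yields $c(D) \le c(\frac{1}{m} \sum_i \Psi_{A_i} D)$; Hausdorff continuity of the capacity (Proposition~\ref{MonComf}(iii) and its Ekeland-Hofer analogue) then permits passage from finite Riemann-type averages to the Haar integral.

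Next I would identify the averaged body. The support function of $\Psi_A(\Delta \times \Lambda)$ at $(v_1, v_2)$ is $h_\Delta(A^t v_1) + h_\Lambda(A^t v_2)$, and for every unit vector $u \in S^{n-1}$, formula (\ref{e:mean-width}) yields
\begin{equation*}
\int_{O(n)} h_\Delta(A u)\, d\mu_n(A) \;=\; M^\ast(\Delta) \;=\; M^\ast(\widehat{\Delta}) \;=\; r_\Delta,
\end{equation*}
where the middle equality follows from $h_{\widehat{\Delta}} = (h_\Delta + h_{-\Delta})/2$ together with the reflection-symmetry of $\sigma_n$ on $S^{n-1}$. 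The analogous identity for $\Lambda$ shows that the Haar-averaged body has support function $(v_1, v_2) \mapsto r_\Delta |v_1| + r_\Lambda |v_2|$, and therefore equals the Lagrangian product $r_\Delta B^n_q \times r_\Lambda B^n_p$.

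Finally, to evaluate $c(r_\Delta B^n_q \times r_\Lambda B^n_p)$, I would apply the linear symplectic rescaling $\phi(q, p) = (\lambda q, \lambda^{-1} p)$ with $\lambda^2 = r_\Delta / r_\Lambda$; this map is symplectic, commutes with both $\tau_0$ and $\hat{\tau}_0$, and carries $r_\Delta B^n \times r_\Lambda B^n$ onto $\sqrt{r_\Delta r_\Lambda}\,(B^n \times B^n)$. Since $(B^n)^\circ = B^n$, formulas (\ref{e:BrunH})--(\ref{e:BrunI}) give $c(B^n \times B^n) = 4$ in each of the three relevant settings, and conformality (Proposition~\ref{prop:EH.1.7}(iii)) then yields $c(r_\Delta B^n \times r_\Lambda B^n) = 4 r_\Delta r_\Lambda$, completing the proof of all three inequalities. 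The main obstacle is the first step: one must carefully justify that the iterated Brunn-Minkowski inequality passes to the Haar integral via Hausdorff continuity of the capacity on ($\tau$-invariant) convex bodies; once this averaging lemma is in place, the remaining steps are direct applications of earlier results in the paper.
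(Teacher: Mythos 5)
Your proposal is correct and captures the essential mechanism the paper also uses: iterate the Brunn--Minkowski inequality over rotated copies, pass to an average, reduce to a product of Euclidean balls, and invoke $c(B^n\times B^n)=4$ from (\ref{e:BrunH})--(\ref{e:BrunI}). The genuine difference is in how one reaches $r_\Delta B^n\times r_\Lambda B^n$: you average over the entire group $O(n)$ with respect to Haar measure and identify the resulting body directly from its support function (using that $M^\ast(\Delta)=M^\ast(\widehat\Delta)=r_\Delta$ because $\sigma_n$ is antipodally invariant), whereas the paper's proof invokes the Bourgain--Lindenstrauss--Milman theorem~\cite[Theorem~2]{BoLiMi88} to produce, for each $\varepsilon$, a \emph{finite} family $A_1,\dots,A_N\in O(n)$ such that $\frac{1}{2N}\sum_j(A_j\Delta+(-A_j)\Delta)$ is $\varepsilon$-close to $r_\Delta B^n$ in Hausdorff distance, then lets $\varepsilon\to 0$. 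Both routes require Hausdorff continuity of the capacity on convex bodies (for $c_{\rm EHZ,\tau_0}$ and $c_{\rm EHZ,\hat\tau_0}$ this is Proposition~\ref{MonComf}(iii); for $c_{\rm EHZ}$ it is the standard continuity of the Hofer--Zehnder capacity on convex domains, which the paper uses but does not state as a separate proposition). Your route is arguably a bit more economical since the identification $\int_{O(n)}\Psi_A(\Delta\times\Lambda)\,d\mu_n = (\int_{O(n)}A\Delta\,d\mu_n)\times(\int_{O(n)}A\Lambda\,d\mu_n) = r_\Delta B^n\times r_\Lambda B^n$ is an exact equality of support functions and avoids the external BoLiMi approximation result; on the other hand, the BoLiMi route avoids having to formalize the convergence of Riemann-type sums $\frac1m\sum_i\Psi_{A_i}D$ to the Haar integral in Hausdorff metric (although the paper itself implicitly relies on that same passage in the paragraph surrounding (\ref{e:BrunF+})). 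One trivial slip: your rescaling exponent should be $\lambda^2=r_\Lambda/r_\Delta$, not $r_\Delta/r_\Lambda$, to send $r_\Delta B^n\times r_\Lambda B^n$ onto $\sqrt{r_\Delta r_\Lambda}(B^n\times B^n)$, but since both $\lambda$ and $\lambda^{-1}$ give symplectic maps commuting with $\tau_0$ and $\hat\tau_0$, this does not affect the argument.
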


\subsection{Applications to billiards dynamics}\label{sec:billard}

Recently,  Artstein-Avidan,  Karasev and Ostrover \cite{AAO14, AAKO14}
used the Ekeland-Hofer-Zehnder symplectic capacity to obtain  several
very interesting bounds and inequalities for the length of the shortest periodic billiard trajectory in a
smooth convex body in $\mathbb{R}^n$. Our above generalizations for
the Ekeland-Hofer-Zehnder symplectic capacity allow us to prove
some corresponding results for a special class of billiard trajectories in a
smooth convex body in $\mathbb{R}^n$.

 For a connected bounded open subset $\Omega\subset\mathbb{R}^n$ with boundary $\partial\Omega$
of class $C^2$, recall that a nonconstant, continuous, and piecewise $C^\infty$ path $\sigma:\mathbb{R}/(T\mathbb{Z})\to\overline{\Omega}$
is called a {\bf closed (or periodic) billiard trajectory} in  $\Omega$ if there exists a finite set
$\mathscr{B}_\sigma\subset \mathbb{R}/(T\mathbb{Z})$ with the following properties:
%\begin{description}
\begin{enumerate}
\item[\bf (B-i)] On $\mathbb{R}/(T\mathbb{Z})\setminus\mathscr{B}_\sigma$,
$\sigma$ is smooth and satisfies $\ddot{\sigma}=0$.
\item[\bf (B-ii)] For each $t\in \mathscr{B}_\sigma$, $\sigma(t)\in\partial\Omega$ and $\dot{\sigma}^\pm(t):=\lim_{\tau\to t\pm}\dot{\sigma}(\tau)$
fulfils the equation
\begin{equation}\label{e:bill1}
\dot{\sigma}^+(t)+\dot{\sigma}^-(t)\in T_{\sigma(t)}\partial\Omega,\quad
\dot{\sigma}^+(t)-\dot{\sigma}^-(t)\in (T_{\sigma(t)}\partial\Omega)^\bot\setminus\{0\}.
\end{equation}
\end{enumerate}
%\end{description}
(So $|\dot\sigma|$ is a nonzero constant on $\mathbb{R}/(T\mathbb{Z})\setminus\mathscr{B}_\sigma$.)
 $\mathscr{B}_\sigma$  is called the set of {\bf bounce times}.
 %If it is empty, the periodic billiard trajectory $\sigma$ becomes a closed geodesic.

We call the above  closed  billiard trajectory $\sigma:\mathbb{R}/(T\mathbb{Z})\to\overline{\Omega}$
  a {\bf brake  billiard trajectory} if it satisfies $\sigma(-t)=\sigma(t)$
 for all $t$. When $\Omega$ is also centrally symmetric, a closed  billiard trajectory $\sigma$
 in $\Omega$ is called a {\bf brake  billiard trajectory of second class} if it satisfies $\sigma(-t)=-\sigma(t)$
 for all $t$.

\begin{remark}\label{rm:brake}
{\rm  In \cite{Ir15} K. Irie called a nonconstant, continuous, and piecewise $C^\infty$ map
$\sigma:[0, T]\to\Omega$  a {\bf brake billiard trajectory} if it satisfies the following conditions:\\
$\bullet$ There exists a finite set $\mathscr{B}_\sigma\subset (0, T)$ such that
$\ddot{\sigma}=0$ on $(0,T)\setminus \mathscr{B}_\sigma$, and every $t\in \mathscr{B}_\sigma$
satisfies (B-i) and (B-ii).\\
$\bullet$ $\sigma(0),\sigma(T)\in\partial\Omega$,  $\dot\sigma^+(0)$ and $\dot\sigma^-(T)$
  are perpendicular to $\partial\Omega$.

 Define $\gamma:[0,T]/\{0,T\}\to \Omega$  by
$$
\gamma(t)=\left\{\begin{array}{ll}
\sigma(2t)\;\hbox{for}\;0\le t\le T/2,\\
\sigma(2T-2t)\;\hbox{for}\;T/2\le t\le T.
\end{array}\right.
$$
It satisfies $\gamma(T-t)=\gamma(t)\;\forall t$, and thus is a
 brake billiard trajectory in our sense above.
 Conversely, our brake billiard trajectory $\sigma:\mathbb{R}/(T\mathbb{Z})\to\overline{\Omega}$
 must satisfy $\dot{\sigma}^-(0)=-\dot{\sigma}^+(0)$ and $\dot{\sigma}^-(T/2)=-\dot{\sigma}^+(T/2)$.
 It follows that $\{0, T/2\}\subset\mathscr{B}_\sigma$ and
$\{\dot{\sigma}^+(0), \dot{\sigma}^-(T/2)\}\subset (T_{\sigma(t)}\partial\Omega)^\bot\setminus\{0\}$.
 These mean that the restriction of $\sigma$ to $[0, T/2]$ is
  a brake billiard trajectory  in the Irie's sense.}
\end{remark}

Ghomi \cite{Gh04} defined a {\bf generalized closed (or periodic) billiard trajectory} $\mathcal{T}$
in a  convex body $\Delta\subset\mathbb{R}^n$ as  a sequence of distinct
boundary points $q_i\in\partial K$, $i\in\mathbb{Z}/N\mathbb{Z}$, $N\ge 2$,  such that
for every $i$,
$$
\nu_i:=\frac{q_i-q_{i-1}}{|q_i-q_{i-1}|}+ \frac{q_{i}-q_{i+1}}{|q_{i}-q_{i+1}|}
$$
is an outward support vector of $\Delta$ at $q_i$.  In particular,
if $N = 2$ it called a {\bf bouncing ball orbit}.
The length of $\mathcal{T}$ is defined by
$$
{\rm length}(\mathcal{T}):=\sum^N_{i=1}|q_i-q_{i+1}|.
$$

\begin{remark}\label{rm:billT.0}
{\rm If $\Delta$ is smooth the definitions of the  generalized closed  billiard trajectory
and the closed  billiard trajectory above  coincide.
In fact, for a smooth convex body in $\Delta\subset\mathbb{R}^n$, a nonconstant, continuous,
and piecewise $C^\infty$ path $\sigma:\mathbb{R}/(T\mathbb{Z})\to \Delta$
is a  periodic billiard trajectory in  $\Delta$ with $\mathscr{B}_\sigma=\{t_1<\cdots<t_{m-1}\}$
if and only if the sequence
$$
q_0=\sigma(0), q_1=\sigma(t_1),\cdots,
q_{m-1}=\sigma(t_{m-1}), q_m=\sigma(T)
$$
is a generalized periodic billiard trajectory in  $\Delta$.
}
\end{remark}

%Without special statements, we always suppose below
Suppose that
 \textsf{$\Delta\subset\mathbb{R}^n_q$ and $\Lambda\subset\mathbb{R}^n_p$
 are two smooth convex bodies containing the origin in their interiors}. Then
 $\Delta\times \Lambda$ is only a smooth manifold with corners $\partial \Delta\times\partial \Lambda$
 in the standard symplectic space
 $(\mathbb{R}^{2n},\omega_0)=(\mathbb{R}^n_q\times \mathbb{R}^n_p, dq\wedge dp)$.
 Note that $\partial(\Delta\times\Lambda)=(\partial\Delta\times\partial\Lambda)\cup
({\rm Int}(\Delta)\times\partial\Lambda)\cup
(\partial\Delta\times{\rm Int}(\Lambda))$.
 Since
$j_{\Delta\times \Lambda}(q,p)=\max\{j_\Delta(q), j_\Lambda(p)\}$, we have
$$
\nabla j_{\Delta\times \Lambda}(q,p)=\left\{
\begin{array}{cc}
 (0,\nabla j_\Lambda(p))  &\forall (q,p)\in{\rm Int}(\Delta)\times\partial \Lambda, \\
  (\nabla j_\Delta(q),0) &\forall (q,p)\in \partial \Delta\times{\rm Int}(\Lambda)
\end{array}
\right.
$$
and thus %(noting \cite[(1.23)]{HoZe94} and our $J$ is $-J$ there)
$$
\mathfrak{X}(q,p):=J_0\nabla j_{\Delta\times \Lambda}(q,p)=\left\{
\begin{array}{cc}
 (-\nabla j_\Lambda(p),0)  &\forall (q,p)\in{\rm Int}(\Delta)\times\partial \Lambda, \\
  (0, \nabla j_\Delta(q)) &\forall (q,p)\in \partial \Delta\times{\rm Int}(\Lambda).
\end{array}
\right.
$$

\begin{definition}[\hbox{\cite[Definition~2.9]{AAO14}}]\label{def:billT.3}
{\rm A {\bf closed $(\Delta, \Lambda)$-billiard trajectory} is  a continuous and piecewise smooth map
$\gamma:\mathbb{R}/(T\mathbb{Z})\to \partial(\Delta\times\Lambda)$ satisfying the following two conditions:
%\begin{description}
\begin{enumerate}
\item[\bf (BT1)] For every $t\notin\mathscr{B}_\gamma:=\{t\in \mathbb{R}/(T\mathbb{Z})\,|\,\gamma(t)\in
\partial\Delta\times\partial\Lambda\}$ one has
$$
\dot{\gamma}(t)=\kappa\mathfrak{X}(\gamma(t))
$$
for some positive constant $\kappa$.
 %it holds that  on the complementary of  in $\mathbb{R}/(T\mathbb{Z})$;

\item[\bf (BT2)] For every $t\in\mathscr{B}_\gamma$
the left and right derivative of ${\gamma}$ at it exists, and
    \begin{equation}\label{e:linesymp3}
   \dot{\gamma}^\pm(t)\in\{-\lambda(\nabla j_\Lambda(\gamma_p(t)),0)+\mu(0,\nabla j_\Delta(\gamma_q(t)))\,|\,
    \lambda\ge 0, \;\mu\ge 0,\;(\lambda,\mu)\ne (0,0)\}
    \end{equation}
    with $\gamma(t)=(\gamma_q(t),\gamma_p(t))$.
%\end{description}
\end{enumerate}
In general, for any  two smooth convex bodies   $\Delta\subset\mathbb{R}^n_q$ and $\Lambda\subset\mathbb{R}^n_p$,
a continuous and piecewise smooth map $\gamma:\mathbb{R}/(T\mathbb{Z})\to
\partial(\Delta\times\Lambda)$ is said to be a {\bf closed $(\Delta, \Lambda)$-billiard trajectory} if there exist
$\bar{q}\in{\rm Int}(\Delta)$ and $\bar{p}\in {\rm Int}(\Lambda)$
such that $\gamma-(\bar{q},\bar{p})$ is a closed $(\Delta-\bar{q}, \Lambda-\bar{p})$
 -billiard trajectory in the above sense.
  Such a closed $(\Delta, \Lambda)$-billiard trajectory is called {\bf proper} (resp. {\bf gliding}) if $\mathscr{B}_\gamma$ is a finite set (resp. $\mathbb{R}/(T\mathbb{Z})$,
i.e., $\gamma(\mathbb{R}/(T\mathbb{Z}))\subset\partial\Delta\times\partial\Lambda$ completely).
If $\tau$ is a linear anti-symplectic involution on $\mathbb{R}^{2n}$, and
$\Delta\times\Lambda\subset\mathbb{R}^n_q\times\mathbb{R}^n_p\equiv\mathbb{R}^{2n}$
is $\tau$-invariant, we call a closed $(\Delta, \Lambda)$-billiard trajectory
$\gamma:\mathbb{R}/(T\mathbb{Z})\to\partial(\Delta\times\Lambda)$ a {\bf $\tau$-brake
$(\Delta, \Lambda)$-billiard trajectory} if $\gamma(T-t)=\tau(\gamma(t))\;\forall t$.
}
\end{definition}

So if $\Lambda$ (resp. $\Delta$) is  centrally symmetric,
we have the notion of a $\tau_0$-brake (resp. $\hat{\tau_0}$) $(\Delta, \Lambda)$-billiard trajectory
on $\partial(\Delta\times\Lambda)$, where  $\tau_0$ and $\hat{\tau_0}$
are the anti-symplectic involution on $(\mathbb{R}^{2n},\omega_0)=(\mathbb{R}^n_q\times \mathbb{R}^n_p, dq\wedge dp)$
given by (\ref{e:1.can-inv}) and  by $\hat{\tau_0}(q,p)=(-q,p)$, respectively.
(Clearly, both involutions are symmetric with respect to $\langle\cdot,\cdot\rangle_{\mathbb{R}^{2n}}$.)

The projection curve $\pi_q(\gamma)$ of a closed $(\Delta, \Lambda)$-billiard
trajectory $\gamma$ is called a {\bf $\Lambda$-billiard trajectory in $\Delta$} (\cite{AAO14}).
If $\Lambda$ (resp. $\Delta$) is  centrally symmetric,  the projection curve $\pi_q(\gamma)$ of a $\tau_0$-brake (resp. $\hat{\tau_0}$-brake)  $(\Delta, \Lambda)$-billiard trajectory $\gamma$
is called {\bf a brake $\Lambda$-billiard trajectory in $\Delta$} (resp.
{\bf a brake $\Lambda$-billiard trajectory of second class in $\Delta$}).

\begin{proposition}[\hbox{\cite{AAO14, JinLu19}}]\label{prop:brake}
 For any  two smooth convex bodies   $\Delta\subset\mathbb{R}^n_q$ and $\Lambda\subset\mathbb{R}^n_p$,
 we have:
\begin{enumerate}
 \item[{\bf (i)}] If both $\Delta$ and $\Lambda$ are also strictly convex (i.e.,
they have strictly positive Gauss curvatures at every point of their boundaries),
 every closed $(\Delta, \Lambda)$-billiard trajectory is either proper or  gliding (\cite[Proposition~2.12]{AAO14}), and $c_{\rm HZ}(\Delta\times\Lambda)$ equals the length, with respect to the support function $h_{\Lambda}$, of the shortest periodic $\Lambda$-billiard trajectory in $\Delta$.
 (\cite[\S2]{AAO14}).
 \item[{\bf (ii)}] Every proper closed $(\Delta, \Lambda)$-billiard trajectory
$\gamma:\mathbb{R}/(T\mathbb{Z})\to \partial(\Delta\times\Lambda)$
cannot be contained in $\Delta\times\partial \Lambda$ or $\partial\Delta\times\Lambda$
(\cite[Claim~1.35]{JinLu19}). Consequently, $\gamma^{-1}(\partial\Delta\times\partial\Lambda)$ is nonempty.
\item[{\bf (iii)}] $\gamma:\mathbb{R}/(T\mathbb{Z})\to \partial(\Delta\times\Lambda)$ is a closed
 $(\Delta, \Lambda)$-billiard trajectory  if and only if it is a generalized closed characteristic on $\partial(\Delta\times\Lambda)$ (\cite{AAO14}, see also \cite[Claim~1.38]{JinLu19} for a proof.)
 \item[{\bf (iv)}] If $\Delta\times\Lambda\subset\mathbb{R}^n_q\times\mathbb{R}^n_p\equiv\mathbb{R}^{2n}$
is $\tau$-invariant for a linear anti-symplectic involution $\tau$ on $\mathbb{R}^{2n}$,
$\gamma:\mathbb{R}/(T\mathbb{Z})\to \partial(\Delta\times\Lambda)$ is a $\tau$-brake
 $(\Delta, \Lambda)$-billiard trajectory  if and only if it is a generalized $\tau$-brake closed characteristic on $\partial(\Delta\times\Lambda)$.
  \end{enumerate}
 \end{proposition}

 (iv) easily follows from (iii) and the above definitions.

 The action of a proper closed $(\Delta, \Lambda)$-billiard trajectory $\gamma$
with $m$ bouncing points is given by
\begin{equation}\label{e:linesymp4}
A(\gamma)=\sum^{m}_{j=0}h_\Lambda(q_j-q_{j+1})
\end{equation}
where $q_j=\pi_q(\gamma(t_i))$ are the projections to $\mathbb{R}^n_q$
of the bouncing points $\{\gamma(t_i)\,|\,t_i\in\mathscr{B}_\gamma\}$,  and $q_{m+1}=q_0$
(\cite[(7)]{AAO14}). In particular, if $\Lambda=B^n(1)$
the projection curve $\pi_q(\gamma)$ has the length
\begin{equation}\label{e:linesymp4.1}
L(\pi_q(\gamma))=\sum^{m}_{j=0}|q_{j+1}-q_j|=A(\gamma).
\end{equation}
This also holds for any gliding closed $(\Delta, B^n(1))$-billiard trajectory
$\gamma$ if $\Delta$ is strictly convex (\cite[Claim~1.37]{JinLu19}).

\begin{claim}\label{cl:billT.1}
A $B^n(1)$-billiard trajectory in $\Delta$ coming from
a proper closed $(\Delta,\break B^n(1))$-billiard trajectory
is a closed billiard trajectory in $\Delta$ as above.
Conversely, every closed billiard trajectory in  $\Delta$,  $\sigma:\mathbb{R}/(T\mathbb{Z})\to \Delta$,
is the projection to  $\Delta$ of a proper closed $(\Delta, B^n(1))$-billiard trajectory
whose action is equal to the length of $\sigma$.
\end{claim}

The second claim is a direct consequence of \cite[Claim~1.39]{JinLu19}.
From Claim~\ref{cl:billT.1} and Definition~\ref{def:billT.3} we deduce

\begin{claim}\label{cl:billT.2}
A brake $B^n(1)$-billiard trajectory in $\Delta$ coming from
a proper $\tau_0$-brake $(\Delta, B^n(1))$-billiard trajectory
is a brake billiard trajectory in $\Delta$ as above.
Conversely, every brake billiard trajectory $\sigma$ in  $\Delta$
is the projection to  $\Delta$ of a proper $\tau_0$-brake $(\Delta, B^n(1))$-billiard trajectory
whose action is equal to the length of $\sigma$.
Moreover, if $\Delta$ is  centrally symmetric, these also hold after
``brake $B^n(1)$-billiard trajectory in $\Delta$",
``brake billiard trajectory in $\Delta$" and ``$\tau_0$-brake"
are replaced by ``brake $B^n(1)$-billiard trajectory of second class in $\Delta$",
``brake billiard trajectory of second class in $\Delta$" and ``$\hat{\tau}_0$-brake",
respectively.
\end{claim}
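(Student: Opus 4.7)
\medskip

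The strategy is to bootstrap on Claim~\ref{cl:billT.1}, which already supplies the bijective correspondence (up to reparameterization) between proper closed $(\Delta,B^n(1))$-billiard trajectories $\gamma$ and closed billiard trajectories $\sigma=\pi_q(\gamma)$ in $\Delta$, together with the equality of action and length. What remains is to verify that, under this correspondence, the $\tau_0$-brake (respectively $\hat{\tau}_0$-brake) symmetry on $\gamma$ matches exactly the brake (respectively second class) symmetry on $\sigma$. The key structural fact to exploit is that on the smooth pieces of $\gamma$ we have $\dot\gamma=\kappa\mathfrak{X}(\gamma)$ with $\mathfrak{X}(q,p)=(-p,0)$ on $\Delta\times\partial B^n(1)$, so that the fibre coordinate satisfies $\gamma_p(t)=-\dot\gamma_q(t)/\kappa=-\dot\sigma(t)/|\dot\sigma|$.

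For the forward direction I would simply apply $\pi_q$ to $\gamma(T-t)=\tau_0\gamma(t)=(\gamma_q(t),-\gamma_p(t))$ and read off $\sigma(T-t)=\sigma(t)$; Claim~\ref{cl:billT.1} then certifies that $\sigma$ is a closed billiard trajectory in the usual sense, so in particular a brake billiard trajectory. For the converse I would lift $\sigma$ via the canonical construction of Claim~\ref{cl:billT.1} (on each straight segment set $\gamma_p=-\dot\sigma/|\dot\sigma|$, and define $\gamma_p$ at bounce points by the appropriate one-sided limits), and then check that $\sigma(T-t)=\sigma(t)$ forces $\dot\sigma(T-t)=-\dot\sigma(t)$ at smooth instants; consequently $\gamma_p(T-t)=-\gamma_p(t)$ and the identity $\gamma(T-t)=\tau_0\gamma(t)$ holds on $\mathbb{R}/T\mathbb{Z}$ minus a finite set, whence everywhere by continuity. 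A short separate check, using the symmetry of the reflection law across $T_{\sigma(t)}\partial\Delta$, is needed to confirm that if $t\in\mathscr{B}_\gamma$ then $T-t\in\mathscr{B}_\gamma$ and the one-sided derivatives of $\gamma$ swap correctly under $\tau_0$; this is the one genuinely fiddly step of the argument.

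For the "second class" variant, I would run the same scheme with $\hat{\tau}_0(q,p)=(-q,p)$ and the symmetry $\sigma(T-t)=-\sigma(t)$. The central symmetry $-\Delta=\Delta$ guarantees that $-\sigma$ remains a legitimate trajectory, and differentiating now gives $\dot\sigma(T-t)=\dot\sigma(t)$, hence $\gamma_p(T-t)=\gamma_p(t)$. Combining with $\gamma_q(T-t)=-\gamma_q(t)$ yields $\gamma(T-t)=\hat{\tau}_0\gamma(t)$. The match of actions and lengths in all cases follows from (\ref{e:linesymp4.1}) together with Claim~\ref{cl:billT.1}, so no new computation is required there.

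The main obstacle, as indicated, is the careful treatment of the bounce set: one must check that $\mathscr{B}_\gamma$ is symmetric under $t\mapsto T-t$ and that the jump of $\gamma_p$ at $t$ is compatible (under the relevant involution) with the jump at $T-t$. Everything else is essentially a transcription of Claim~\ref{cl:billT.1} together with the observation that $\tau_0$ and $\hat{\tau}_0$ act on the fibre coordinate $p$ exactly as the velocity symmetry of a brake, respectively second-class brake, trajectory demands.
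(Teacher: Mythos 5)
Your proposal is correct and matches the paper's intent: the paper dispatches Claim~\ref{cl:billT.2} with the single sentence that it follows from Claim~\ref{cl:billT.1} together with Definition~\ref{def:billT.3}, and your argument is precisely a worked-out version of that deduction. The key observations (that $\gamma_p=-\dot\sigma/|\dot\sigma|$ on the smooth pieces, that applying $\pi_q$ to $\gamma(T-t)=\tau_0\gamma(t)$ yields $\sigma(T-t)=\sigma(t)$, and the analogous computation with $\hat{\tau}_0$ and $\sigma(T-t)=-\sigma(t)$) are exactly the content the paper leaves implicit, and your attention to the symmetry of the bounce set under $t\mapsto T-t$ is the right thing to flag as the only nontrivial checkpoint.
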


%Using Proposition~\ref{prop:brake} and  Theorems~\ref{th:convex} and \ref{th:EHconvex}
%we prove the following theorem in Section~\ref{sec:BillT}.
Proposition~\ref{prop:brake} and  Theorems~\ref{th:convex} and \ref{th:EHconvex} immediately lead to next result.

\begin{thm}\label{th:billT.1}
For  a linear anti-symplectic involution $\tau$ on $(\mathbb{R}^{2n},\omega_0)$,
if the Lagrangian product of smooth convex bodies   $\Delta\subset\mathbb{R}^n_q$ and $\Lambda\subset\mathbb{R}^n_p$
are $\tau$-invariant, then
there is a $\tau$-brake $(\Delta, \Lambda)$  billiard trajectory $\gamma^{\ast}$ on $\partial(\Delta\times\Lambda)$ such that
\begin{eqnarray*}
&&c_{\rm EHZ,\tau}(\Delta\times \Lambda,\omega_0)=A(\gamma^{\ast})\\
&=&\min\{A(\gamma)>0\,|\,\gamma\;\text{is
a $\tau$-brake $(\Delta, \Lambda)$  billiard trajectory $\gamma^{\ast}$ on $\partial(\Delta\times\Lambda)$}\}.
\end{eqnarray*}
\end{thm}

%let us call the image of the projection curve $\gamma_q=\pi_q(\gamma)$
%a $a-\lambda$ {\bf billiard trajectory} in $\delta$, where $\pi_q:\mathbb{r}^{2n}\to\mathbb{r}^n_q$
%denote the projection to the configuration space.

For  a linear anti-symplectic involution $\tau$ on $(\mathbb{R}^{2n},\omega_0)$,
suppose that the Lagrangian product of convex bodies   $\Delta\subset\mathbb{R}^n_q$ and $\Lambda\subset\mathbb{R}^n_p$
are $\tau$-invariant. We define
\begin{equation}\label{e:linesymp5}
\left.\begin{array}{ll}
&\zeta_\Lambda^\tau(\Delta)=c_{\rm EHZ,\tau}(\Delta\times \Lambda)\quad\hbox{and}\\
&\zeta^\tau(\Delta):=\zeta_\Lambda^\tau(\Delta)\quad\hbox{if $\Lambda=B^n(1)$}.
\end{array}\right\}
\end{equation}
Clearly, $\zeta_{\Lambda_1}^\tau(\Delta_1)\le \zeta_{\Lambda_2}^\tau(\Delta_2)$ if
both are well-defined and $\Lambda_1\subset\Lambda_2$ and $\Delta_1\subset \Delta_2$.
 Claim~\ref{cl:billT.2} and Theorem~\ref{th:billT.1} yield next result.

\begin{claim}\label{cl:billT.6}
For a smooth convex body   $\Delta\subset\mathbb{R}^n_q$  it holds that
\begin{eqnarray*}
&&\zeta^{\tau_0}(\Delta)\le\inf\{L(\sigma)\,|\,\hbox{$\sigma$ is a brake billiard trajectory  in $\Delta$}\},\\
&&\zeta^{\hat{\tau}_0}(\Delta)\le\inf\{L(\sigma)\,|\,\hbox{$\sigma$ is a brake billiard trajectory of
 second class in $\Delta$}\}
\end{eqnarray*}
if $\Delta$ is  centrally symmetric. Moreover%, if $\Delta$ is also strictly convex then
\begin{eqnarray*}
&\zeta^{\tau_0}(\Delta)=\inf\{L(\sigma)\,|\,\hbox{$\sigma$ is a brake $B^n(1)$-billiard trajectory  in $\Delta$}\},\\
&\zeta^{\hat{\tau}_0}(\Delta)=\inf\{L(\sigma)\,|\,\hbox{$\sigma$ is a brake $B^n(1)$-billiard trajectory of
 second class in $\Delta$}\}
\end{eqnarray*}
if $\Delta$ is  centrally symmetric in addition.
\end{claim}

As in the proof of \cite[Theorem~1.1]{AAO14} using Corollary~\ref{cor:Brun.1} we
prove next theorem in Section~\ref{sec:BillT}.

\begin{thm}\label{th:billT.2}
Let $\tau$ be  a linear anti-symplectic involution  on $(\mathbb{R}^n_q\times \mathbb{R}^n_p,\,\omega_0)$,
and let convex bodies  $\Delta_1, \Delta_2\subset\mathbb{R}^n_q$ and $\Lambda\subset\mathbb{R}^n_p$
be such that  $\Delta_1\times\Lambda$, $\Delta_2\times\Lambda$
and $(\Delta_1+\Delta_2)\times\Lambda$
are $\tau$-invariant.
%Suppose that  ${\rm Int}(\Delta_1)\cap{\rm Int}(\Delta_2)\times {\rm Int}(\Lambda)$
%contains a fixed point $(\bar{q},\bar{p})$ of $\tau$.
 Then
\begin{equation}\label{e:linesymp6}
 \zeta^\tau_\Lambda(\Delta_1+\Delta_2)\ge \zeta^\tau_\Lambda(\Delta_1)+ \zeta^\tau_\Lambda(\Delta_2)
   \end{equation}
   and the equality holds if  there exist
    $c_{\rm EHZ,\tau}$-carriers for $\Delta_1\times \Lambda$ and $\Delta_2\times \Lambda$ which coincide up to dilation and translation in ${\rm Fix}(\tau)$.
\end{thm}

Recall that the inradius and the circumradius of a convex body $\Delta\subset\mathbb{R}^n$
are
\begin{eqnarray*}
&&r(\Delta)=\max\{r>0\,|\, B^n(q,r)\subset \Delta\;\hbox{for some}\;q\in \Delta\}\quad\hbox{and}\\
&&R(\Delta)=\min\{R>0\,|\, B^n(q,R)\supseteq \Delta\;\hbox{for some}\;q\in\mathbb{R}^n\}
\end{eqnarray*}
respectively, i.e., the former is the radius
of the largest ball contained in $\Delta$, and the latter
the radius of the smallest ball containing $\Delta$ (\cite[page 129]{Sch93}).
In Section~\ref{sec:BillT}, we prove next result.

\begin{thm}\label{th:billT.3}
For  a convex body $\Delta\subset\mathbb{R}^n_q$
there holds
\begin{equation}\label{e:linesymp8}
4 r(\Delta)\le\zeta^{\tau_0}(\Delta)\le 4 R(\Delta).
\end{equation}
Moreover, if $\Delta$ is  centrally symmetric, then $B^{n}(\bar{q},r)\subset\Delta$ (resp.
$\Delta\subset \overline{B^{n}(\bar{q},R)}$) implies that
$B^{n}(0,r)\subset\Delta$ (resp. $\Delta\subset \overline{B^{n}(0,R)}$), and
the conclusions in (\ref{e:linesymp8}) also hold true if ``$\tau_0$" and
``brake billiard trajectory  in $\Delta$" are replaced by
``$\hat\tau_0$" and ``brake billiard trajectory of second class in $\Delta$", respectively.
\end{thm}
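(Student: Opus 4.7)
The plan is to reduce the bounds on $\zeta^{\tau_0}(\Delta)=c_{\rm EHZ,\tau_0}(\Delta\times B^n(1))$ (and its $\hat\tau_0$-analogue) to those for a Euclidean ball centered at the origin, using translation invariance of the symmetric Ekeland--Hofer--Zehnder capacity by elements of $L_0$ (Proposition~\ref{prop:EH.1.7}(i), extended by Proposition~\ref{prop:EH.1.9}), the monotonicity property (Proposition~\ref{prop:EH.1.7}(ii)), and explicit bouncing-ball billiard trajectories along a diameter.

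For the lower bound, pick $\bar q\in\mathbb{R}^n_q$ with $B^n(\bar q,r(\Delta))\subset\Delta$. Since $B^n(1)$ is invariant under $p\mapsto-p$, both $\Delta\times B^n(1)$ and $(\Delta-\bar q)\times B^n(1)$ are $\tau_0$-invariant, and with $w=(-\bar q,0)\in L_0={\rm Fix}(\tau_0)$ Proposition~\ref{prop:EH.1.7}(i) gives $\zeta^{\tau_0}(\Delta)=c_{\rm EHZ,\tau_0}((\Delta-\bar q)\times B^n(1))\ge c_{\rm EHZ,\tau_0}(B^n(0,r(\Delta))\times B^n(1))$. Since every generalized $\tau_0$-brake characteristic is in particular a generalized characteristic, the action--capacity identifications in Theorem~\ref{th:convex} together with the analogous non-symmetric result yield $c_{\rm EHZ,\tau_0}(D)\ge c_{\rm EHZ}(D)$ on every $\tau_0$-invariant convex body $D$. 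Combined with the classical fact that the shortest periodic billiard trajectory in a Euclidean ball of radius $r$ is the bouncing-ball orbit of length $4r$---so that $c_{\rm EHZ}(B^n(0,r)\times B^n(1))=4r$ by Proposition~\ref{prop:brake}(i)---this gives $\zeta^{\tau_0}(\Delta)\ge 4r(\Delta)$.

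For the upper bound, pick $\bar q$ with $\Delta\subset\overline{B^n(\bar q,R(\Delta))}$; the same reduction yields $\zeta^{\tau_0}(\Delta)\le\zeta^{\tau_0}(B^n(0,R(\Delta)))$. Parametrize the bouncing-ball orbit on the $e_1$-axis as $\sigma\colon\mathbb{R}/(4R\mathbb{Z})\to B^n(0,R)$ with $\sigma(0)=-Re_1$, $\sigma(2R)=Re_1$ at unit speed. One checks directly that $\sigma(4R-t)=\sigma(t)$, so $\sigma$ is a brake billiard trajectory of length $4R$; applying the first inequality of Claim~\ref{cl:billT.6} (valid for any smooth convex body) then gives $\zeta^{\tau_0}(B^n(0,R))\le 4R$.

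Finally, for the centrally symmetric and second-class case, the inclusion $B^n(\bar q,r)\subset\Delta$ together with $-B^n(\bar q,r)\subset-\Delta=\Delta$ and convexity of $\Delta$ forces $B^n(0,r)\subset\Delta$; analogously, $\Delta\subset\overline{B^n(\bar q,R)}$ forces $\Delta\subset\overline{B^n(0,R)}$ via the identity $|x|^2=\tfrac12(|x-\bar q|^2+|x+\bar q|^2)-|\bar q|^2\le R^2$. Since $L_{\hat\tau_0}=\{0\}\times\mathbb{R}^n_p$, the set $\Delta\times B^n(1)$ is $\hat\tau_0$-invariant exactly when $\Delta=-\Delta$, and under that hypothesis no translation is required; the lower bound argument then carries over verbatim. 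For the upper bound, reparametrize the bouncing-ball orbit so that it passes through the origin at $t=0$, setting $\sigma(0)=0$, $\sigma(R)=Re_1$, $\sigma(3R)=-Re_1$ at unit speed; then $\sigma(4R-t)=-\sigma(t)$, so $\sigma$ is a brake billiard trajectory of second class of length $4R$ in $B^n(0,R)$, and the second inequality of Claim~\ref{cl:billT.6} (valid for centrally symmetric smooth $\Delta$) completes the proof. The main bookkeeping issue is simply matching the two parametrizations of the bouncing-ball orbit to the two brake conditions; no deeper new argument is required beyond assembling the capacity tools already established in the paper.
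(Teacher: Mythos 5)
Your proof is correct. It performs the same reduction to balls as the paper does, via translation by the fixed point $(\bar q,0)\in L_0$ and monotonicity, but it handles the ball computation by a genuinely different route. The paper evaluates $c_{\rm EHZ,\tau_0}(B^{n}(r)\times B^n(1))$ exactly by pulling back under the $\tau_0$- and $\hat\tau_0$-commuting symplectomorphism of (\ref{e:BrunQ}), conformality, and the identity (\ref{e:BrunI}), getting
$c_{\rm EHZ,\tau_0}(B^{n}(r)\times B^n(1))=c_{\rm EHZ,\tau_0}(B^n(\sqrt{r})\times B^n(\sqrt{r}))=4r$.
You avoid (\ref{e:BrunI}) entirely and split the estimate: for the lower bound you note that the representation formula of Theorem~\ref{th:convex} (minimizing action over generalized $\tau_0$-brake characteristics) dominates the classical representation (minimizing over all generalized characteristics), hence $c_{\rm EHZ,\tau_0}\ge c_{\rm EHZ}$ on $\tau_0$-invariant convex bodies, and then use the classical value $c_{\rm EHZ}(B^{n}(r)\times B^n(1))=4r$; for the upper bound you exhibit explicit bouncing-ball brake (resp.\ second-class brake) trajectories of length $4R$ in $B^n(0,R)$ and quote Claim~\ref{cl:billT.6}. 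Your route is slightly longer but more elementary in spirit: it traces the upper bound to a concrete billiard orbit rather than to the Mahler-type equality (\ref{e:BrunI}), and it only ever needs a one-sided comparison on the lower-bound side. The verifications you give of the brake conditions $\sigma(-t)=\sigma(t)$ (resp.\ $\sigma(-t)=-\sigma(t)$) for the two parametrizations of the bouncing-ball orbit, and of the two centrally-symmetric inclusions via convexity and the parallelogram identity, are accurate.
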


%For a smooth convex body $\Delta\subset\mathbb{R}^n_q$, (\ref{e:linesymp8}) and
% Claim~\ref{cl:billT.6} imply
%\begin{equation}\label{e:linesymp8.1}
% 4 r(\Delta)\le \inf\{L(\sigma)\,|\,\hbox{$\sigma$ is a brake billiard trajectory  in $\Delta$}\}.
%\end{equation}

%{\rm (ii)} If $\Delta$ is contained in the closure of the ball $B^{n}(\bar{q},R)$, then
%\begin{equation}\label{e:linesymp8.2}
%\zeta^{\tau_0}(\Delta)\le 4R.
%\end{equation}

 The {\bf width} of a convex body $\Delta\subset\mathbb{R}^n_q$ is the thickness of the
narrowest slab which contains $\Delta$, i.e.,
${\rm width}(\Delta)=\min\{h_\Delta(u)+ h_\Delta(-u)\,|\, u\in S^{n-1}\}$,
where $S^{n-1}=\{u\in\mathbb{R}^n\;\&\;|u|=1\}$. Let
\begin{eqnarray}\label{e:linesymp12}
&&S^{n-1}_\Delta:=\{u\in S^{n-1}\,|\, {\rm width}(\Delta)=h_\Delta(u)+ h_\Delta(-u)\},\\
&&H_u:=\{x\in\mathbb{R}^n\,|\,\langle x, u\rangle_{\mathbb{R}^n}=(h_\Delta(u)- h_\Delta(-u))/2\},\label{e:linesymp13}\\
&&Z^{2n}_\Delta:=([-{\rm width}(\Delta)/2, {\rm width}(\Delta)/2]\times\mathbb{R}^{n-1})\times ([-1,1]\times\mathbb{R}^{n-1}).\label{e:linesymp14}
\end{eqnarray}

\begin{thm}\label{th:billT.4}
 For $u\in S^{n-1}_\Delta$ and $\bar{q}\in H_u$  there holds
\begin{eqnarray}\label{e:linesymp16}
\zeta^{\tau_0}(\Delta)\le  c_{\rm EHZ,\tau_0}(Z^{2n}_\Delta)=2{\rm width}(\Delta).
\end{eqnarray}
\end{thm}

This will be proved in Section~\ref{sec:BillT}.
Since ${\rm width}(\Delta)\le (n+1)r(\Delta)$ by \cite[(1.2)]{Sch09},
 we have
  \begin{eqnarray}\label{e:linesymp17}
\zeta^{\tau_0}(\Delta)\le 2(n+1)r(\Delta).
\end{eqnarray}
This is the corresponding form of \cite[Theorem~1.3]{AAO14}:
$\xi(\Delta)\le 2(n+1)r(\Delta)$ for a smooth convex body $\Delta\subset\mathbb{R}^n_q$.
\\

%\subsection{Furthermore research problems}\label{sec:future}

\noindent{\bf Organization of the paper}. %The paper is organized as follows.
 The arrangements of the paper is as follows. \\
 $\bullet$ Section~\ref{section:space} gives  related preparations.\\
 $\bullet$ Section~\ref{sec:EH.2} gives the variational explanation for $c_{\rm EH,\tau_0}$.\\
 $\bullet$ Section~\ref{sec:convex}  proves Theorem~\ref{th:convex}.\\
  $\bullet$ Section~\ref{sec:EH.3} proves  Theorems~\ref{th:EHconvex}, \ref{th:EHproduct}. \\
 $\bullet$ Section~\ref{sec:EH.4} proves  Theorem~\ref{th:EHcontact}.\\
 $\bullet$ Section~\ref{sec:Brunn} proves  Theorem~\ref{th:Brun.2} and Corollaries~\ref{cor:Brun.2}, \ref{cor:Brun.4}.\\
 $\bullet$ Section~\ref{sec:BillT} proves Theorems~\ref{th:billT.2},~\ref{th:billT.3}, \ref{th:billT.4}.\\

%2 Preliminary results
%2.1 Notation and conventions

%Preliminaries %(预备知识)Variational  preparations

%\section{Variational frame and related preparations}\label{section:space}

\section{Preliminaries}\label{section:space}
\setcounter{equation}{0}

For the spaces defined in (\ref{e:spacees}),
by Propositions~3,4 in \cite[pages 84-85]{HoZe94}, we immediately obtain the following
two results.

\begin{proposition}\label{prop:compact}
   Assume $t>s\geq 0$. Then the inclusion map $I_{t,s}:{E}^t_{\tau_0}\rightarrow {E}^s_{\tau_0}$ is compact.
\end{proposition}

\begin{proposition}\label{prop:e1}
  Assume $s>\frac{1}{2}$. If $x\in E^s_{\tau_0}$, then $x$ is continuous, $x(1+t)=x(1)$ and $x(-t)=\tau_0x(t)$
  for all $t\in\mathbb{R}$. Moreover, there exists a constant $c=c_s$ such that
  $$
  \sup_{t\in [0,1]} |x(t)|\leq c\|x\|_{s}.
  $$
\end{proposition}

Define  $\mathfrak{a}:\mathbb{E}\rightarrow\mathbb{R}$ by
\begin{equation}\label{e:aaction}
\mathfrak{a}(x)=\frac{1}{2}(\|x^+\|^2_{1/2}-\|x^-\|^2_{1/2}).
\end{equation}
It is of class $C^\infty$ and has the gradient
$\nabla \mathfrak{a}(x)=x^+-x^-\in \mathbb{E}$.

\begin{remark}\label{rem:action}
 {\rm  For $x\in C^{1}(S^1,\mathbb{R}^{2n})$ satisfying $x(-t)=\tau_0x(t)\;\forall t\in\mathbb{R}$,
  there holds $x\in E^1_{\tau_0}$ and
  $$
  \mathfrak{a}(x)=\frac{1}{2}\int_0^1\langle-J_0\dot{x},x\rangle_{\mathbb{R}^{2n}}=A(x),
  $$
  where $A(x)$ is the action of $x$ defined in (\ref{e:action1}).
  In fact, let us write {\small$x=\sum_{j\in\mathbb{Z}}e^{2\pi jtJ_0}x_j$}. Then
  $$
  x(-t)=\sum_{j\in\mathbb{Z}}e^{-2\pi jtJ_0}x_j\quad\hbox{and}\quad
  \tau_0x(t)=\sum_{j\in\mathbb{Z}}\tau_0e^{2\pi jtJ_0}x_j=\sum_{j\in\mathbb{Z}}e^{-2\pi jtJ_0}\tau_0x_j.
  $$
  It follows that $x_j=\tau_0x_j$, i.e., $x_j\in L_0$,  $\forall j\in\mathbb{Z}$. Moreover,
   $$
   -J_0\dot{x}=\sum_{0\ne j\in\mathbb{Z}}2\pi je^{2\pi jtJ_0}x_j
   $$
      and $-J_0\dot{x}\in L^2$ implies that $\sum_{k\in \mathbb{Z}} |2\pi j|^2|x_j|^2<\infty$.
  It is easily computed that
  \begin{eqnarray*}
  \frac{1}{2}\int_0^1\langle -J_0\dot{x},x\rangle_{\mathbb{R}^{2n}}&=&\sum_{j\in\mathbb{Z}}2\pi |j||x_j|^2\\
                     &=&2\pi\sum_{j>0}j|x_j|^2-2\pi\sum_{j<0}|j||x_j|^2\\
                     &=&\|x^+\|^2_{1/2}-\|x^+\|^2_{1/2}.
  \end{eqnarray*}}
\end{remark}

For a $C^2$ function $H:\mathbb{R}^{2n}\rightarrow\mathbb{R}$ satisfying
(H1) and (H3) under Definition~\ref{def:deform},
%\begin{description}
%\item[(H1)] $H(z)=H(N_0z)$,
%\item[(H2)] there exist $z_0\in L_0$, real numbers $a> 2\pi$ and $b$
%such that $H(z)=a|z|^2+ \langle z, z_0\rangle+ b$ outside a compact subset of $\mathbb{R}^{2n}$,
%\end{description}
we define
$$
\hat{b}_H:L^2([0,1])\rightarrow\mathbb{R},\; x\mapsto \int_0^{1}H(x(t))dt.
$$
Clearly $\hat{b}_H$ is differentiable and has gradient $\nabla \hat{b}_H(x)=\nabla H(x)\in L^2$. Next we define
$$
\mathfrak{b}_H:\mathbb{E}\rightarrow\mathbb{R},\;x\mapsto \hat{b}_H(j(x)),
$$
 where
 $j: \mathbb{E}\rightarrow L^2$ is the inclusion map.
 Let $j^{\ast}:L^2\rightarrow \mathbb{E}$ be the adjoint operator of $j$
  defined by  $\langle j(x),y\rangle_{L^2}=\langle x, j^{\ast}(y)\rangle_{1/2}$
 for all $x\in \mathbb{E}$ and $y\in L^2$. A direct computation yields $\nabla \mathfrak{b}_H(x)=j^{\ast}\nabla H(x)$ for $x\in \mathbb{E}$.  By the arguments in \cite[pages 86-87]{HoZe94}, we obtain the following two propositions.

 \begin{proposition}\label{prop:jast}
   $j^{\ast}(L^2)\subset E^1_{\tau_0}$ and $\|j^\ast y\|_1\le \|y\|_{L^2}\;\forall y\in L^2$, so $j^{\ast}$ is a compact operator.
 \end{proposition}

 \begin{proposition}\label{prop:Lip}
   The  gradient $\nabla \mathfrak{b}_H:\mathbb{E}\rightarrow \mathbb{E}$ is compact and
    for some constant $C_1>0$ it holds that
   $$
   \|\nabla \mathfrak{b}_H(x)-\nabla \mathfrak{b}_H(y)\|_{1/2}\leq C_1\|x-y\|_{1/2},\,\forall x, y\in \mathbb{E}.
   $$
     Moreover, there exist positive numbers $C_2$ and $C_3$ such that $|\mathfrak{b}_H (x)|\leq C_2 \|x\|^2_{L^2}+C_3 $,
     $\forall x\in \mathbb{E}$.
 \end{proposition}

 Then for the above $H$ Proposition~\ref{prop:Lip} implies that the functional defined by (\ref{e:EH.1.1}),
% Define functional
\begin{equation}\label{e:Phi}
\Phi_H:\mathbb{E}\rightarrow\mathbb{R}^{2n},\;x\mapsto \mathfrak{a}(x)-\mathfrak{b}_H(x),
\end{equation}
% Then $\Phi_H$
 is $C^{1,1}$ and the gradient $\nabla \Phi_H=x^+-x^--\nabla \mathfrak{b}_H$ satisfying global Lipschitz condition.
Hence the negative gradient flow of $\Phi_{H}$ defined by
\begin{eqnarray*}
\frac{d\phi^t(x)}{dt}=-\nabla\Phi_{H}(\phi^t(x))\quad\hbox{and}\quad
\phi^0(x)=x
\end{eqnarray*}
exists for all $t\in\mathbb{R}$ and $x\in \mathbb{E}$.

 \begin{proposition}\label{prop:flow}
  The flow of $\dot{x}=-\nabla\Phi_{{H}}(x)$ admits the representation
  $$
  x\cdot t =e^tx^-+x^0+e^{-t}x^{+}+K(t,x),
  $$
  where $K:\mathbb{R}\times \mathbb{E}\rightarrow \mathbb{E}$ is continuous and maps bounded sets into precompact sets.
\end{proposition}

Note that $\Phi_{{H}}$ is also a $C^1$ functional on the Hilbert space
$E^{1/2}$ (denoted by $\widehat{\Phi}_H$ when it is consider as a functional on $E^{1/2}$)
and that for $x\in \mathbb{E}$ there holds $\nabla\widehat{\Phi}_H(x)=\check{\tau}_0\nabla\widehat{\Phi}_H(x)$ and so $\nabla \Phi_H(x)=\nabla\widehat{\Phi}_H(x)$.
Using \cite[page 88, Lemma~5]{HoZe94}  and the principle of symmetric criticality
due to Palais \cite{Palais79} we can obtain next result.

\begin{proposition}\label{solution}
  If $x\in \mathbb{E}$ is a critical point of $\Phi_{H}$, then
$x$ is in $C^2(S^1,\mathbb{R}^{2n})$ and satisfies
$$
\dot{x}=J_0\nabla H (x)\quad\hbox{and}\quad
x(-t)=\tau_0x(t).
$$
\end{proposition}

Recall that a $C^1$ functional $f$ on a Banach space $X$ is said to satisfy
the {\bf Palais-Smale condition} ((PS) {\bf condition}, for short)  if
every sequence $(u_n)\subset X$ with $f'(u_n)\to 0$ and $(f(u_n))$  bounded
 has a subsequence converging to $u\in X$.

We say $H\in C^2(\mathbb{R}^{2n},\mathbb{R}_+)$ to be {\bf  nonresonant} if it satisfies
(H3) under Definition~\ref{def:deform}  with $a\notin \mathbb{Z}\pi$.
By slightly modifying the proof  of \cite[page 89, Lemma~6]{HoZe94}(see \cite[Proposition~2.10]{JinLu19} )
 we get

\begin{proposition}\label{prop:PSmale}
Suppose that $H\in C^2(\mathbb{R}^{2n},\mathbb{R})$ satisfies  (H1) and (H3) under Definition~\ref{def:deform},
and is also  nonresonant. Then each sequence $(x_k)\subset \mathbb{E}$ with $\nabla\Phi_{H}(x_k)\to 0$
 has a convergent subsequence. In particular,  $\Phi_H$ satisfies the (PS) condition.
\end{proposition}

\section{Proof of Theorem~\ref{th:EH.1.6}}\label{sec:EH.2}
\setcounter{equation}{0}

%\section{The variational explanation for $c_{\rm EH,\tau_0}$}\label{sec:EH.2}
%\setcounter{equation}{0}

%\begin{thm}\label{th:EH.1.6}%[\hbox{\cite[Prop.3.4.1]{Sik90}}]
%If $H\in C^\infty(\mathbb{R}^{2n},\mathbb{R})$ satisfies (H1)-(H3)
%in Section~\ref{sec:1.EH}
%and is also nonresonant, then $c_{\rm EH,\tau_0}(H)$ is a positive critical value of $\Phi_H$ on $\mathbb{E}$.
%\end{thm}

The main aim of this section  is to prove Theorem~\ref{th:EH.1.6}.
As in \cite{JinLu19} our arguments are closely related to  Sikorav's approach \cite{Sik90}.
The proof will be completed by serval propositions.
Let $\Gamma$ be the set of all admissible deformations from Definition~\ref{def:deform}

\begin{proposition}\label{prop:EH.1.1}
  %\begin{description}
  \begin{enumerate}
   \item[\bf (i)] For any  $\gamma \in\Gamma$ and $\tilde{\gamma} \in\Gamma$, then there holds $\gamma \circ\tilde{\gamma} \in\Gamma$.
   \item[\bf (ii)] %(Intersection property.)
   Denote by $S^+$ the unit sphere in $\mathbb{E}^+$. For any $e\in \mathbb{E}^+\setminus\{0\}$ and $\gamma\in\Gamma$, there holds
       \begin{equation}\label{nonempty}
       \gamma(S^+)\cap (\mathbb{E}^-\oplus \mathbb{E}^0\oplus \mathbb{R}_{>0}e)\neq \emptyset.
       \end{equation}
  %\end{description}
  \end{enumerate}
\end{proposition}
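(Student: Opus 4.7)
Given homotopies $(\gamma_u)_{u\in[0,1]}$ and $(\tilde\gamma_u)_{u\in[0,1]}$ realizing $\gamma$ and $\tilde\gamma$ respectively, my plan is the standard concatenation
$$H_s = \begin{cases} \tilde\gamma_{2s}, & 0\le s\le 1/2, \\ \gamma_{2s-1}\circ\tilde\gamma, & 1/2\le s\le 1.\end{cases}$$
On $[0,1/2]$ conditions (i) and (ii) of Definition~\ref{def:deform} are inherited from $\tilde\gamma$. On $[1/2,1]$, writing $y=\tilde\gamma(x)=\tilde a(x,1)x^+ + \tilde b(x,1)x^0 + \tilde c(x,1)x^- + \tilde K(x,1)$ and plugging this into the structural formula for $\gamma_{2s-1}$, I would split $\tilde K(x,1)$ along the three spectral summands and reshuffle. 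The coefficients of $x^+$, $x^0$, $x^-$ that emerge are products of strictly positive continuous scalars (hence strictly positive and continuous), and all remaining terms assemble into a single compact residue since sums and pre/post-compositions of continuous maps carrying bounded sets to precompact sets retain that property. Condition (i) is preserved because the composition of two maps preserving $\mathbb{E}\setminus(\mathbb{E}^-\oplus\mathbb{E}^0)$ still does.

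\textbf{Part (ii).} After normalizing $\|e\|=1$, split $\mathbb{E}^+=\mathbb{R}e\oplus e^\perp$ and let $\pi_{e^\perp}$ denote orthogonal projection onto $e^\perp$. For $x\in S^+$ one has $x=x^+$ and so, by property (ii) of the definition,
$$\pi^+\gamma_u(x) = a(x,u)\, x + \pi^+ K(x,u),$$
which is a compact perturbation of the positive scaling $a(x,u)\,\mathrm{id}$. Property (i) of the definition, applied to $S^+\subset\mathbb{E}\setminus(\mathbb{E}^-\oplus\mathbb{E}^0)$, guarantees $\pi^+\gamma_u(x)\neq 0$ for all $(x,u)\in S^+\times[0,1]$. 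The goal is to produce $x\in S^+$ with $\pi_{e^\perp}\pi^+\gamma(x)=0$ and $\langle e,\pi^+\gamma(x)\rangle>0$, which is exactly the required membership in $\mathbb{E}^-\oplus\mathbb{E}^0\oplus\mathbb{R}_{>0}e$.

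I plan to carry this out via Galerkin approximation and Brouwer degree. Fix an increasing sequence of finite-dimensional subspaces $E^+_N\subset\mathbb{E}^+$, each containing $e$, with $\overline{\bigcup_N E^+_N}=\mathbb{E}^+$, and likewise for $\mathbb{E}^-$; since the $K$-part is compact, approximate $\gamma_u$ by maps $\gamma^N_u$ whose compact correction takes values in the finite-dim slice $E_N=E^-_N\oplus\mathbb{E}^0\oplus E^+_N$ (this is standard because $\mathbb{E}^0=L_0$ is already finite-dimensional). On $S^+\cap E^+_N$ the assignment $x\mapsto \pi_{e^\perp}\pi^+\gamma^N(x)$ lands in $e^\perp\cap E^+_N$, a space of codimension one inside the ambient sphere. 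The homotopy $\gamma^N_u$ links this map to the identity via (i) of Definition~\ref{def:deform}, and for $\gamma_0=\mathrm{id}$ the only zeros on $S^+\cap E^+_N$ are $\pm e$, with $+e$ carrying positive $e$-coefficient. A Brouwer degree computation along this homotopy produces a zero $x_N$ at which the $e$-coefficient remains positive; compactness of $\pi^+K(\cdot,1)$ then permits passage to a limit $x_*\in S^+$ satisfying the required conditions.

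The hard part is the degree argument in the Galerkin step, specifically ensuring that the sign of $\langle e,\pi^+\gamma_u(x)\rangle$ cannot pass through zero along the homotopy; this is precisely where condition (i) of Definition~\ref{def:deform} is indispensable, since a sign change at some $(x_0,u_0)$ would force $\pi^+\gamma_{u_0}(x_0)\in e^\perp$, which combined with the hypothetical vanishing of $\pi_{e^\perp}\pi^+\gamma_{u_0}(x_0)$ would force $\pi^+\gamma_{u_0}(x_0)=0$, contradicting (i). All other pieces (compactness, continuity, persistence of positivity in the limit) are routine.
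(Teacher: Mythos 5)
The paper does not actually prove this Proposition; it merely cites Section~3.1 of Sikorav \cite{Sik90}, so there is no in-paper argument to compare against.

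Your proof of part (i) is correct. The concatenated homotopy is the standard construction, and your check that the composition retains the structure in Definition~\ref{def:deform}(ii) is sound: the scalar factors multiplying $x^{\pm},x^{0}$ are products of strictly positive continuous scalars, and the remaining terms are sums of maps that send bounded sets to precompact sets (each is either a projection of $\tilde K$ times a scalar bounded on bounded sets, or $K$ post-composed with the bounded-to-bounded map $\tilde\gamma$). Property (i) of the Definition is obviously preserved under composition.

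For part (ii) the overall strategy is right: exploit that $P^{+}\gamma_u|_{S^+}=a(x,u)x+P^{+}K(x,u)$ is a compact perturbation of a positive rescaling, and use Definition~\ref{def:deform}(i) to forbid $P^{+}\gamma_u(x)=0$ on $S^+$. But the degree argument as you have set it up does not parse. You take the map $x\mapsto P^{+}\gamma^{N}(x)$ projected to $e^{\perp}\cap E^{+}_N$, viewed as a map from the sphere $S^+\cap E^{+}_N$ (dimension $\dim E^{+}_N-1$) into $e^{\perp}\cap E^{+}_N$ (also dimension $\dim E^{+}_N-1$); such a map $S^{d-1}\to\mathbb{R}^{d-1}$ has no Brouwer degree, and no ``degree computation along the homotopy'' is available in that form. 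The degree must be taken over the ball, with the $e$-direction kept in the target rather than projected out. One way: set $\tilde\Phi_u(x):=x+a(x,u)^{-1}P^{+}K(x,u)$ on $S^+$ (this is identity plus compact because $a$ is bounded away from $0$ and $\infty$ on bounded sets, as forced by the requirement that $(a,b,c,K)$ map bounded sets to compact subsets of $(0,+\infty)^3\times\mathbb{E}$), extend it positively homogeneously to the closed unit ball $\bar B^{+}\subset\mathbb{E}^{+}$, establish the uniform lower bound $\inf_{S^+\times[0,1]}\|P^{+}\gamma_u\|>0$ (a small compactness argument that you would in any case need to guarantee that a Galerkin truncation of $\gamma_u$ still satisfies Definition~\ref{def:deform}(i) on $S^+\cap E^{+}_N$ — you asserted this as ``standard'' but it is exactly the missing step), and then compute $\deg_{\rm LS}(\tilde\Phi_u-\epsilon e,\,B^{+},\,0)$ for $0<\epsilon$ below that lower bound divided by $\sup a$; it equals $1$ at $u=0$, hence at $u=1$, and positive homogeneity converts the interior zero into the desired point of $S^+$ sent into $\mathbb{E}^-\oplus\mathbb{E}^0\oplus\mathbb{R}_{>0}e$. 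Formulated this way, Leray--Schauder degree works directly in $\mathbb{E}^{+}$ and the Galerkin reduction is unnecessary. So: right idea, but the degree needs to live on the ball rather than the sphere, the $e$-coordinate must stay in the codomain, and the uniform nonvanishing of $P^{+}\gamma_u$ on $S^+$ has to be proved, not assumed.
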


See Section~3.1 in \cite{Sik90} for the proof of the above proposition.
\begin{proposition}\label{prop:EH.1.3}%[\hbox{\cite[Prop.3.3.1]{Sik90}}]
If $H\in C^0(\mathbb{R}^{2n},\mathbb{R}_+)$, then
\begin{equation}\label{e:EH.1.3}
c_{\rm EH,\tau_0}(H)\le\sup_{z\in\mathbb{R}^{2n}}\left(\pi|z_1|^2-H(z)\right),
\end{equation}
where $z=(x_1,\cdots,x_n,y_1,\cdots,y_n)^{T}$ and $z_1=(x_1,y_1)^{T}$.
\end{proposition}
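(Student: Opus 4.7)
The plan is to bound $c_{\mathrm{EH},\tau_0}(H)=\sup_{\gamma\in\Gamma}\inf_{x\in\gamma(S^+)}\Phi_H(x)$ by exhibiting, for every admissible deformation $\gamma\in\Gamma$, a single test point $x_\gamma\in\gamma(S^+)$ at which $\Phi_H(x_\gamma)\le M:=\sup_{z\in\mathbb{R}^{2n}}(\pi|z_1|^2-H(z))$. Such a point will be produced by intersecting $\gamma(S^+)$ with a carefully chosen half-line via Proposition~\ref{prop:EH.1.1}(ii); the rest is a direct Fourier computation. If $M=+\infty$ the statement is vacuous, so we may assume $M<+\infty$.

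The key idea is to pick a unit vector $e\in S^+\subset\mathbb{E}^+$ whose pointwise image lies entirely in the first coordinate pair, so that the inequality $H(z)\ge\pi|z_1|^2-M$ absorbs exactly the positive contribution of $e$ to $\Phi_H$. Identifying $L_0\cong\mathbb{R}^n_q$ and letting $e_1=(1,0,\dots,0)\in L_0$, I would set
\[
e(t):=\tfrac{1}{\sqrt{2\pi}}\,e^{2\pi tJ_0}e_1 \;=\; \tfrac{1}{\sqrt{2\pi}}\bigl(\cos 2\pi t,\,\sin 2\pi t,\,0,\dots,0\bigr).
\]
Its only nonzero Fourier coefficient is the first one (equal to $e_1/\sqrt{2\pi}\in L_0$), so $e\in\mathbb{E}^+$ and $\|e\|_{1/2}^2=2\pi\cdot(1/\sqrt{2\pi})^2=1$, hence $e\in S^+$. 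By Proposition~\ref{prop:EH.1.1}(ii) there exists $x\in\gamma(S^+)\cap(\mathbb{E}^-\oplus\mathbb{E}^0\oplus\mathbb{R}_{>0}e)$, so $x=x^-+x^0+se$ with $s>0$ and $\|x^+\|_{1/2}^2=s^2$.

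To estimate $\Phi_H(x)$, encode the first coordinate pair of $x(t)$ as the complex-valued function $w(t)=x(t)_{q_1}+ix(t)_{p_1}$. Since every Fourier coefficient $x_j$ of $x$ lies in $L_0$, the action of $e^{2\pi jtJ_0}$ on the first pair reduces to multiplying a real scalar by $e^{2\pi ijt}$; hence $w(t)=\sum_{j\in\mathbb{Z}}a_je^{2\pi ijt}$ with $a_j\in\mathbb{R}$, $a_0=x^0_{q_1}$, $a_1=s/\sqrt{2\pi}$, $a_j=0$ for $j\ge 2$, and $(a_j)_{j<0}$ equal to the first-slot components of the Fourier coefficients of $x^-$. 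Parseval's identity gives $\int_0^1|w(t)|^2\,dt=\sum_j a_j^2$. Combined with $H\ge\pi|z_1|^2-M$ and the bounds $\|x^-\|_{1/2}^2=\sum_{j<0}2\pi|j||x_j|^2\ge 2\pi\sum_{j<0}a_j^2$ and $s^2/2=\pi a_1^2$, this yields
\[
\Phi_H(x)\le\frac{s^2}{2}-\pi\sum_{j<0}a_j^2-\pi\sum_{j\in\mathbb{Z}}a_j^2+M=M-\pi a_0^2-2\pi\sum_{j<0}a_j^2\le M.
\]
Taking $\inf$ over $\gamma(S^+)$ and then $\sup$ over $\gamma\in\Gamma$ completes the proof.

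The main obstacle is the choice of $e$: a test element spread across several coordinate planes would deliver only the weaker classical bound involving $\pi|z|^2$, and localizing $e$ to the first coordinate pair is precisely what is needed for $\pi|z_1|^2$ to suffice. The constraint $x_j\in L_0$ coming from the $\tau_0$-symmetry (equivalently, reality of the Fourier coefficients of $w$) is what forces the $x^-$ and $x^0$ contributions to appear with the correct sign in the Parseval step.
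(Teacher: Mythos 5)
Your proposal is correct and follows essentially the same route as the paper's proof: both use Proposition~\ref{prop:EH.1.1}(ii) with the same test element $e(t)\propto e^{2\pi tJ_0}e_1$ supported in the first symplectic coordinate pair, so that the bound $H(z)\ge\pi|z_1|^2-M$ exactly cancels the positive part of the action. The only difference is cosmetic: the paper drops the negative modes and applies Cauchy--Schwarz to get $\lambda^2\le\int_0^1|x_1(t)|^2\,dt$, whereas you carry the whole Fourier/Parseval bookkeeping in the first coordinate; both collapse to the same estimate.
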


\begin{proof}
   Let $e_1(t)=e^{2\pi tJ_0}X$, where $X=(1,0,\cdots,0)^{T}\in\mathbb{R}^{2n}$.
  Then $e_1\in \mathbb{E}^+$.
   For any $x\in \mathbb{E}^-\oplus \mathbb{E}^0\oplus\mathbb{R}_{>0}e_1$, we write $x=y+\lambda e_1$, where
$y\in \mathbb{E}^-\oplus \mathbb{E}^0$ and $\lambda>0$. Then
\begin{equation}\label{e:EH.2.0}
\mathfrak{a}(x)\le\frac{1}{2}\|\lambda e_1\|^2_{1/2}= \pi\lambda^2
\end{equation}
by (\ref{e:aaction}). Since $y\in \mathbb{E}^-\oplus \mathbb{E}^0$ and $e_1\in \mathbb{E}^+$, we deduce
\begin{eqnarray}\label{e:EH.2.1}
\int_0^1\langle x_1(t),e^{2\pi tJ_0}X_1\rangle_{\mathbb{R}^{2n}}&=&\int_0^1\langle x(t),e^{2\pi tJ_0}X\rangle_{\mathbb{R}^{2n}}\nonumber\\
&=&\int_0^1\langle \lambda e^{2\pi tJ_0}X,e^{2\pi tJ_0}X\rangle_{\mathbb{R}^{2n}}=\lambda,
\end{eqnarray}
where $x(t)=(x_1(t),x_2(t))\in\mathbb{R}^{2}\times\mathbb{R}^{2n-2}$ for each $t$ and $X=(X_1,X_2)\in\mathbb{R}^{2}\times\mathbb{R}^{2n-2}$.
It follows this and (\ref{e:EH.2.0})  that
$$
\mathfrak{a}(x)\le\pi\left(\int_0^1\langle x(t),e^{2\pi tJ_0}X\rangle_{\mathbb{R}^{2n}}\right)^2\le\pi\int_0^1|x_1(t)|^2.
$$
By  Proposition~\ref{prop:EH.1.1}(ii), we get
$$
\inf_{x\in\gamma(S^+)}\Phi_H(x) \le\sup_{x\in \mathbb{E}^-\oplus \mathbb{E}^0\oplus\mathbb{R}_{>0}e_1}\Phi_H(x)\le \sup_{z\in\mathbb{R}^{2n}}(\pi|z_1|^2-H(z)),\,\forall\,\gamma\in\Gamma.
$$
Then (\ref{e:EH.1.3}) follows.
\end{proof}

\begin{proposition}\label{prop:EH.1.4}
If $H\in C^\infty(\mathbb{R}^{2n},\mathbb{R}_+)$ satisfies the conditions (H2) and (H3)
under Definition~\ref{def:deform}, then
 $c_{\rm EH,\tau_0}(H)>0$.
\end{proposition}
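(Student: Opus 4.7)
My plan is to exhibit a single admissible deformation $h\in\Gamma$ for which $\inf_{x\in h(S^+)}\Phi_H(x)>0$, which by the definition of $c_{\rm EH,\tau_0}(H)$ as a supremum forces it to be strictly positive. Using (H2), I fix $z_0\in L_0\cap\mathrm{Int}(H^{-1}(0))$ together with $r>0$ such that $H\equiv 0$ on $B(z_0,r)$, and for a sufficiently small parameter $\rho\in(0,1)$ I take
\[
h(x)=\rho x+z_0,\qquad h_u(x)=(1-u+u\rho)x+uz_0.
\]
Checking $h\in\Gamma$ is routine: the coefficients $a(x,u)=b(x,u)=c(x,u)=1-u+u\rho$ are continuous and bounded below by $\rho>0$, the remainder $K(x,u)=uz_0$ takes values in the line segment $[0,z_0]\subset L_0$ (so any bounded set is mapped to a precompact set), and since $z_0\in\mathbb{E}^0$ we have $h_u(x)^+=(1-u+u\rho)x^+\ne 0$ whenever $x^+\ne 0$.

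Next I estimate $\Phi_H$ on $h(S^+)$. For $e\in S^+\subset\mathbb{E}^+$, the orthogonal decomposition of $\rho e+z_0$ into $(\mathbb{E}^+,\mathbb{E}^0,\mathbb{E}^-)$-components yields $\mathfrak{a}(\rho e+z_0)=\tfrac12\|\rho e\|_{1/2}^{2}=\rho^2/2$, hence
\[
\Phi_H(\rho e+z_0)=\frac{\rho^2}{2}-\int_0^1 H(\rho e(t)+z_0)\,dt.
\]
Since $H\equiv 0$ on $B(z_0,r)$, the integrand vanishes outside the set $\{t:|e(t)|\ge r/\rho\}$. From (H3) together with continuity of $H$ I extract a constant $\tilde M>0$ with $H(y)\le\tilde M|y-z_0|^2$ whenever $|y-z_0|\ge r$ (the bounded annular regime is handled by continuity, the far regime by the explicit quadratic asymptotic), giving
\[
\int_0^1 H(\rho e(t)+z_0)\,dt\le\tilde M\rho^2\int_{\{|e|\ge r/\rho\}}|e(t)|^2\,dt.
\]

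The crux, and the step I anticipate as the main obstacle, is proving that the tail integral on the right tends to $0$ as $\rho\to 0+$ uniformly over $e\in S^+$. A bare $L^2$-bound is insufficient here because elements of $S^+$ are not uniformly bounded in $L^\infty$. Instead I invoke the compact embedding $\mathbb{E}=E^{1/2}_{\tau_0}\hookrightarrow L^2$ supplied by Proposition~\ref{prop:compact}: since $S^+$ is bounded in $\mathbb{E}$, it is precompact in $L^2$, and the continuity of the squaring map on bounded subsets of $L^2$ (via $\||e|^2-|f|^2\|_{L^1}\le\|e-f\|_{L^2}(\|e\|_{L^2}+\|f\|_{L^2})$) makes the family $\{|e|^2:e\in S^+\}$ precompact in $L^1$, hence uniformly integrable by the Vitali convergence theorem. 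This gives $\sup_{e\in S^+}\int_{\{|e|\ge N\}}|e(t)|^2\,dt\to 0$ as $N\to\infty$, and setting $N=r/\rho$ transforms this into $\sup_{e\in S^+}\int_0^1 H(\rho e(t)+z_0)\,dt=o(\rho^2)$. Therefore $\Phi_H(\rho e+z_0)\ge\rho^2/4>0$ for $\rho$ sufficiently small, uniformly in $e\in S^+$, which gives $c_{\rm EH,\tau_0}(H)\ge\inf_{x\in h(S^+)}\Phi_H(x)>0$.
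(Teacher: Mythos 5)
Your proof is correct, and it reaches the paper's key estimate by a somewhat different technical route. Both arguments use the same admissible deformation (your $h(x)=\rho x+z_0$ is the paper's $\gamma(x)=\hat z+\varepsilon x$ after relabelling), and both reduce to showing that $\rho^{-2}\int_0^1 H(\rho e(t)+z_0)\,dt\to 0$ uniformly over $e\in S^+$ as $\rho\to 0^+$. The paper proves this by contradiction: it extracts a sequence $y_j=x_j/\|x_j\|_{1/2}\in S^+$ via the compact embedding, applies the Riesz--Fischer-type fact (Brezis, Th.\ 4.9) to get a dominating $L^2$ majorant $w$, and uses the two Taylor-type bounds $H(\hat z+z)\le C_1|z|^2$ and $H(\hat z+z)\le C_2|z|^3$ to run the dominated convergence theorem. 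You instead observe directly that, since $H\equiv 0$ on $B(z_0,r)$, the integrand is supported on $\{|e|\ge r/\rho\}$, bound $H$ there by $\tilde M|\cdot-z_0|^2$, and then invoke uniform integrability of $\{|e|^2:e\in S^+\}\subset L^1$, deduced from $L^2$-precompactness of $S^+$ (Proposition~\ref{prop:compact}) and the elementary inequality $\||e|^2-|f|^2\|_{L^1}\le\|e-f\|_{L^2}(\|e\|_{L^2}+\|f\|_{L^2})$. Both hinges are the compact embedding $\mathbb{E}\hookrightarrow L^2$; your route avoids the argument by contradiction and the auxiliary cubic bound, at the cost of appealing to the (standard, but somewhat less elementary) fact that precompact subsets of $L^1$ are uniformly integrable. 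One small wording quibble: what you are using is that precompactness in $L^1$ implies uniform integrability, which is a separate fact from the Vitali convergence theorem; the role Vitali would play is the reverse direction, so the citation is slightly off even though the claim itself is true.
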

\begin{proof}
By the assumption (H2), we can take $\hat{z}\in{\rm int}\,H^{-1}(0)\cap L_0$.
Define
$$
\gamma:\mathbb{E}\rightarrow \mathbb{E},\; x\mapsto \gamma(x)=\hat{z}+\varepsilon x,
$$
where $\varepsilon>0$ is a constant.  Clearly, $\gamma\in\Gamma$. Let us prove
$$
\inf_{y\in\gamma(S^+)}\Phi_H(y)>0
$$
for sufficiently small $\varepsilon$. Since
\begin{equation}\label{e:EH.2.2}
\Phi_H(\hat{z}+x)=1/2\|x\|^2_{1/2}-\int_0^1H(\hat{z}+x)\quad\forall x\in \mathbb{E}^+,
\end{equation}
it suffices to prove that
\begin{equation}\label{e:EH.2.3}
\lim_{\|x\|_{1/2}\rightarrow 0}\frac{\int_0^1 H(\hat{z}+x)}{\|x\|^2_{1/2}}=0.
\end{equation}
%%%%%%%%%%%%%%%%%%%%%%%%%%%%%%%%%%%%%%%%%%%%%%%%%%%%%%%%%%%%%%%%%%%%%%%%%%%%%%%%%%%%%%%%%%%%%%%%
%%%%%%%%%%%%%%%%%%%%%%%%%%%%%%%%%%%%%%%%%%%%%%%%%%%%%%%%%%%%%%%%%%%%%%%%%%%%%%%%%%%%%%%%%%%%%%%%
%%%In fact, it follows from the above equation that there exists a positive number
%%$\epsilon>0$ such that for all $x$ satisfying $\|x\|_E<\epsilon$, it holds that
%%$$
%%\frac{\int_0^1 H(z_0+x)}{\|x\|^2_E}<\frac{1}{4},
%%$$
%which is equivalent to
%$$
%\int_0^1 H(z_0+x)<\frac{1}{4}\|x\|^2_E.
%$$
%Then by (\ref{e:EH.2.2}), we have
%$$
%\Phi_H(z_0+x)\ge\frac{1}{4}\|x\|_E^2,\, \forall x\in E^+\quad\hbox{such that}\quad \|x\|_E<\epsilon.
%$$
%Especially we get that for $\varepsilon<\epsilon$ , there holds
%$$
%\inf_{y\in\gamma(S^+)}\Phi_H(y)\ge\frac{1}{4}\varepsilon^2>0.
%$$
%%%%%%%%%%%%%%%%%%%%%%%%%%%%%%%%%%%%%%%%%%%%%%%%%%%%%%%%%%%%%%%%%%%%%%%%%%%%%%%%%%%%%%%%%%
%%%%%%%%%%%%%%%%%%%%%%%%%%%%%%%%%%%%%%%%%%%%%%%%%%%%%%%%%%%%%%%%%%%%%%%%%%%%%%%%%%%%%%%%%
 Otherwise, suppose there exists a sequence $(x_j)\subset \mathbb{E}$ and $d>0$ satisfying
  \begin{equation}\label{e:EH.2.4}
  \|x_j\|_{1/2}\rightarrow 0 \quad \hbox{and} \quad \frac{\int_0^1 H(\hat{z}+x_j)}{\|x_j\|^2_{1/2}}
  \geq d>0\quad\forall j.
  \end{equation}
  Let $y_j=\frac{x_j}{\|x_j\|_{1/2}}$. Then $\|y_j\|_{1/2}=1$. By Proposition
  \ref{prop:compact}, $(y_j)$ has a convergent subsequence in $L^2$.
  By a standard result in $L^p$ theory, (see \cite[Th.4.9]{Br11}),
  we have $w\in L^2$ and a subsequence of $(y_j)$, still denoted by $(y_j)$, such that
   $y_j(t)\rightarrow y(t)$   a.e. on $(0,1)$ and that
   $|y_j(t)|\leq w(t)$   a.e. on $ (0,1)$  for each $j$.  Since (H2) implies that $H$ vanishes near $\hat{z}$,
  by (H3) and the Taylor expansion of $H$ at $\hat{z}\in\mathbb{R}^{2n}$,
  we have constants $C_1>0$ and $C_2>0$ such that $H(\hat{z}+z)\le C_1|z|^2$
  and  $H(\hat{z}+z)\le C_2|z|^3$ for all $z\in\mathbb{R}^{2n}$.
  It follows that
  \begin{eqnarray*}
  &&\frac{H(\hat{z}+x_j(t))}{\|x_j\|_{1/2}^2}\leq C_1\frac{|x_j(t)|^2}{\|x_j\|_{1/2}^2}
  =C_1|y_j(t)|^2\le C_1w(t)^2,\quad \hbox{a.e. on}\quad  (0,1),\;\forall j,\\
 &&  \frac{H(\hat{z}+x_j(t))}{\|x_j\|_{1/2}^2}\leq C_2\frac{|x_j(t)|^3}{\|x_j\|_{1/2}^2}
  =C_2|x_j(t)|\cdot |y_j(t)|^2\le C_2|x_j(t)|w(t)^2,\\
  &&\hspace{60mm} \hbox{a.e.  on }\quad  (0,1),\;\forall j.
   \end{eqnarray*}
  The first claim in (\ref{e:EH.2.4}) implies that $(x_j)$
  has a subsequence such that
  $$
  x_{j_l}(t)\rightarrow 0, \quad \text{a.e. on}\quad   (0,1).
  $$
 Hence the Lebesgue dominated convergence theorem leads to
  $$
  \int_0^1 \frac{H(\hat{z}+x_{j_l}(t))}{\|x_{j_l}\|_{1/2}^2}\rightarrow 0.
  $$
  This contradicts the second claim in (\ref{e:EH.2.4}).
  \end{proof}

\noindent{\it Proof of Theorem~\ref{th:EH.1.6}}.\quad
Let us define
   $$
   \mathcal{F}=\{\gamma(S^+)\,|\,\gamma\in\Gamma
   \;\text{and}\;\inf(\Phi_H|\gamma(S^+))>0\}.
   $$
   Then  $c_{\rm EH,\tau_0}(H)=\sup_{F\in\mathcal{F}}\inf_{x\in F}\Phi_H(x)$
   since $c_{\rm EH,\tau_0}(H)>0$. Note that the flow $\phi^u$ of $\nabla \Phi_H$  has the form
  \begin{equation}\label{e:EH2.5}
  \phi^u(x)=e^{-u}x^-+x^0+e^ux^++\widetilde{K}(u,x),
  \end{equation}
  where $\widetilde{K}:\mathbb{R}\times \mathbb{E}\rightarrow \mathbb{E}$ is compact.
  Fix a set $F\in\mathcal{F}$, where $F=\gamma(S^+)$ for some $\gamma\in\Gamma$. Then $\alpha:=\inf(\Phi_H|\gamma(S^+))>0$ by the definition of $\mathcal{F}$. Let $\rho:\mathbb{R}\rightarrow [0,1]$ be a smooth function such that $\rho(s)=0$ for $s\le 0$ and $\rho(s)=1$ for $s\ge \alpha$. Define a vector field $V$ on $\mathbb{E}$ by
  $$
  V(x)=x^+-x^--\rho(\Phi_H(x))\nabla\mathfrak{b}_H(x).
  $$
  Since $0\le \rho(\Phi_H(x))\le 1$, the flow of $V$,  denoted by $\gamma^u$,
  has also the expression as in (\ref{e:EH2.5}).
  % has the same property as $\phi^u$.
   Clearly, for $x\in\mathbb{E}^-\oplus\mathbb{E}^0$, we have $\Phi_H(x)\le 0$ and
    thus  $V(x)=-x^-$. Hence $\gamma^u(\mathbb{E}^-\oplus \mathbb{E}^0)=\mathbb{E}^-\oplus \mathbb{E}^0$ and $\gamma^u(\mathbb{E}\setminus\mathbb{E}^-\oplus\mathbb{E}^0)=
    \mathbb{E}\setminus\mathbb{E}^-\oplus\mathbb{E}^0$ since $\gamma^u$ is a homeomorphism for each $u\in\mathbb{R}$. Note that $V|\Phi_H^{-1}([\alpha,\infty])=\nabla \Phi_H(x)$. Therefore $\gamma^u(F)=\phi^u(F)$ for $u\ge 0$. Moreover $\Gamma$ is closed
    for composition operation. It follows that $\mathcal{F}$ is positively invariant under the flow $\phi^u $ of $\nabla\Phi_H$. Using Proposition~\ref{prop:PSmale} we can prove Theorem~\ref{th:EH.1.6} by a standard minimax argument.
\qed
\vspace*{4pt}

\noindent{\it Proof of Proposition~\ref{prop:EH.1.8}}.\quad
By (\ref{e:EH.1.7}) we can assume that $B$ is  bounded, i.e., it is contained in ball $B^{2n}(K)$
of sufficiently large radius $K>0$.
Since the symplectic matrix $\Psi$ commutes with $\tau_0$,
 Lemma~\ref{lem:alb} tells us that
 $$
 \Psi=\left(
                                \begin{array}{cc}
                                  A & 0 \\
                                  0 & (A^T)^{-1} \\
                                \end{array}
                              \right)
$$
 for some $A\in GL(n,\mathbb{R})$. Since every symplectic matrix
can be uniquely decomposed as a product of an orthogonal  symplectic matrix and a positive definite
symplectic matrix, $\Psi=UP$, where
$$
P=\left(\begin{array}{cc}
 \sqrt{AA^T} & 0 \\
  0 & (\sqrt{(AA^T)})^{-1} \\
   \end{array}
 \right)\quad\hbox{and}\quad U=\left(\begin{array}{cc}
 Q& 0 \\
  0 & (Q^T)^{-1} \\
   \end{array}
 \right)
$$
with $Q=A(\sqrt{AA^T})^{-1}$, and thus commute with $\tau_0$. Moreover, $P=\exp(M)$, where $M\in\mathfrak{sp}(2n)$, i.e.,
it satisfies $M^TJ_0+J_0M=0$ ($\Leftrightarrow MJ_0+J_0M^T=0$). In fact, we have
$$
M=\left(\begin{array}{cc}
 N & 0 \\
  0 & -N \\
   \end{array}
 \right)\quad\hbox{where  $N\in \mathbb{R}^{n\times n}$ satisfies $N^T=N$ and $\exp N=\sqrt{AA^T}$}.
$$
Put $S=J_0M$, which is symmetric. Actually, $S=\left(\begin{array}{cc}
 0 & N \\
  N & 0 \\
   \end{array}
 \right)$ and thus
 $$
 P_t:=\exp(tM)=\exp(-J_0St)
 $$
is a Hamiltonian flow with Hamiltonian
$$
H_S(z)=-\frac{1}{2}\langle Sz, z\rangle_{\mathbb{R}^{2n}},\quad\forall z\in\mathbb{R}^{2n}.
$$
Fix $1/2>\epsilon>0$. Let $\|S\|:=\sup\{|Su|\,|\,u\in\mathbb{R}^{2n}\,\&\, |u|=1\}$.
Since $X_{H_S}(z)=-J_0Sz$,  for a positive number $K_1>3\|S\|K$ we have
 $$
 P_t(z)\in B^{2n}(K_1),\quad\forall z\in B^{2n}(K),\;\forall t\in [-2\epsilon, 1+2\epsilon].
 $$
 Choose a positive $K_2>K_1$ and a $\tau_0$-invariant smooth cut-off function $\rho:\mathbb{R}^{2n}\to [0,1]$
  such that $\sup_z|\nabla\rho(z)|\le 1$ and
 \begin{eqnarray*}
 \rho(z)=1\;\forall z\in B^{2n}(K_1),\qquad \rho(z)=0\;\forall z\in\mathbb{R}^{2n}\setminus B^{2n}(K_2).
 \end{eqnarray*}
 Define
 $$
 H^\rho_S(z)= -\frac{1}{2} \rho(z)\langle Sz, z\rangle_{\mathbb{R}^{2n}}\quad\forall z\in\mathbb{R}^{2n}.
 $$
 Then $X_{H^\rho_S}(z)=\rho(z)X_{H_S}(z)+ H_S(z)X_{\rho}(z)$  has a support in the closure of $B^{2n}(K_2)$,
  and     $|X_{H^\rho_S}(z)|=|X_{H_S}(z)|\le \|S\| |z|$ for all $z\in B^{2n}(K_1)$. Hence
  through any $z\in\mathbb{R}^{2n}$ the differential equation initial value problem
  $$
  \dot z(t)=X_{H^\rho_S}(z(t))\quad\hbox{and}\quad z(0)=z
  $$
  has a unique solution $(-\epsilon, 1+\epsilon)\ni t\mapsto z(t)\in\mathbb{R}^{2n}$.
    It follows that
  for each $t\in (-\epsilon, 1+\epsilon)$,
  $$
  \widetilde{P}_t:\mathbb{R}^{2n}\to\mathbb{R}^{2n},\;z=z(0)\mapsto z(t)
  $$
 is a Hamiltonian diffeomorphism with support in the closure of $B^{2n}(K_2)$ and satisfying
 $$
 \widetilde{P}_t(z)=P_t(z)\;\forall z\in B^{2n}(K)\quad\hbox{and}\quad \widetilde{P}_t\circ\tau_0=\tau_0\circ\widetilde{P}_t.
 $$
 %Write $\widetilde{P}:=\widetilde{P}_1$.  Then $\widetilde{P}\circ\tau_0=\tau_0\circ\widetilde{P}$, $\widetilde{P}(z)=P(z)$ for any $z\in B^{2n}(K)$, and $\widetilde{P}(z)=z$
%for all $z\in\mathbb{R}^{2n}\setminus B^{2n}(K_2)$.
%
%
%
%For $i\le j$ let $f_{ij}:\mathbb{R}\to\mathbb{R}$ be a smooth bijective map such that
%\begin{eqnarray*}
%&&f_{ij}(s)=r_{ij}s\quad\hbox{for $|s|\le K$},\\
%&&f_{ii}(s)=s\quad\hbox{for $|s|$ sufficiently large},\\
%&&f_{ij}(s)=0\quad\hbox{for $i<j$ and for $|s|$ sufficiently large},\\
%&&f'_{ii}(s)>0\quad\hbox{for all $s\in\mathbb{R}$}.
%\end{eqnarray*}
%Then we define $\widetilde{R}:\mathbb{R}^{2n}\to\mathbb{R}^{2n}$ by
%$$
%\widetilde{R}(x_1, x_2,\cdots,x_n)^T=\left(\sum^n_{i=1}f_{1i}(x_i),\sum^n_{i=2}f_{2i}(x_i),\cdots,f_{nn}(x_n)\right)^T.
%$$
%By the recurrence method we can directly check that $\widetilde{R}$ is
%a smooth bijective map. Moreover it is not hard to compute that
%$\widetilde{R}$ has the Jacobian $\prod^n_{i=1}f_{ii}'(x_i)>0$ at $(x_1,\cdots,x_n)\in\mathbb{R}^n$.
%Hence $\widetilde{R}$ is a diffeomorphism on $\mathbb{R}^n$.
Let $\widetilde{\Psi}_t=U\circ\widetilde{P}_t$  for each $t\in (-\epsilon, 1+\epsilon)$. It commutes with $\tau_0$ and  $\widetilde{\Psi}_t(z)=U(z)$
for all $z\in\mathbb{R}^{2n}\setminus B^{2n}(K_2)$ and $t$.
Write $\widetilde{\Psi}:=\widetilde{\Psi}_1$. Then
 $\widetilde{\Psi}(z)=\Psi(z)$ for any $z\in B^{2n}(K)$.

For any $H\in\mathscr{F}(\mathbb{R}^{2n},\tau_0,\Psi(B))$ which is  $\tau_0$-nonresonant, $H\circ\widetilde{\Psi}\in \mathscr{F}(\mathbb{R}^{2n},\tau_0,B)$ and is  $\tau_0$-nonresonant. Moreover $H\circ\widetilde{\Psi}_t\in \mathscr{F}(\mathbb{R}^{2n},\tau_0)$ and is also $\tau_0$-nonresonant for all $t$.
In fact, we only need to verify (H3) under Definition~\ref{def:deform}. By the assumptions we have
 $$
 H(z)=a|z|^2+ \langle z, z_0\rangle+ b\quad\forall z\in\mathbb{R}^{2n}\setminus B^{2n}(K_3),
 $$
  where $K_3>K_2$, $a>\pi$, $a\notin \mathbb{Z}\pi$, $z_0\in L_0$ and $b\in\mathbb{R}^{2n}$. It follows
  that
$$
H(\widetilde{\Psi}_t(z))=H(Uz)=a|Uz|^2+ \langle Uz, z_0\rangle+ b=a|z|^2+\langle z, U^Tz_0\rangle+ b
$$
for all $z\in\mathbb{R}^{2n}\setminus B^{2n}(K_3)$ and $t$. Clearly,  $U^Tz_0$ is also
a fixed point of $\tau_0$.

By (ii) of Proposition~\ref{prop:EH.1.2}, the map $t\to c_{\rm EH,\tau_0}(H\circ\widetilde{\Psi}_t)$ is continuous. Moreover, each
$c_{\rm EH,\tau_0}(H\circ\widetilde{\Psi}_t)$ is a critical value of $\Phi_{H\circ\widetilde{\Psi}_t}$ on
 the Hilbert space $\mathbb{E}$ by Theorem~\ref{th:EH.1.6}. Hence there exists a $C^2$ $1$-periodic map $\mathbb{R}\ni s\mapsto x_t(s)\in\mathbb{R}^{2n}$ such that
 $$
 \frac{d}{ds}x_t(s)=X_{H\circ\widetilde{\Psi}_t}(x_t(s)),\quad x_t(-s)=\tau_0x_t(s),\quad
 c_{\rm EH,\tau_0}(H\circ\widetilde{\Psi}_t)=\Phi_{H\circ\widetilde{\Psi}_t}(x_t).
 $$
  Clearly $\widetilde{\Psi}_t(x_t)$ is also a critical value of $\Phi_{H}$ on $\mathbb{E}$ and $\Phi_{H\circ\widetilde{\Psi}_t}(x_t)=\Phi_{H}(\widetilde{\Psi}_t(x_t))$. Hence for any $t\in [0,1]$,
  $c_{\rm EH,\tau_0}(H\circ\widetilde{\Psi}_t)$ sits in the critical value set of $\Phi_{H}$ which has empty interior.  These imply that $c_{\rm EH,\tau_0}(H\circ\widetilde{\Psi}_t)\equiv c_{\rm EH,\tau_0}(H\circ\widetilde{\Psi}_0)=c_{\rm EH,\tau_0}(H\circ U)$ for all $t$. In particular, we have
  \begin{equation}\label{e:EH.1.8.1}
  c_{\rm EH,\tau_0}(H\circ\widetilde{\Psi})=c_{\rm EH,\tau_0}(H\circ U).
  \end{equation}
   We claim that
  \begin{equation}\label{e:EH.1.8.2}
  c_{\rm EH,\tau_0}(H\circ U)=c_{\rm EH,\tau_0}(H).
  \end{equation}
  To prove this let us choose  a continuous path $Q_t$ ($0\le t\le 1$) in ${\rm O}(n,\mathbb{R})$ such that $Q_1=Q$ and $Q_0=I_n$ or ${\rm diag}(-1,1,\cdots,1)$ (depending on the determinant of $Q$). Define
  $$
  \Psi_{Q_t}=\left(
                \begin{array}{cc}
                  Q_t & 0 \\
                  0 & Q_t \\
                \end{array}
              \right).
              $$
  Clearly $U=\Psi_Q=\Psi_{Q_1}$.    Let $\Omega_t$ be the translation given by $z\mapsto z-\frac{1}{2a}\Psi_{Q_t}^Tz_0$.   Then for $z\in\mathbb{R}^{2n}$ with $|z|>K_3+\frac{|z_0|}{2a}$
  and each $t\in [0,1]$ there holds
  $$
  (H\circ\Psi_{Q_t}\circ \Omega_t)(z)=a|z|^2+b-\frac{|z_0|^2}{4a}.
  $$
  Using the same arguments as above and the proof of (i) of Proposition~\ref{prop:EH.1.7} we arrive at
   \begin{eqnarray*}
   c_{\rm EH,\tau_0}(H\circ U)&=&c_{\rm EH,\tau_0}(H\circ\Psi_{Q_1})=c_{\rm EH,\tau_0}(H\circ\Psi_{Q_1}\circ \Omega_1)\\
   &=&c_{\rm EH,\tau_0}(H\circ\Psi_{Q_0}\circ \Omega_0)=c_{\rm EH,\tau_0}(H\circ\Psi_{Q_0}).
   \end{eqnarray*}
   Hence (\ref{e:EH.1.8.2}) is proved if $\det Q>0$.
  Next we  show that
  \begin{equation}\label{e:EH.1.8.3}
   c_{\rm EH,\tau_0}(H)=c_{\rm EH,\tau_0}(H\circ \Psi_{Q_0})\quad
  \hbox{with}\quad Q_0={\rm diag}(-1,1,\cdots,1).
  \end{equation}
  Notice that $Q_0^2=I_n$ and so $(\Psi_{Q_0})^2=I_{2n}$. We can define a bijection
  $$
  \Lambda:\Gamma\to\Gamma,\;\gamma\mapsto\hat{\gamma},
  $$
  where  $\gamma(x)=a(x,1)x^++b(x,1)x^0+c(x,1)x^-+K(x,1)$ and
  \begin{eqnarray*}
 \hat{\gamma}(x)&=&\hat{a}(x,1)x^++\hat{b}(x,1)x^0+\hat{c}(x,1)x^-
  +\widehat{K}(x,1)\\
  &=&a(\Psi_{Q_0}x,1)x^++b(\Psi_{Q_0}x,1)x^0+c(\Psi_{Q_0}x,1)x^-+
  \Psi_{Q_0}K(\Psi_{Q_0}x,1)
  \end{eqnarray*}
  with $(\Psi_{Q_0}x)(t)=\sum _{k\in\mathbb{Z}}e^{2k\pi tJ_0}\Psi_{Q_0}x_k$ if $x\in \mathbb{E}$ has the expansion
  $$
  x(t)=\sum _{k\in\mathbb{Z}}e^{2k\pi tJ_0}x_k.
  $$
   Since $\Psi_{Q_0}$ commutes with $J_0$,
  $\Psi_{Q_0}x$ has the equivalent definition
  $$
  (\Psi_{Q_0}x)(t):=\Psi_{Q_0}x(t).
  $$
Hence $\Psi_{Q_0}x\in\gamma(S^+)$ if and only if $x\in \hat{\gamma}(S^+)$. It follows that
  \begin{eqnarray*}
  c_{\rm EH,\tau_0}(H\circ \Psi_{Q_0})&=&\sup_{\gamma\in\Gamma}\inf_{x\in \gamma(S^+)}\Phi_{H\circ \Psi_{Q_0}}(x)\\
  &=&\sup_{\gamma\in\Gamma}\inf_{x\in \gamma(S^+)}\Phi_{H}(\Psi_{Q_0}x)\\
  &=&\sup_{\gamma\in\Gamma}\inf_{\Psi_{Q_0}x\in \gamma(S^+)}\Phi_{H}(x)\\
  &=&\sup_{\hat{\gamma}\in\Gamma}\inf_{x\in \hat{\gamma}(S^+)}\Phi_{H}(x)\\
  &=& c_{\rm EH,\tau_0}(H).
  \end{eqnarray*}
(\ref{e:EH.1.8.3}) is proved, and so (\ref{e:EH.1.8.2}) holds.

  By (\ref{e:EH.1.8.1}) and (\ref{e:EH.1.8.2})  we obtain $c_{\rm EH,\tau_0}(H\circ\widetilde{\Psi})=c_{\rm EH,\tau_0}(H)$. It follows from this that $c_{\rm EH,\tau_0}(B)\le c_{\rm EH,\tau_0}(\Psi B)$.
  Since $\Psi^{-1}$ is also a symplectic matrix commuting with $\tau_0$, we get
   $c_{\rm EH,\tau_0}(B)\le c_{\rm EH,\tau_0}(\Psi^{-1}B)$.
   Replacing $B$ by $\Psi B$ in the latter inequality we get $c_{\rm EH,\tau_0}(\Psi B)\le c_{\rm EH,\tau_0}(B)$.
   Proposition~\ref{prop:EH.1.8} is proved.
 \qed

\section{Proof of Theorem~\ref{th:convex}}\label{sec:convex}
\setcounter{equation}{0}

By Lemma 2.29 in \cite{R12} there exists a linear real symplectic isomorphism $\Psi$
from $(\mathbb{R}^{2n},\omega_0,\tau)$ to $(\mathbb{R}^{2n},\omega_0,\tau_0)$.
Note that $D$ is $\tau$-invariant if and only if $\Psi(D)$  is $\tau_0$-invariant,
and that $x$ is a (generalized) $\tau$-brake closed characteristic on $\partial D$
if and only if $\Psi(x)$ a (generalized) $\tau_0$-brake closed characteristic on $\partial \Psi(D)=\Psi(\partial D)$.
 Moreover, it is clear that $A(\Psi(x))=A(x)$ by (\ref{e:action1}).
\textsf{Because of these and Proposition~\ref{MonComf}(ii) we can assume $\tau=\tau_0$ below.}

Note that $D$ always contains a fixed point $p$ of $\tau_0$. (Indeed, choose any $\hat{p}\in D$. Then $\tau_0\hat{p}\in D$ since $\tau_0D=D$. It follows that $p:=(\hat{p}+\tau_0\hat{p})/2\in D$ due to convexity of $D$ and there holds $\tau_0p=p$.)  Consider the symplectomorphism
\begin{equation}\label{e:5convex}
\psi:(\mathbb{R}^{2n},\omega_0)\to (\mathbb{R}^{2n},\omega_0),\;x\mapsto x-p.
\end{equation}
Since $\psi\circ \tau_0=\tau_0 \circ\psi$, we have
$c_{\rm HZ,\tau_0}(D,\omega_0)=c_{\rm HZ,\tau_0}(\psi(D),\omega_0)$. Moreover,
for a (generalized) $\tau_0$-brake characteristic $z:[0,T]\to \partial D$, it is easily checked
that  $y=\psi\circ z$ is  a (generalized) $\tau_0$-brake characteristic on $\partial\psi(D)=\psi(\partial D)$
and satisfies $A(y)=A(z)$.
Hence \textsf{from now on we can assume that our $D$ contains  $0$ and denote by $j_D$ the Minkowski functional of $D$ in this section.}

\begin{lemma}\label{nointerior}
 % Let $D\subset \mathbb{R}^{2n}$ be a compact  convex domain with
%boundary $\mathcal{S}=\partial D$ satisfying $N_0D=D$ and $0\in D$.
If $\mathcal{S}=\partial D$ is of class $C^{2n+2}$, then the set
   \begin{equation}\label{e:brakespec}
    \Sigma_{\mathcal{S}}^{\tau_0}:=\{ A(x)\,|\, x\;\hbox{is a }\;\hbox{$\tau_0$-brake closed characteristic on}
    \;\mathcal{S}\;\hbox{and}\;
   A(x)>0\}
    \end{equation}
   has no interior point  in $\mathbb{R}$.
 \end{lemma}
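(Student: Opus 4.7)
The plan is to imitate the classical argument (à la Hofer--Zehnder) showing that the non-symmetric action spectrum of a $C^{2n+2}$ hypersurface of restricted contact type has empty interior, adapted to the brake setting via the $\tau_0$-symmetry.

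First, I would take $H:=j_D^2$, of class $C^{2n+2}$ on $\mathbb{R}^{2n}\setminus\{0\}$ and positively $2$-homogeneous, with $\mathcal{S}=H^{-1}(1)$. After reparametrization, every $\tau_0$-brake closed characteristic on $\mathcal{S}$ becomes a $T$-periodic solution $x$ of $\dot x=X_H(x)$ with $x(T-t)=\tau_0 x(t)$, whose action is $A(x)=T$ by $2$-homogeneity. Thus it suffices to prove that the set of such periods $T$ has empty interior in $\mathbb{R}$.

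Next, I would use the symmetry. Since $\tau_0$ is anti-symplectic and $H\circ\tau_0=H$, the Hamiltonian flow satisfies $\phi^t\circ\tau_0=\tau_0\circ\phi^{-t}$; for $p\in L_0\cap\mathcal{S}$ this yields $\phi^{-t}(p)=\tau_0\phi^t(p)$, so that the orbit through $p$ closes up as a $\tau_0$-brake orbit of period $2s$ if and only if $\phi^s(p)\in L_0$. Differentiating $H\circ\tau_0=H$ at $p\in L_0$ and using $\tau_0^T=\tau_0$ gives $\nabla H(p)\in L_0\setminus\{0\}$, and hence $\nabla H(p)\notin L_0^\perp=J_0 L_0$, so $L_0\cap\mathcal{S}$ is a $C^{2n+2}$ submanifold of dimension $n-1$. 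This produces the $C^{2n+1}$ map
\[
F\colon (L_0\cap\mathcal{S})\times(0,\infty)\longrightarrow L_0^\perp,\qquad F(p,s)=\pi_{L_0^\perp}\phi^s(p),
\]
between manifolds of equal dimension $n$, whose zero set parametrizes all $\tau_0$-brake orbits with action $2s$.

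Then I would apply Sard's theorem. At every regular zero $(p,s)$ of $F$ the implicit function theorem forces $(p,s)$ to be isolated in $F^{-1}(0)$, so regular zeros contribute at most countably many values to the action spectrum. For the critical zeros a stratified Morse--Sard argument, stratifying $\mathrm{Cr}(F)$ by the rank of $dF$ and invoking the regularity $C^{2n+1}$ of $F$ on each stratum, would show that the image under $(p,s)\mapsto 2s$ of the critical zero locus has Lebesgue measure zero in $\mathbb{R}$. The full brake spectrum is the countable union over positive-integer multiples of the set of primitive brake periods, hence a countable union of measure-zero sets; it is therefore of Lebesgue measure zero and, a fortiori, has empty interior.

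The main technical obstacle will be the treatment of critical zeros of $F$: the ordinary Sard theorem supplies measure-zero critical values only in the $n$-dimensional target $L_0^\perp$, whereas what is needed is a measure-zero image in the one-dimensional action axis. The assumption $\mathcal{S}\in C^{2n+2}$ is used precisely here, to invoke the sharper Morse--Sard estimates on the Hausdorff dimension of the images of rank strata of $\mathrm{Cr}(F)$, which in turn force the projection onto the action direction to be small.
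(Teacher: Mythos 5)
You missed the inclusion that makes the lemma immediate: every $\tau_0$-brake closed characteristic on $\mathcal{S}$ is, in particular, an ordinary closed characteristic, so $\Sigma_{\mathcal{S}}^{\tau_0}\subseteq\Sigma_{\mathcal{S}}$, where $\Sigma_{\mathcal{S}}$ is the full action spectrum defined in (\ref{spec}). For a hypersurface of class $C^{2n+2}$ the set $\Sigma_{\mathcal{S}}$ is well known to have empty interior, and a subset of a set with empty interior has empty interior. That one-line reduction is the paper's entire proof of the lemma.

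Your direct approach, by contrast, contains a genuine gap. The shooting map $F(p,s)=\pi_{L_0^\perp}\phi^s(p)$ is set up correctly: the computation $\nabla H(p)\in L_0$ for $p\in L_0$ is right, $L_0\cap\mathcal{S}$ is indeed an $(n-1)$-manifold, and zeros of $F$ do parametrize $\tau_0$-brake orbits with action $2s$; the regular-zero analysis via the implicit function theorem is also fine. But what you call a ``stratified Morse--Sard argument'' to handle the critical zeros is not a known theorem and does not follow from Morse--Sard. The Morse--Sard theorem bounds the Lebesgue (or Hausdorff) measure of the critical-\emph{value} set $F(\mathrm{Cr}(F))$ inside the target $L_0^\perp\cong\mathbb{R}^n$, whereas what you need is that the image of $F^{-1}(0)\cap\mathrm{Cr}(F)$ under the projection $(p,s)\mapsto s$ onto the one-dimensional action axis has measure zero. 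These are different assertions: smallness of critical values in the $n$-dimensional target says nothing about the size of slices of the zero set $F^{-1}(0)$, and there is no automatic passage from one to the other, however high the regularity. Without an additional non-degeneracy input at critical zeros, or a replacement of Sard by a genuinely parametric smallness result adapted to this projection, the argument does not close; and in any case it is far longer than the trivial inclusion $\Sigma_{\mathcal{S}}^{\tau_0}\subseteq\Sigma_{\mathcal{S}}$.
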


 This is a direct consequence of the well known fact that
  \begin{equation}\label{spec}
 \Sigma_{\mathcal{S}}:=\{ A(x)\,|\, x\;\hbox{is a }\;\hbox{closed characteristic on}\;\mathcal{S}\;\hbox{and}\;A(x)>0\}
 \end{equation}
  has no interior point in $\mathbb{R}$. (See \cite{Sik90}, and \cite[Lemma~4.1]{JinLu19} for a more general result).

\begin{lemma}\label{lem:genChar}
 For every generalized $\tau_0$-brake closed characteristic on $\mathcal{S}=\partial D$,
$x:\mathbb{R}/T\mathbb{Z}\to \mathbb{R}^{2n}$ (for some $T>0$),
which may be equivalently viewed as an absolutely continuous curve
$x:[0,T]\to \mathbb{R}^{2n}$ satisfying
\begin{equation}\label{e:repara0}
\left\{
   \begin{array}{l}
     -J_0\dot{x}(t)\in N_{\mathcal{S}}(x(t)), \;{\rm a.e.},\\
     x(T-t)=\tau_0x(t),\;x(0)=x(T),\;x([0,T])\subset\mathcal{S},
   \end{array}
   \right.
\end{equation}
we  can reparameterize it
as a $W^{1,\infty}$-map $x^\ast:[0, T']\to \mathbb{R}^{2n}$ satisfying
 \begin{equation}\label{e:repara}
\left\{
   \begin{array}{l}
     -J_0\dot{x}^\ast(t)\in\partial {H}(x^\ast(t)), \;{\rm a.e.},\\
     x^\ast(T'-t)=\tau_0x^\ast(t),\;x^\ast(0)=x^\ast(T'),\;x^\ast([0,T'])\subset\mathcal{S},
   \end{array}
   \right.
\end{equation}
where $H=j_D^2$. (Precisely, there is a differentiable homeomorphism
$\varphi:[0, T']\to [0, T]$ with an absolutely continuous inverse
$\psi:[0, T]\to [0, T']$ such that $x^\ast=x\circ\varphi$
is a $W^{1,\infty}$-map satisfying (\ref{e:repara}).)
Moreover,
\begin{eqnarray}\label{e:action3}
A(z)&=&\frac{1}{2}\int^{T}_0\langle-J_0\dot{x}(t), x(t)\rangle_{\mathbb{R}^{2n}} dt\nonumber\\
&=&\frac{1}{2}\int^{T'}_0\langle-J_0\dot{x}^\ast(t), x^\ast(t)\rangle_{\mathbb{R}^{2n}} dt=
A(x^\ast)=T'.
\end{eqnarray}
 \end{lemma}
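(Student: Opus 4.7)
\textbf{Proof plan for Lemma \ref{lem:genChar}.}
The plan is to reparameterize $x$ by an ``action primitive'' so that the multivalued vector field $-J_0\dot x$ moves at ``speed $2$'' inside the subdifferential $\partial H = \partial(j_D^2)$. Since $0\in D$ and $D$ is convex with boundary $\mathcal{S}=j_D^{-1}(1)$, the Minkowski functional $j_D$ is a positively homogeneous convex Lipschitz function, and for any $x\in\mathcal{S}$ and any $n\in\partial j_D(x)$ one has $\langle n,x\rangle = 1$ and $n\in N_{\mathcal{S}}(x)$; conversely $N_{\mathcal{S}}(x)=\mathbb{R}_{\ge 0}\,\partial j_D(x)$. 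By the convex chain rule, $\partial H(x)=2j_D(x)\partial j_D(x)=2\partial j_D(x)$ on $\mathcal{S}$. Consequently, the inclusion $-J_0\dot x(t)\in N_{\mathcal{S}}(x(t))$ can be written a.e.\ as
\[
-J_0\dot x(t)=\lambda(t)n(t),\qquad n(t)\in\partial j_D(x(t)),\;\;\lambda(t)\ge 0,
\]
and taking the inner product with $x(t)\in\mathcal{S}$ gives the explicit measurable formula $\lambda(t)=\langle -J_0\dot x(t),x(t)\rangle_{\mathbb{R}^{2n}}$.

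Next I would define $\psi:[0,T]\to[0,T']$ by $\psi(t):=\tfrac12\int_0^t\lambda(s)\,ds$ with $T':=\psi(T)=A(x)$. This $\psi$ is absolutely continuous and nondecreasing; the brake relation $x(T-t)=\tau_0 x(t)$ combined with $\tau_0^T J_0\tau_0=-J_0$ yields $\lambda(T-t)=\lambda(t)$, hence $\psi(T-t)=T'-\psi(t)$. I will let $\varphi:[0,T']\to[0,T]$ be the right-continuous generalized inverse $\varphi(s):=\inf\{t:\psi(t)>s\}$ and set $x^\ast:=x\circ\varphi$. Because $\dot x$ vanishes on $\{\lambda=0\}$, the map $x$ is constant on every interval where $\psi$ is constant, so $x^\ast$ is well defined independently of the ambiguity in $\varphi$; moreover,
\[
|x^\ast(s_2)-x^\ast(s_1)|\le \int_{\varphi(s_1)}^{\varphi(s_2)}\lambda(t)|n(t)|\,dt\le 2C|s_2-s_1|,
\]
with $C:=\sup\{|n|:n\in\partial j_D(y),\,y\in\mathcal{S}\}<\infty$ by compactness of $\mathcal{S}$ and local Lipschitzness of $j_D$, so $x^\ast\in W^{1,\infty}$.

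Differentiating $x^\ast(s)=x(\varphi(s))$ at points where $\lambda(\varphi(s))>0$ gives $\varphi'(s)=2/\lambda(\varphi(s))$, hence
\[
\dot x^\ast(s)=\lambda(\varphi(s))J_0 n(\varphi(s))\varphi'(s)=2J_0 n(x^\ast(s)),
\]
so $-J_0\dot x^\ast(s)=2n(x^\ast(s))\in 2\partial j_D(x^\ast(s))=\partial H(x^\ast(s))$ a.e., while on a.e.\ $s$ with $\lambda(\varphi(s))=0$ we get $\dot x^\ast(s)=0$, which still lies in $\partial H(x^\ast(s))$ because $0\in\partial H(x^\ast(s))$ fails in general --- so here I would reparameterize $x$ \emph{once at the outset} to kill constant intervals (replacing $x$ by an equivalent characteristic parametrized on a shorter interval with $\dot x\ne 0$ a.e.); then $\lambda>0$ a.e., $\psi$ is strictly increasing, and $\varphi$ is a genuine homeomorphism as required. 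The brake symmetry $x^\ast(T'-s)=\tau_0 x^\ast(s)$ follows from $\varphi(T'-s)=T-\varphi(s)$ (the reflection identity for $\psi$), and the action identity \eqref{e:action3} drops out from $\tfrac12\int_0^T\lambda = T'$ together with $\langle -J_0\dot x^\ast(s),x^\ast(s)\rangle = \langle 2n(x^\ast(s)),x^\ast(s)\rangle = 2$ a.e.

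The main obstacle is the measurable-selection/reparameterization bookkeeping: one needs a measurable selection $t\mapsto n(t)\in\partial j_D(x(t))$ (which exists by standard measurable-selection theorems since $\partial j_D$ is upper semicontinuous with compact convex values), and one has to justify that collapsing the zero set of $\lambda$ produces an equivalent generalized $\tau_0$-brake characteristic with the same action to which the strictly-increasing-$\psi$ case applies.
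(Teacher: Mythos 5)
Your construction is essentially the paper's: the change of variable $\psi(t)=\tfrac12\int_0^t\langle -J_0\dot x(s),x(s)\rangle_{\mathbb{R}^{2n}}\,ds$, the brake symmetry $\lambda(T-t)=\lambda(t)$ obtained from $\tau_0^TJ_0\tau_0=-J_0$ together with $\tau_0^T=\tau_0$, and the $W^{1,\infty}$ bound via boundedness of $\partial j_D$ on the compact set $\mathcal{S}$ all appear there. The only structural difference is that the paper imports the $L^1$ multiplier $\lambda:\mathbb{R}/T\mathbb{Z}\to(0,\infty)$ with $-J_0\dot x\in\lambda\,\partial H(x)$ a.e.\ from Lemma~2 in Chapter~V, \S 1 of Ekeland's book and then argues that it can be chosen brake-symmetric, whereas you exhibit $\lambda$ directly from $N_{\mathcal{S}}(x)=\mathbb{R}_{\ge 0}\,\partial j_D(x)$, the identity $\partial H=2\,\partial j_D$ on $\mathcal{S}$, and the Euler relation $\langle n,x\rangle=1$ for $n\in\partial j_D(x)$. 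The latter makes $\lambda$ canonical wherever $\dot x\ne 0$, so the brake symmetry of $\lambda$ is automatic rather than a choice; that is a slight sharpening of the paper's wording but the same idea.

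One small inaccuracy in your collapse step: deleting the maximal intervals on which $x$ is constant does not, in general, produce a curve with $\dot x\ne 0$ a.e.\ --- the set $\{\lambda=0\}=\{\dot x=0\}$ can remain a nowhere-dense set of positive measure. What the collapse actually gives you is that $\{\lambda>0\}$ meets every subinterval in positive measure, whence $\psi$ is strictly increasing and $\varphi=\psi^{-1}$ is a genuine homeomorphism. Combined with the fact that $\psi$ is absolutely continuous with $\psi'=0$ on $\{\lambda=0\}$, so that $\psi(\{\lambda=0\})\subset[0,T']$ is Lebesgue-null, the differential inclusion for $x^\ast$ holds for a.e.\ $s\in[0,T']$ regardless of the measure of $\{\lambda=0\}$ upstairs. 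The argument thus goes through, but the stronger claim ``$\dot x\ne 0$ a.e.\ after collapse'' is neither true nor needed.
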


 \begin{proof}
By Lemma~2 in \cite[Chap.V,\S1]{Ek90} (or its proof)
there exists $L^1$ map $\lambda:\mathbb{R}/T\mathbb{Z}\rightarrow(0,\infty)$ such that $-J_0\dot{x}(s)\in\lambda(s)\partial H(x(s))$ a.e. on $[0,T]$. Since it is obvious that $H(\tau_0z)=H(z)$, there holds $\partial H(\tau_0z)= \tau_0\partial H(z)$. Moreover, by definitions of $\tau_0$-brake closed characteristic , we have
$x(T-s)=\tau_0x(s)$. It follows
\begin{eqnarray*}
-\tau_0\dot{x}(T-s)=\dot{x}(s)&\in&\lambda(s)J_0\partial H(x(s))\\
&=& \lambda(s)J_0\partial H(\tau_0x(T-s))\\
&=&\lambda(s)J_0\tau_0\partial H(x(T-s))\\
&=&-\lambda(s)\tau_0J_0\partial H(x(T-s)),\;\forall s.
\end{eqnarray*}
On the other hand, we also have
$$
\dot{x}(T-s)\in\lambda(T-s) J_0\partial H(x(T-s)).
$$
Therefore we can assume the above $\lambda$ to satisfy $\lambda(s)=\lambda (T-s)$ in addition. Define  $\psi:[0,T]\rightarrow [0,T']$ by $\psi(s):=\int_0^s\lambda(u)du$, where  $T'=\int_0^T\lambda(s)ds$ . Clearly we have
\begin{eqnarray*}
T'-\psi(s)=\psi(T)-\psi(s)&=&\int_0^T\lambda(u)du-\int_0^s\lambda(u)du\\
&=&\int_0^T\lambda(u)du-\int_{T-s}^T\lambda(u)du\\
&=&\int_0^{T-s}\lambda(u)du\\
&=&\psi(T-s).
\end{eqnarray*}
Let $\varphi:[0,T']\rightarrow [0,T]$ be the inverse of $\psi$. It is differentiable and satisfies
$$
T-\varphi(t)=\varphi(T'-t).
$$
Then it is easy to check that $x^\ast:=x\circ\varphi$ satisfies (\ref{e:repara}) and (\ref{e:action3}). Arguing as in the proof of Lemma 4.2 in \cite{JinLu19} we deduce that there exists $r>0$ such that
\begin{equation}\label{e:homog3}
\partial H(z)\subset B^{2n}(r),\;\forall z\in\mathcal{S}
\end{equation}
and hence $x^\ast$ is a  $W^{1,\infty}$-map.
\end{proof}

\subsection{Proof of (\ref{e:action2})}\label{sec:convex1}

We shall complete the proof via the Clarke dual variational  principle in \cite{Cl79}
(see also \cite{MoZe05, HoZe94} (smooth case) and \cite{Ek90, AAO14} (nonsmooth case)
for detailed arguments).
%Let  $j_D: \mathbb{R}^{2n}\rightarrow\mathbb{R}$ be the Minkowski (or gauge) functional associated to $D$.
Notice that the Hamiltonian function  $H:\mathbb{R}^{2n}\to \mathbb{R}$ defined by $H(z)=j_D^2(z)$
is convex (and so continuous by \cite[Cor.10.1.1]{Roc70} or \cite[Prop.2.31]{Kr15}), and satisfies $H(\tau_0z)=H(z)$
 and $\mathcal{S}=\partial D=H^{-1}(1)$.
It is also $C^{1,1}$ with uniformly Lipschitz constant if $\mathcal{S}$
 is $C^{1,1}$ in $\mathbb{R}^{2n}$.
Moreover, there exists some constant $R_1\geq 1$ such that
\begin{equation}\label{e:dualestimate0}
 \frac{|z|^2}{R_1}\leq H(z)\leq R_1|z|^2\quad
\forall z\in\mathbb{R}^{2n}.
\end{equation}
This implies that the Legendre transformation of $H$ defined by
$$
H^{\ast}(w)=\max_{\xi\in \mathbb{R}^{2n}}(\langle w ,\xi \rangle_{\mathbb{R}^{2n}}-H(\xi))
$$
is a convex function from $\mathbb{R}^{2n}$ to $\mathbb{R}$ (and thus continuous).
From this and (\ref{e:dualestimate0}) it follows that there exists a constant $R_2\geq 1$ such that
\begin{equation}\label{e:dualestimate}
 \frac{|z|^2}{R_2}\leq H^{\ast}(z)\leq R_2|z|^2\quad
\forall z\in\mathbb{R}^{2n}.
\end{equation}
Note that $H^\ast$ is also $C^{1,1}$ in $\mathbb{R}^{2n}$
with uniformly Lipschitz constant if  $\mathcal{S}$
 is $C^{1,1}$ and strictly convex. See \cite[Cor.10.1.1]{Roc70}.

Consider the subspace of $W^{1,2}([0,1],\mathbb{R}^{2n})$,
\begin{equation}\label{e:constrant1}
\mathcal{F}=\{x\in W^{1,2}([0,1],\mathbb{R}^{2n})\,|\,x(1)=x(0),\;x(1-t)=\tau_0x(t),\,\int_0^1x(t)dt=0\},
\end{equation}
and its subset
\begin{equation}\label{e:constrant2}
\mathcal{A} =\{x\in \mathcal{F}\,|\,A(x)=1 \}.
\end{equation}
\begin{remark}\label{re:mean}
  {\rm Note that $\mathcal{F}\ne\emptyset$ and so $\mathcal{A}\ne\emptyset$.
  In fact, fix some $x\in W^{1,2}([0,1],\mathbb{R}^{2n})$ such that $x(1-t)=\tau_0x(t)$
  and $x(1)=x(0)$. Let $c:=\int_0^1 x(t)dt$. Then
  $$
  \tau_0c=\int_0^1\tau_0x(t)dt=\int_0^1 x(1-t)dt=\int_{0}^1x(t)dt=c.
  $$
  Let $y(t):=x(t)-c$. Then $y\in W^{1,2}([0,1],\mathbb{R}^{2n})$  satisfies $y(1)=y(0)$ and
  $$
  \int_0^1y(t)dt=0,\quad y(1-t)=x(1-t)-c=\tau_0(x(t)-c)=\tau_0y(t), \quad A(y)=A(x).
  $$}
\end{remark}
Also notice that $\mathcal{A}$ is a regular submanifold of $\mathcal{F}$. In fact, for any $x\in \mathcal{F}$ and $\zeta\in T_x\mathcal{F}=\mathcal{F}$,
$$
dA(x)[\zeta]= \int_0^{1}\langle -J_0\dot{\zeta},x\rangle_{\mathbb{R}^{2n}}+\frac{1}{2}\langle -J_0x,\zeta\rangle_{\mathbb{R}^{2n}}|_0^1=\int_0^{1}\langle -J_0\dot{\zeta},x\rangle_{\mathbb{R}^{2n}}
$$
since $x(1)=x(0)$ and $\zeta(1)=\zeta(0)$. Thus $dA\neq 0$ on $\mathcal{A}$ because
$$
dA(x)[x]=\int_0^{1}\langle -J_0\dot{x}, x\rangle_{\mathbb{R}^{2n}}=2,\quad\forall x\in \mathcal{A}\subset A^{-1}(1).
$$

\vspace*{4pt}\noindent{\bf Step 1.} {The functional $I:\mathcal{F}\to\mathbb{R}$ defined by
$$
I(x)=\int_0^1H^{\ast}(-J_0\dot{x})
$$
has the positive infimum  on $\mathcal{A}$, $\mu:=\inf _{x\in\mathcal{A}}I(x)$.}
For any  $x\in\mathcal{F}$, since
 $\int_0^1 x(t)dt=0$   we have
\begin{equation}\label{poincare}
\|x\|_{L^2}\le \|\dot{x}\|_{L^2}.
\end{equation}
Moreover,  if $x\in\mathcal{A}$, then there holds
\begin{equation}\label{actioncons}
2=2A(x)\leq \|x\|_{L^2}\|\dot{x}\|_{L^2}\leq \|\dot{x}\|_{L^2}^2.
\end{equation}
It follows from these and (\ref{e:dualestimate}) that for any $x\in\mathcal{A}$,
\begin{equation}\label{relation}
I(x)=\int_0^1 H^{\ast}(-J_0\dot{x})\geq \frac{1}{R_2}\|\dot{x}\|_{L^2}^2\geq \frac{2}{R_2}.
\end{equation}

\noindent\noindent{\bf Step 2.} {There exists $u\in  \mathcal{A}$ such  that
$I(u)=\mu$.} %In particular, $u$ is a critical point of  $I|_{\mathcal{A}}$.}
Let $(x_n)\subset\mathcal{A}$ be a sequence satisfying
 $$
 \lim_{n\rightarrow+\infty}I(x_n)=\mu.
 $$
  By (\ref{relation}), for some $C_2>0$ we have
 $$
 2\le\|\dot{x}_n\|_{L^2}^2\leq R_2I(x_n)\leq C_2.
 $$
 It follows from  (\ref{poincare}) and (\ref{actioncons}) that
$$
\frac{2}{C_2}\leq \frac{2}{\|\dot{x}_n\|_{L^2}^2}\leq\|x_n\|_{L^2}^2\leq \|\dot{x}_n\|_{L^2}^2\leq  C_2.
$$
Hence $(x_n)$ is a bounded sequence in $W^{1,2}(\mathbb{R}/\mathbb{Z},\mathbb{R}^{2n})$.
After passing to a subsequence
if necessary, we can assume that  $(x_n)$ converges weakly to some $u$ in  $W^{1,2}(\mathbb{R}/\mathbb{Z}, \mathbb{R}^{2n})$.
 By Arzel\'{a}-Ascoli theorem, there also exists $\hat{u}\in C^{0}(\mathbb{R}/\mathbb{Z},\mathbb{R}^{2n})$ such that
$$
\lim_{n\rightarrow+\infty}\sup_{t\in [0,1]}|x_n(t)-\hat{u}(t)|=0.
$$
A standard argument gives that $\hat{u}(t)=u(t)$ almost everywhere.
As usual we can assume $\hat{u}=u$. Then  we get that
$$
u(1-t)=\tau_0u(t),\quad u(1)=u(0),\quad \int_0^1 u(t)dt=0
$$
and thus $u\in\mathcal{A}$ because
\begin{eqnarray*}
A(u)&=&\frac{1}{2}\int_0^1 \langle J_0u, \dot{u}\rangle_{\mathbb{R}^{2n}}\\
&=&\lim_{n\rightarrow+\infty}\frac{1}{2}\int_0^1 \langle J_0u, \dot{x}_{n}\rangle_{\mathbb{R}^{2n}}\\
&=&\lim_{n\rightarrow+\infty} \frac{1}{2}\int_0^1 (\langle J_0x_n, \dot{x}_{n}\rangle_{\mathbb{R}^{2n}}+\langle J_0(u-x_n), \dot{x}_{n}\rangle_{\mathbb{R}^{2n}})\\
&=&1.
\end{eqnarray*}
Consider the functional
$$
\hat{I}: L^2([0,1],\mathbb{R}^{2n})\to\mathbb{R},\;u\mapsto\int^1_0H^\ast(u(t))dt.
$$
Then $I(x)=\hat{I}(-J_0\dot{x})$ for any $x\in\mathcal{F}$.
Since $H^{\ast}$ is  convex, so is $\hat{I}$.
(\ref{e:dualestimate}) also implies that $\hat{I}$ is continuous and thus
has nonempty subdifferential $\partial\hat{I}(v)$ at each point $v\in L^2([0,1],\mathbb{R}^{2n})$.
%Hence
%\begin{equation}\label{e:convex}
%\hat{I}(v_2)-\hat{I}(v_1)\le (w, v_2-v_1)_{L^2}\quad\forall w\in \partial\hat{I}(v_2)\subset L^2([0,1],\mathbb{R}^{2n})
%\end{equation}
%for any $v_2,v_1\in L^2([0,1],\mathbb{R}^{2n})$.
By Corollary~3 in \cite[Chap. II, \S3]{Ek90} we know
$$
\partial\hat{I}(v)=\{w\in L^2([0,1],\mathbb{R}^{2n})\,|\, w(t)\in \partial H^\ast(v(t))\;\hbox{a.e. on}\;[0,1] \}.
$$
%and so (\ref{e:convex}) becomes
%$$
%\hat{I}(v_2)-\hat{I}(v_1)\le \int^1_0\langle w(t), v_2(t)-v_1(t)\rangle_{\mathbb{R}^{2n}} dy,\quad \forall w\in \partial\hat{I}(v_2).
%$$
It follows that
\begin{eqnarray}\label{mini}
I(u)-I(x_n)&=&\hat{I}(-J_0\dot{u})-\hat{I}(-J_0\dot{x}_n)\nonumber\\
&\leq& \int_0^1 \langle w(t),-J_0( \dot{u}(t)-\dot{x}_n(t))\rangle_{\mathbb{R}^{2n}} dt
\end{eqnarray}
for any $w\in \partial\hat{I}(-J_0\dot{u})=\{w\in L^2([0,1],\mathbb{R}^{2n})\,|\, w(t)\in \partial H^\ast(-J_0\dot{u}(t))\;\hbox{a.e. on}\;[0,1]\}$. Recall  that $(x_n)$ converges weakly to some $u$ in
$W^{1,2}(\mathbb{R}/\mathbb{Z}, \mathbb{R}^{2n})$. This implies that
$(\dot{x}_n)$ converges weakly to some $\dot{u}$ in  $L^2([0,1], \mathbb{R}^{2n})$.
It follows  that the left hand of (\ref{mini}) converges to $0$. Therefore
 $$
\mu\le I(u)\le\lim_{n\rightarrow+\infty}I(x_n)=\mu.
$$
The desired claim is proved.

\vspace*{4pt}\noindent{\bf Step 3.} {There exists a generalized $\tau_0$-brake closed characteristic on $\mathcal{S}$,
$\hat{x}:\mathbb{R}/\mu\mathbb{Z}\rightarrow \mathcal{S}$,  such that $A(\hat{x})=\mu$.} Since $\mu$ is the minimum  of  $I|_{\mathcal{A}}$,
  applying Lagrangian multiplier theorem (cf. \cite[Theorem~6.1.1]{Cl83}) we get some $\lambda\in\mathbb{R}$ such that
$0\in\partial (I+\lambda A)(u)=\partial I(u)+\lambda A'(u)$. Define
$$
\Lambda_{\mathcal{F}}:\mathcal{F}\rightarrow L^2([0,1]),\,x\mapsto -J_0\dot{x}.
$$
Clearly $R(\Lambda_{\mathcal{F}})$ is closed in $L^2([0,1])$ because $\mathcal{F}$ is closed in $W^{1,2}([0,1],\mathbb{R}^{2n})$. Using
$I=\hat{I}\circ \Lambda_{\mathcal{F}}$ and  Corollary~6 in \cite[Chap. II, \S2]{Ek90}
we arrive at
\begin{eqnarray*}
\partial I(u)&=& (\Lambda_{\mathcal{F}})^\ast\partial \hat{I}(\Lambda_{\mathcal{F}}u)\\
&=&
\{(\Lambda_{\mathcal{F}})^\ast w\,|\,
w\in L^2([0,1],\mathbb{R}^{2n}),\; w(t)\in \partial H^\ast(-J_0\dot{u}(t))\;\hbox{a.e. on}\;[0,1]\}
\end{eqnarray*}
Hence there exists a function $w\in L^2([0,1],\mathbb{R}^{2n})$ with $w(t)\in \partial H^\ast(-J_0\dot{u}(t))$ a.e. on $[0,1]$, such that $(\Lambda_{\mathcal{F}})^\ast w+\lambda A'(u)=0$, i.e.,
\begin{eqnarray}\label{e:elag}
0&=&\langle (\Lambda_{\mathcal{F}})^\ast w+\lambda A'(u),\zeta\rangle_{L^2}\nonumber\\
&=&\langle w, \Lambda_{\mathcal{F}}\zeta\rangle_{L^2}+\langle\lambda A'(u),\zeta\rangle_{L^2}\nonumber\\
&=&\int_0^1 \langle w(t),-J_0\dot{\zeta}(t)\rangle_{\mathbb{R}^{2n}}+\lambda
\int_0^1 \langle  u(t),-J_0\dot{\zeta}(t)\rangle_{\mathbb{R}^{2n}},\quad \forall\zeta\in \mathcal{F}.
\end{eqnarray}
This $w$ can be chosen to satisfy $w(1-t)=\tau_0 w(t)$ in addition. [In fact, since $u(1-t)=\tau_0u(t)$ and $\zeta(1-t)=\tau_0\zeta (t)$, (\ref{e:elag})
implies that
\begin{eqnarray*}
&&\int_0^1 \langle \tau_0w(1-t)+\lambda u(t),-J_0\dot{\zeta}(t)\rangle_{\mathbb{R}^{2n}} dt\\
&=&\int_0^1 \langle \tau_0w(1-t)+\lambda \tau_0u(1-t),-\tau_0J_0\dot{\zeta}(1-t)\rangle_{\mathbb{R}^{2n}} dt\\
&=&\int_0^1 \langle w(1-t)+\lambda u(1-t),-J_0\dot{\zeta}(1-t)\rangle_{\mathbb{R}^{2n}} dt\\
&=&\int_0^1 \langle w(t)+\lambda u(t),-J_0\dot{\zeta}(t)\rangle_{\mathbb{R}^{2n}} dt\\
&=&0,\quad\forall \zeta\in\mathcal{F}.
\end{eqnarray*}
From this and (\ref{e:elag}) it follows that  $\hat{w}(t):=(w(t)+\tau_0w(1-t))/2$ satisfies
$$
\int_0^1 \langle \hat{w}(t)+\lambda u(t),-J_0\dot{\zeta}(t)\rangle_{\mathbb{R}^{2n}} dt= 0,\quad\forall \zeta\in\mathcal{F}.
$$
 Thus the desired $w$ can be obtained.]
%Since $w(t)\in \partial H^\ast(-J_0\dot{u}(t))$, there holds
%$$
%\tau_0 w(1-t)\in \tau_0\partial H^\ast(-J_0\dot{u}(1-t))=\partial H^\ast(-\tau_0J_0\dot{u}(1-t))=\partial H^\ast(-J_0\dot{u}(t)).
%$$
% It follows that $\hat{w}(t)\in \partial H(-J_0\dot{u})$ a.e. on $[0,1]$. Moreover
%$\hat{w}(1-t)=\tau_0\hat{w}(t)$ and satisfies
%$$
%\int_0^1 \langle \hat{w}(t)+\lambda u(t),-J_0\dot{\zeta}(t)\rangle_{\mathbb{R}^{2n}} dt
%=0,\;\forall \zeta\in\mathcal{F}.
%$$
This implies that for some $a_0\in L_0$,
\begin{equation}\label{mini.1}
{w}(t)+\lambda u(t)=a_0\quad\hbox{a.e. on}\quad [0,1].
\end{equation}
%In the following we denote $\hat{w}$ by $w$ for simplicity.
Since
\begin{eqnarray}\label{mini.2}
\langle w, -J_0\dot{u}\rangle_{L^2}&=&\int_0^1 \langle w(t),-J_0\dot{u}(t)\rangle_{\mathbb{R}^{2n}} dt\nonumber\\
&=&\int_0^1\langle a_0-\lambda u(t),-J_0\dot{u}(t)\rangle_{\mathbb{R}^{2n}}=-2\lambda,
\end{eqnarray}
and the convex functional $\hat{I}$ is a $2$-positively homogeneous, using
the Euler formula (cf. \cite[Theorem~3.1]{YangWei08}) we may obtain
$$
\langle w, -J_0\dot{u}\rangle_{L^2}=2\hat{I}(-J_0\dot{u})=2I(u)=2\mu
$$
and thus $-\lambda=\mu$ by (\ref{mini.2}).

%Let $\hat{I}^\ast$ be the Fenchel conjugate of $\hat{I}$.
%Then Theorem~2 in \cite[Chap. II,\S3]{Ek90} yields
%$$
%\hat{I}^\ast(x)=\int^1_0H(x(t))dt.
%$$
%Using Corollary~3 in \cite[Chap. II,\S3]{Ek90} again we get
%$$
%\partial\hat{I}^\ast(x)=\{y\in L^2([0,1],\mathbb{R}^{2n})\,|\, y(t)\in \partial H(x(t))\;a.e.\}.
%$$
By (\ref{mini.1}), $a_0-\lambda u(t)=w(t)\in\partial H^\ast(-J_0\dot{u}(t))$ a.e. on $[0,1]$.
So
 \begin{equation}\label{mini.3}
-J_0\dot{u}(t)\in \partial H(w(t))= \partial H(-\lambda u(t)+a_0)\quad\hbox{a.e. on}\quad [0,1]
\end{equation}
and hence
\begin{equation}\label{mini.4}
v:[0,\mu]\rightarrow \mathbb{R}^{2n},\;
t\mapsto v(t):=-\lambda u(-t/\lambda)+a_0=\mu u(t/\mu)+a_0
\end{equation}
satisfies
 \begin{equation}\label{mini.5}
-J_0\dot{v}(t)\in  \partial H(v(t))\quad\hbox{a.e.}
\end{equation}
and
$$
v(\mu)=v(0),\quad v(\mu-t)=\tau_0v(t).
$$
The Legendre reciprocity formula (cf. \cite[Proposition~II.1.15]{Ek90}) in convex analysis yields
\begin{eqnarray}\label{e:vconstant}
\int_0^\mu H(v(t))dt&=&-\int_0^\mu H^\ast(-J_0\dot{v}(t))dt+\int_0^\mu\langle v(t),-J_0\dot{v}(t)\rangle_{\mathbb{R}^{2n}} dt\nonumber\\
&=&-\mu\int_0^1H^\ast(-J_0\dot{u}(s))ds+\mu^2\int_0^1\langle-J_0\dot{u}(s),u(s)\rangle_{\mathbb{R}^{2n}}
ds\nonumber\\
&=&-\mu^2+2\mu^2=\mu^2.
\end{eqnarray}
By \cite[Theorem~2]{Ku96}, (\ref{mini.5})
implies that $H(v(t))$ is constant and hence by (\ref{e:vconstant}) $H(v(t))\equiv\mu$ for all $t\in [0, \mu]$.
The latter shows that  $v$ is nonconstant.
It follows that
 \begin{equation}\label{e:desiredChar}
x^\ast:[0,\mu]\rightarrow \mathcal{S},\; t\mapsto\frac{v(t)}{\sqrt{\mu}}=\sqrt{\mu}u(t/\mu)+ a_0/\sqrt{\mu}
\end{equation}
is a $\tau_0$-brake closed characteristic on $\mathcal{S}$ with action $A(x^\ast)=\mu$.
Moreover, (\ref{mini.5}) and the $2$-homogeneity of $H$ imply $-J_0\dot{x}^\ast(t)\in  \partial H(x^\ast(t))$ a.e. on $[0,1]$.
The following claim shows that $x^\ast$ satisfies (\ref{e:action2}).

\vspace*{4pt}\noindent\noindent{\bf Step 4.} {For any generalized $\tau_0$-brake closed characteristic
on $\mathcal{S}$ with positive action, $y:[0,T]\rightarrow \mathcal{S}$, there holds $A(y)\ge \mu$.
} By Lemma~\ref{lem:genChar}, by reparameterizing it we can assume that
$y\in W^{1,2}([0,T],\mathbb{R}^{2n})$ and satisfies (\ref{e:repara})
with $T'$ being replaced by $T$. Then
\begin{equation}\label{e:desiredChar1}
A(y)=T\quad\hbox{and}\quad H(y(t))\equiv 1.
\end{equation}
 Define
$$
y^{\ast}:[0,1]\rightarrow \mathbb{R}^{2n},\;   t\mapsto y^{\ast}(t)=a y(tT)+b,
$$
 where $a:=1/\sqrt{T}$ and $b:=-1/\sqrt{T}\int_0^1 y(tT)dt\in L_0$. Then $y^{\ast}\in\mathcal{A}$ and
$$
-J_0\dot{y}^\ast(t)=-aTJ_0\dot{y}(tT)\in aT\partial H({y}(Tt))=
\partial H(aT{y}(Tt))\quad\hbox{a.e. on}\quad [0,1]
$$
and thus  $aT{y}(Tt)\in\partial H^\ast(-J_0\dot{y}^\ast(t))$ a.e. on $[0,1]$.
The Legendre reciprocity formula (cf. \cite[Proposition~II.1.15]{Ek90}) in convex analysis yields
\begin{eqnarray*}
H(aT{y}(Tt))+ H^\ast(-J_0\dot{y}^\ast(t))&=&\langle-J_0\dot{y}^\ast(t), aT{y}(Tt)\rangle_{\mathbb{R}^{2n}}\\
&=&\langle-aTJ_0\dot{y}(Tt), aT{y}(Tt)\rangle_{\mathbb{R}^{2n}}\\
&=&(aT)^2\langle-J_0\dot{y}(Tt), {y}(Tt)\rangle_{\mathbb{R}^{2n}}\\
&=&2(aT)^2H({y}(Tt))\\
&=&2(aT)^2=2T,\quad\hbox{a.e. on}\quad [0,1],
\end{eqnarray*}
where the fourth equality comes from
the Euler formula \cite[Theorem~3.1]{YangWei08}.
Moreover, (\ref{e:desiredChar1}) implies $H(aT{y}(Tt))=(aT)^2=T$. Then
$H^\ast(-J_0\dot{y}^\ast(t))=T$ a.e. on $[0,1]$ and so
 \begin{eqnarray*}
\int_0^1H^{\ast}(-J_0\dot{y}^{\ast}(t))dt=T.
\end{eqnarray*}
By the definition of $\mu$, we have $T\ge \mu$, and so $A(y)\ge\mu$ by (\ref{e:desiredChar1}).

\subsection{Proof of (\ref{e:action-capacity1}) for smooth and strictly convex $D$}\label{sec:convex3}

The proof is similar to that of  \cite[Propposition~4]{HoZe90}.
For the sake of completeness we prove it in details.

\vspace*{4pt}\noindent{\bf Step 1.} {Prove}
 \begin{equation}\label{e:action-capacity5}
   c_{\rm HZ,\tau_0}(D,\omega_0)\ge A(x^{\ast}).
   \end{equation}
For small $0<\epsilon, \delta<1/2$, pick  a smooth
 function $f: [0,1]\rightarrow\mathbb{R}$ such that
   \begin{eqnarray*}
     &f(t)=0,& \,\,\,   t\le\delta, \\
     & f(t)=A(x^{\ast})-\varepsilon  ,& \,\,\, 1-\delta\le t, \\
    & 0\leq f'(t)<A(x^{\ast}),& \,\,\, \delta<t<1-\delta.
   \end{eqnarray*}
Define $H(x)=f(j_D^2(x))$ for $x\in D$. Then
   $H\in \mathcal{H}(D,\omega_0,\tau_0)$ because $j_D$ is $C^\infty$
   in $\mathbb{R}^{2n}\setminus\{0\}$ and satisfies $j_D(\tau_0z)=j_D(z)$. Let us prove that every
   $x:\mathbb{R}/T\mathbb{Z}\to D$  satisfying
   \begin{equation}\label{e:action-capacity6}
        \dot{x}=J_0\nabla H(x)=f'(j^2_D(x))J_0\nabla j^2_D(x)\quad\hbox{and}\quad
     x(-t)=\tau_0 x(t)
     \end{equation}
with $0<T\le 1$ is constant. By contradiction we assume that $x=x(t)$
is nonconstant. Then $j_D(x(t))$ is equal to a nonzero constant and thus $x(t)\ne 0$
for each $t$. Moreover, $f'(j^2_D(x(t)))\equiv a\in (0, A(x^\ast))$.
Since $\nabla j^2_D(\lambda z)=\lambda\nabla j^2_D(z)$ for all $(\lambda,z)\in\mathbb{R}_+\times\mathbb{R}^{2n}$, multiplying $x(t)$
by a suitable positive number we can assume that $x(\mathbb{R}/T\mathbb{Z})\subset\mathcal{S}=\partial D$
and
\begin{equation}\label{e:action-capacity7}
        \dot{x}=aJ_0\nabla j^2_D(x)\quad\hbox{and}\quad
     x(-t)=\tau_0 x(t).
     \end{equation}
     Note that $\langle \nabla j^2_D(z), z\rangle_{\mathbb{R}^{2n}}=2j^2_D(z)=2$ for any $z\in\mathcal{S}$.
We deduce
from (\ref{e:action-capacity7}) that
$$
A(x)=aT\le a<A(x^\ast),
$$
which contradicts (\ref{e:action2}).
This shows that $H\in \mathcal{H}_{ad}(D,\omega_0,\tau_0)$ is admissible and hence
$$
c_{\rm HZ,\tau_0}(D,\omega_0)\ge m(H)=A(x^{\ast})-\epsilon.
$$
Letting $\epsilon\to 0$ we get (\ref{e:action-capacity5}).

\vspace*{4pt}\noindent{\bf Step 2.} {Prove}
 \begin{equation}\label{e:action-capacity8}
   c_{\rm HZ,\tau_0}(D,\omega_0,\tau_0)\le A(x^{\ast}).
   \end{equation}
Let $H\in \mathcal{H}(D,\omega_0,\tau_0)$ satisfy $m(H)>A(x^\ast)$.
We wish to prove that the system
   \begin{equation}\label{e:action-capacity9}
        \dot{x}=J_0\nabla H(x),\quad x(t+1)=x(t)\quad\hbox{and}\quad
     x(-t)=\tau_0x(t)
     \end{equation}
has a nonconstant solution $x:\mathbb{R}/\mathbb{Z}\to D$.
By Lemma~\ref{nointerior} we have a small number $\epsilon>0$ such that $m(H)>A(x^{\ast})+\epsilon$ and  $A(x^\ast)+\epsilon\notin \Sigma_{\mathcal{S}}^{\tau_0}$. This means that the following system
   \begin{equation}\label{e:action-capacity10}
        \dot{x}=(A(x^\ast)+\epsilon)J_0\nabla j^2_D(x),\quad x(1+t)=x(t) \quad\hbox{and}\quad x(-t)=\tau_0x(t)
     \end{equation}
admits only the trivial solution $x\equiv 0$. (Otherwise, we have
$x(t)\ne 0\;\forall t$ as above. Thus after
 multiplying $x(t)$ by a suitable positive number we can assume that
  $x(t)\in \mathcal{S}=\partial D$, which leads to
  $\Sigma_{\mathcal{S}}^{\tau_0}\ni A(x)=A(x^\ast)+\epsilon$.)
For a fixed number $\delta>0$ we take a smooth
 function $f: [1, \infty)\rightarrow\mathbb{R}$ such that
   \begin{eqnarray*}
     &f(t)\ge (A(x^\ast)+\epsilon)t,& \,\,\,   t\ge 1, \\
     &f(t)=(A(x^\ast)+\epsilon)t,& \,\,\,   t\;\hbox{large}, \\
     & f(t)=m(H),& \,\,\, 1\le t\le 1+\delta, \\
    & 0\leq f'(t)\le A(x^{\ast})+\epsilon,& \,\,\, t>1+\delta.
   \end{eqnarray*}
With this $f$ we get an extension of $H$ as follows
$$
 \overline{H}(z)=\left\{
   \begin{array}{l}
     H(z),\quad\hbox{for}\;z\in D, \\
     f(j_D^2(z)), \quad\hbox{for}\;z\notin D.
   \end{array}
   \right.
$$
Notice that $\overline{H}\in C^\infty(\mathbb{R}^{2n},\mathbb{R})$ satisfies $\overline{H}(\tau_0z)=\overline{H}(z)$.
By (\ref{e:Phi}),
$$
\Phi_{\overline{H}}(x)=
\mathfrak{a}(x)-\mathfrak{b}_{\overline{H}}(x)
=1/2(\|x^+\|_{1/2}^2-\|x^-\|_{1/2}^2)-\int^1_0\overline{H}(x(t))dt.
$$

\begin{lemma}\label{lem:positive}
   Assume that $x$ is a $\tau_0$-brake orbit of $X_{\overline{H}}$ with
   period $1$ and with  $\Phi_{\overline{H}}(x)>0$.
   Then it is nonconstant, sits in $D$ completely, and thus
   is a brake orbit of $X_H$ on $D$.
\end{lemma}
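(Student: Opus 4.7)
The plan is to combine conservation of $\overline{H}$ along the Hamiltonian flow with the explicit structure of $\overline{H}$ in each region, in order to exclude every possibility except that the orbit lies entirely inside $D$.

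First I rule out $x$ being constant: a constant orbit would give $\Phi_{\overline{H}}(x)=-\overline{H}(x(0))\le 0$ since $\overline{H}\ge 0$, contradicting the hypothesis $\Phi_{\overline{H}}(x)>0$. Next, since $\dot x=J_0\nabla\overline{H}(x)$, the value $c:=\overline{H}(x(t))$ is independent of $t$, and I split on its size. In the easy cases $c=0$ or $c=m(H)$, the orbit is trapped in a level set on which $\nabla\overline{H}$ identically vanishes: for $c=0$ because $\overline{H}\ge 0$ attains its minimum; for $c=m(H)$ because inside $D$ the function $H$ attains its maximum (so $\nabla H=0$), while outside $D$ the level $\{\overline{H}=m(H)\}$ lies in $\{j_D^2\in[1,t^\ast]\}$ with $t^\ast:=\sup\{s\ge 1:f(s)=m(H)\}\ge 1+\delta$, where $f'\equiv 0$. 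In both cases $\dot x\equiv 0$, contradicting nonconstancy.

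The only substantive case is $c>m(H)$: the orbit must lie on a single exterior sphere $\{j_D^2=t_0\}$ with $f(t_0)=c$ and $t_0>1+\delta$. Exploiting the $2$-homogeneity identity $\langle\nabla j_D^2(y),y\rangle=2 j_D^2(y)=2t_0$, the Hamilton equation $\dot x=f'(t_0)J_0\nabla j_D^2(x)$ yields
\[
A(x)=\tfrac12\int_0^1\langle-J_0\dot x,x\rangle_{\mathbb{R}^{2n}}\,dt=f'(t_0)\,t_0,
\]
whence
\[
\Phi_{\overline{H}}(x)=A(x)-c=f'(t_0)t_0-f(t_0)\le(A(x^\ast)+\epsilon)t_0-(A(x^\ast)+\epsilon)t_0=0,
\]
using the two defining bounds $f'(t)\le A(x^\ast)+\epsilon$ and $f(t)\ge(A(x^\ast)+\epsilon)t$. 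This contradicts $\Phi_{\overline{H}}(x)>0$. All other values of $c$ being excluded, we are left with $0<c<m(H)$; since $\overline{H}(y)\ge m(H)$ for every $y\notin D$, the orbit cannot leave $D$, and there $\overline{H}=H$ exhibits $x$ as a brake orbit of $X_H$ on $D$.

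The main obstacle is the exterior case $c>m(H)$: the contradiction rests entirely on the interplay of the two inequalities $f'(t)\le A(x^\ast)+\epsilon$ and $f(t)\ge(A(x^\ast)+\epsilon)t$ built into the construction of $f$, and nothing smaller than $A(x^\ast)+\epsilon$ in either slot would suffice. The other cases reduce to general Hamiltonian facts (energy conservation and the vanishing of $\nabla\overline{H}$ on extremal level sets).
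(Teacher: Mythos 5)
Your proof is correct and rests on the same key computation as the paper: on an exterior level $\{j_D^2=t_0\}$ one has $\Phi_{\overline{H}}(x)=f'(t_0)t_0-f(t_0)\le 0$ by the two defining bounds on $f$, contradicting positivity. You make the energy split $c=0$, $c=m(H)$, $c>m(H)$, $0<c<m(H)$ explicit, whereas the paper handles the first two implicitly (by observing that a nonconstant orbit with a point outside $D$ must lie entirely outside $D$, since $\nabla\overline{H}$ vanishes on $\partial D$ and on extremal level sets), but the substance is the same.
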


\begin{proof}
   Since $\overline{H}\geq 0$
 and $\Phi_{\overline{H}}(x)>0$,  $x$ cannot be constant.
 Moreover, by contradiction, suppose that $x(\bar{t})\notin D$ for some $\bar{t}$.
Then $x(t)\notin D$ for all $t$. By the construction of $\overline{H}$, the Hamiltonian equation $\dot{x}=X_{\overline{H}}(x)$  becomes
  $$
  \dot{x}=J_0f'(j_D^2(x))\nabla j_D^2(x).
  $$
  This implies that $j_D(x(t))$ is constant. A direct computation leads to
  \begin{eqnarray*}
  \Phi_{\overline{H}}(x)&=&\int_0^1\left\{\frac{1}{2}\langle f'(j_D^2(x))\nabla j_D^2(x),x\rangle_{\mathbb{R}^{2n}}-f(j_D^2(x))\right\}\\
  &=&f'(j_D^2(x))j_D^2(x)-f(j_D^2(x))\\
  &\leq &\left(A(x^\ast)+\varepsilon\right)j_D^2(x)- \left(A(x^\ast)+\varepsilon\right)j_D^2(x)\\
  &=&0,
  \end{eqnarray*}
  which contradicts the assumption $\Phi_{\overline{H}}(x)>0$.
    Hence $x(t)\in D$ for all $t$, and thus $x$ is a nonconstant $\tau_0$-brake orbit of $X_H$.
\end{proof}

\begin{lemma}\label{lem:PS}
   If  a sequence $(x_k)\subset \mathbb{E}$ such that $\nabla \Phi_{\overline{H}}(x_k)\rightarrow 0$ in
   $\mathbb{E}$, then it
   has a convergent subsequence in $\mathbb{E}$.
\end{lemma}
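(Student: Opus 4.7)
The plan is to establish the Palais--Smale condition for $\Phi_{\overline{H}}$ in two stages: first I would prove that $(x_k)$ is bounded in $\mathbb{E}$, and then extract a convergent subsequence by a compactness argument. The crux is that $\overline{H}$ has been constructed so that for $|z|$ large it coincides with $K(z):=(A(x^\ast)+\varepsilon)j_D^2(z)$, while our choice of $\varepsilon$ ensures $A(x^\ast)+\varepsilon\notin\Sigma_{\mathcal{S}}^{\tau_0}$, i.e.\ that (\ref{e:action-capacity10}) admits only the trivial solution.

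To bound $(x_k)$ I would write $\overline{H}=K+R$, where $R$ vanishes for all $z$ with $j_D^2(z)$ large, so that $\nabla R$ is uniformly bounded with compact support. The working identity
$$
\nabla\Phi_{\overline{H}}(x)=x^+-x^--j^{\ast}\nabla K(x)-j^{\ast}\nabla R(x)
$$
has last term uniformly bounded in $\mathbb{E}$ by continuity of $j^{\ast}:L^2\to\mathbb{E}$ (Proposition~\ref{prop:jast}) together with the $L^\infty$-bound on $\nabla R$. Supposing for contradiction that $\|x_k\|_{1/2}\to\infty$, I would set $y_k:=x_k/\|x_k\|_{1/2}$, divide the identity by $\|x_k\|_{1/2}$, and use the positive $1$-homogeneity of $\nabla K$ to reach
$$
y_k^+-y_k^--j^{\ast}\nabla K(y_k)\to 0\quad\hbox{in}\;\mathbb{E}.
$$
Since $\nabla K(y_k)$ is bounded in $L^2$ (from $|\nabla K(z)|\le C|z|$), $j^{\ast}$ is compact (Proposition~\ref{prop:jast}), and $\mathbb{E}^0=L_0$ is finite dimensional, a subsequence of $(y_k)$ converges in $\mathbb{E}$ to some $y$ with $\|y\|_{1/2}=1$; dominated convergence applied to the Nemytskii operator $z\mapsto\nabla K(z)$ then justifies passing to the limit in the nonlinear term, yielding $\nabla\Phi_K(y)=0$ in $\mathbb{E}$.

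Bootstrapping the identity $\dot y=J_0\nabla K(y)$ from $\nabla K(y)\in L^2$ gives $y\in W^{1,2}\subset C^0$, and conservation of $K$ along its flow together with strict positivity of $K$ off the origin forces $y(t)\neq 0$ for all $t$; hence $y$ is $C^\infty$ on $\mathbb{R}/\mathbb{Z}$ and nonconstant. Rescaling $x:=y/j_D(y(\cdot))$, which is well defined since $j_D\circ y$ is a positive constant, and invoking the $0$-homogeneity of $\nabla j_D$, I verify that $x:\mathbb{R}/\mathbb{Z}\to\mathcal{S}$ is a nontrivial solution of (\ref{e:action-capacity10}), directly contradicting our choice of $\varepsilon$. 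This completes the boundedness step, which is the main obstacle in the proof; the subtlety is precisely passing to the limit in the nonlinear term $j^{\ast}\nabla K(y_k)$ given that $K$ fails to be $C^2$ at the origin.

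Once $(x_k)$ is known to be bounded, the linear growth bound $|\nabla\overline{H}(z)|\le C(1+|z|)$ makes $\nabla\overline{H}(x_k)$ bounded in $L^2$, so $j^{\ast}\nabla\overline{H}(x_k)$ admits a convergent subsequence in $\mathbb{E}$ by compactness of $j^{\ast}$. The identity $x_k^+-x_k^-=\nabla\Phi_{\overline{H}}(x_k)+j^{\ast}\nabla\overline{H}(x_k)$ then gives a convergent subsequence of $x_k^+-x_k^-$ in $\mathbb{E}$, and since $x_k^0$ is automatically precompact in the finite-dimensional $\mathbb{E}^0$, this produces the desired convergent subsequence of $(x_k)$ in $\mathbb{E}$.
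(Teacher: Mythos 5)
Your proposal is correct and follows essentially the same route as the paper: suppose $\|x_k\|_{1/2}\to\infty$, rescale to $y_k=x_k/\|x_k\|_{1/2}$, use compactness of $j^{\ast}$ together with the global Lipschitz bound on $\nabla j_D^2$ to extract a limit $y$ with $\|y\|_{1/2}=1$ solving (\ref{e:action-capacity10}), and contradict the choice $A(x^\ast)+\epsilon\notin\Sigma_{\mathcal{S}}^{\tau_0}$. The only cosmetic difference is that the paper concludes $y\equiv 0$ directly from the fact that (\ref{e:action-capacity10}) admits only the trivial solution, whereas you take the gratuitous extra step of rescaling the nonzero limit to produce a nontrivial $\tau_0$-brake characteristic on $\mathcal{S}$ — both are instances of the same admissibility contradiction.
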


\begin{proof}
If $(x_k)$ is bounded in $\mathbb{E}$, as in the proof \cite[page 89, Lemma~6]{HoZe94} we deduce that
$(x_k)$ has a convergent subsequence.
If $(x_k)$ is unbounded in $\mathbb{E}$,
we can assume
   $\lim_{k\rightarrow +\infty }\|x_k\|_{1/2}=+\infty$ without loss of generality.
   Let $y_k=\frac{x_k}{\|x_k\|_{1/2}}$. Then $\|y_k\|_{1/2}=1$ and satisfies
   $$
   y_k^+-y_k^--j^{\ast}\left(\frac{ \nabla \overline{H}(x_k)}{\|x_k\|_{1/2}}\right)\rightarrow 0.
   $$
By the construction of $\overline{H}$ and the fact that
$\nabla j_D^2(\lambda z)=\lambda \nabla j_D^2(z)$ for $\lambda>0$,
 arguing as in the proof \cite[page 89, Lemma~6]{HoZe94}
 we  can assume that  $y_{k}\rightarrow  y$ in $\mathbb{E}$.  Moreover, since
 there exists a constant $C>0$ such that $|(j_D^2)''(z)u|\le C|u|$  for all $z,u\in\mathbb{R}^{2n}$,
  we get that
  $$
  j^\ast\left(\frac{\nabla\overline{H}(x_k)}{\|x_k\|_{1/2}}\right)\to (A(x^{\ast})+\epsilon)
  j^{\ast}\nabla j_D^2(y)\quad\hbox{in $\mathbb{E}$}.
  $$
  By Proposition~\ref{solution}, $y$ satisfies
  $$
  \dot{y}=(A(x^\ast)+\epsilon)J_0\nabla j_D^2(y),\quad y(1+t)=y(t)\quad\hbox{and}\quad
  y(-t)=\tau_0y(t)
  $$
  and thus $y=0$ because
   $A(x^\ast)+\epsilon\notin \Sigma_{\mathcal{S}}^{\tau_0}$.
   This contradicts to the fact $\|y\|_{1/2}=1$. That is,  $(x_k)$ must be bounded in $\mathbb{E}$.
\end{proof}

For $x^\ast$ in (\ref{e:desiredChar}) define
$x_0(t)=x^\ast(\mu t)=\sqrt{\mu}u(t)+ a_0/\sqrt{\mu}$ for $t\in [0, 1]$.
Then $x_0$ satisfies
\begin{equation}\label{e:action-capacity12}
\left\{
   \begin{array}{l}
     \dot{x_0}=A(x^\ast)J_0\nabla j_D^2(x_0),\\
      x_0(t+1)=x_0(t),\; x_0(-t)=\tau_0x(t)\\
       A(x_0)=A(x^\ast),\\
    j_D(x_0(t))\equiv 1,\;\hbox{i.e.,}\; x_0([0,1])\subset\mathcal{S}.
   \end{array}
   \right.
\end{equation}
Denote by $x_0^+$ the projections of $x_0$ onto $\mathbb{E}^+$.
 Then $x_0^+\ne 0$. (Otherwise,
 a contradiction occurs because $0<A(x^\ast)=A(x_0)=-\frac{1}{2}\|x_0^-\|^2_{1/2}$.)
Following \cite{HoZe90}
 we define for $s>0$ and $r>0$,
 \begin{eqnarray*}
 &&W_s:=\mathbb{E}^-\oplus \mathbb{E}^0\oplus sx_0^+ \subset\mathbb{E},\\
 &&\Sigma_r:=\{x^-+x^0+sx_0^+\,|\,0\le s\le r,\;\|x^-+x^0\|_{1/2}\le r\}.
 \end{eqnarray*}
Let $\partial\Sigma_r$ denote the boundary of $\Sigma_r$ in $\mathbb{E}^0\oplus \mathbb{E}^-\oplus\mathbb{R}x_0^+$. Then
 \begin{equation}\label{boundary}
  \partial\Sigma_r=\{x=x^-+x^0 +sx_0^+\in\Sigma_r\,|\,\|x^-+x^0\|_{1/2}=r \;\text{or}\; s=0 \;\text{or} \; s=r \}.
  \end{equation}
Lemmas~5,~6 in \cite{HoZe90} directly lead to the following two results.
\begin{lemma}\label{lem:HZ5}
  There exists a constant $C>0$ such that for any $s\ge 0$,
  $$
  \Phi_{\overline{H}}(x)\le-\epsilon\int^1_0(j_D(x(t)))^2dt+C,\quad\forall x\in W_s.
  $$
\end{lemma}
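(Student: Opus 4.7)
\emph{Proof plan.}

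My plan is to compare $\Phi_{\overline{H}}$ to the auxiliary functional
\[
\Psi(z) := \mathfrak{a}(z) - A(x^\ast)\int_0^1 j_D^2(z(t))\,dt, \qquad z\in\mathbb{E},
\]
and then to show $\Psi\le 0$ on $W_s$ uniformly in $s\ge 0$ by exhibiting $x_0$ from \eqref{e:action-capacity12} as a global maximizer of $\Psi$ on the affine subspace $x_0^+ + (\mathbb{E}^-\oplus\mathbb{E}^0)$, with maximum value zero.

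The first step is a pointwise lower bound for $\overline{H}$. From the construction of $f$ (namely $f(t)\ge (A(x^\ast)+\epsilon)t$ for $t\ge 1$ and $f\ge 0$ for $t\in[0,1]$), combined with $H\ge 0$ on $D$ and $j_D^2\le 1$ on $D$, one obtains
\[
\overline{H}(z) \;\ge\; (A(x^\ast)+\epsilon)\,j_D^2(z)\,-\,(A(x^\ast)+\epsilon), \qquad \forall z\in\mathbb{R}^{2n}.
\]
Substituting into $\Phi_{\overline{H}}(x)=\mathfrak{a}(x)-\int \overline{H}(x)\,dt$ and moving an $\epsilon\int_0^1 j_D^2(x)\,dt$ term to the left, for every $x\in W_s$,
\[
\Phi_{\overline{H}}(x)+\epsilon\int_0^1 j_D^2(x(t))\,dt \;\le\; \Psi(x)+(A(x^\ast)+\epsilon).
\]
Hence the lemma will follow with $C=A(x^\ast)+\epsilon$ once $\Psi(x)\le 0$ is established on $W_s$.

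The heart of the argument is that $x_0$ is a critical point of $\Psi$ on $\mathbb{E}$: integration by parts for the smooth $\tau_0$-symmetric trajectory $x_0$ gives $d\Psi(x_0)[\zeta]=\int_0^1\langle -J_0\dot x_0 - A(x^\ast)\nabla j_D^2(x_0),\zeta\rangle\,dt$ for every $\zeta\in\mathbb{E}$, which vanishes by $\dot x_0 = A(x^\ast)J_0\nabla j_D^2(x_0)$ from \eqref{e:action-capacity12}. On the affine subspace $x_0^+ + (\mathbb{E}^-\oplus\mathbb{E}^0)$, the quadratic part of $\Psi$ reduces to $\tfrac12\|x_0^+\|_{1/2}^2 - \tfrac12\|y^-\|_{1/2}^2$, which is concave in $y$, and the convexity of $j_D^2$ makes $y\mapsto -A(x^\ast)\int_0^1 j_D^2(x_0^+ + y)\,dt$ concave as well. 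Thus $\Psi$ restricted to this affine subspace is concave, so its critical point $x_0$ is a global maximizer with
\[
\sup_{y\in\mathbb{E}^-\oplus\mathbb{E}^0}\Psi(x_0^+ + y)=\Psi(x_0)=A(x^\ast)-A(x^\ast)\cdot 1=0,
\]
where we used $\mathfrak{a}(x_0)=A(x_0)=A(x^\ast)$ and $j_D(x_0)\equiv 1$.

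The remaining case $s\ne 1$ is handled by homogeneity: $2$-homogeneity of $\|\cdot\|_{1/2}^2$ (on each of $\mathbb{E}^\pm$, $\mathbb{E}^0$) and of $j_D^2$ yields the scaling identity $\Psi(sx_0^+ + y) = s^2\,\Psi(x_0^+ + y/s)$ for $s>0$ and $y\in\mathbb{E}^-\oplus\mathbb{E}^0$, whence $\sup_y\Psi(sx_0^+ + y) = s^2\cdot 0 = 0$; for $s=0$, $\Psi(y) \le 0$ on $\mathbb{E}^-\oplus\mathbb{E}^0$ is immediate. The main subtlety is that $G(z):=A(x^\ast)j_D^2(z)$ does not satisfy (H3), so the critical-point identification and the gradient computation needed for $\Psi$ cannot be invoked from Section~\ref{section:space} verbatim; however, the smoothness and strict convexity of $\partial D$ in force here make $G$ of class $C^2$ with gradient of at most linear growth, so the same compactness and regularity arguments go through and the reduction above is justified.
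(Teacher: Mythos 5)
Your proof is correct, and since the paper does not write out a proof of this lemma at all---it simply invokes ``Lemmas~5,~6 in \cite{HoZe90}''---your self-contained argument fills a genuine gap in the exposition. The two pivotal steps both check out: the pointwise bound $\overline{H}(z)\ge(A(x^\ast)+\epsilon)j_D^2(z)-(A(x^\ast)+\epsilon)$ follows from $f(t)\ge(A(x^\ast)+\epsilon)t$ for $t\ge 1$ together with $\overline{H}\ge 0$ on $D$; and the inequality $\Psi\le 0$ on each $W_s$ reduces by $2$-homogeneity to $s=1$, where concavity of $\Psi|_{W_1}$ (the $\mathbb{E}^+$-part of the quadratic form is frozen at $x_0^+$, and $-A(x^\ast)\int j_D^2$ is concave) plus the fact that $x_0$ is a critical point of $\Psi$ on all of $\mathbb{E}$ (via $-J_0\dot{x}_0=A(x^\ast)\nabla j_D^2(x_0)$) gives $\sup_{W_1}\Psi=\Psi(x_0)=0$. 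It is worth noting that the inequality $\mathfrak{a}(x)\le A(x^\ast)\int_0^1 j_D^2(x(t))\,dt$ on $W_s$, which is precisely $\Psi\le 0$, is the same geometric fact that the paper establishes for the Ekeland--Hofer variant in Section~\ref{sec:EH.3.1} around (\ref{e:EH.3.2}), but there it is obtained by a different, ``dual'' route: a discriminant computation for the quadratic $t\mapsto\mathfrak{a}(x+ty)$ combined with a Legendre-transform inequality coming from $I(y)=1$ for the dual minimizer $y$. Your concavity-plus-critical-point argument is cleaner and sidesteps the Legendre machinery. One minor imprecision: $j_D^2$ is $C^\infty$ away from the origin and convex, but it is only $C^{1,1}$ at $0$, not $C^2$ (its Hessian blows up at the origin of homogeneity degree $0$); this does not matter since $C^1$ with globally Lipschitz gradient is all you actually use for differentiability of the Nemytskii map and for the critical-point identity.
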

\begin{lemma}\label{lem:HZ6}
    $\Phi_{\overline{H}}|\partial\Sigma_r\le 0$ if $r>0$ is sufficiently large.
\end{lemma}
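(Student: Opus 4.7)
My plan is to verify the inequality $\Phi_{\overline{H}}(x)\le 0$ separately on each of the three faces of $\partial\Sigma_\tau$ identified in (\ref{boundary}). On the face $\{s=0\}$, $x=x^-+x^0$ has no $\mathbb{E}^+$-component, so
\[
\Phi_{\overline{H}}(x)=-\tfrac12\|x^-\|_{1/2}^2-\int_0^1\overline{H}(x(t))\,dt\le 0
\]
since $\overline{H}\ge 0$; this case requires no size assumption on $\tau$.

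The remaining two faces require Lemma~\ref{lem:HZ5} combined with two elementary lower bounds for $\int_0^1 j_D^2(x(t))\,dt$. Because $0\in\mathrm{Int}\,D$ and $D$ is bounded, there exists $c>0$ with $j_D(z)\ge c|z|$, which gives $\int_0^1 j_D^2(x(t))\,dt\ge c^2\|x\|_{L^2}^2\ge c^2 s^2\|x_0^+\|_{L^2}^2$; and Jensen's inequality applied to the convex function $j_D^2$ gives $\int_0^1 j_D^2(x(t))\,dt\ge j_D^2(x^0)\ge c^2|x^0|^2$. On the face $\{s=\tau\}$, the first bound together with Lemma~\ref{lem:HZ5} yields $\Phi_{\overline{H}}(x)\le -\epsilon c^2\tau^2\|x_0^+\|_{L^2}^2+C$, which is $\le 0$ for $\tau$ large (note $x_0^+\ne 0$ forces $\|x_0^+\|_{L^2}>0$). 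On the face $\{\|x^-+x^0\|_{1/2}=\tau\}$ I split according as $|x^0|\ge\tau/\sqrt 2$ or $\|x^-\|_{1/2}\ge\tau/\sqrt 2$; in the first subcase Jensen gives $\int j_D^2(x)\,dt\ge c^2\tau^2/2$ and Lemma~\ref{lem:HZ5} finishes.

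The main obstacle is the subcase $\|x^-\|_{1/2}\ge\tau/\sqrt 2$: the naive estimate $\Phi_{\overline{H}}(x)\le\tfrac{s^2}{2}\|x_0^+\|_{1/2}^2-\tfrac12\|x^-\|_{1/2}^2$ (from $\overline{H}\ge 0$) may fail to be nonpositive because $\|x_0^+\|_{1/2}$ can exceed $1$ and the $1/2$-norm does not dominate the $L^2$-norm at high frequencies. I would circumvent this by taking a convex combination with parameter $\theta\in(0,1)$:
\[
\Phi_{\overline{H}}(x)\le\theta\bigl(\tfrac{s^2}{2}\|x_0^+\|_{1/2}^2-\tfrac12\|x^-\|_{1/2}^2\bigr)+(1-\theta)\bigl(-\epsilon\textstyle\int_0^1 j_D^2(x)\,dt+C\bigr),
\]
and using $\int_0^1 j_D^2(x)\,dt\ge c^2 s^2\|x_0^+\|_{L^2}^2$ to obtain
\[
\Phi_{\overline{H}}(x)\le s^2\bigl[\tfrac{\theta}{2}\|x_0^+\|_{1/2}^2-(1-\theta)\epsilon c^2\|x_0^+\|_{L^2}^2\bigr]-\tfrac{\theta}{2}\|x^-\|_{1/2}^2+(1-\theta)C.
\]
Choosing $\theta$ small enough that the bracket is nonpositive (possible since both $\|x_0^+\|_{1/2}$ and $\|x_0^+\|_{L^2}$ are fixed positive constants), the estimate reduces to $\Phi_{\overline{H}}(x)\le -\tfrac{\theta}{2}\|x^-\|_{1/2}^2+(1-\theta)C\le -\tfrac{\theta\tau^2}{4}+(1-\theta)C$, which is $\le 0$ for $\tau$ sufficiently large. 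The convex-combination device is essential: it trades the dangerous $\tfrac{s^2}{2}\|x_0^+\|_{1/2}^2$ term against the $s^2\|x_0^+\|_{L^2}^2$ contribution hidden inside $\int_0^1 j_D^2(x)\,dt$, exploiting the fact that $x_0^+$ has nonzero $L^2$-norm.
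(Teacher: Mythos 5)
Your proof is correct, and it follows the same face-by-face argument that the paper invokes by citation to Lemmas~5 and~6 of Hofer--Zehnder \cite{HoZe90}: trivial bound on $\{s=0\}$, Lemma~\ref{lem:HZ5} plus the coercivity $\int_0^1 j_D^2(x)\ge c^2\|x\|_{L^2}^2\ge c^2 s^2\|x_0^+\|_{L^2}^2$ on $\{s=\tau\}$, and Jensen's inequality $\int_0^1 j_D^2(x)\ge j_D^2(x^0)$ when $|x^0|\ge\tau/\sqrt2$. The convex-combination device you use for the remaining subcase $\|x^-\|_{1/2}\ge\tau/\sqrt2$ is a correct and essentially equivalent alternative to the standard direct split into $s$ small versus $s$ large (with threshold proportional to $\tau$), and it does resolve the genuine delicacy that $\|x_0^+\|_{1/2}$ may exceed $1$ so that the naive bound $\tfrac{s^2}{2}\|x_0^+\|_{1/2}^2-\tfrac12\|x^-\|_{1/2}^2$ alone can be positive.
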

Arguing as in the proof of Proposition~\ref{prop:EH.1.4}, we get
 \begin{lemma}\label{lem:HZ7}
 For $z_0\in L_0\cap H^{-1}(0)$,
there exist constants $\alpha>0$ and $\beta>0$ such that
  $$
  \Phi_{\overline{H}}|\Gamma_\alpha\geq\beta>0,
  $$
  where $\Gamma_\alpha=\{z_0+x\,|\,x\in \mathbb{E}^+,\,\|x\|_{1/2}=\alpha\}$.
  \end{lemma}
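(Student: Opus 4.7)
The plan is to mirror the argument used in the proof of Proposition~\ref{prop:EH.1.4}. First observe that $z_0\in L_0=\mathbb{E}^0$, so that for every $x\in\mathbb{E}^+$ the decomposition of $z_0+x$ is $(z_0+x)^+=x$, $(z_0+x)^-=0$, $(z_0+x)^0=z_0$, whence $\mathfrak{a}(z_0+x)=\tfrac12\|x\|_{1/2}^2$. Consequently, for any $x\in\Gamma_\alpha$,
\begin{equation*}
\Phi_{\overline{H}}(z_0+x)=\frac{\alpha^2}{2}-\int_0^1\overline{H}(z_0+x(t))\,dt.
\end{equation*}
So the lemma reduces to showing that $\int_0^1\overline{H}(z_0+x(t))\,dt=o(\alpha^2)$ as $\alpha\to 0$, uniformly in $x\in\Gamma_\alpha$; picking $\alpha$ small enough that this integral is at most $\alpha^2/4$ then yields the conclusion with $\beta=\alpha^2/4$.

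The main step is therefore the asymptotic estimate, which I would prove by the same contradiction/dominated-convergence scheme as in Proposition~\ref{prop:EH.1.4}. For this, two bounds on $\overline{H}$ are needed: a global quadratic majorant $\overline{H}(z_0+z)\le C_1|z|^2$ for all $z\in\mathbb{R}^{2n}$, and a local cubic bound $\overline{H}(z_0+z)\le C_2|z|^3$ for $|z|$ on any fixed bounded set. The global bound follows from the extension formula $\overline{H}(z)=f(j_D^2(z))$ outside $D$ with $f$ ultimately affine (so $\overline{H}$ is at most quadratic in $|z|$ at infinity), combined with the boundedness $\overline{H}\le m(H)$ on $D$. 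The local cubic bound holds because, choosing $z_0$ inside the open set $U$ on which $H\equiv 0$ that is built into the definition of $\mathcal{H}(D,\omega_0,\tau_0)$, the smooth extension $\overline{H}$ vanishes identically in a whole neighborhood of $z_0$. Assuming the limit fails, one extracts a sequence $x_j\in\mathbb{E}^+$ with $\|x_j\|_{1/2}=\alpha_j\to 0$ and $\int_0^1\overline{H}(z_0+x_j)/\alpha_j^2\ge d>0$. Normalizing $y_j:=x_j/\alpha_j$, the compact embedding in Proposition~\ref{prop:compact} and standard $L^p$ theory (as in \cite[Th.4.9]{Br11}) yield a subsequence with $y_j\to y$ in $L^2$, $y_j(t)\to y(t)$ a.e., and $|y_j(t)|\le w(t)$ for some $w\in L^2$. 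The quadratic bound then gives $\overline{H}(z_0+x_j(t))/\alpha_j^2\le C_1 w(t)^2$ (a uniform integrable dominating function), while the cubic bound gives $\overline{H}(z_0+x_j(t))/\alpha_j^2\le C_2|x_j(t)|\cdot|y_j(t)|^2\to 0$ a.e.\ (since $x_j(t)=\alpha_jy_j(t)\to 0$ a.e.). Dominated convergence forces the integral to tend to zero, contradicting the choice of $x_j$.

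I expect the main obstacle to be the verification that the two Taylor-type bounds on $\overline{H}$ genuinely hold: in particular, that the global quadratic bound survives the patching of the original $H$ with the cutoff extension $f(j_D^2(\cdot))$ across $\partial D$, and that $z_0$ may be chosen in the interior of $H^{-1}(0)$ (i.e., inside the open set $U$) so that $\overline{H}$ actually vanishes on a full neighborhood of $z_0$; without the latter, the cubic local control breaks down and the argument collapses. Once these verifications are in place, the remainder of the proof is a routine transcription of the scheme already executed in Proposition~\ref{prop:EH.1.4}.
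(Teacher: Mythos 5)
Your proposal is correct and reproduces exactly the approach the paper itself intends: the paper's entire proof is the one-line remark ``Arguing as in the proof of Proposition~\ref{prop:EH.1.4}, we get,'' and your transcription of that scheme (split $z_0+x$ in $\mathbb{E}^0\oplus\mathbb{E}^+$, reduce to $\int_0^1\overline{H}(z_0+x)\,dt=o(\|x\|_{1/2}^2)$, prove this by contradiction via compact embedding, an $L^2$-dominating function, a global quadratic bound, a local cubic bound, and dominated convergence) is precisely what is meant. Your closing caveat is also well-taken: the lemma is only true if $z_0$ lies in the \emph{interior} of $\overline{H}^{-1}(0)$, which is what the construction actually supplies (the definition of $\mathcal{H}(D,\omega_0,\tau_0)$ gives an open $U$ with $L_0\cap U\ne\emptyset$ and $H|_U=0$, and $\overline{H}=H$ on $D$), so the hypothesis ``$z_0\in L_0\cap H^{-1}(0)$'' should be read as ``$z_0\in L_0\cap{\rm Int}(H^{-1}(0))$.''
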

Let $\phi^t$ be the negative gradient flow of $\Phi_{\overline{H}}$. Arguing as in Section~\ref{section:space}, $\phi^t$ has the property described in Proposition~\ref{prop:flow}.
\begin{lemma}\label{positive+}
$\phi^t(\Sigma_r)\cap\Gamma_\alpha\neq \emptyset, \,\forall t\geq 0$.
 \end{lemma}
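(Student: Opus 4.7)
The statement is a classical linking lemma in the spirit of \cite{HoZe90}, adapted to our $\tau_0$-symmetric variational framework. The plan is to encode the intersection $\phi^t(\Sigma_\tau)\cap\Gamma_\alpha$ as the zero set of a suitable map on $\Sigma_\tau$ and to conclude via a Leray-Schauder degree computation. The key input is the energy separation between $\partial\Sigma_\tau$ and $\Gamma_\alpha$: Lemma~\ref{lem:HZ6} gives $\Phi_{\overline{H}}\le 0$ on $\partial\Sigma_\tau$, while Lemma~\ref{lem:HZ7} gives $\Phi_{\overline{H}}\ge \beta>0$ on $\Gamma_\alpha$. Since $\phi^t$ is the negative gradient flow of $\Phi_{\overline{H}}$, the function $t\mapsto\Phi_{\overline{H}}(\phi^t(y))$ is non-increasing, and hence $\phi^t(\partial\Sigma_\tau)\cap\Gamma_\alpha=\emptyset$ for every $t\ge 0$.

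Set $W:=\mathbb{E}^-\oplus\mathbb{E}^0\oplus\mathbb{R}x_0^+$, so that $\Sigma_\tau$ is a bounded subset of $W$ with nonempty interior. Since $x\in\Gamma_\alpha$ if and only if $P^-x=0$, $P^0x=z_0$ and $\|P^+x\|_{1/2}=\alpha$, I would define
\begin{equation*}
F_t:\Sigma_\tau\longrightarrow W,\qquad F_t(y):=P^-(\phi^t(y))+\bigl(P^0(\phi^t(y))-z_0\bigr)+\frac{\|P^+(\phi^t(y))\|_{1/2}^2-\alpha^2}{\|x_0^+\|_{1/2}^2}\,x_0^+,
\end{equation*}
so that $F_t(y)=0$ is equivalent to $\phi^t(y)\in\Gamma_\alpha$. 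By Proposition~\ref{prop:flow}, $\phi^t(y)=e^ty^-+y^0+e^{-t}sx_0^++K(t,y)$ with $K(t,\cdot)$ sending bounded sets into precompact ones, so $F_t$ is the sum of an explicit continuous linear isomorphism of $W$ (built from the dilations $e^{\pm t}$) and a compact map, after a short auxiliary homotopy absorbing the quadratic nonlinearity in the $x_0^+$-coordinate. The energy separation ensures $F_t$ does not vanish on $\partial\Sigma_\tau$, so the Leray-Schauder degree $\deg_{\mathrm{LS}}(F_t,\mathrm{int}(\Sigma_\tau),0)$ is well-defined for every $t\ge 0$, and the family $t\mapsto F_t$ is an admissible homotopy.

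At $t=0$ the map reduces to $F_0(y^-+y^0+sx_0^+)=y^-+(y^0-z_0)+(s^2\|x_0^+\|_{1/2}^2-\alpha^2)x_0^+/\|x_0^+\|_{1/2}^2$, whose unique zero in $W$ is $y_\ast:=z_0+(\alpha/\|x_0^+\|_{1/2})x_0^+$. Enlarging $\tau$ in Lemma~\ref{lem:HZ6} if necessary, I may assume $y_\ast\in\mathrm{int}(\Sigma_\tau)$. A straightforward product computation (or an obvious homotopy straightening the quadratic coordinate to an affine one) gives $\deg_{\mathrm{LS}}(F_0,\mathrm{int}(\Sigma_\tau),0)=\pm 1$. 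Homotopy invariance then yields $\deg_{\mathrm{LS}}(F_t,\mathrm{int}(\Sigma_\tau),0)=\pm 1\ne 0$ for all $t\ge 0$, so $F_t^{-1}(0)\ne\emptyset$, which is precisely the assertion $\phi^t(\Sigma_\tau)\cap\Gamma_\alpha\ne\emptyset$.

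The main technical obstacle is rigorously casting $F_t$ in the Leray-Schauder framework given that $\mathbb{E}^-$ is infinite-dimensional. This requires factoring out the continuous linear isomorphism $(y^-,y^0,s)\mapsto(e^ty^-,y^0,e^{-t}s)$ of $W$ and isolating the compact remainder coming from $K(t,\cdot)$ via Proposition~\ref{prop:flow}, while taming the norm-squared nonlinearity in the third coordinate through an auxiliary homotopy that preserves the nonvanishing of $F_t$ on $\partial\Sigma_\tau$ secured by the energy separation.
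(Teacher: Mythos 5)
Your plan is correct and follows essentially the same strategy as the paper's proof: encode the intersection $\phi^t(\Sigma_\tau)\cap\Gamma_\alpha$ as the zero set of a map on $\Sigma_\tau$, use the energy separation of Lemmas~\ref{lem:HZ6} and \ref{lem:HZ7} together with the fact that $\Phi_{\overline{H}}$ decreases along $\phi^t$ to exclude zeros on $\partial\Sigma_\tau$, and then compute a nonzero Leray--Schauder degree by homotoping down to $t=0$. Two remarks on the technical obstacle you flag: there is no need for an auxiliary homotopy to absorb the quadratic nonlinearity in the $x_0^+$-coordinate, because any continuous map valued in the one-dimensional subspace $\mathbb{R}x_0^+$ already sends bounded sets to precompact ones; and the dilation $e^t$ on $\mathbb{E}^-$ is handled in the paper not by post-composing with $L_t^{-1}$ but by multiplying the $\mathbb{E}^-$-equation by $e^{-t}$ when the map is first written down, so that the resulting $id+B(t,\cdot)$ (with $B(t,x)=e^{-t}P^-K(t,x)+P^0K(t,x)-z_0+(\|x\cdot t-z_0\|_{1/2}-\alpha)x_0^+-P^+x$) is directly identity-plus-compact with no factorization step. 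The paper also uses the linear normalization $(\|x\cdot t-z_0\|_{1/2}-\alpha)x_0^+$ in place of your quadratic $(\|P^+\phi^t(y)\|_{1/2}^2-\alpha^2)x_0^+/\|x_0^+\|_{1/2}^2$, which makes the $t=0$ degree computation a single affine homotopy contracting $id+B(0,\cdot)$ to the translation $x\mapsto x-z_0-\alpha x_0^+$, whose degree on a sufficiently large $\Sigma_\tau$ is $1$.
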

\begin{proof}
Observe that $\phi^t(\Sigma_r)\cap\Gamma_\alpha\neq \emptyset$
if and only if $t\cdot x=\phi^t(x)\in\Gamma_\alpha$ for some $x\in\Sigma_r$, that is,
\begin{equation}\label{e:3.16}
\left\{
\begin{array}{l}
  x\in\Sigma_r \\
  (P^-+P^0)(t\cdot x-z_0)=0,\\
  \|t\cdot x-z_0\|_{1/2}=\alpha.
\end{array}
\right.
\end{equation}
By Proposition~\ref{prop:flow}
$$
(P^-+P^0)(t\cdot x-z_0)=e^tx^-+P^-K(t,x)+x^0+P^0K(t,x)-z_0.
$$
Hence (\ref{e:3.16}) is equivalent to
\begin{equation*}
\left\{
\begin{array}{l}
  x\in\Sigma_r \\
  x^-+e^{-t}P^-K(t,x)+x^0+P^0K(t,x)-z_0=0,\\
  (\|t\cdot x-z_0\|_{1/2}-\alpha)x_0^+=0.
\end{array}
\right.
\end{equation*}
 Define $B(t,\cdot):\mathbb{E}^-\oplus \mathbb{E}^0\oplus \mathbb{R}x_0^+\rightarrow \mathbb{E}^-\oplus \mathbb{E}^0\oplus \mathbb{R}x_0^+$ by
 $$
 B(t,x)= e^{-t}P^-K(t,x) +P^0K(t,x)-z_0
 +(\|t\cdot x-z_0\|_{1/2}-\alpha)x_0^+-P^+x.
 $$
  Then (\ref{e:3.16}) is equivalent to
 \begin{equation}\label{e:3.17}
 \left\{
\begin{array}{l}
  x\in\Sigma_r \\
  x+B(t,x)=0.
\end{array}
\right.
 \end{equation}
Since $K$ in Proposition~\ref{prop:flow} is continuous and maps bounded sets into precompact sets,
each $B(t,\cdot)$ is also such a map.
Note that for the constant  $\alpha$ in Lemma \ref{lem:HZ7} we have
   $$
   0\notin (id +B(t,\cdot))(\partial\Sigma_r), \,\forall t\geq 0
   $$
  if  $r$ in Lemma \ref{lem:HZ6} is sufficiently large. From now on, we fix a sufficiently large $r>\alpha$.
   Then $\deg(\Sigma_r, id+B(t,\cdot),0)=\deg(\Sigma_r, id+B(0,\cdot),0)$. Since $K(0, x)=0$, we have
   $$
   B(0,x)=-z_0+P^{+}\{(\|x-z_0\|_{1/2}-\alpha)x_0^{+}-x\}.
   $$
    Define the homotopy
   $$
   L_{\mu}(x)=-z_0+P^{+}\{(\mu\|x-z_0\|_{1/2}-\alpha)x_0^{+}-\mu x\} \quad\text{for}\quad 0\leq \mu\leq 1.
   $$
   Since $L_\mu$ maps $\mathbb{E}^-\oplus \mathbb{E}^+\oplus\mathbb{R}x_0^+$ into a finite dimensional space, $L_\mu$
   maps bounded sets into precompact sets for every $0\le \mu\le 1$.
  We conclude that $x+L_\mu(x)\neq 0$ for $x\in\partial\Sigma_r$ and $0\leq \mu\leq 1$.
  Otherwise, suppose that $x+L_\mu x=0$ for some $\mu\in [0, 1]$ and some
  $x=x^-+x^0 +sx_0^+\in \partial\Sigma_r$.  Then
  \begin{equation}\label{e:3.18}
 -z_0+P^{+}\{(\mu\|x-z_0\|_{1/2}-\alpha)x_0^{+}-\mu x\}=-x.
 \end{equation}
  It follows that $x^-=0$ and $x^0=z_0$. (Note that we can choose sufficiently large $r$ such that $\tau>|z_0|$). Thus (\ref{e:3.18}) becomes
  $(\mu s\|x_0^+\|_{1/2}-\alpha)x_0^{+}-\mu sx_0^+=-sx_0^+$, that is,
  \begin{equation}\label{e:3.19}
\mu s\|x_0^+\|_{1/2}-\alpha=\mu s-s.
 \end{equation}
  Moreover, by (\ref{boundary}) we have either $s=0$ or $s=\tau$.
  Because $\alpha>0$,  (\ref{e:3.19}) implies $s\ne 0$. Hence we get
  $\mu r \|x_0^+\|_{1/2}-\alpha=\mu r-r$, i.e.,
  $$
  r=\frac{\alpha}{\mu\|x_0^+\|_{1/2}+1-\mu}\le \frac{\alpha}{\min\{\|x_0^+\|_{1/2},1\}}
  $$
  because $0<\min\{\|x_0^+\|_{1/2},1\}\le\mu\|x_0^+\|_{1/2}+1-\mu\le 1$ for $0\le\mu\le 1$.
 Thus as sufficiently large $r$ satisfies $r>\alpha/\min\{1,\|x_0^+\|_{1/2}\}$,
 we get a contradiction.

    It follows from the homotopy invariance of degree that
   \begin{eqnarray*}
    \deg(\Sigma_r,id +B(t,\cdot),0)
   &=&\deg(\Sigma_r,id +B(0,\cdot),0)\\
   &=&\deg(\Sigma_r,id+L_0,0)\\
   &=&\deg(\Sigma_r,id-z_0-\alpha x_0^{+},0)=1.
      \end{eqnarray*}
     This implies that (\ref{e:3.17}) and so (\ref{e:3.16}) has solutions.
  \end{proof}
Let $\mathcal{F}=\{\phi^t(\Sigma_r)\,|\, t\geq 0\}$ and define
$$
c(\Phi_{\overline{H}}, \mathcal{F}):=\inf_{t\geq 0}\sup_{x\in \phi^t(\Sigma_r)}\Phi_{\overline{H}}(x).
$$
Lemmas~\ref{lem:HZ7},~\ref{positive+} imply
 $$
 0<\beta\leq \inf _{x\in\Gamma_\alpha}\Phi_{\overline{H}}(x)\leq \sup_{x\in \phi^t(\Sigma_r)}\Phi_{\overline{H}}(x),\quad
\forall t\geq 0,
$$
 and hence $c(\Phi_{\overline{H}}, \mathcal{F})\geq\beta>0$.
On the other hand,  since $\Sigma_r$ is bounded and Proposition~\ref{prop:Lip} implies that $\Phi_{\overline{H}}$ maps  bounded sets into bounded sets we arrive at
$$
c(\Phi_{\overline{H}}, \mathcal{F})\leq \sup_{x\in\Sigma_r}\Phi_{\overline{H}}(x)<\infty.
$$
 Using the Minimax Lemma on \cite[page 79]{HoZe94}, we get a critical point $x$ of $\Phi_{\overline{H}}$ with $\Phi_{\overline{H}}(x)>0$.

Now Lemma~\ref{lem:positive},~\ref{solution} together yield the desired result.

\subsection{Completing the proof of Theorem~\ref{th:convex} for general case}\label{sec:convex4}

By modifying parts of the proof of Proposition~1.12 in \cite{Kr15} we get the following result.

\begin{proposition}\label{prop:approximation2}
  Let $K\subset\mathbb{R}^{2n}$ be a $\tau_0$-invariant bounded convex domain containing $0$,
  and let $U$ be an open neighborhood of $K$. Then there exists a bounded $\tau_0$-invariant
 and strictly convex body $K'$ with smooth boundary such that
   $K\subset K'\subset U$.
\end{proposition}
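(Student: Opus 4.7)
The plan is to reduce to the non-equivariant smooth strictly convex approximation result of \cite{Kr15} and then pass to a $\tau_0$-symmetric body by Minkowski averaging, exploiting the convexity of a slightly enlarged buffer to ensure the averaged body still lies in $U$.

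\textbf{Step 1 ($\tau_0$-invariant buffer).} Since $K$ is compact and $U$ is open with $K\subset U$, I can choose $\delta>0$ so small that
$$
N_{2\delta}(K):=K+2\delta\,\overline{B^{2n}(0,1)}\;\subset\;U.
$$
Because $\tau_0$ is a linear Euclidean isometry, both $K$ and $\overline{B^{2n}(0,1)}$ are $\tau_0$-invariant, and so is the convex body $N_{2\delta}(K)$; denote by $V$ its interior. Then $V$ is an open $\tau_0$-invariant convex neighborhood of the convex body $N_\delta(K)$ lying in $U$.

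\textbf{Step 2 (asymmetric approximation).} Apply Krantz's Proposition~1.12 to the convex body $N_\delta(K)$ with the neighborhood $V$ to obtain a strictly convex body $\tilde K$ with smooth boundary satisfying
$$
N_\delta(K)\;\subset\;\tilde K\;\subset\;V\;\subset\;U.
$$
In general $\tilde K$ need not be $\tau_0$-invariant.

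\textbf{Step 3 (Minkowski symmetrization).} Set
$$
K':=\tfrac12\bigl(\tilde K+\tau_0(\tilde K)\bigr)=\bigl\{\tfrac12(a+b)\,\bigm|\,a\in\tilde K,\ b\in\tau_0(\tilde K)\bigr\}.
$$
Clearly $\tau_0(K')=\tfrac12(\tau_0(\tilde K)+\tilde K)=K'$, so $K'$ is $\tau_0$-invariant; and $K'$ is convex, being a Minkowski sum of convex sets. For any $x\in K$ one has $x\in\tilde K$ and, since $\tau_0(x)\in K\subset\tilde K$, also $x=\tau_0(\tau_0(x))\in\tau_0(\tilde K)$, hence $x=\tfrac12(x+x)\in K'$; this gives $K\subset K'$. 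Conversely, if $a\in\tilde K\subset V$ and $b\in\tau_0(\tilde K)$, then $\tau_0(b)\in\tilde K\subset V$, so $b\in\tau_0(V)=V$; by convexity of $V$, $\tfrac12(a+b)\in V$, hence $K'\subset V\subset U$.

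\textbf{Step 4 (smooth strict convexity of $K'$).} The support function satisfies $h_{K'}=\tfrac12(h_{\tilde K}+h_{\tau_0\tilde K})$, and since $\tau_0$ is linear, $\tau_0(\tilde K)$ is again a smooth strictly convex body. Both summands are $C^\infty$ off the origin with positive definite spherical Hessian (the standard characterization of a smooth boundary of positive Gauss curvature), so $h_{K'}$ inherits these properties; equivalently, $K'$ has $C^\infty$ boundary and is strictly convex (cf.\ Schneider \cite{Sch93}, \S 3.4). This completes the construction.

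The main obstacle is the last step: confirming that $C^\infty$-smoothness and strict convexity of the boundary are preserved under Minkowski summation, which requires the support-function / Gauss-curvature correspondence. Everything else is a routine combination of convexity of the buffer $V$ (forcing $K'\subset U$), $\tau_0$-invariance of Euclidean balls (forcing $\tau_0$-invariance of $N_{2\delta}(K)$ and of the symmetrized body), and the standard observation that averaging a set with its image under an involution produces an invariant set.
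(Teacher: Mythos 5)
Your proposal is correct, but it takes a genuinely different route from the paper's. The paper modifies Krantz's construction internally: it mollifies the Minkowski functional $p$ of $K$ against a \emph{$\tau_0$-symmetric} bump $\chi$ (with $\chi\circ\tau_0=\chi$), adds $\epsilon|z|^2$, and verifies by the change of variables $t\mapsto\tau_0 t$ that the smoothed function $p_\epsilon$ --- and hence its sublevel body --- is already $\tau_0$-invariant; no symmetrization step is needed. You instead invoke the non-equivariant approximation as a black box and restore $\tau_0$-invariance a posteriori by the Minkowski average $K'=\tfrac12(\tilde K+\tau_0\tilde K)$, with the inclusion $K'\subset U$ secured by the convexity of your $\tau_0$-invariant buffer $V$. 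Both work; the paper's route is self-contained and avoids any regularity question about Minkowski sums, while yours is arguably more modular and reusable for other finite symmetry groups.

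One small imprecision you should fix: in Step~2 you describe $\tilde K$ only as a ``strictly convex body with smooth boundary,'' but set-theoretic strict convexity together with $C^\infty$ boundary does \emph{not} in general guarantee that the support function $h_{\tilde K}$ is $C^2$, let alone $C^\infty$ with a positive-definite spherical Hessian --- that requires everywhere-positive Gauss curvature (class $C^\infty_+$). Your Step~4 in fact uses exactly the $C^\infty_+$ property (additivity of the reverse Weingarten map), and you do write the correct parenthetical, so the gap is terminological rather than logical; it is closed because Krantz's convolution $+\,\epsilon|z|^2$ produces a gauge with globally positive-definite Hessian, hence a body of class $C^\infty_+$. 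Stating Step~2 with that stronger conclusion would make the argument airtight.
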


\begin{proof}
  For simplicity let $p=j_K$ the Minkowski functional of $K$.
  Then there holds $p(\tau_0z)=p(z)$. Let $\chi$ be a nonnegative $C_0^{\infty}$
  function supported in $B^{2n}(1)$ such that $\chi (\tau_0z)=\chi(z)$ and $\int_{\mathbb{R}^{2n}} \chi=1$.
  For $\epsilon>0$ and $z\in\mathbb{R}^{2n}$, define
  $$
  p_\epsilon(z)=\int_{\mathbb{R}^{2n}} p(z-\epsilon u)\chi (u)du+\epsilon |z|^2.
  $$
  Then it is easy to check that $p_\epsilon$ is smooth and strictly convex. Moreover,
 \begin{eqnarray*}
  p_\epsilon(\tau_0z)&=&\int_{\mathbb{R}^{2n}} p(\tau_0z-\epsilon u)\chi (u)du+\epsilon |\tau_0z|^2\\
  &=&\int_{\mathbb{R}^{2n}} p(z-\epsilon \tau_0u)\chi (\tau_0u)du+\epsilon |z|^2\\
  &=&\int_{\mathbb{R}^{2n}} p(z-\epsilon u)\chi (u)du+\epsilon |z|^2\\
  &=&p_\epsilon(z).
  \end{eqnarray*}
  In addition,
  $$
  p_\epsilon(z)\le \int_{\mathbb{R}^{2n}} (p(z)+\epsilon p(-u))\chi(u)du+\epsilon|z|^2=p(z)+\epsilon\left(\int_{\mathbb{R}^{2n}} p(-u)\chi(u)du+|z|^2\right).
  $$
  Let $C_1=\int_{\mathbb{R}^{2n}} p(-u)\chi(u)du+\sup_{z\in K}|z|^2 $. Then $C_1$ is a positive constant such that
  $$
  p_\epsilon(z)\le p(z)+\epsilon C_1=1+\epsilon C_1,\quad\forall z\in K .
  $$
  Clearly there exists a positive number $\delta>0$ so that
  $z-\epsilon u\notin (1+\delta)K$
  for every $z\notin U$ and for all $u\in B^{2n}(1)$ if the above $\epsilon$
  is chosen sufficiently small. We can also require $\epsilon<\delta/C_1$.
  It follows that
  $$
  p_\epsilon(z)>1+\delta+\epsilon |z|^2\quad\hbox{for}\quad z\notin U.
  $$
  Let $C_2:=\min_{z\notin U}|z|$. Then $C_2$ is a positive number such that
  $$
  p_\epsilon(z)>1+\delta+\epsilon C_2\quad\hbox{for}\quad z\notin U.
  $$
  Define
  $$
  K'=\{z\in\mathbb{R}^{2n}\,|\,p_\epsilon(z)\le 1+2C_1\epsilon\}.
  $$
  Since $1+\delta+\epsilon C_2>1+2\epsilon C_1$ , we have
  $$
  K\subset K'\subset U.
  $$
  It is obvious that $K'$ is strictly convex with smooth boundary and satisfies $\tau_0K'=K'$.
\end{proof}

By Proposition~\ref{prop:approximation2},
we can choose two sequences of $C^\infty$ strictly convex domains with smooth boundaries,
 $(D^+_k)$ and $(D^-_k)$, such that
 \begin{enumerate}
 %\begin{description}
\vspace*{3pt} \item[\bf (i)] $\tau_0D^+_k=D^+_k$ and $\tau_0D^-_k=D^-_k$ for each $k$.
\vspace*{3pt} \item[\bf (ii)] $D^-_1\subset D^-_2\subset\cdots\subset D$ and $\lim_{k\to\infty}d_{\rm Hausd}(D^-_k, D)=0$,
\vspace*{3pt} \item[\bf (iii)] $D^+_1\supseteq D^+_2\supseteq\cdots\supseteq D$ and $\lim_{k\to\infty}d_{\rm Hausd}(D^+_k,D)=0$.
 %\end{description}
 \end{enumerate}
% where $d$ is the Hausdorff distance defined in (\ref{e:Hausdis}).
 Denote by $j_D, j_{D^+_k}$ and $j_{D^-_k}$ the Minkowski  functionals of
 $D, D^+_k$ and $D^-_k$, respectively. Let $H=j_D^2, H^+_k=j_{D^+_k}^2$ and $H^{-}_k=j_{D^-_k}^2$
for each $k\in\mathbb{N}$. Their  Legendre transformations are
$H^\ast, H_k^{+\ast}$ and $H_k^{-\ast}$, $k=1,2,\cdots$. Denote by
$$
I(u)=\int^1_0H^\ast(-J\dot{u}),\quad I^+_k(u)=\int^1_0H_k^{+\ast}(-J\dot{u}),\quad I^-_k(u)=\int^1_0H_k^{-\ast}(-J\dot{u})
$$
for $u\in\mathcal{A}$, $k=1,2,\cdots$. Note that (ii) and (iii) imply
 %\begin{description}
 \begin{enumerate}
\vspace*{3pt} \item[\bf (iv)] $j_{D^-_1}\ge j_{D^-_2}\ge\cdots\ge j_D$ and so $H_1^{-\ast}\le H_2^{-\ast}\le\cdots\le H^\ast$,
\vspace*{3pt} \item[\bf (v)] $j_{D^+_1}\le j_{D^+_2}\le\cdots\le j_D$ and so $H_1^{+\ast}\ge H_2^{+\ast}\ge\cdots\ge H^\ast$.
 %\end{description}
 \end{enumerate}
These lead to
\begin{equation}\label{e:action-capacity13}
I^+_1(u)\ge I^+_2(u)\ge\cdots\ge I(u)\ge\cdots\ge I^-_2(u)\ge I^-_1(u),\quad\forall u\in  \mathcal{A}.
\end{equation}
By the first three steps in Section~\ref{sec:convex1} these functional attain their
minimums on $\mathcal{A}$. It easily follows from (\ref{e:action-capacity13}) that
\begin{equation}\label{e:action-capacity14}
\min_{\mathcal{A}}I^+_1\ge \min_{\mathcal{A}}I^+_2\ge\cdots\ge \min_{\mathcal{A}}I\ge\cdots\ge
\min_{\mathcal{A}}I^-_2\ge \min_{\mathcal{A}}I^-_1.
\end{equation}
Now (\ref{e:action2}) gives rise to
\begin{equation}\label{e:action-capacity15}
\min_{\mathcal{A}}I=\min\{A(x)>0\,|\,x\;\text{is a generalized $\tau_0$-brake closed characteristic on}\;\mathcal{S}\},
\end{equation}
and results in Section~\ref{sec:convex3} yield
\begin{eqnarray}\label{e:action-capacity16}
{c}_{\rm HZ,\tau_0}(D^+_k,\omega_0)=\min_{\mathcal{A}}I^+_k\quad\hbox{and}\quad
{c}_{\rm HZ,\tau_0}(D^-_k,\omega_0)=\min_{\mathcal{A}}I^-_k
\end{eqnarray}
for each $k\in\mathbb{N}$. By this, (\ref{e:action-capacity14})
and the monotonicity of ${c}_{\rm HZ,\tau_0}$ we get
$$
\begin{array}{ccccc}
 {c}_{\rm HZ,\tau_0}(D^+_k,\omega_0)& \ge& {c}_{\rm HZ,\tau_0}(D,\omega_0)&\ge & {c}_{\rm HZ,\tau_0}(D^-_k,\omega_0)\\[2mm]
 \parallel&  &&  &\parallel\\[2mm]
\min_{\mathcal{A}}I^+_k& \ge&  \min_{\mathcal{A}}I&\ge &\min_{\mathcal{A}}I^-_k
\end{array}
$$
Moreover $\lim_{k\to\infty}{c}_{\rm HZ,\tau_0}(D^+_k,\omega_0)=
 {c}_{\rm HZ,\tau_0}(D,\omega_0)$ and $\lim_{k\to\infty}{c}_{\rm HZ,\tau_0}(D^-_k,\omega_0)
={c}_{\rm HZ,\tau_0}(D,\omega_0)$ by Proposition~\ref{MonComf}(iii).
The squeeze theorem leads to
$$
{c}_{\rm HZ,\tau_0}(D,\omega_0)=\min_{\mathcal{A}}I.
$$
The desired result follows from this and (\ref{e:action-capacity15}).

\section{Proofs of Theorems~\ref{th:EHconvex}, \ref{th:EHproduct}}\label{sec:EH.3}
\setcounter{equation}{0}

Our proofs  closely follow those of Theorems~6.5, 6.6 in \cite{Sik90}.

\subsection{Proof of Theorem~\ref{th:EHconvex}}\label{sec:EH.3.1}

In the arguments  at the beginning of Section~\ref{sec:convex},
we can use the definition (\ref{e:EH.1.8}), instead of Proposition~\ref{MonComf}(ii),
to boil down to the case $\tau=\tau_0$.

%Proof of Theorem~\ref{th:convex}}

We first assume that $\partial D$ is smooth and strictly convex.
  As in the arguments at the beginning of Section~\ref{sec:convex}
 we can also assume $0\in D$ below since  $c_{\rm EH,\tau_0}(D)=c_{\rm EH,\tau_0}(\psi(D))$
 under the translation (\ref{e:5convex}) by Proposition~\ref{prop:EH.1.7} (i).

Let  $j_D: \mathbb{R}^{2n}\rightarrow\mathbb{R} $ be the Minkowski functional of $D$. Consider the Hamiltonian function  $H(z)=j^2_D(z)$ and its Legendre transformation $H^{\ast}$. Let $I$, $\mathcal{F}$ and $\mathcal{A}$
be as in the proof of (\ref{e:action2}) in Section \ref{sec:convex1}. Then there exists $w\in\mathcal{A}$  such that
$$
a:=\min\{I(u)\,|\,u\in\mathcal{A}\}=I(w)=A(x^{\ast})\quad\hbox{and}\quad A(w)=1.
$$
 Denote by $w^\ast$
 the projections of $w$ onto $\mathbb{E}^\ast$ (according to the decomposition $\mathbb{E}
=\mathbb{E}^+\oplus \mathbb{E}^-\oplus \mathbb{E}^0$), $\ast=0,-,+$.  Then $w^+\ne 0$.
 (Otherwise, a contradiction occurs because
$1=A(w) = A(w^0\oplus w^-) =-\frac{1}{2}\|w^-\|^2_{1/2}$.)
Put $y=w/\sqrt{a}$ so that
$$
 I(y)=1\quad\hbox{and}\quad A(y)=\frac{1}{a}.
$$
Now for any $\lambda\in\mathbb{R}$ and $x\in \mathbb{E}$ it holds that
\begin{eqnarray*}
\lambda^2=I(\lambda y)&=&\int^1_0H^\ast(-\lambda J_0\dot{y}(t))dt\\
&=&\int^1_0\sup_{\zeta\in\mathbb{R}^{2n}}\{\langle\zeta, -\lambda J_0\dot{y}(t)\rangle_{\mathbb{R}^{2n}}- H(\zeta)\}dt\\
&\ge&\int^1_0\{\langle x(t), -\lambda J_0\dot{y}(t)\rangle_{\mathbb{R}^{2n}}- H(x(t))\}dt.
\end{eqnarray*}
This leads to
\begin{eqnarray*}
\int^1_0H(x(t))dt&\ge& \int^1_0\langle x(t), -\lambda J_0\dot{y}(t)\rangle_{\mathbb{R}^{2n}} dt-\lambda^2\\
&=&\lambda \int^1_0\langle x(t), - J_0\dot{y}(t)\rangle_{\mathbb{R}^{2n}} dt-\lambda^2.
\end{eqnarray*}
Taking
$$
\lambda=\frac{1}{2} \int^1_0\langle x(t), -J_0\dot{y}(t)\rangle_{\mathbb{R}^{2n}} dt
$$
we arrive at
\begin{eqnarray}\label{e:EH.3.1}
\int^1_0H(x(t))dt\ge\left(\frac{1}{2} \int^1_0\langle x(t), - J_0\dot{y}(t)\rangle_{\mathbb{R}^{2n}} dt\right)^2\quad
\forall x\in \mathbb{E}.
\end{eqnarray}
Note that $y^+\ne 0$ and $\mathbb{E}^-\oplus \mathbb{E}^0\oplus\mathbb{R}_{>0}y=\mathbb{E}^-\oplus \mathbb{E}^0\oplus\mathbb{R}_{>0}y^+$.
From (ii) in Proposition~\ref{prop:EH.1.1} we derive the following result.

\begin{proposition}\label{prop:EH2}
For any $h\in\Gamma$ it holds that
$$
h(S^+)\cap(\mathbb{E}^-\oplus \mathbb{E}^0\oplus\mathbb{R}_{>0}y)\ne\emptyset.
$$
\end{proposition}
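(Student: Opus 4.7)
The plan is to reduce this directly to Proposition~\ref{prop:EH.1.1}(ii), using the fact that the half-affine subspace $\mathbb{E}^-\oplus \mathbb{E}^0\oplus\mathbb{R}_{>0}y$ agrees with $\mathbb{E}^-\oplus \mathbb{E}^0\oplus\mathbb{R}_{>0}y^+$, which has already been remarked in the text immediately preceding the statement.

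More concretely, I would first verify the set equality
\[
\mathbb{E}^-\oplus \mathbb{E}^0\oplus\mathbb{R}_{>0}y=\mathbb{E}^-\oplus \mathbb{E}^0\oplus\mathbb{R}_{>0}y^+.
\]
For the inclusion ``$\subset$'', take an arbitrary element $\xi^-+\xi^0+\lambda y$ with $\xi^-\in\mathbb{E}^-$, $\xi^0\in\mathbb{E}^0$ and $\lambda>0$, and rewrite it as
\[
(\xi^-+\lambda y^-)+(\xi^0+\lambda y^0)+\lambda y^+,
\]
which lies in $\mathbb{E}^-\oplus \mathbb{E}^0\oplus\mathbb{R}_{>0}y^+$ since $\lambda>0$. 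The reverse inclusion is symmetric: any $\eta^-+\eta^0+\mu y^+$ with $\mu>0$ equals $(\eta^--\mu y^-)+(\eta^0-\mu y^0)+\mu y$.

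Next, because the minimizer $w\in\mathcal{A}$ constructed above the statement of Proposition~\ref{prop:EH2} satisfies $w^+\ne 0$ (otherwise $1=A(w)=-\tfrac12\|w^-\|^2\le 0$, a contradiction), and since $y=w/\sqrt a$, we have $y^+\ne 0$, i.e.\ $y^+\in\mathbb{E}^+\setminus\{0\}$. Therefore Proposition~\ref{prop:EH.1.1}(ii), applied with $e:=y^+$ and with the given admissible deformation $h\in\Gamma$, yields
\[
h(S^+)\cap(\mathbb{E}^-\oplus \mathbb{E}^0\oplus\mathbb{R}_{>0}y^+)\ne\emptyset,
\]
and combining this with the equality established in the first step gives the claimed nonemptiness of $h(S^+)\cap(\mathbb{E}^-\oplus \mathbb{E}^0\oplus\mathbb{R}_{>0}y)$.

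There is no real obstacle here: the content is essentially a bookkeeping observation that replacing the ``direction'' $y$ by its $\mathbb{E}^+$-component $y^+$ does not change the half-space $\mathbb{E}^-\oplus\mathbb{E}^0\oplus\mathbb{R}_{>0}y$, together with the nontriviality of $y^+$ which was needed earlier to derive inequality~(\ref{e:EH.3.1}). The substantive work was already done in Proposition~\ref{prop:EH.1.1}(ii), whose proof is attributed to Sikorav~\cite{Sik90} and relies on a degree-theoretic argument encoded in the decomposition of admissible deformations in Definition~\ref{def:deform}.
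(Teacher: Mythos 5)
Your proof is correct and is essentially the paper's own (one-line) argument: the paper notes that $y^+\ne 0$ and that $\mathbb{E}^-\oplus\mathbb{E}^0\oplus\mathbb{R}_{>0}y=\mathbb{E}^-\oplus\mathbb{E}^0\oplus\mathbb{R}_{>0}y^+$, and then derives the claim from Proposition~\ref{prop:EH.1.1}(ii) with $e=y^+$. You have simply spelled out the bookkeeping behind the set equality and the nontriviality $y^+\ne 0$ that the paper leaves implicit.
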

Fix an $h\in\Gamma$. Let $x\in h(S^+)\cap(\mathbb{E}^-\oplus \mathbb{E}^0\oplus\mathbb{R}_{>0}y)$.
Consider the polynomial
$$
P(t)=\mathfrak{a}(x+ty)=\mathfrak{a}(x)+ t\int^1_0\langle x, - J_0\dot{y}\rangle_{\mathbb{R}^{2n}} + \mathfrak{a}(y)t^2.
$$
  Writing $x=x^{-0}+ sy=x^{-0}+ sy^{-0}+ sy^+$, then
$$
P(t)=\mathfrak{a}(x^{-0}+ (t+s)y)
$$
and by $\mathfrak{a}|\mathbb{E}^-\oplus\mathbb{E}^0\le 0$
we deduce that $P(-s)\le 0$. Moreover,by $\mathfrak{a}(y)=1/a>0$ we get
$$
P(t)\to+\infty\quad\hbox{as}\quad |t|\to+\infty.
$$
These imply that there exists $t_0\in\mathbb{R}$ such that $P(t_0)=0$. It follows that
$$
\left(\int^1_0\langle x, - J_0\dot{y}\rangle_{\mathbb{R}^{2n}}\right)^2-4\mathfrak{a}(y)\mathfrak{a}(x)\ge 0
$$
and so
\begin{eqnarray}\label{e:EH.3.2}
\mathfrak{a}(x)&\le& (\mathfrak{a}(y))^{-1}\left(\frac{1}{2}\int^1_0\langle x, - J_0\dot{y}\rangle_{\mathbb{R}^{2n}}\right)^2\nonumber\\
&=&a\left(\frac{1}{2}\int^1_0\langle x, - J_0\dot{y}\rangle_{\mathbb{R}^{2n}}\right)^2\nonumber\\
&\le&a\int^1_0H(x(t))dt
\end{eqnarray}
by (\ref{e:EH.3.1}). Let $\Sigma^{\tau_0}_{\mathcal S}$ be as in (\ref{e:EHcontact+}).
 For $\epsilon>0$, let
\begin{eqnarray}\label{e:EH.3.7}
\mathscr{E}_\epsilon(\mathbb{R}^{2n},\tau_0,D)
\end{eqnarray}
consist of $\overline{H}=f\circ H$, where $f\in C^\infty(\mathbb{R},\mathbb{R})$ satisfies
\begin{eqnarray}\label{e:EH.3.8}
f(s)=0\;\forall s\le 1,\quad  f'(s)\ge 0,\;\forall\;s\ge 1,\quad f'(s)=\alpha\in\mathbb{R}\setminus\Sigma^{\tau_0}_{\mathcal S}
\;\hbox{if}\;f(s)\ge\epsilon,
\end{eqnarray}
and $\alpha$ is required to satisfy
$$
\alpha H(z)\ge \pi|z|^2-C,\quad\hbox{for}\, |z|\,\hbox{sufficiently large},
$$
where $C>0$ is a constant.
Arguing as in the Lemma~\ref{lem:PS}, we get
\begin{lemma}\label{lem:EH8}
For $\overline{H}\in\mathscr{E}_\epsilon(\mathbb{R}^{2n},\tau_0,D)$, $\Phi_{\overline{H}}$ satisfies the
$(PS)$ condition.
\end{lemma}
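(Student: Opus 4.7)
The plan is to mimic the argument of Lemma~\ref{lem:PS} exactly, replacing the role of the earlier extension $\overline{H}$ by the present $\overline{H}=f\circ H\in\mathscr{E}_\epsilon(\mathbb{R}^{2n},\tau_0,D)$. Suppose $(x_k)\subset\mathbb{E}$ satisfies $\nabla\Phi_{\overline{H}}(x_k)\to 0$. Recall that $\nabla\Phi_{\overline{H}}(x)=x^+-x^--j^*\nabla\overline{H}(x)$. First I would dispatch the bounded case: if $(x_k)$ is bounded in $\mathbb{E}$, then by Proposition~\ref{prop:Lip} the gradient $\nabla\mathfrak{b}_{\overline{H}}=j^*\nabla\overline{H}$ is compact, so a subsequence of $(j^*\nabla\overline{H}(x_k))$ converges in $\mathbb{E}$, and then $x_k^+-x_k^-$ converges; combined with compactness of the embedding into $\mathbb{E}^0=L_0$ on bounded sets, one extracts a subsequence convergent in $\mathbb{E}$ as in \cite[p.\,89, Lemma~6]{HoZe94}.

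The main work is ruling out $\|x_k\|_{1/2}\to\infty$. Set $y_k=x_k/\|x_k\|_{1/2}$, so $\|y_k\|_{1/2}=1$, and normalize the PS condition to
\[
y_k^+-y_k^--j^*\!\left(\frac{\nabla\overline{H}(x_k)}{\|x_k\|_{1/2}}\right)\longrightarrow 0\quad\text{in }\mathbb{E}.
\]
By the defining conditions (\ref{e:EH.3.8}), there is $R>0$ such that $f'(H(z))=\alpha$ whenever $|z|\ge R$, so outside a compact set $\nabla\overline{H}(z)=\alpha\nabla j_D^2(z)$; moreover $\nabla j_D^2$ is positively $1$-homogeneous and globally Lipschitz (because $\partial D\in C^\infty$ and strictly convex, $H\in C^{1,1}_{\rm loc}$ with linear growth of the derivative). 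Hence $\nabla\overline{H}(x_k)/\|x_k\|_{1/2}$ differs from $\alpha\nabla j_D^2(y_k)$ by a term supported on a bounded region and of size $O(1/\|x_k\|_{1/2})$ in $L^2$. Using Proposition~\ref{prop:compact} to pass to an $L^2$-convergent subsequence $y_k\to y$, and then applying $j^*$ (which is compact by Proposition~\ref{prop:jast}), one obtains $j^*\nabla\overline{H}(x_k)/\|x_k\|_{1/2}\to\alpha\, j^*\nabla j_D^2(y)$ strongly in $\mathbb{E}$. Combining this with the displayed limit yields convergence of $y_k^+-y_k^-$, and hence of $y_k\to y$ in $\mathbb{E}$ with $\|y\|_{1/2}=1$.

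Passing to the limit in $\nabla\Phi_{\overline{H}}(x_k)\to 0$ (or equivalently invoking Proposition~\ref{solution} and the principle of symmetric criticality) shows that $y$ is a $1$-periodic classical solution of
\[
\dot{y}=\alpha J_0\nabla j_D^2(y),\qquad y(-t)=\tau_0 y(t).
\]
If $y\not\equiv 0$, then $y(t)\ne 0$ for all $t$ and $j_D(y(t))$ is constant; after rescaling by a positive factor one obtains a $\tau_0$-brake closed characteristic on $\mathcal S=\partial D$ whose action lies in $\alpha\cdot\mathbb{Z}_{>0}\cap\Sigma^{\tau_0}_{\mathcal S}$, contradicting $\alpha\notin\Sigma^{\tau_0}_{\mathcal S}$. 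Hence $y=0$, contradicting $\|y\|_{1/2}=1$, so $(x_k)$ must be bounded and the first case applies.

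The only nontrivial point, and the step I would be most careful about, is the justification that $j^*\nabla\overline{H}(x_k)/\|x_k\|_{1/2}\to \alpha\,j^*\nabla j_D^2(y)$ strongly in $\mathbb{E}$: one needs the compactness of $j^*$ together with uniform control of $\nabla\overline{H}-\alpha\nabla j_D^2$ on the ``transition zone'' $\{1\le H\le R^2\}$, where $f'$ interpolates between $0$ and $\alpha$. This is exactly the delicate analog of the passage in the proof of Lemma~\ref{lem:PS}.
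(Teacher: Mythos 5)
Your argument reproduces the paper's proof, which simply invokes Lemma~\ref{lem:PS} with the earlier slope $A(x^*)+\epsilon$ replaced by $\alpha$, and the careful treatment of the transition zone between $\{H\le 1\}$ and $\{f'\circ H\equiv\alpha\}$ is exactly the point the paper elides with ``arguing as in Lemma~\ref{lem:PS}''. One small inaccuracy (shared with the paper's own phrasing and not affecting the conclusion): a nonconstant $1$-periodic rescaled limit $y$ on $\partial D$ yields a prime $\tau_0$-brake closed characteristic of action $\alpha/m$ for some $m\in\mathbb{Z}_{>0}$, not an element of $\alpha\cdot\mathbb{Z}_{>0}$.
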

By the same method used in proving Theorem~\ref{th:EH.1.6}, we get
\begin{corollary}\label{cor:EH9}
For $\overline{H}\in\mathscr{E}_\epsilon(\mathbb{R}^{2n},\tau_0,D)$, $c_{\rm EH,\tau_0}(\overline{H})$ is a positive critical value of $\Phi_{\overline{H}}$.
\end{corollary}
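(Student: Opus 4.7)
The plan is to mimic the proof of Theorem~\ref{th:EH.1.6} almost verbatim, with $\overline{H}=f\circ H$ replacing the nonresonant $H$ used there and Lemma~\ref{lem:EH8} playing the role of Proposition~\ref{prop:PSmale}. First I will record the inputs I need from $\overline{H}$: it is smooth on all of $\mathbb{R}^{2n}$ (automatic away from $0$ because $\partial D$ is smooth, and automatic near $0$ because $j_D^2<1$ there while $f\equiv 0$ on $(-\infty,1]$); it is $\tau_0$-invariant (inherited from $j_D$) and non-negative; it vanishes identically on an open neighborhood of $0\in L_0$; and it satisfies the coercivity $\overline{H}(z)\ge\pi|z|^2-C$ at infinity, exactly as imposed in (\ref{e:EH.3.8}).

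From these inputs I will show that $c_{\rm EH,\tau_0}(\overline{H})$ is finite and strictly positive. Finiteness follows by copying the proof of Proposition~\ref{prop:EH.1.3} line by line: by Proposition~\ref{prop:EH.1.1}(ii), every $h(S^+)$ meets $\mathbb{E}^-\oplus\mathbb{E}^0\oplus\mathbb{R}_{>0}e_1$ with $e_1(t)=e^{2\pi tJ_0}(1,0,\dots,0)^T$, and the coercivity bound gives $\inf_{h(S^+)}\Phi_{\overline{H}}\le\sup_z(\pi|z_1|^2-\overline{H}(z))<\infty$. Positivity follows by copying the proof of Proposition~\ref{prop:EH.1.4}: pick $\hat{z}\in{\rm Int}(D)\cap L_0$ in the open set where $\overline{H}\equiv 0$ and test with the admissible deformation $\gamma_\varepsilon(x)=\hat{z}+\varepsilon x$. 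Since $\overline{H}$ vanishes identically near $\hat{z}$, the same dominated-convergence argument yields $\int_0^1\overline{H}(\hat{z}+x)\,dt=o(\|x\|_{1/2}^2)$ as $\|x\|_{1/2}\to 0$, and thus $\inf_{\gamma_\varepsilon(S^+)}\Phi_{\overline{H}}>0$ for sufficiently small $\varepsilon$.

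Finally I will reproduce the minimax/flow deformation argument at the end of Section~\ref{sec:EH.2}. Set $\mathcal{F}=\{\gamma(S^+)\,|\,\gamma\in\Gamma,\ \inf\Phi_{\overline{H}}|\gamma(S^+)>0\}$, so that $c_{\rm EH,\tau_0}(\overline{H})=\sup_{F\in\mathcal{F}}\inf_F\Phi_{\overline{H}}>0$. For $F=\gamma_0(S^+)\in\mathcal{F}$ with $\alpha_0:=\inf_F\Phi_{\overline{H}}>0$, I will introduce a cut-off $\rho\colon\mathbb{R}\to[0,1]$ that vanishes on $(-\infty,0]$ and equals $1$ on $[\alpha_0,\infty)$ and form $V(x)=x^+-x^--\rho(\Phi_{\overline{H}}(x))\nabla\mathfrak{b}_{\overline{H}}(x)$. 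Its flow $\gamma^u$ has, for each $u\ge 0$, the structure required in Definition~\ref{def:deform}, preserves $\mathbb{E}^-\oplus\mathbb{E}^0$ (where $V=-x^-$), and agrees with the negative gradient flow $\phi^u$ of $\Phi_{\overline{H}}$ on $\{\Phi_{\overline{H}}\ge\alpha_0\}$; consequently $\gamma^u\circ\gamma_0\in\Gamma$ and $\phi^u(F)\in\mathcal{F}$ for all $u\ge 0$. Combined with the (PS) condition from Lemma~\ref{lem:EH8}, the standard minimax/deformation argument (cf.\ \cite[p.~79]{HoZe94}) then produces a critical point of $\Phi_{\overline{H}}$ at the level $c_{\rm EH,\tau_0}(\overline{H})$.

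The main obstacle, and the reason this is not a literal corollary of Theorem~\ref{th:EH.1.6}, is that $\overline{H}$ does \emph{not} satisfy the rigid quadratic-plus-affine form (H3) from Section~\ref{sec:1.EH} unless $D$ happens to be an ellipsoid. So rather than invoking Theorem~\ref{th:EH.1.6} as a black box I must confirm that (H3) enters its proof only through (i) finiteness of $c_{\rm EH,\tau_0}$, (ii) the coercivity used to bound $\Phi$ on linking subspaces, and (iii) the (PS) condition, all three of which are available here from the growth hypothesis in (\ref{e:EH.3.8}) and from Lemma~\ref{lem:EH8}; the nonresonance condition $\alpha\notin\Sigma^{\tau_0}_{\mathcal{S}}$ plays the role that $a\notin\pi\mathbb{Z}$ played in the Proposition~\ref{prop:PSmale}-based proof.
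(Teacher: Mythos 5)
Your proposal is correct and matches the paper's intended argument: the paper's proof of Corollary~\ref{cor:EH9} is the one-liner ``by the same method used in proving Theorem~\ref{th:EH.1.6},'' with Lemma~\ref{lem:EH8} supplying the (PS) condition in place of Proposition~\ref{prop:PSmale}. Your careful accounting of the fact that $\overline{H}$ fails (H3) but that (H3) enters Theorem~\ref{th:EH.1.6} only through finiteness, positivity, and compactness of $\nabla\mathfrak{b}_{\overline{H}}$, each of which is recovered from the growth condition in (\ref{e:EH.3.8}) and the vanishing of $\overline{H}$ on $D$, is exactly the unstated verification behind that one line.
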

\begin{lemma}\label{lem:EH10}
For $\overline{H}\in\mathscr{E}_\epsilon(\mathbb{R}^{2n},\tau_0,D)$, any positive critical value $c$ of
$\Phi_{\overline{H}}$ satisfies
 $$
 c>\min\Sigma^{\tau_0}_{\mathcal S}-\epsilon.
 $$
 In particular, $c_{\rm EH,\tau_0}(\overline{H})>\min\Sigma^{\tau_0}_{\mathcal S}-\epsilon$.
\end{lemma}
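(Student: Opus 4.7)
\smallskip
\noindent\textbf{Proof plan.} My approach is to exploit the fact that any critical point of $\Phi_{\overline{H}}$ solves the Hamiltonian equation for $\overline H = f\circ H$, combined with the 2-homogeneity of $H=j_D^2$, in order to represent the critical value as $hf'(h)-f(h)$ for a single scalar $h$, and then recognize $f'(h)$ as the action of a $\tau_0$-brake closed characteristic on $\partial D$.

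Concretely, let $x\in\mathbb{E}$ be a critical point of $\Phi_{\overline H}$ with $c=\Phi_{\overline H}(x)>0$. By Proposition~\ref{solution}, $x\in C^2(S^1,\mathbb{R}^{2n})$ solves $\dot x=J_0\nabla\overline H(x)=f'(H(x))\,J_0\nabla H(x)$ and $x(-t)=\tau_0 x(t)$. Since $J_0$ is skew, $H$ is constant along the orbit: $H(x(t))\equiv h\ge 0$. Using Remark~\ref{rem:action} and the Euler relation $\langle\nabla H(z),z\rangle=2H(z)$, I would then compute
\[
c=\tfrac{1}{2}\!\int_0^1\!\langle -J_0\dot x,x\rangle_{\mathbb{R}^{2n}}dt-\!\int_0^1 f(H(x))dt=\int_0^1 f'(h)H(x(t))dt-f(h)=hf'(h)-f(h).
\]

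Next I would rule out the trivial cases. If $h\le 1$, then $f(h)=f'(h)=0$ so $c=0$; and if $f'(h)=0$ then $c=-f(h)\le 0$. Hence $h>1$ and $f'(h)>0$. I would then rescale: set $y(t):=x(t)/\sqrt{h}$, so $H(y)\equiv 1$, i.e.\ $y(t)\in\partial D$; using the 1-homogeneity of $\nabla H$ one checks $\dot y=f'(h)J_0\nabla H(y)$. Reparametrising $z(s):=y(s/f'(h))$ on $[0,f'(h)]$ yields a nonconstant $\tau_0$-brake closed characteristic on $\mathcal{S}=\partial D$ (nondegeneracy uses that $\nabla H$ does not vanish on the smooth strictly convex $\partial D$), and a direct action computation, as in (\ref{e:action3}), gives $A(z)=f'(h)$. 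Thus $f'(h)\in\Sigma^{\tau_0}_{\mathcal{S}}$, so $f'(h)\ge\min\Sigma^{\tau_0}_{\mathcal{S}}$.

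The defining property (\ref{e:EH.3.8}) of $\mathscr{E}_\epsilon(\mathbb{R}^{2n},\tau_0,D)$ says $f'(s)=\alpha\notin\Sigma^{\tau_0}_{\mathcal{S}}$ on $\{f\ge\epsilon\}$. Since $f'(h)\in\Sigma^{\tau_0}_{\mathcal{S}}$, this forces $f(h)<\epsilon$. Combining $h>1$, $f'(h)\ge\min\Sigma^{\tau_0}_{\mathcal{S}}$, and $f(h)<\epsilon$ yields
\[
c=hf'(h)-f(h)>f'(h)-\epsilon\ge \min\Sigma^{\tau_0}_{\mathcal{S}}-\epsilon,
\]
proving the main claim. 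The final assertion on $c_{\mathrm{EH},\tau_0}(\overline H)$ then follows at once from Corollary~\ref{cor:EH9}, which identifies it as a positive critical value. The one subtle point I anticipate is the reparametrisation step: the period $f'(h)$ must be recognized as lying in $\Sigma^{\tau_0}_{\mathcal{S}}$ even if $z$ happens to be a non-primitive (multiply covered) orbit, but since the action of any iterate is a positive integer multiple of a primitive action in $\Sigma^{\tau_0}_{\mathcal{S}}$, the inequality $f'(h)\ge\min\Sigma^{\tau_0}_{\mathcal{S}}$ still holds.
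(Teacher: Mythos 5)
Your proof is correct and follows essentially the same route as the paper: compute $c=hf'(h)-f(h)$ with $h=H(x(t))$ constant, show $h>1$ and $f'(h)>0$, rescale to obtain a $\tau_0$-brake closed characteristic on $\partial D$ with action $f'(h)$, conclude $f(h)<\epsilon$ from the definition of $\mathscr{E}_\epsilon$, and combine. Your closing remark about multiply-covered orbits is a valid extra precaution the paper glosses over; since a $k$-fold covered $\tau_0$-brake orbit projects to a primitive $\tau_0$-brake closed characteristic of $1/k$ the action, the inequality $f'(h)\ge\min\Sigma^{\tau_0}_{\cal S}$ indeed survives.
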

\begin{proof}
    Let $x\in \mathbb{E}$ be a critical point of $\Phi_{\overline{H}}$ with $\Phi_{\overline{H}}(x)>0$. Clearly there holds $\overline{H}(\tau_0z)=\overline{H}(z)$. Then by Proposition~\ref{solution} we have
\begin{eqnarray*}
&&-J_0\dot{x}(t)=\nabla \overline{H}(x(t))=f'(H(x(t)))\nabla H(x(t)), \\
&&\quad x(t+1)=x(t)\quad\hbox{and}\quad x(-t)=\tau_0 x(t),
\end{eqnarray*}
and $H(x(t))\equiv s_0$ (a constant). It follows that
\begin{eqnarray*}
\Phi_{\overline{H}}(x)&=&\frac{1}{2}\int^1_0\langle J_0x(t),\dot{x}(t)\rangle_{\mathbb{R}^{2n}} dt-\int^1_0\overline{H}(x(t))dt\\
&=&\frac{1}{2}\int^1_0\langle x(t),f'(s_0)\nabla H(x(t))\rangle_{\mathbb{R}^{2n}} dt-\int^1_0f(s_0)dt\\
&=&f'(s_0)s_0-f(s_0).
\end{eqnarray*}
Since $\Phi_{\overline{H}}(x)>0$, we get $\beta:=f'(s_0)>0$, and so $s_0>1$. Put
$$
y(t)=\frac{1}{\sqrt{s_0}}x(t/\beta).
$$
Then $y$ satisfies
$$
H(y(t))=1,\quad -J_0\dot{y}=\nabla H(y(t)),\quad y(\beta+t)=y(t)\quad\hbox{and}\quad y(-t)=\tau_0y(t).
$$
These show that $f'(s_0)=\beta=A(y)\in\Sigma^{\tau_0}_{\mathcal S}$.
Therefore $f(s_0)<\epsilon$ by definition of $\overline{H}$. It follows from these that
$$
\Phi_{\overline{H}}(x)=f'(s_0)s_0-f(s_0)> f'(s_0)-\epsilon\ge \min\Sigma^{\tau_0}_{\mathcal S}-\epsilon.
$$
\end{proof}
Since for any $\epsilon>0$ and $G\in \mathscr{F}(\mathbb{R}^{2n},\tau_0,D)$, there exists $\overline{H}\in\mathscr{E}_\epsilon(\mathbb{R}^{2n},\tau_0,D)$ such that $\overline{H}\ge G$.
 It follows that $c_{\rm EH,\tau_0}(G)\ge c_{\rm EH,\tau_0}(\overline{H})\ge\min\Sigma^{\tau_0}_{\mathcal S}-\epsilon$. Hence
$$
c_{\rm EH,\tau_0}(D)\ge \min\Sigma^{\tau_0}_{\mathcal S}=a.
$$
\begin{lemma}\label{lem:EH11}
$c_{\rm EH,\tau_0}(D)\le a$.
\end{lemma}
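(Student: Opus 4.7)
The plan is to prove the matching upper bound by producing, for every $\epsilon>0$, a specific Hamiltonian $\overline{H}_\epsilon\in\mathscr{F}(\mathbb{R}^{2n},\tau_0,D)$ satisfying $c_{\rm EH,\tau_0}(\overline{H}_\epsilon)\le a+\epsilon$. The infimum in the definition (\ref{e:EH.1.6}) then gives $c_{\rm EH,\tau_0}(D)\le a+\epsilon$ for every $\epsilon>0$, so $c_{\rm EH,\tau_0}(D)\le a$. The construction will be driven by the pointwise estimate (\ref{e:EH.3.2}) established just above, and the role of Proposition~\ref{prop:EH2} is to guarantee a single element in each image $h(S^+)$ on which that estimate applies.

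The key observation is the following. Fix $h\in\Gamma$. By Proposition~\ref{prop:EH2}, pick $x_h\in h(S^+)\cap(\mathbb{E}^-\oplus\mathbb{E}^0\oplus\mathbb{R}_{>0}y)$; by (\ref{e:EH.3.2}) this point satisfies $\mathfrak{a}(x_h)\le a\int_0^1 H(x_h(t))\,dt$ where $H=j_D^2$. Consequently
\begin{equation}\label{e:planupper}
\Phi_{\overline{H}_\epsilon}(x_h)=\mathfrak{a}(x_h)-\int_0^1\overline{H}_\epsilon(x_h(t))\,dt\le \int_0^1\bigl[aH(x_h(t))-\overline{H}_\epsilon(x_h(t))\bigr]\,dt.
\end{equation}
Therefore, if I can arrange the single pointwise inequality
\begin{equation}\label{e:planpointwise}
\overline{H}_\epsilon(z)\ge aH(z)-a-\epsilon\qquad\forall z\in\mathbb{R}^{2n},
\end{equation}
then (\ref{e:planupper}) forces $\Phi_{\overline{H}_\epsilon}(x_h)\le a+\epsilon$, hence $\inf_{x\in h(S^+)}\Phi_{\overline{H}_\epsilon}(x)\le a+\epsilon$, and taking the supremum over $h\in\Gamma$ yields $c_{\rm EH,\tau_0}(\overline{H}_\epsilon)\le a+\epsilon$ as required.

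It remains to construct $\overline{H}_\epsilon$ realizing (\ref{e:planpointwise}) together with membership in $\mathscr{F}(\mathbb{R}^{2n},\tau_0,D)$. Since $0\in\mathrm{int}(D)$, the ratio $M:=\sup_{z\ne 0}H(z)/|z|^2$ is finite. Fix once and for all a constant $\alpha>\max\{2\pi,aM\}$; then $\alpha|z|^2\ge aH(z)$ for every $z$. On a $\tau_0$-invariant neighborhood of $\overline{D}$ chosen inside $\{H<1+\epsilon/a\}$ I set $\overline{H}_\epsilon\equiv 0$, where (\ref{e:planpointwise}) holds because $aH(z)-a-\epsilon\le 0$. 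In a neighborhood of infinity I set $\overline{H}_\epsilon(z)=\alpha|z|^2$, which supplies the form required by (H3) with $z_0=0\in L_0$ and automatically satisfies (\ref{e:planpointwise}) there. In the intermediate annular region I interpolate smoothly using a radial $\tau_0$-invariant cutoff together with a smoothed version of the nonnegative function $\max\{0,aH(z)-a-\epsilon\}$, keeping the interpolant pointwise above $aH(z)-a-\epsilon$; this is feasible because the two boundary values $0$ and $\alpha|z|^2$ both dominate $aH(z)-a-\epsilon$. The $\tau_0$-invariance of the whole construction is preserved since $H$, $|z|^2$, and any radial cutoff are $\tau_0$-invariant ($\tau_0$ being an isometry). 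Conditions (H1)--(H3) are then automatic, and $\overline{H}_\epsilon\ge 0$ vanishes near $\overline{D}$, so $\overline{H}_\epsilon\in\mathscr{F}(\mathbb{R}^{2n},\tau_0,D)$.

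The only real bookkeeping is the smooth interpolation in the annular region, which I expect to be the main (but purely routine) technical point; the conceptual content of the lemma is already contained in (\ref{e:EH.3.2}) and Proposition~\ref{prop:EH2}. Letting $\epsilon\to 0^+$ in the bound $c_{\rm EH,\tau_0}(D)\le c_{\rm EH,\tau_0}(\overline{H}_\epsilon)\le a+\epsilon$ closes the argument and, combined with Lemma~\ref{lem:EH10}, completes the identity $c_{\rm EH,\tau_0}(D)=a=A(x^\ast)$.
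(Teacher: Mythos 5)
Your proof is correct, but it takes a genuinely different route from the paper. The paper also starts from the estimate $\mathfrak{a}(x_h)\le a\int_0^1 H(x_h)\,dt$ for a point $x_h\in h(S^+)\cap(\mathbb{E}^-\oplus\mathbb{E}^0\oplus\mathbb{R}_{>0}y)$ guaranteed by Proposition~\ref{prop:EH2}, but it then selects a \emph{steep} test Hamiltonian $H_\nu\in\mathscr{F}(\mathbb{R}^{2n},\tau_0,D)$ with $H_\nu\ge \nu\bigl(H-(1+\tfrac{\varepsilon}{2a})\bigr)$ and lets the slope $\nu$ grow; the conclusion is then obtained by splitting into two cases according to whether $\int_0^1 H(x_h)\,dt$ is at most $1+\varepsilon/a$ (where the action bound $\mathfrak{a}(x_h)\le a\int H(x_h)$ already gives $<a+\varepsilon$) or exceeds it (where the steepness $\nu$ forces $\Phi_{H_\nu}(x_h)\le 0$ once $\nu\tfrac{\varepsilon}{2a}(1+\tfrac{\varepsilon}{a})^{-1}>a$). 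You instead fix the slope at $a$ from the start: the single pointwise inequality $\overline H_\epsilon\ge aH-a-\epsilon$ feeds directly into $\Phi_{\overline H_\epsilon}(x_h)\le\int_0^1\bigl(aH(x_h)-\overline H_\epsilon(x_h)\bigr)\le a+\epsilon$, uniformly in $h$, with no case distinction. The gain of your version is that it avoids both the unbounded family $H_\nu$ (whose quadratic coefficient at infinity must grow with $\nu$) and the case split, at the modest cost of the smooth‐interpolation bookkeeping you already flag; since $\mathscr{F}$ only requires $C^0$ membership, that interpolation is unproblematic, and your observation that both boundary values $0$ and $\alpha|z|^2$ dominate $aH-a-\epsilon$ on the relevant annulus closes it. The two approaches deliver the same conclusion; yours is a mild streamlining of the paper's argument.
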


\begin{proof}
It suffices to prove: for any $\varepsilon>0$, there exists $\widetilde{H}\in\mathscr{F}(\mathbb{R}^{2n},\tau_0,D)$
such that
\begin{eqnarray}\label{e:EH.3.9}
c_{\rm EH,\tau_0}(\widetilde{H})< a+\varepsilon.
\end{eqnarray}
This may be reduced to prove: $\forall h\in\Gamma$, there exists $x\in h(S^+)$ such that
\begin{eqnarray}\label{e:EH.3.10}
\Phi_{\widetilde{H}}(x)< a+\varepsilon.
\end{eqnarray}

For $\nu>0$, there exists $H_\nu\in \mathscr{F}(\mathbb{R}^{2n},\tau_0, D)$
such that
\begin{eqnarray}\label{e:EH.3.11}
H_\nu\ge \nu\left(H-\left(1+\frac{\varepsilon}{2a}\right)\right).
\end{eqnarray}
For $h\in\Gamma$ choose $x\in h(S^+)$ satisfying (\ref{e:EH.3.2}). We shall prove that for $\nu>0$
large enough $\widetilde{H}=H_\nu$ satisfies the requirements.

\vspace*{4pt}\noindent{\bf Case I.} If $\int^1_0H(x(t))dt\le\left(1+\frac{\varepsilon}{a}\right)$, then by $H_\nu\ge 0$ and (\ref{e:EH.3.2}), we have
$$
\Phi_{H_\nu}(x)\le \mathfrak{a}(x)\le a\int^1_0H(x(t))dt\le a\left(1+\frac{\varepsilon}{a}\right)<a+\varepsilon.
$$

\vspace*{4pt}\noindent{\bf Case II.} If $\int^1_0H(x(t))dt>\left(1+\frac{\varepsilon}{a}\right)$, then (\ref{e:EH.3.11}) implies
\begin{eqnarray}\label{e:EH.3.12}
\int^1_0H_\nu(x(t))dt&\ge& \nu\left(\int^1_0H(x(t))dt-\left(1+\frac{\varepsilon}{2a}\right)\right)\nonumber\\
&\ge&\nu \frac{\varepsilon}{2a}\left(1+\frac{\varepsilon}{a}\right)^{-1}\int^1_0H(x(t))dt
\end{eqnarray}
because
$$
\left(1+\frac{\varepsilon}{2a}\right)=
\left(1+\frac{\varepsilon}{2a}\right)\left(1+\frac{\varepsilon}{a}\right)^{-1}
\left(1+\frac{\varepsilon}{a}\right)<
\left(1+\frac{\varepsilon}{2a}\right)\left(1+\frac{\varepsilon}{a}\right)^{-1}\int^1_0H(x(t))dt
$$
and
$$
1-\left(1+\frac{\varepsilon}{2a}\right)\left(1+\frac{\varepsilon}{a}\right)^{-1}=
\left(1+\frac{\varepsilon}{a}\right)^{-1}\left[\left(1+\frac{\varepsilon}{a}\right)-
\left(1+\frac{\varepsilon}{2a}\right)\right]=
\frac{\varepsilon}{2a}\left(1+\frac{\varepsilon}{a}\right)^{-1}.
$$
Let us choose $\nu>0$ so large that
$$
\nu\frac{\varepsilon}{2a}\left(1+\frac{\varepsilon}{a}\right)^{-1}>a.
$$
Then (\ref{e:EH.3.12}) leads to
$$
\int^1_0H_\nu(x(t))dt\ge a\int^1_0H(x(t))dt
$$
and hence
$$
\Phi_{H_\nu}(x)=\mathfrak{a}(x)-\int^1_0H_\nu(x(t))dt\le \mathfrak{a}(x)-a\int^1_0H(x(t))dt\le 0
$$
by (\ref{e:EH.3.2}).

In summary, in two case we have $\Phi_{H_\nu}(x)<a+\varepsilon$.
\end{proof}

Finally, let us prove (\ref{e:fixpt.1}). Note that
 both $\mathcal{S}=\partial D$ and $D$ contain fixed points of $\tau_0$.
 As above we can assume $0\in D$. Clearly
 $\mathcal{S}$ also contains fixed points of $\tau_0$ and so
 $c_{\rm EH,\tau_0}(\mathcal{S})$ is well-defined.
 Let us define
\begin{eqnarray}\label{e:EH.3.3}
\widetilde{\mathscr{E}}_\epsilon(\mathbb{R}^{2n},\tau_0, \mathcal{S})
\end{eqnarray}
consist of $\overline{H}=f\circ H$, where $f\in C^\infty(\mathbb{R},\mathbb{R})$ satisfies
\begin{eqnarray}\label{e:EH.3.4}
&&f(s)=0\;\hbox{for}\; s\;\hbox{near}\;1,\quad f'(s)\le 0\;\forall s\le 1,
\quad f'(s)\ge 0\;\forall s\ge 1,\\
&&f'(s)=\alpha\in\mathbb{R}\setminus\Sigma^{\tau_0}_{\mathcal S}
\;\hbox{if}\;s\ge 1\;\hbox{and}\;f(s)\ge\epsilon,\label{e:EH.3.5}
\end{eqnarray}
where $\alpha$ is also required to be so large that
\begin{eqnarray}\label{e:EH.3.6}
\alpha H(z)\ge \pi|z|^2-C
\end{eqnarray}
for some constant $C>0$. Similar to Lemma~\ref{lem:EH10}, there holds that
$$
c_{\rm EH,\tau_0}(\overline{H})>\min \Sigma^{\tau_0}_{\mathcal{S}}-\epsilon,\;\forall\; \overline{H}\in \widetilde{\mathscr{E}}_\epsilon(\mathbb{R}^{2n},\tau_0,\mathcal{S}).
$$
It follows that $c_{\rm EH,\tau_0}(\mathcal{S})\ge a$. By the monotonicity of
$c_{\rm EH,\tau_0}$ we get that $c_{\rm EH,\tau_0}(\mathcal{S})=a$.
\qed

\subsection{Proof of Theorem~\ref{th:EHproduct}}\label{sec:EH.3.2}

\begin{lemma}\label{lem:9.1}
$c_{\rm EH,\tau_0}(D\times\mathbb{R}^{2k})=c_{\rm EH,\tau_0}(D)$
for any $\tau_0$-invariant convex domain $D\subset\mathbb{R}^{2n}$.
\end{lemma}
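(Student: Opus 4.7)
The proof hinges on the unbounded-set definition~(\ref{e:EH.1.7}) and the product formula of Theorem~\ref{th:EHproduct}. Since $D\times\mathbb{R}^{2k}$ is always unbounded, its capacity equals the supremum of $c_{\rm EH,\tau_0}(B_0)$ over bounded $\tau_0$-invariant subsets $B_0\subset D\times\mathbb{R}^{2k}$ with $B_0\cap L_0\ne\emptyset$. The strategy is to bracket each such $B_0$ between compact convex products of the form $K\times\overline{B^{2k}(R)}$, to which Theorem~\ref{th:EHproduct} applies after passing to closures via Remark~\ref{rem:EH6}(i).

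For ``$\le$'', I take any such $B_0$ and set $K:=\overline{{\rm conv}(\pi_1(B_0))}$, where $\pi_1:\mathbb{R}^{2n+2k}\to\mathbb{R}^{2n}$ denotes projection onto the first factor. Using convexity of $D$, $K$ is a compact convex $\tau_0$-invariant subset of $\overline{D}$ meeting $L_0$, and $B_0\subset K\times\overline{B^{2k}(R)}$ for some $R>0$. Monotonicity (Proposition~\ref{prop:EH.1.7}(ii)), Theorem~\ref{th:EHproduct}, and Remark~\ref{rem:EH6}(i) then give
\[
c_{\rm EH,\tau_0}(B_0)\le c_{\rm EH,\tau_0}(K\times\overline{B^{2k}(R)})=\min\{c_{\rm EH,\tau_0}(K),\pi R^2\}\le c_{\rm EH,\tau_0}(\overline{D})=c_{\rm EH,\tau_0}(D),
\]
and taking the supremum over $B_0$ proves this direction.

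For ``$\ge$'', I first treat the case of bounded $D$: Remark~\ref{rem:EH6}(i) lets me replace $D\times\mathbb{R}^{2k}$ by $\overline{D}\times\mathbb{R}^{2k}$, and choosing $R$ so large that $\pi R^2>c_{\rm EH,\tau_0}(D)$, monotonicity combined with Theorem~\ref{th:EHproduct} yields
\[
c_{\rm EH,\tau_0}(D\times\mathbb{R}^{2k})=c_{\rm EH,\tau_0}(\overline{D}\times\mathbb{R}^{2k})\ge c_{\rm EH,\tau_0}(\overline{D}\times\overline{B^{2k}(R)})=\min\{c_{\rm EH,\tau_0}(D),\pi R^2\}=c_{\rm EH,\tau_0}(D).
\]
When $D$ is unbounded, the same argument applied to every bounded $\tau_0$-invariant convex $D_0\subset D$ with $D_0\cap L_0\ne\emptyset$ gives $c_{\rm EH,\tau_0}(D\times\mathbb{R}^{2k})\ge c_{\rm EH,\tau_0}(D_0)$, and the definition~(\ref{e:EH.1.7}) applied to $D$ upgrades this to $c_{\rm EH,\tau_0}(D\times\mathbb{R}^{2k})\ge c_{\rm EH,\tau_0}(D)$. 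I do not expect real obstacles here; the only fiddly part is reconciling the open-convex-domain setting of Lemma~\ref{lem:9.1} with the compact-convex hypothesis of Theorem~\ref{th:EHproduct}, which Remark~\ref{rem:EH6}(i) handles cleanly.
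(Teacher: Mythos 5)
Your proposal is circular. In the paper, the inequality
\[
c_{\rm EH,\tau_0}(D_1\times\cdots\times D_k)\le\min_i c_{\rm EH,\tau_0}(D_i)
\]
from Theorem~\ref{th:EHproduct} is obtained in (\ref{e:EH.3.13}) by combining monotonicity ($D_1\times\cdots\times D_k\subset D_i\times\mathbb{R}^{2(n-n_i)}$) with \emph{Lemma~\ref{lem:9.1} itself}, which identifies $c_{\rm EH,\tau_0}(D_i\times\mathbb{R}^{2(n-n_i)})$ with $c_{\rm EH,\tau_0}(D_i)$. Both halves of your argument call exactly that ``$\le$'' part of the product formula, namely when you evaluate $c_{\rm EH,\tau_0}(K\times\overline{B^{2k}(R)})$ and $c_{\rm EH,\tau_0}(\overline{D}\times\overline{B^{2k}(R)})$ as the minima $\min\{c_{\rm EH,\tau_0}(K),\pi R^2\}$ and $\min\{c_{\rm EH,\tau_0}(D),\pi R^2\}$. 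So you would be deducing the lemma from a theorem that the paper deduces from the lemma.

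The paper's own proof does not touch Theorem~\ref{th:EHproduct}. It reduces to a bounded $\tau_0$-invariant convex $D\subset\mathbb{R}^{2n}$ with $C^2$ boundary and $0\in D$, sets $H=j_D^2$, and observes (by definition and monotonicity) that $c_{\rm EH,\tau_0}(D\times\mathbb{R}^{2k})=\sup_R c_{\rm EH,\tau_0}(E_R)$ with $E_R=\{(z,z')\,|\,H(z)+(|z'|/R)^2<1\}$, a $\tau_0$-invariant convex domain with $C^{1,1}$ boundary $\mathcal{S}_R$. Theorem~\ref{th:EHconvex}, already proved earlier in Section~\ref{sec:EH.3.1}, gives $c_{\rm EH,\tau_0}(E_R)=\min\Sigma^{\tau_0}_{\mathcal{S}_R}$. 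One then analyzes the $\tau_0$-brake closed characteristics on $\mathcal{S}_R$ directly: since the Hamiltonian splits, such an orbit projects to a brake orbit on $\mathcal{S}=\partial D$ (contributing $\Sigma^{\tau_0}_{\mathcal{S}}$) or to a circle in $\mathbb{R}^{2k}$ with action $k\pi R^2$; for $R$ large the minimum is $\min\Sigma^{\tau_0}_{\mathcal{S}}=c_{\rm EH,\tau_0}(D)$. If you wish to salvage a product-style argument you would first need an independent proof of the two-factor inequality $c_{\rm EH,\tau_0}(K\times\overline{B^{2k}(R)})\le\min\{c_{\rm EH,\tau_0}(K),\pi R^2\}$ that does not pass through Lemma~\ref{lem:9.1}; the paper provides none.
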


\begin{proof}
It suffices to prove this lemma for a $\tau_0$-invariant bounded strictly convex domain
 $D\subset\mathbb{R}^{2n}$ with $C^2$-smooth boundary ${\mathcal S}$. Let $H=j^2_D$.
 By the definition and monotonicity we have
 $$
 c_{\rm EH,\tau_0}(D\times\mathbb{R}^{2k})=\sup_R c_{\rm EH,\tau_0}(E_R),
 $$
 where $E_R=\{(z,z')\in\mathbb{R}^{2n}\times\mathbb{R}^{2k}\,|\,H(z)+ (|z'|/R)^2<1\}$.
 Clearly, $E_R$ is invariant under the canonical involution on
  $\mathbb{R}^{2n}\times\mathbb{R}^{2k}$ (i.e., the product of
          the canonical involutions on $\mathbb{R}^{2n}$ and $\mathbb{R}^{2k}$).
  Since $E_R$ is convex  and $\mathcal{S}_R=\partial E_R$ is of class $C^{1,1}$
  (because $H$ is of class $C^{1,1}$ on  $\mathbb{R}^{2n}$), (\ref{e:fixpt}) gives rise to
 $$
 c_{\rm EH,\tau_0}(E_R)=\min\Sigma^{\tau_0}_{\mathcal{S}_R},
 $$
where  $\Sigma^{\tau_0}_{\mathcal{S}_R}$ is as in (\ref{e:EHcontact+}).
 Let   $(x,x'):[0,\lambda]\rightarrow \mathcal{S}_R$ satisfy
\begin{eqnarray*}
&&\dot{x}=X_H(x),\quad x(\lambda+t)=x(t)\quad\hbox{and}\quad\quad x(-t)=\tau_0x(t),\\
&&\dot{x}'=2J_0x'/R^2,\quad x'(\lambda+t)=x'(t)\quad\hbox{and}\quad\quad x'(-t)=\tau_0 x'(t).
\end{eqnarray*}
Then  $x\equiv 0$ or $\lambda\in \Sigma^{\tau_0}_{\mathcal S}$ by the first line,
and  $x'\equiv 0$ or $\lambda=k\pi R^2$  by the second one, where $k\in\mathbb{Z}$.
Hence for $R>0$ large enough we arrive at
$$
c_{\rm EH,\tau_0}(E_R)=\min\Sigma^{\tau_0}_{S_R}=\min\Sigma^{\tau_0}_{\mathcal S}=c_{\rm EH,\tau_0}(D)
$$
and so the desired conclusion.
\end{proof}

\begin{lemma}\label{lem:9.2}
Let $D\subset\mathbb{R}^{2n}$ be a $\tau_0$-invariant convex bounded domain
  with $C^2$-smooth boundary ${\mathcal S}$ and containing $0$. Let $\widetilde{H}\in\mathscr{F}(\mathbb{R}^{2n},\tau_0,D)$. Then for any $\epsilon>0$
  there exists $\gamma\in\Gamma$ such that
   \begin{equation}\label{e:EH.3.14}
      \Phi_{\widetilde{H}}|\gamma(B^+\setminus\epsilon B^+)\ge c_{\rm EH,\tau_0}(D)-\epsilon
      \quad\hbox{and}\quad \Phi_{\widetilde{H}}|\gamma(B^+)\ge 0,
   \end{equation}
    where $B^+$ is the closed unit ball in $\mathbb{E}^+$ and $S^+=\partial B^+$.
\end{lemma}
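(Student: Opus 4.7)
Since $c_{\rm EH,\tau_0}(D)$ is defined as an infimum over $\mathscr{F}(\mathbb{R}^{2n},\tau_0,D)$ and $\widetilde H$ lies in that class, we have $c:=c_{\rm EH,\tau_0}(D)\le c_{\rm EH,\tau_0}(\widetilde H)$, and the minimax formula (\ref{e:EH.1.2}) supplies $\gamma_\star\in\Gamma$ with $\inf_{x\in\gamma_\star(S^+)}\Phi_{\widetilde H}(x)>c-\epsilon/2$. The plan is to compose a suitably localized version of $\gamma_\star$ with an admissible radial retraction $r_\epsilon\in\Gamma$ that collapses the annulus $B^+\setminus\epsilon B^+$ onto the sphere $S^+$, after first arranging that on $\epsilon B^+$ the image of $\gamma$ stays inside a region where $\Phi_{\widetilde H}\ge 0$.

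\textbf{Construction.} Define the radial scaling
$$
r_\epsilon(x):=\lambda(\|x^+\|_{1/2})\,x^++x^0+x^-,\qquad \lambda(r):=\frac{1}{\max(r,\epsilon)},
$$
together with the linear homotopy $a_u(x)=1+u(\lambda(\|x^+\|_{1/2})-1)$, $b\equiv c\equiv 1$, $K\equiv 0$; routine checks against Definition~\ref{def:deform} give $r_\epsilon\in\Gamma$, and on $B^+$ this retraction sends $B^+\setminus\epsilon B^+$ onto $S^+$ and $\epsilon B^+$ into $B^+$ via $x\mapsto x/\epsilon$. Next, choose $\delta>0$ small enough that $\Phi_{\widetilde H}(y)\ge 0$ for every $y\in\mathbb{E}^+$ with $\|y\|_{1/2}\le\delta$; this follows from $\widetilde H$ vanishing near $0\in D$ together with the quadratic/Chebyshev decay estimate of Proposition~\ref{prop:EH.1.4} (after a cofinal smooth approximation via Remark~\ref{rem:EH6}(ii), if needed). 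With a continuous cutoff $\chi:[0,\infty)\to[0,1]$ vanishing on $[0,\delta/2]$ and equal to $1$ on $[\delta,\infty)$, put
$$
\widehat\gamma_\star(y):=(\gamma_\star)_{\chi(\|y\|_{1/2})}(y).
$$
The structure conditions in Definition~\ref{def:deform} are inherited from $(\gamma_\star)_u$ by substitution, so $\widehat\gamma_\star\in\Gamma$; it agrees with $\gamma_\star$ on $S^+$ and is the identity on $(\delta/2)B^+$. Finally set $\gamma:=\widehat\gamma_\star\circ r_\epsilon\in\Gamma$, using Proposition~\ref{prop:EH.1.1}(i).

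\textbf{Verification at the ends.} For $x\in B^+\setminus\epsilon B^+$ we have $r_\epsilon(x)=x/\|x\|_{1/2}\in S^+$, hence $\gamma(x)=\gamma_\star(r_\epsilon(x))\in\gamma_\star(S^+)$ and $\Phi_{\widetilde H}(\gamma(x))>c-\epsilon/2>c-\epsilon$, which gives the first inequality in (\ref{e:EH.3.14}). For the innermost part of $\epsilon B^+$, namely $\|x\|_{1/2}\le\epsilon\delta/2$, we have $r_\epsilon(x)=x/\epsilon\in(\delta/2)B^+$ where $\widehat\gamma_\star$ is the identity, so $\gamma(x)=x/\epsilon\in(\delta/2)B^+\cap\mathbb{E}^+$ and $\Phi_{\widetilde H}(\gamma(x))\ge 0$ by the choice of $\delta$.

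\textbf{Main obstacle.} The essential technical point is the intermediate band $\epsilon\delta/2\le\|x\|_{1/2}\le\epsilon$, on which $y:=r_\epsilon(x)=x/\epsilon$ satisfies $\|y\|_{1/2}\in[\delta/2,1]$ and $\gamma(x)=(\gamma_\star)_{\chi(\|y\|_{1/2})}(y)$ is a priori neither confined to $(\delta/2)B^+$ nor to $\gamma_\star(S^+)$, so the naive cutoff does not itself force $\Phi_{\widetilde H}\ge 0$ there. To close this gap I plan to refine the choice of $\gamma_\star$ by first flowing a suitable starting admissible deformation under the negative gradient flow $\phi^t$ of $\Phi_{\widetilde H}$ (which preserves $\Gamma$ and forces $\Phi_{\widetilde H}$ to be monotone along flow lines, as exploited in the proof of Theorem~\ref{th:EH.1.6}), so that the whole homotopy $(\gamma_\star)_u$ stays inside $\{\Phi_{\widetilde H}\ge 0\}$; once this uniform control across the transitional band is in place, both inequalities of (\ref{e:EH.3.14}) follow from the two endpoint estimates above.
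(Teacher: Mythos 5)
Your construction is correct at the two ends of $B^+$, but the gap you flag in the transitional band $\epsilon\delta/2\le\|x\|_{1/2}\le\epsilon$ is a genuine one, and the fix you sketch cannot be carried out as stated. The obstruction is structural: every $\gamma_\star\in\Gamma$ comes with a homotopy $(\gamma_\star)_u$ that begins at the \emph{identity}, and the identity map itself fails $\Phi_{\widetilde H}\ge 0$ on most of $B^+$ (for $y\in\mathbb{E}^+$ of moderate norm, $\frac12\|y\|^2_{1/2}-\int_0^1\widetilde H(y)$ is in general negative, since $\widetilde H$ grows quadratically at infinity and $E^{1/2}$ does not embed into $C^0$). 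Your truncation $\widehat\gamma_\star(y)=(\gamma_\star)_{\chi(\|y\|_{1/2})}(y)$ therefore evaluates the homotopy at intermediate times $u\in(0,1)$ on the annulus $\delta/2\le\|y\|_{1/2}\le1$, where there is no a priori control on the sign of $\Phi_{\widetilde H}$. Post-composing $\gamma_\star$ with the gradient flow $\varphi_T$ of $\nabla\Phi_{\widetilde H}$ does not help either, because the resulting homotopy still interpolates from the identity, so the intermediate stages are as uncontrolled as before; making this work would require abandoning the minimax characterization (\ref{e:EH.1.2}) as your starting point, which is precisely what the paper does.

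The paper's argument sidesteps this entirely. It does not select $\gamma_\star$ from the $\sup$ in (\ref{e:EH.1.2}); instead it builds $\gamma$ directly as a radial cone over the flow: on $\frac12\epsilon B^+$ it rescales $x^+$ into a small ball $\alpha B^+$ on which $\Phi_{\widetilde H}\ge 0$ (this is exactly your $\delta$-estimate via Proposition~\ref{prop:EH.1.4}, after passing to a cofinal $\widetilde H\in\mathscr{E}_{\epsilon/2}(\mathbb{R}^{2n},\tau_0,D)$), and on $B^+\setminus\frac12\epsilon B^+$ it sends $x^+$ to $\varphi_{u(\|x^+\|)}(\alpha x^+/\|x^+\|)$ with flow time running from $0$ up to a suitable $r$. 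Positivity of $\Phi_{\widetilde H}$ over \emph{all} of $\gamma(B^+)$ is then automatic, since $\Phi_{\widetilde H}$ is nondecreasing along $\varphi_u$ and is already $>0$ on $\alpha S^+$; and the key lower bound $\Phi_{\widetilde H}|S_r\ge c_{\rm EH,\tau_0}(D)-\epsilon$ is obtained not from (\ref{e:EH.1.2}) but from the $(PS)$-condition plus Lemma~\ref{lem:EH10}, which identifies $\sup_{u}\inf\Phi_{\widetilde H}|S_u$ as a brake closed characteristic action value and bounds it from below. In short: your plan has the right ingredients (Proposition~\ref{prop:EH.1.4} near the center, the gradient flow for the large part), but they must be assembled into a single cone-over-flow deformation, not grafted onto an existing $\gamma_\star\in\Gamma$.
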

\begin{proof}
Let $\mathscr{E}_{\epsilon/2}(\mathbb{R}^{2n},\tau_0,D)$ be as in
 (\ref{e:EH.3.7}). Replacing $\widetilde{H}$ by a greater function we may assume
 $\widetilde{H}\in\mathscr{E}_{\epsilon/2}(\mathbb{R}^{2n},\tau_0,D)$,
 because making $\tilde{H}$ larger only decreases $\Phi_{\tilde{H}}$.
 Since $\widetilde{H}=0$ on $D$ and
$0\in D$, there exists $\alpha>0$ such that
\begin{equation}\label{e:EH.3.15}
\inf \Phi_{\widetilde{H}}|(\alpha S^+)>0\quad\hbox{and}\quad \Phi_{\widetilde{H}}|(\alpha B^+)\ge 0,
 \end{equation}
 (see the proof of Proposition~\ref{prop:EH.1.4} ).
 Let $\varphi_u$ be the flow of $\nabla \Phi_{\widetilde{H}}$. Put
\begin{eqnarray*}
S_u=\varphi_u(\alpha S^+)\quad\hbox{and}\quad
d(\widetilde{H})=\sup_{u\ge 0}\inf(\Phi_{\widetilde{H}}|S_u).
\end{eqnarray*}
Then we have
$$
0<\inf \Phi_{\widetilde{H}}|S_0\le d(\widetilde{H})\le c_{\rm EH,\tau_0}(\widetilde{H})<\infty.
$$
Since $\Phi_{\widetilde{H}}$ satisfies the (PS) condition, $d(\widetilde{H})$ is a positive critical value of $\Phi_{\widetilde{H}}$, and
  $d(\widetilde{H})\ge c_{\rm EH,\tau_0}(D)-\epsilon/2$ by Lemma~\ref{lem:EH10}. Moreover,
  by the definition of
$d(\widetilde{H)}$ there exists $r>0$ such that $\Phi_{\widetilde{H}}|S_r\ge d(\widetilde{H})-\epsilon/2$ and thus
\begin{equation}\label{e:EH.3.16}
 \Phi_{\widetilde{H}}|S_r\ge c_{\rm EH,\tau_0}(D)-\epsilon.
 \end{equation}
 Because $\Phi_{\widetilde{H}}$ is nondecreasing along flow,
\begin{equation}\label{e:EH.3.17}
\Phi_{\widetilde{H}}|S_u\ge \Phi_{\widetilde{H}}|S_0>0,\quad\forall u\ge 0.
 \end{equation}
Define $\gamma:\mathbb{E}\to \mathbb{E}$ by
$$
\gamma(x^++x^0+x^-)=\widetilde{\gamma}(x^+)+x^0+x^-,
$$
where
\begin{eqnarray*}
&&\widetilde{\gamma}(x)=2(\alpha/\epsilon)x\hspace{14mm}\quad\quad\hbox{if}\quad x\in \mathbb{E}^+\;\hbox{and}\;\|x\|_{1/2}\le\frac{1}{2}\epsilon,\\
&&\widetilde{\gamma}(x)=\varphi_{r(2\|x\|_{1/2}-\epsilon)/\epsilon}(\alpha x/\|x\|_{1/2})\quad\hbox{if}\quad x\in \mathbb{E}^+\;\hbox{and}\;
\frac{1}{2}\epsilon<\|x\|_{1/2}\le \epsilon,\\
&&\widetilde{\gamma}(x)=\varphi_{r}(\alpha x/\|x\|_{1/2})\hspace{8mm}\quad\quad\hbox{if}\quad x\in \mathbb{E}^+\;\hbox{and}\;
\|x\|_{1/2}>\epsilon.
\end{eqnarray*}
%\begin{eqnarray*}
%&&\widetilde{\gamma}(x)=2(\alpha/\epsilon)x\hspace{14mm}\quad\quad\hbox{if}\quad x\in \mathbb{E}^+\;\hbox{and}\;\|x\|\le\frac{1}{2}\epsilon,\\
%&&\widetilde{\gamma}(x)=\varphi_{r(2\|x\|-\epsilon)/\epsilon}(\alpha x/\|x\|)\quad\hbox{if}\quad x\in \mathbb{E}^+\;\hbox{and}\;
%\frac{1}{2}\epsilon<\|x\|\le \epsilon,\\
%&&\widetilde{\gamma}(x)=\varphi_{r}(\alpha x/\|x\|)\hspace{8mm}\quad\quad\hbox{if}\quad x\in \mathbb{E}^+\;\hbox{and}\;
%\|x\|>\epsilon.
%\end{eqnarray*}
Then
$$
\gamma(B^+\setminus\epsilon B^+)=S_r\quad\hbox{and}\quad
\gamma(B^+)=(\alpha B^+)\bigcup_{0\le u\le r}S_u,
$$
Thus this, (\ref{e:EH.3.15}) and (\ref{e:EH.3.16})-(\ref{e:EH.3.17}) show that $\gamma$ satisfies (\ref{e:EH.3.14}).

Finally, we get $\gamma\in\Gamma$ by considering the homotopy
\begin{eqnarray*}
\gamma_0(x)=2(\alpha/\epsilon)x^++x^0+x^-,\quad
\gamma_u(x)=u^{-1}(\gamma(ux)),\quad 0<u\le 1.
\end{eqnarray*}
\end{proof}

\begin{proof}[Proof of Theorem~\ref{th:EHproduct}]
\noindent{\bf Step 1}.
By the standard approximation arguments
 we can assume that each $D_i\subset\mathbb{R}^{2n_i}$ is a $\tau_0$-invariant
  bounded convex domain of class $C^2$.
 Moreover, we may also assume that each $D_i$ contains the origin of $\mathbb{R}^{2n_i}$, $1\le i\le k$, since $c_{\rm EH,\tau_0}$ is invariant under translation by a fixed point of $\tau_0$.
 Thus it follows from  the monotonicity and Lemma~\ref{lem:9.1} that
 \begin{equation}\label{e:EH.3.13}
 c_{\rm EH,\tau_0}(D_1\times\cdots\times D_k)\le \min_ic_{\rm EH,\tau_0}(D_i\times\mathbb{R}^{2(n-n_i)})=\min_ic_{\rm EH,\tau_0}(D_i).
 \end{equation}

In order to prove the converse inequality,
note that for each ${H}\in\mathscr{F}(\mathbb{R}^{2n},\tau_0, D_1\times\cdots\times  D_k)$
we may choose $\widehat{H}_i\in\mathscr{F}(\mathbb{R}^{2n_i},\tau_0, D_i)$, $i=1,\cdots,k$, such that
$$
\widehat{H}(z):=\sum \widehat{H}_i(z_i)\ge H(z)\quad\forall z.
$$
For each $i=1,\cdots,k$, by Lemma~\ref{lem:9.2} there exist  $\gamma_i\in\Gamma(\mathbb{R}^{2n_i})$
such that
$$
\Phi_{\widehat{H}_i}|\gamma_i(B_i^+\setminus(2k)^{-1}B^+_i)\ge c_{\rm EH,\tau_0}(D_i)-\epsilon,\qquad
\Phi_{\widehat{H}_i}|\gamma_i(B_i^+)\ge 0.
$$
Since for any $x=(x_1,\cdots,x_k)\in S^+\subset B^+_1\times\cdots\times B^+_k$ there exists some
$i_0$ such that
$$
x_{i_0}\in B_{i_0}^+\setminus(2k)^{-1}B_{i_0}^+,
$$
 putting $\gamma=\gamma_1\times\cdots\times\gamma_k$ we arrive at
$$
\Phi_{\widehat{H}}(\gamma(x))=\sum \Phi_{\widehat{H}_i}(\gamma_i(x_i))\ge\min_i(c_{\rm EH,\tau_0}(D_i)-\epsilon)
$$
and hence
$$
c_{\rm EH,\tau_0}({H})\ge
c_{\rm EH,\tau_0}(\widehat{H})=\sup_{h\in\Gamma}\inf_{x\in h(S^+)}\Phi_{\widehat{H}}(x)\ge
\min_ic_{\rm EH,\tau_0}(D_i)-\epsilon
$$
by Proposition~\ref{prop:EH.1.2}(i) and (\ref{e:EH.1.2}). This leads to
\begin{equation}\label{e:EH.3.18}
c_{\rm EH,\tau_0}(D_1\times\cdots\times D _k)\ge\min_i c_{\rm EH,\tau_0}(D_i)
 \end{equation}
and so the first equality in (\ref{e:product1}) by combining  with (\ref{e:EH.3.13}).

\vspace*{4pt}\noindent{\bf Step 2}.
As in Step 1, for each $i=1,\cdots,k$ we may assume: (i)
 $D_i\subset\mathbb{R}^{2n_i}$ is a $\tau_0$-invariant bounded
 convex  domain of class $C^2$, (ii) $D_i$ contains the origin of $\mathbb{R}^{2n_i}$.
Then Lemma~\ref{lem:9.2} holds for every $\widetilde{H}\in\mathscr{F}(\mathbb{R}^{2n},\tau_0,\partial D_i)$. Arguing as in Step 1 we get that
$$
c_{\rm EH,\tau_0}(\partial D_1\times\cdots\times \partial D_k)\ge\min_i c_{\rm EH,\tau_0}(D_i).
$$
Since $c_{\rm EH,\tau_0}(\partial D_1\times\cdots\times \partial D_k)
\le  c_{\rm EH,\tau_0}(D_1\times\cdots\times D_k)=\min_ic_{\rm EH,\tau_0}(D_i)$ by
the monotonicity property of $c_{\rm EH,\tau_0}$ and (\ref{e:EH.3.13})
we obtain the second equality in (\ref{e:product1}).
\end{proof}
%\hfill$\Box$\vspace{2mm}

\section{Proof of Theorem~\ref{th:EHcontact}}\label{sec:EH.4}
\setcounter{equation}{0}

 As explained at the beginning of Section~\ref{sec:EH.3.1}
we only need to consider the case that $\tau=\tau_0$ below.

\begin{lemma}\label{spec2}
  For any $\tau_0$-invariant contact hypersurface $\mathcal{S}$, $\Sigma_{\mathcal{S}}^{\tau_0}$ has empty interior.
\end{lemma}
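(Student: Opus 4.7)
The plan is to reduce Lemma~\ref{spec2} to the already-quoted classical fact (see the comment after (\ref{spec})) that, for any smooth hypersurface $\mathcal{S}$ in $(\mathbb{R}^{2n},\omega_0)$, the usual action spectrum
$$
\Sigma_{\mathcal{S}} = \{A(x)>0 \,|\, x\text{ is a closed characteristic on }\mathcal{S}\}
$$
has empty interior in $\mathbb{R}$, exactly in the spirit of the one-line justification of Lemma~\ref{nointerior}.

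First I would unpack the definition: a $\tau_0$-brake closed characteristic on $\mathcal{S}$ is by definition a $C^1$ embedding $x:\mathbb{R}/T\mathbb{Z}\to\mathcal{S}$ tangent to the characteristic line bundle $\mathcal{L}_{\mathcal{S}}$ with the extra symmetry condition $x(T-t)=\tau_0(x(t))$. Forgetting the symmetry condition, $x$ is in particular an ordinary closed characteristic on $\mathcal{S}$, and its action defined by (\ref{e:action1}) coincides with the usual one. This gives the obvious inclusion
$$
\Sigma^{\tau_0}_{\mathcal{S}} \subseteq \Sigma_{\mathcal{S}}.
$$

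Second, I would justify that the classical empty-interior statement does apply to our $\mathcal{S}$: since $\mathcal{S}$ is assumed to be a smooth hypersurface of restricted contact type in $(\mathbb{R}^{2n},\omega_0)$, the Liouville vector field $X$ provides a contact form $\alpha=\iota_X\omega_0|_\mathcal{S}$ whose Reeb flow (up to reparametrization) traces out the characteristic foliation, so that $\Sigma_{\mathcal{S}}$ equals the period spectrum of a smooth Reeb flow on a compact manifold, and the classical Sard/transversality argument (used in the smooth hypersurface case quoted below (\ref{spec})) shows it has empty interior. This is exactly the fact already invoked for Lemma~\ref{nointerior}.

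Combining the two observations gives that $\Sigma^{\tau_0}_{\mathcal{S}}$ has empty interior in $\mathbb{R}$. There is essentially no obstacle here: the only point worth checking is that the symmetry-free action functional used in (\ref{e:action1}) is the same functional restricted to symmetric loops, which is immediate from the definitions, and that the containing spectrum $\Sigma_{\mathcal{S}}$ falls under the already-quoted classical fact, which it does since $\mathcal{S}$ is smooth and of contact type.
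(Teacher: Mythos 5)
Your argument is exactly the paper's: observe that $\Sigma^{\tau_0}_{\mathcal{S}}\subseteq\Sigma_{\mathcal{S}}$ by forgetting the symmetry, then invoke the well-known empty-interior property of the ordinary action spectrum on a contact hypersurface. The proposal is correct and takes essentially the same route as the paper's own one-line proof.
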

It is well known that for any  contact hypersurface  $\mathcal{S}$,
$\Sigma_{\mathcal{S}}$ has empty interior. Since $\Sigma_{\mathcal{S}}^{\tau_0}$ is a subset of $\Sigma_{\mathcal{S}}$, Lemma~\ref{spec2} follows.

Using the flow $\phi^t$ of the Liouville vector field $X$
we may define a very special parameterized family of hypersurfaces modelled on ${\mathcal S}$, given by
 \begin{eqnarray}\label{e:EH.4.5}
 \psi:(-\varepsilon, \varepsilon)\times\mathcal{S}\to \mathbb{R}^{2n},\; (s, z)\mapsto \phi^s(z),
\end{eqnarray}
where $\varepsilon>0$ is so small that $\mathbb{R}^{2n}\setminus \cup_{t\in (-\varepsilon, \varepsilon)}\phi^t(\mathcal{S})$   has two components.
By (\ref{e:EHcontact}) there holds
\begin{equation}\label{e:EH.4.6}
\tau_0(\phi^t(z))=\phi^t(\tau_0 z),\,\forall (t,z)\in (-\varepsilon, \varepsilon)\times\mathcal{S}.
\end{equation}
Define $U:=\cup_{t\in (-\varepsilon, \varepsilon)}\phi^t(\mathcal{S})$ and
\begin{eqnarray}\label{e:EH.4.7}
K_\psi:U\to\mathbb{R},\;w\mapsto \nu
\end{eqnarray}
if $w=\psi(\nu,z)\in U$ where $z\in\mathcal{S}$. Let $X_{K_\psi}$ be the
Hamiltonian vector field associated to $K_\psi$ defined by $\omega_0(\cdot,X_{K_\psi})=dK_\psi$. Then for  $w=\psi(\nu,z)\in U$ it holds that $\omega_0(w)(X(w),X_{K_\psi}(w))=1$.
Moreover, by a direct computation we prove in Appendix~\ref{app:B},
  \begin{eqnarray}\label{e:EH.4.9}
 X_{K_\psi}(\psi(\nu,z))=e^{-\nu}
  d\phi^\nu(z)[X_{K_\psi}(z)],  \quad\forall (\nu,z)\in(-\varepsilon, \varepsilon)\times{\mathcal S}.
\end{eqnarray}
Clearly (\ref{e:EH.4.6}) and (\ref{e:EH.4.9}) show that $y:\mathbb{R}/T\mathbb{Z}\to {\mathcal S}_\nu$
   satisfies
   $$
   \dot{y}(t)=X_{K_\psi}(y(t)),\quad y(0)=y(T)\quad\hbox{and}\quad y(-t)=\tau_0y(t)
   $$
   if and only if $y(t)=\psi(\nu, x(e^{-\nu} t))$, where $x:\mathbb{R}/e^{-\nu} T\mathbb{Z}\to\mathcal{S}$
    satisfies
    $$
    \dot{x}(t)=X_{K_\psi}(x(t)),\quad x(e^{-\nu} T)= x(0)\quad\hbox{and}\quad x(-t)=\tau_0x(t).
    $$
    In addition
    $A(y)=e^\nu A(x) $.
Fix $0<\delta<\varepsilon$. Let ${\bf A}_\delta$ and ${\bf B}_\delta$
 denote the unbounded and bounded components of
  $\mathbb{R}^{2n}\setminus \cup_{t\in (-\delta, \delta)}\phi^t(\mathcal{S})$,
  respectively.   Then
$$
\psi(\{\nu\}\times \mathcal{S})\subset {\bf B}_\delta\quad\hbox{for}\quad -\varepsilon<\nu<-\delta.
$$
Let $\mathscr{F}(\mathbb{R}^{2n},\tau_0)$ be given by (\ref{e:EH.1.5.1}).
We call $H\in\mathscr{F}(\mathbb{R}^{2n},\tau_0)$ {\bf adapted to} $\psi$ if
\begin{equation}\label{e:EH.4.10}
H(x)=\left\{\begin{array}{ll}
C_0\ge 0 &{\rm if}\;x\in {\bf B}_\delta,\\
f(\nu) &{\rm if}\;x=\psi(\nu,y),\;y\in{\mathcal S},\;\nu\in [-\delta,\delta],\\
C_1\ge 0 &{\rm if}\;x\in {\bf A}_\delta\cap B^{2n}(0,R),\\
h(|x|^2) &{\rm if}\;x\in {\bf A}_\delta\setminus B^{2n}(0,R)
\end{array}\right.
\end{equation}
where $f:(-\varepsilon, \varepsilon)\to\mathbb{R}$ and $h:[0, \infty)\to\mathbb{R}$ are smooth functions satisfying
\begin{eqnarray}
&&f|(-\varepsilon,-\delta]=C_0,\quad f|[\delta,\varepsilon)=C_1,\label{e:EH.4.11}\\
&&sh'(s)-h(s)\le 0\quad\forall s.\label{e:EH.4.12}
\end{eqnarray}
\begin{lemma}\label{lem:EH.4.2}
%\begin{description}
\begin{enumerate}
\item[\bf (i)] If $x$ is a nonconstant critical point of $\Phi_H$ on $X$ such that
$x(0)\in \psi(\{\nu\}\times\mathcal{S})$ for some $\nu\in (-\delta,\delta)$ satisfying $f'(\nu)>0$,
then
$$
e^{-\nu}f'(\nu)\in \Sigma^{\tau_0}_{\mathcal{S}}\quad\hbox{and}\quad \Phi_H(x)=f'(\nu)-f(\nu).
$$
\item[\bf (ii)] If some $\nu\in (-\delta,\delta)$ satisfies $f'(\nu)>0$ and $ e^{-\nu}f'(\nu)\in \Sigma^{\tau_0}_{\mathcal{S}}$,
then there is a nonconstant critical point $x$ of $\Phi_H$ on $X$ such that $x(0)\in \psi(\{\nu\}\times\mathcal{S})$.
%\end{description}
\end{enumerate}
\end{lemma}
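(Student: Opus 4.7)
The plan is to exploit the product structure $H=f\circ K_\psi$ holding on the collar $\psi((-\delta,\delta)\times\mathcal{S})$. Differentiating gives $X_H=f'(K_\psi)X_{K_\psi}$, so $X_H$ is tangent to each level set $\phi^\nu(\mathcal{S})=\psi(\{\nu\}\times\mathcal{S})$; any orbit of $X_H$ issuing from $\psi(\{\nu\}\times\mathcal{S})$ therefore stays there and satisfies $\dot x=f'(\nu)X_{K_\psi}(x)$ with the scalar $f'(\nu)$ constant in $t$.

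For part (i), Proposition~\ref{solution} tells me that a nonconstant critical point $x\in\mathbb{E}$ of $\Phi_H$ satisfies $\dot x=X_H(x)$, $x(t+1)=x(t)$ and $x(-t)=\tau_0 x(t)$, so the observation above applies. Rescaling time via $y(s):=x(s/f'(\nu))$, $y$ becomes a $\tau_0$-brake closed orbit of $X_{K_\psi}$ on $\phi^\nu(\mathcal{S})$ with period $f'(\nu)$. The equivalence recorded just above Lemma~\ref{spec2} then lets me write $y(s)=\psi(\nu,z(e^{-\nu}s))$, where $z$ is a $\tau_0$-brake closed characteristic on $\mathcal{S}$ parametrized by $\dot z=X_{K_\psi}(z)$ with period $e^{-\nu}f'(\nu)$. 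Since the primitive $\lambda(v):=\omega_0(X,v)$ of $\omega_0$ satisfies $\lambda(X_{K_\psi})=\omega_0(X,X_{K_\psi})=dK_\psi(X)=1$, the contact action along $z$ equals its period, yielding $A(z)=e^{-\nu}f'(\nu)\in\Sigma^{\tau_0}_{\mathcal{S}}$. To evaluate $\Phi_H(x)$ I combine reparametrization invariance of $A$ with the identity $A(x)=\int_x\lambda$ (valid for any primitive $\lambda$ of $\omega_0$, since two such primitives differ by an exact form): from $A(x)=A(y)=\int_0^{f'(\nu)}\lambda(\dot y)\,ds=f'(\nu)$ and $H(x(t))\equiv f(\nu)$, I conclude $\Phi_H(x)=A(x)-f(\nu)=f'(\nu)-f(\nu)$.

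For part (ii) I reverse this construction. Given a $\tau_0$-brake closed characteristic $z:\mathbb{R}\to\mathcal{S}$ with $\dot z=X_{K_\psi}(z)$ and period $e^{-\nu}f'(\nu)$, I set $x(t):=\psi(\nu,z(e^{-\nu}f'(\nu)t))$ for $t\in\mathbb{R}/\mathbb{Z}$. Combining (\ref{e:EH.4.6}) with the brake property of $z$ gives $x(-t)=\tau_0 x(t)$, and (\ref{e:EH.4.9}) together with the chain rule produces $\dot x=f'(\nu)X_{K_\psi}(x)=X_H(x)$. Smoothness and the symmetry place $x$ inside $\mathbb{E}$, and a direct verification that $d\Phi_H(x)[\zeta]=\int_0^1\langle -J_0\dot x-\nabla H(x),\zeta\rangle\,dt=0$ for every $\zeta\in\mathbb{E}$ shows that $x$ is the desired nonconstant critical point lying on $\psi(\{\nu\}\times\mathcal{S})$.

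The main obstacle will be the careful bookkeeping of the factors of $e^\nu$. The Liouville flow rescales the primitive by $(\phi^\nu)^*\lambda=e^\nu\lambda$, so Reeb orbits on $\phi^\nu(\mathcal{S})$ with respect to the ambient $\lambda$ correspond under $\phi^\nu$ to orbits on $\mathcal{S}$ reparametrized by $e^{-\nu}$; this is exactly what converts the period $f'(\nu)$ of $y$ on $\phi^\nu(\mathcal{S})$ into the period $e^{-\nu}f'(\nu)$ of $z$ on $\mathcal{S}$, and it must be threaded consistently through both directions. Once this bookkeeping is in place, the clean formula $\Phi_H(x)=f'(\nu)-f(\nu)$ is what ultimately enables the comparison with $\Sigma^{\tau_0}_{\mathcal{S}}$ needed for Theorem~\ref{th:EHcontact}.
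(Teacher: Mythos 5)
Your proof is correct and follows essentially the same route as the paper's: invoke Proposition~\ref{solution} to turn a critical point into a smooth $\tau_0$-brake Hamiltonian orbit, observe that $X_H$ is tangent to $\psi(\{\nu\}\times\mathcal{S})$ so the orbit is trapped there, transport it down to $\mathcal{S}$ via the Liouville flow picking up the factor $e^{-\nu}$, and compute both $A(x)=f'(\nu)$ and $\int_0^1 H(x)=f(\nu)$ using $\lambda(X_{K_\psi})\equiv 1$ together with the fact that $A(\cdot)$ is given by integrating any primitive of $\omega_0$ over the closed loop. The only cosmetic difference is that you insert an intermediate explicit reparametrization $y(s)=x(s/f'(\nu))$ before applying the displayed $\psi(\nu,\cdot)$-correspondence, whereas the paper pulls back directly by $\phi^{-\nu}$ and uses $(\phi^{-\nu})^\ast\lambda=e^{-\nu}\lambda$; and the correspondence you cite sits between Lemma~\ref{spec2} and Lemma~\ref{lem:EH.4.2}, not above Lemma~\ref{spec2}.
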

\begin{proof}
{\bf (i)} By Proposition~\ref{solution}, $x$ satisfies
\begin{equation}\label{e:criticalpt}
\dot{x}=X_H(x),\quad x(1)=x(0)\quad\hbox{and}\quad x(-t)=\tau_0x(t).
\end{equation}
Since $x(0)\in \psi(\{\nu\}\times\mathcal{S})$, it follows that $x(t)\in\psi(\{\nu\}\times\mathcal{S})$, $\forall t\in [0,1]$ and the first equality in (\ref{e:criticalpt}) becomes
$$
\dot{x}=f'(\nu)X_{K_\psi}(x).
$$
Let $y(t):=\phi^{-\nu}(x(t))$, for $0\le t\le 1$. Then $y:\mathbb{R}/\mathbb{Z}\to \mathcal{S}$ is a $\tau_0$-brake closed characteristic on $\mathcal{S}$. Let $\lambda:=\imath_{X}\omega_0$. Then the action of $y$ is given by
$$
A(y)=\int y^\ast\lambda=\int x^\ast(\phi^{-\nu})^\ast\lambda=e^{-\nu}\int x^\ast\lambda=e^{-\nu}f'(\nu)>0,
$$
i.e. $e^{-\nu}f'(\nu)\in\Sigma^{\tau_0}_{\mathcal{S}}$. Moreover, (since $x$ is $C^2$) we have
$$
\Phi_H(x)=A(x)-\int_0^1 H(x(t))dt=f'(\nu)-f(\nu).
$$

\noindent{\bf (ii)} By the assumption there exists $y:\mathbb{R}/\mathbb{Z}\to\mathcal{S}$ such that
$$
\dot{y}=e^{-\nu}f'(\nu)X_{K_\psi}(y),\quad y(1)=y(0)\quad\hbox{and}\quad
y(-t)=\tau_0y(t).
$$
Hence $x(t)=\psi(\nu, y(t))=\phi^\nu(y(t))$ satisfies
\begin{eqnarray*}
\dot{x}(t)&=&d\phi^\nu(y(t))[\dot{y}(t)]=e^{-\nu}f'(\nu)d\phi^\nu(y(t))[X_{K_\psi}(y)]\\
&=&f'(\nu)X_{K_\psi}(\phi^\nu(y(t)))=f'(\nu)X_{K_\psi}(x(t))=X_H(x(t)),
\end{eqnarray*}
and
$$
x(1)=x(0),\quad x(-t)=\tau_0x(t),
$$
that is, $x$ is the desired critical point of $\Phi_H$ with $x(0)\in\psi(\{\nu\}\times\mathcal{S})$ and $\Phi_H(x)=f'(\nu)-f(\nu)$.
\end{proof}
\noindent{\bf Continuing the proof of Theorem~\ref{th:EHcontact}}.\quad
For $C>0$ large enough and $\delta>2\eta>0$ small enough we define
  an  $H=H_{C,\eta}\in\mathscr{F}(\mathbb{R}^{2n},\tau_0)$
   adapted to $\psi$ as follows:
\begin{equation}\label{e:EH.4.14}
H_{C,\eta}(x)=\left\{\begin{array}{ll}
C\ge 0 &{\rm if}\;x\in {\bf B}_\delta,\\
f_{C,\eta}(\nu) &{\rm if}\;x=\psi(\nu,y),\;y\in{\mathcal S},\;\nu\in [-\delta,\delta],\\
C&{\rm if}\;x\in {\bf A}_\delta\cap B^{2n}(0,R),\\
h_{C,\eta}(|x|^2) &{\rm if}\;x\in {\bf A}_\delta\setminus B^{2n}(0,R)
\end{array}\right.
\end{equation}
where $B^{2n}(0,R)\supseteq\overline{\psi((-\varepsilon,\varepsilon)\times{\mathcal S})}$
(the closure of $\psi((-\varepsilon,\varepsilon)\times{\mathcal S})$),
$f_{C,\eta}:(-\varepsilon, \varepsilon)\to\mathbb{R}$ and $h_{C,\eta}:[0, \infty)\to\mathbb{R}$ are smooth functions satisfying
\begin{eqnarray*}
&&f_{C,\eta}|[-\eta,\eta]\equiv 0,\quad f_{C,\eta}(s)=C\;\hbox{if}\;|s|\ge 2\eta,\\
&&f'_{C,\eta}(s)s>0\quad\hbox{if}\quad \eta<|s|<2\eta,\\
&&f'_{C,\eta}(s)-f_{C,\eta}(s)>c_{\rm EH,\tau_0}(\mathcal{S})+1\quad\hbox{if}\;s>0\;\hbox{and}\;\eta<f_{C,\eta}(s)<C-\eta,\\
&&h_{C,\eta}(s)=a_Hs+b\quad\hbox{for $s>0$ large enough}, a_H>\pi\; \hbox{and}\; a_H=C/R^2\notin \pi\mathbb{Z},\\
&&sh'_{C,\eta}(s)-h_{C,\eta}(s)\le 0\quad\forall s\ge 0.
\end{eqnarray*}
Notice that ${\bf B}_{\delta}$, ${\bf A}_\delta$ and $B^{2n}(0,R)$ is $\tau_0$-invariant. Moreover, we have
 \begin{eqnarray*}
 x\in \psi (\{[-\delta,\delta]\}\times\mathcal{S})\quad &\Longrightarrow &\quad \hbox{$\tau_0x\in  \psi (\{[-\delta,\delta]\}\times\mathcal{S})$ and $K_\psi(x)=K_\psi(\tau_0x)$},\\
   x\in {\bf A}_\delta\setminus B^{2n}(0,R)\quad&\Longrightarrow &\quad \hbox{$\tau_0x\in {\bf A}_\delta\setminus B^{2n}(0,R) $ and $|x|^2=|\tau_0x|^2$}.
   \end{eqnarray*}
Such a family $H_{C,\eta}$ ($C\to+\infty$, $\eta\to 0$) can be chosen to be cofinal in
the set $\mathscr{F}(\mathbb{R}^{2n},\tau_0,\mathcal{S})$ defined by (\ref{e:EH.1.5.2}) and
also to have the property that
\begin{equation}\label{e:EH.4.14.1}
C\le C'\Rightarrow H_{C,\eta}\le H_{C',\eta},\qquad \eta\le \eta'\Rightarrow H_{C,\eta}\ge H_{C,\eta'}.
\end{equation}
It follows that
$$
c_{\rm EH,\tau_0}(\mathcal{S})=\lim_{\eta\to 0\&C\to+\infty}c_{\rm EH,\tau_0}(H_{C,\eta}).
$$
By Proposition~\ref{prop:EH.1.2}(i) and (\ref{e:EH.4.14.1}),
$$
\eta\le \eta'\Rightarrow c_{\rm EH,\tau_0}(H_{C,\eta})\le c_{\rm EH,\tau_0}(H_{C,\eta'}),
$$
 and hence
\begin{equation}\label{e:EH.4.14.2}
\Upsilon(C):=\lim_{\eta\to 0}c_{\rm EH,\tau_0}(H_{C,\eta})
\end{equation}
exists, and for $C\le C'$ we have
$$
\Upsilon(C)=\lim_{\eta\to 0}c_{\rm EH,\tau_0}(H_{C,\eta})\ge \lim_{\eta\to 0}c_{\rm EH,\tau_0}(H_{C',\eta})=\Upsilon(C'),
$$
i.e.,  $C\mapsto\Upsilon(C)$ is non-increasing.
We claim
\begin{equation}\label{e:EH.4.14.3}
 c_{\rm EH,\tau_0}(\mathcal{S})=\lim_{C\to+\infty}\Upsilon(C).
\end{equation}
In fact, for any $\epsilon>0$ there exist $\eta_0>0$ and $C_0>0$ such that
$$
-\epsilon<c_{\rm EH,\tau_0}(H_{C,\eta})-c_{\rm EH,\tau_0}(\mathcal{S})<\epsilon\quad\forall \eta<\eta_0,\;\forall C>C_0.
$$
Letting $\eta\to 0$ we get
$$
-\epsilon\le\Upsilon(C)-c_{\rm EH,\tau_0}(\mathcal{S})\le\epsilon\quad\forall C>C_0,
$$
and thus the desired claim in (\ref{e:EH.4.14.3}).

By Theorem~\ref{th:EH.1.6}, $c_{\rm EH,\tau_0}(H_{C,\eta})$ is  a positive critical value of
$\Phi_{H_{C,\eta}}$ and the associated
critical point $x\in \mathbb{E}$ gives rise to  a nonconstant $\tau_0$-brake characteristic
 sitting in the interior of $U$.
Note that $c_{\rm EH,\tau_0}(H_{C,\eta})>0$ implies $f'_{C,\eta}(\nu)>f_{C,\eta}(\nu)\ge 0$ and so $\nu>0$ by the choice of $f$ below (\ref{e:EH.4.14}).
Then from Lemma~\ref{lem:EH.4.2}(i) we deduce
$$
c_{\rm EH,\tau_0}(H_{C,\eta})=\Phi_{H_{C,\eta}}(x)=f'_{C,\eta}(\nu)-f_{C,\eta}(\nu),
$$
where $f'_{C,\eta}(\nu)\in e^\nu\Sigma^{\tau_0}_{\mathcal{S}}$ and $\eta<|\nu|<2\eta$.
Choose $C>0$ so large that
$$
c_{\rm EH,\tau_0}(H_{C,\eta})<c_{\rm EH,\tau_0}(\mathcal{S})+1.
$$
Then the choice of $f$ below (\ref{e:EH.4.14}) implies:
$$
\hbox{either\quad $f_{C,\eta}(\nu)<\eta\quad$ or $\quad f_{C,\eta}(\nu)>C-\eta$.}
$$

Choose a sequence of positive numbers $\eta_n\to 0$. Passing to a subsequence we may assume two cases.

\vspace*{3pt}\noindent{\bf Case 1}. $c_{\rm EH,\tau_0}(H_{C,\eta_n})=f'_{C,\eta_n}(\nu_n)-f_{C,\eta_n}(\nu_n)=e^{\nu_n}a_n-f_{C,\eta_n}(\nu_n)$, where
$a_n\in\Sigma^{\tau_0}_{\mathcal S}$, $0\le f_{C,\eta_n}(\nu_n)<\eta_n$ and $\eta_n<\nu_n<2\eta_n$.\vspace*{3pt}

Since $c_{\rm EH,\tau_0}(H_{C,\eta_n})\to\Upsilon(C)$, the sequence $e^{-\nu_n}(c_{\rm EH,\tau_0}(H_{C,\eta_n})+f_{C,\eta_n}(\nu_n))=a_n$
is a bounded sequence. Passing to a subsequence we may assume that $(a_n)$ is convergent.
Let $a_n\to a_C\in \overline{\Sigma^{\tau_0}_{\mathcal{S}}}$ (the closure of $\Sigma^{\tau_0}_{\mathcal{S}}$).
Note that
\begin{eqnarray*}
&&\lim_{n\to\infty}\left(e^{-\nu_n}(c_{\rm EH,\tau_0}(H_{C,\eta_n})+f_{C,\eta_n}(\nu_n))\right)\\
&=&\lim_{n\to\infty}e^{-\nu_n}(\lim_{n\to\infty}c_{\rm EH,\tau_0}(H_{C,\eta_n})+\lim_{n\to\infty}f_{C,\eta_n}(\nu_n))\\
&=&\Upsilon(C).
\end{eqnarray*}
Hence
\begin{equation}\label{e:EH.4.16}
\Upsilon(C)=a_C\in \overline{\Sigma^{\tau_0}_{\mathcal{S}}}.
\end{equation}
Moreover, by standard arguments one can show that $\overline{\Sigma^{\tau_0}_{\mathcal{S}}}=\Sigma_{\mathcal{S}}^{\tau_0}\cup\{0\}$. Therefore
$\overline{\Sigma^{\tau_0}_{\mathcal{S}}}$ also has empty interior.\vspace*{3pt}

\noindent{\bf Case 2}. $c_{\rm EH,\tau_0}(H_{C,\eta_n})=f'_{C,\eta_n}(\nu_n)-f_{C,\eta_n}(\nu_n)=e^{\nu_n}a_n-f_{C,\eta_n}(\nu_n)
=e^{\nu_n}a_n-C-(f_{C,\eta_n}(\nu_n)-C)$, where
$a_n\in\Sigma^{\tau_0}_{\mathcal S}$, $C-\eta_n<f_{C,\eta_n}(\nu_n)\le C$ and $\eta_n<\nu_n<2\eta_n$.\vspace*{3pt}

%As in  Case 1 we can prove
Let $a_C$ be as above. Since $\tau_n\to 0$ and $\eta_n\to 0$ as $n\to\infty$, taking limits in both sides of
$$
c(H_{C,\eta_n})=e^{\tau_n}a_n-C-(f_{C,\eta_n}(\tau_n)-C)
$$
we obtain
\begin{equation}\label{e:EH.4.17}
\Upsilon(C)+C=a_C\in \overline{\Sigma^{\tau_0}_{\mathcal{S}}}.
\end{equation}

\vspace*{4pt}\noindent{\bf Step 1.} {Prove $c_{\rm EH,\tau_0}(\mathcal{S})\in\overline{\Sigma^{\tau_0}_{\mathcal{S}}}$.}

 Suppose that there exists an increasing sequence $C_n\to+\infty$ such that $\Upsilon(C_n)=a_{C_n}\in \Sigma^{\tau_0}_{\mathcal{S}}$ for each $n$.
Then by~(\ref{e:EH.4.14.3}) we arrive at
$$
c_{\rm EH,\tau_0}({\mathcal S})=\lim_{n\to\infty}\Upsilon(C_n)\in \overline{\Sigma^{\tau_0}_{\mathcal S}}.
$$
Otherwise, we have
\begin{equation}\label{e:EH.4.18}
\left.\begin{array}{ll}
&\hbox{there exist $\bar{C}>0$ such that
(\ref{e:EH.4.17}) holds}\\
&\hbox{for each $C\in (\bar{C}, +\infty)$
}
\end{array}\right\}
\end{equation}
Let us  prove that this case does not occur. %Note that (\ref{e:EH.4.18}) implies

\begin{claim}\label{cl:EH.4.3}
Assuming (\ref{e:EH.4.18}), if $C<C'$ belong to $(\bar{C}, +\infty)$ then
\begin{equation}\label{e:EH.4.19}
\Upsilon(C)+C\ge \Upsilon(C')+C'.
\end{equation}
\end{claim}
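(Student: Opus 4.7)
The plan is to deduce Claim~\ref{cl:EH.4.3} from the monotonicity of $\Upsilon$ combined with the structural constraint (\ref{e:EH.4.18}), which forces $g(C):=\Upsilon(C)+C$ to take values in the closed set $\overline{\Sigma^{\tau_0}_{\mathcal{S}}}=\Sigma^{\tau_0}_{\mathcal{S}}\cup\{0\}$; the crucial topological input is that this set has empty interior in $\mathbb{R}$, which follows from Lemma~\ref{spec2} together with the identification of the closure mentioned just below~(\ref{e:EH.4.16}).

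First I will decompose the non-increasing function $\Upsilon:(\bar{C},+\infty)\to\mathbb{R}$ as $\Upsilon=\Upsilon_{\rm cont}+\Upsilon_{\rm jump}$, where $\Upsilon_{\rm cont}$ is continuous non-increasing and $\Upsilon_{\rm jump}$ is a pure-jump saltus function supported on the at-most-countable discontinuity set of $\Upsilon$. Correspondingly, $g=g_{\rm cont}+g_{\rm jump}$ with $g_{\rm cont}:=\Upsilon_{\rm cont}+\mathrm{id}$ continuous on $(\bar{C},+\infty)$ and $g_{\rm jump}:=\Upsilon_{\rm jump}$ non-increasing.

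Next, on each maximal open interval $I\subset(\bar{C},+\infty)$ of continuity of $\Upsilon$, the map $g$ is continuous, so its image $g(I)$ is a connected subset of $\mathbb{R}$; since $g(I)\subset\overline{\Sigma^{\tau_0}_{\mathcal{S}}}$ has empty interior, $g(I)$ must reduce to a single point, so $g$ is constant on $I$. Because $\Upsilon_{\rm jump}$ is also constant on $I$ (no jumps inside), this forces $g_{\rm cont}$ to be constant on $I$ as well. The continuity intervals form an open dense subset of $(\bar{C},+\infty)$, and continuity of $g_{\rm cont}$ allows one to propagate the local constants across all boundary points by one-sided limits at each jump of $\Upsilon$---including any accumulation points---showing $g_{\rm cont}$ is globally constant on $(\bar{C},+\infty)$.

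Finally, for $C<C'$ in $(\bar{C},+\infty)$, writing
\[
g(C')-g(C)=\bigl(g_{\rm cont}(C')-g_{\rm cont}(C)\bigr)+\bigl(g_{\rm jump}(C')-g_{\rm jump}(C)\bigr)
\]
yields a vanishing first summand by the constancy just established and a non-positive second summand by monotonicity of $g_{\rm jump}$, so $g(C')\le g(C)$, which is exactly (\ref{e:EH.4.19}). The main obstacle is the global propagation of constancy of $g_{\rm cont}$ across countably (possibly non-isolated) many discontinuity points of $\Upsilon$; this is the delicate step, but it reduces to a straightforward one-sided-limit argument since $\Upsilon$ is bounded on every compact subinterval of $(\bar{C},+\infty)$.
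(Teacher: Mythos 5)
Your overall strategy --- exploiting that $g:=\Upsilon+\mathrm{id}$ takes values in the empty-interior set $\overline{\Sigma^{\tau_0}_{\mathcal{S}}}$ and arguing that a non-constant continuous piece of $g$ would force an interval of values there --- shares the topological input of the paper's proof, but it breaks at a concrete point. You assert that ``the continuity intervals form an open dense subset of $(\bar{C},+\infty)$.'' The only regularity established for $\Upsilon$ is that it is non-increasing; its discontinuity set $D$ is therefore countable, but it need not be closed and may well be dense in $(\bar{C},+\infty)$ (a monotone function can have a jump at precisely every point of a countable dense set). In that case $(\bar{C},+\infty)\setminus D$ contains no open interval, there are no ``maximal open intervals of continuity,'' and the constancy of $g$ is established on the empty set; the propagation step then has nothing to propagate, so you cannot conclude that $g_{\rm cont}$ is globally constant and the final inequality does not follow. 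Ruling out a dense discontinuity set would require regularity of $\Upsilon$ that nothing in the hypotheses supplies.

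The paper avoids the decomposition entirely. Assuming $g(C)<g(C')$, it fixes $d\in(g(C),g(C'))$, sets $\Delta_d=\{C''\in(C,C')\,|\,g(C'')>d\}$, and shows that $C_0:=\inf\Delta_d$ satisfies $g(C_0)=d$. The only input is that $\Upsilon$ is non-increasing, which gives $\Upsilon(C_0^-)\ge\Upsilon(C_0)\ge\Upsilon(C_0^+)$, hence $g$ can only jump downward and cannot skip the value $d$ on $(C,C')$. This Darboux-type property holds regardless of how $D$ sits inside $(C,C')$, so $(g(C),g(C'))\subset\overline{\Sigma^{\tau_0}_{\mathcal{S}}}$, contradicting the empty-interior fact. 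If you wish to salvage your decomposition route, you would first need to prove that $D$ is nowhere dense, which is not available here.
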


\begin{proof}
Assume that for some $C'>C>\bar{C}$,
\begin{equation}\label{e:EH.4.20}
\Upsilon(C)+C<\Upsilon(C')+C'.
\end{equation}
 We shall prove:
\begin{equation}\label{e:EH.4.21}
\left.\begin{array}{ll}
 &\hbox{ for any given
$d\in (\Upsilon(C)+C,\Upsilon(C')+C')$}\\
&\hbox{ there exists $C_0\in (C, C')$ such that
$\Upsilon(C_0)+C_0=d$.}
\end{array}\right\}
\end{equation}

Put $\Delta_d=\{C''\in (C, C')\,|\, C''+\Upsilon(C'')>d\}$. Since
$\Upsilon(C')+C'>d$ and
$\Upsilon(C')\le\Upsilon(C'')\le\Upsilon(C)$ for any $C''\in (C, C')$
we obtain $\Upsilon(C'')+C''>d$ if $C''\in (C, C')$ is sufficiently close to $C'$.
Hence $\Delta_d\ne\emptyset$. Set $C_0=\inf \Delta_d$. Then $C_0\in [C, C')$.

Let $(C_n'')\subset \Delta_d$ satisfy $C_n''\downarrow C_0$.
Since $\Upsilon(C_n'')\le \Upsilon(C_0)$, we have
$$
d<C_n''+\Upsilon(C_n'')\le\Upsilon(C_0)+ C_n''
$$
for each $n\in\mathbb{N}$,
and thus $d\le\Upsilon(C_0)+C_0$ by letting $n\to\infty$.

Suppose that
\begin{equation}\label{e:EH.4.22}
d<\Upsilon(C_0)+C_0.
\end{equation}
Since $d>C+\Upsilon(C)$, this implies $C\ne C_0$ and so $C_0>C$.
For $\hat{C}\in (C, C_0)$, from $\Upsilon(\hat{C})\ge\Upsilon(C_0)$ and
(\ref{e:EH.4.22}) we derive that $\Upsilon(\hat{C})+\hat{C}>d$
if $\hat{C}$ is close to $C_0$. Hence such $\hat{C}$ belongs to
$\Delta_d$, which contradicts $C_0=\inf\Delta_d$.

Hence (\ref{e:EH.4.22}) does not hold. That is, $d=\Upsilon(C_0)+C_0$.
 (\ref{e:EH.4.21}) is proved. Since (\ref{e:EH.4.21}) contradicts the fact that
 $\overline{\Sigma^{\tau_0}_{\mathcal{S}}}$ has empty interior. Hence (\ref{e:EH.4.19}) does hold for all $\overline{C}<C<C'$.
\end{proof}
Take a $C^\ast>\overline{C}$ and a sequence $C_n$ such that $C_n>C^\ast$ and $\lim_{n\to\infty}C_n=\infty$. By (\ref{e:EH.4.19}) $\Upsilon(C_n)+C_n\le \Upsilon(C^\ast)+C^\ast$. It follows that
$\lim_{n\to\infty}\Upsilon(C_n)=-\infty$, which contradicts ~(\ref{e:EH.4.14.3}). Hence ~(\ref{e:EH.4.18}) does not occur.

\vspace*{4pt}\noindent{\bf Step 2.} {Prove}
\begin{equation}\label{e:EH.4.23}
c_{\rm EH,\tau_0}(B)=c_{\rm EH,\tau_0}({\mathcal S}).
\end{equation}

Note that
\begin{equation}\label{e:EH.4.24}
c_{\rm EH,\tau_0}(B)=\inf_{\eta>0, C>0}c_{\rm EH,\tau_0}(\hat{H}_{C,\eta}),
\end{equation}
where
\begin{equation}\label{e:EH.4.25}
\hat{H}_{C,\eta}(x)=\left\{\begin{array}{ll}
0 &{\rm if}\;x\in {\bf B}_\delta,\\
\hat{f}_{C,\eta}(\nu) &{\rm if}\;x=\psi(\nu,y),\;y\in{\mathcal S},\;\nu\in [-\delta,\delta],\\
C &{\rm if}\;x\in {\bf A}_\delta\cap B^{2n}(0,R),\\
\hat{h}(|x|^2) &{\rm if}\;x\in {\bf A}_\delta\setminus B^{2n}(0,R)
\end{array}\right.
\end{equation}
where $B^{2n}(0,R)\supseteq\overline{\psi((-\varepsilon,\varepsilon)\times{\mathcal S})}$, $\hat{f}_{C,\eta}:(-\varepsilon, \varepsilon)\to\mathbb{R}$ and $\hat{h}:[0, \infty)\to\mathbb{R}$ are smooth functions satisfying
\begin{eqnarray*}
&&\hat{f}_{C,\eta}|(-\varepsilon,\eta]\equiv 0,\quad \hat{f}_{C,\eta}(s)=C\;\hbox{if}\;s\ge 2\eta,\\
&&\hat{f}'_{C,\eta}(s)s>0\quad\hbox{if}\quad \eta<s<2\eta,\\
&&\hat{f}'_{C,\eta}(s)-\hat{f}_{C,\eta}(s)>c_{\rm EH,\tau_0}(\mathcal{S})+1\quad\hbox{if}\;s>0\;\hbox{and}\;
\eta<\hat{f}_{C,\eta}(s)<C-\eta,\\
&&\hat{h}_{C,\eta}(s)=a_Hs+b\quad\hbox{for $s>0$ large enough}, a_H=C/R^2\notin \pi\mathbb{Z}, a_H>\pi,\\
&&s\hat{h}'_{C,\eta}(s)-\hat{h}_{C,\eta}(s)\le 0\quad\forall s\ge 0.
\end{eqnarray*}

For $H_{C,\eta}$ in (\ref{e:EH.4.14}) with related functions ${f}_{C,\eta}$ and ${h}_{C,\eta}$,   we choose an associated
$\hat{H}_{C,\eta}$ as in (\ref{e:EH.4.25}) such that $\hat{f}_{C,\eta}|[0,\varepsilon)={f}_{C,\eta}|[0,\varepsilon)$
and $\hat{h}_{C,\eta}={h}_{C,\eta}$. Consider $H_s=sH_{C,\eta}+(1-s)\hat{H}_{C,\eta}$, $0\le s\le 1$, and put
$$
\Phi_s(x):=\Phi_{H_s}(x)\quad\forall x\in E.
$$

It suffices to prove $c_{\rm EH,\tau_0}(H_0)=c_{\rm EH,\tau_0}(H_1)$. If $x$ is a critical point of $\Phi_s$ with $\Phi_s(x)>0$,
as in Lemma~\ref{lem:positive} we have $x([0,1])\in {\mathcal S}_\nu=\psi(\{\nu\}\times{\mathcal S})$
for some $\nu\in (\eta,2\eta)$. The choice of $\hat{H}_{C,\eta}$ shows
$H_s(x(t))\equiv {H}_{C,\eta}(x(t))$ for $t\in [0,1]$. This implies that
each $\Phi_s$ has the same positive critical value as $\Phi_{H_{C,\eta}}$.
By the continuity in Proposition~\ref{prop:EH.1.2}(ii), $s\mapsto c_{\rm EH,\tau_0}(H_s)$ is continuous
and takes values in the set of positive critical value of $\Phi_{H_{C,\eta}}$
(which has measure zero by Sard's theorem). Hence  $s\mapsto c_{\rm EH,\tau_0}(H_s)$ is constant. In particular,
$$
c_{\rm EH,\tau_0}(\hat{H}_{C,\eta})=c_{\rm EH,\tau_0}(H_0)=c_{\rm EH,\tau_0}(H_1)=c_{\rm EH,\tau_0}(H_{C,\eta}).
$$
These show that
$c_{\rm EH,\tau_0}(\mathcal{S})=c_{\rm EH,\tau_0}(B)\in \overline{\Sigma^{\tau_0}_{\mathcal{S}}}$.
Moreover,  $c_{\rm EH,\tau_0}(B)>0$. Hence $c_{\rm EH,\tau_0}(\mathcal{S})=c_{\rm EH,\tau_0}(B)\in \Sigma ^{\tau_0}_{\mathcal{S}}$.

\section{Proofs of Theorem~\ref{th:Brun.2} and Corollaries~\ref{cor:Brun.2}, \ref{cor:Brun.4}}\label{sec:Brunn}
\setcounter{equation}{0}

\subsection{Proof of Theorem~\ref{th:Brun.2}}\label{sec:Brunn.1}
Since $D, K\subset \mathbb{R}^{2n}$ are convex bodies  containing  $0$ in their interiors,
they have the Minkowski (or gauge) functionals $j_D, j_K:\mathbb{R}^{2n}\to\mathbb{R}$.
Following ideas of \cite{{AAO08}} we need to generalize
  some results in Section~\ref{sec:convex1}. For $p>1$, we identify
  $$
  W^{1,p}(S^1,\mathbb{R}^{2n}):=\left\{x\in W^{1,p}([0,1],\mathbb{R}^{2n})\,|\,x(1)=x(0)\right\}
  $$
  and put
  $$
\mathcal{F}_p:=\left\{x\in W^{1,p}(S^1,\mathbb{R}^{2n})\,\Bigm|\;x(1-t)=\tau_0x(t),\,
\int_0^1x(t)dt=0\right\},
$$
which is a closed subspace of $W^{1,p}([0,1],\mathbb{R}^{2n})$. Since the functional
$$
\mathcal{F}_p\ni x\mapsto  A(x)=\frac{1}{2}\int_0^1\langle -J_0\dot{x}(t),x(t)\rangle_{\mathbb{R}^{2n}} dt
$$
is $C^1$ and $dA(x)[x]=2$ for any $x\in\mathcal{F}_p$ with $A(x)=1$,  we deduce  that
\begin{equation}\label{e:Brun.0}
\mathcal{A}_p:=\{x\in \mathcal{F}_p\,|\,A(x)=1 \}
\end{equation}
is a regular $C^1$ submanifold of $\mathcal{F}_p$.
Define a functional
\begin{equation}\label{e:Brun.1}
I_p:\mathcal{F}_p\to\mathbb{R},\;x\mapsto \int_0^1(H_D^{\ast}(-J_0\dot{x}(t)))^{\frac{p}{2}}dt.
\end{equation}
Recall that $H_D^{\ast}$ is the Legendre transform of $j_D^2$.
If $D$ is strictly convex and has $C^1$-smooth boundary
then $I_p$ is a $C^{1}$ functional  with derivative given by
 $$
 dI_p(x)[y]=\int_0^1\langle \nabla (H_D^{\ast})^{\frac{p}{2}}(-J_0\dot{x}(t)),-J_0\dot{y}\rangle_{\mathbb{R}^{2n}} dt,\quad\forall x,y\in \mathcal{F}_p.
 $$
Notice that $H_D^{\ast}=\frac{1}{4}h_D^2$, where $h_D$ is the support function of $D$. It follows that $(H_D^{\ast})^{\frac{p}{2}}=\frac{1}{2^p}h_D^p$.

Corresponding to \cite[Proposition~2.2]{AAO08} we have next result.

\begin{proposition}\label{prop:Brun.1}
For $p>1$, there holds
$$
(c_{\rm EHZ,\tau_0}(D))^{\frac{p}{2}}=\min_{x\in\mathcal{A}_p}
\int_0^1(H_D^{\ast}(-J_0\dot{x}(t)))^{\frac{p}{2}}dt.
$$
\end{proposition}
\begin{proof}
\noindent{\bf Step 1}.\quad{\it $\mu_p:=\inf_{x\in\mathcal{A}_p}I_p(x)$ is positive. }
Note that
\begin{equation}\label{e:Brun.2}
\|x\|_{L^\infty}\le 2\|\dot{x}\|_{L^p}\quad\forall x\in\mathcal{F}_p
\end{equation}
because $\int_0^1x(t)dt=0$.
So for any $x\in\mathcal{A}_p$ we have
$$
2=2A_p(x)\le \|x\|_{L^q}\|\dot{x}\|_{L^p}\le \|x\|_{L^\infty}\|\dot{x}\|_{L^p}\le  2\|\dot{x}\|_{L^p}^2,
$$
and thus $\|\dot{x}\|_{L^p}\ge 1$, where $\frac{1}{p}+\frac{1}{q}=1$.
Let $R_2$ be as in (\ref{e:dualestimate}). These lead to
$$
I_p(x)\ge\left(\frac{1}{R_2}\right)^{p/2}\|\dot{x}\|_{L^p}^p\ge \left(\frac{1}{R_2}\right)^{p/2}.
$$

\vspace*{4pt}\noindent{\bf Step 2.} {There exists $u\in  \mathcal{A}_p$ such  that
$I_p(u)=\mu_p$.}
Let $(x_n)\subset\mathcal{A}_p$ be a sequence satisfying
 $$
 \lim_{n\rightarrow+\infty}I_p(x_n)=\mu_p.
 $$
Then there exists a constant $C>0$ such that
$$
\left(\frac{1}{R_2}\right)^{p/2}\|\dot{x}_n\|_{L^p}^p\le I_p(x_n)\le C,\quad\forall n\in\mathbb{N}.
$$
By (\ref{e:Brun.2}) and the fact that $\|x\|_{L^p}\le\|x\|_{L^\infty}$, we deduce that $(x_n)$ is bounded in $W^{1,p}(S^1,\mathbb{R}^{2n})$. Therefore $(x_n)$ has  a subsequence, also denoted by $(x_n)$, which
converges weakly to some $u\in W^{1,p}(S^1,\mathbb{R}^{2n})$.
By Arzel\'{a}-Ascoli theorem, there also exists $\hat{u}\in C^{0}(\mathbb{R}/\mathbb{Z},\mathbb{R}^{2n})$ such that
$$
\lim_{n\rightarrow+\infty}\sup_{t\in [0,1]}|x_n(t)-\hat{u}(t)|=0.
$$
Standard arguments lead to $u(t)=\hat{u}(t)$ almost everywhere. Hence $u$ also satisfies
$$
u(1-t)=\tau_0u(t)\quad\hbox{and}\quad\int^1_0u(t)dt=0.
$$
As in Step 2 of \cite[\S4.1]{JinLu19}, we also have $A_p(u)=1$, and so $u\in\mathcal{A}_p$.
As in Section~\ref{sec:convex1} we can use results in
 convex analysis to show that there exists $\omega\in L^q([0,1],\mathbb{R}^{2n})$ such that $\omega(t)\in\partial (H_D^{\ast})^{\frac{p}{2}}(-J_0\dot{u}(t))$
 almost everywhere. It follows that
$$
I_p(u)-I_p(x_n)\le \int_0^1\langle \omega(t),-J_0(\dot{u}(t)-\dot{x}_n(t))\rangle_{\mathbb{R}^{2n}} dt\rightarrow 0
$$
since $x_n$ converges weakly to $u$. Hence
$$
\mu_p\le I_p(u)\le\lim_{n\rightarrow\infty}I_p(x_n)= \mu_p.
$$
\vspace*{4pt}\noindent{\bf Step 3.} {There exists a generalized $\tau_0$-brake closed characteristic on $\partial D$,
${x}^\ast:[0, 1]\rightarrow \partial D$,  such that $A({x}^\ast)=(\mu_p)^{\frac{2}{p}}$.}
Since $u$ is the minimizer  of  $I_p|_{\mathcal{A}_p}$,
  applying Lagrangian multiplier theorem (\cite[Theorem~6.1.1]{Cl83}) we get some $\lambda_p\in\mathbb{R}$ such that
$0\in\partial I_p(u)+\lambda_p A'(u)$.
This means that there exists some $\rho\in L^q([0,1],\mathbb{R}^{2n})$ satisfying
\begin{equation}\label{e:Brun.3-}
\rho(t)\in\partial (H_D^{\ast})^{\frac{p}{2}}(-J_0\dot{u}(t))\quad\hbox{a.e. on}\quad [0,1]
\end{equation}
and
\begin{equation}\label{e:Brun.3}
\int_0^1\langle\rho(t),-J_0\dot{\zeta}(t)\rangle_{\mathbb{R}^{2n}}+\lambda_p\int_0^1\langle u(t),-J_0\dot{\zeta}(t)\rangle_{\mathbb{R}^{2n}}=0\quad\forall \zeta\in\mathcal{F}_p.
\end{equation}
As in Step 3 of Section~\ref{sec:convex1}, this $\rho(t)$ can be chosen to satisfy $\rho(1-t)=\tau_0\rho(t)$ and (\ref{e:Brun.3}) implies
\begin{equation}\label{e:Brun.4}
\rho(t)+\lambda_p u(t)={\bf a}_0,\quad\hbox{a.e. on}\quad [0,1]
\end{equation}
for  $\lambda_p=-\frac{p}{2}\mu_p$ and some ${\bf a}_0\in L_0$. Since $((H_D^{\ast})^{\frac{p}{2}})^\ast=\frac{2^q}{qp^{q-1}}j_D^q$,
by (\ref{e:Brun.3-}) there holds
$$
-J_0\dot{u}(t)\in\frac{2^q}{qp^{q-1}}\partial j_D^q(-\lambda_p u(t)+{\bf a}_0),\quad\hbox{a.e.}.
$$
Let $v(t):=-\lambda_p u(t)+{\bf a}_0$. Then it satisfies
$$
-J_0\dot{v}(t)\in-\lambda_p\frac{2^q}{qp^{q-1}}\partial j_D^q(v(t)),
\quad v(1-t)=\tau_0 v(t),\quad v(1)=v(0).
$$
These imply that $j_D^q(v(t))$ is  constant by \cite[Theorem~2]{Ku96}, and
$$
\int_0^1\frac{-2^{q-1}\lambda_p}{p^{q-1}}j_D^q(v(t))dt=\int_0^1\frac{1}{2}\langle-J_0\dot{v}(t),
v(t)\rangle_{\mathbb{R}^{2n}}dt=\lambda_p^2=\left(\frac{p\mu_p}{2}\right)^2
$$
by the Euler formula (cf. \cite[Theorem~3.1]{YangWei08}).
Therefore $j_D^q(v(t))=\left(\frac{p}{2}\right)^q\mu_p$ and
$$
A_p(v)=\frac{1}{2}\int^1_0\langle-J_0\dot{v}(t), v(t)\rangle_{\mathbb{R}^{2n}} dt=\lambda_p^2=\left(\frac{p\mu_p}{2}\right)^2.
$$
Let $x^{\ast}(t)=\frac{v(t)}{j_D(v(t))}$. Then $x^{\ast}$ is a generalized $\tau_0$-brake closed
characteristic on $\partial D$ with action
$$
A(x^{\ast})=\frac{1}{j_D^2(v(t))}A(v)=\mu_p^{\frac{2}{p}}.
$$
\vspace*{4pt}\noindent{\bf Step 4.} {For any generalized $\tau_0$-brake closed characteristic
on $\partial D$ with positive action, $y:[0,T]\rightarrow \partial D$, there holds $A(y)\ge \mu_p^{\frac{2}{p}}$.}
By Lemma~\ref{lem:genChar}, by reparameterizing it we may assume that
$y\in W^{1,\infty}([0,T],\mathbb{R}^{2n})$ and satisfies
\begin{eqnarray*}
&&j_D(y(t))\equiv 1,\quad
y(0)=y(T),\quad y(T-t)=\tau_0y(t),\\
&&-J_0\dot{y}(t)\in\partial j_D^q(y(t))\quad\hbox{a.e. on}\;[0, T].
\end{eqnarray*}
It follows that
\begin{equation}\label{e:Brun.4.1}
A(y)=\frac{qT}{2}.
\end{equation}
Similar to the case $p=2$, define $y^{\ast}:[0,1]\rightarrow \mathbb{R}^{2n}$,   $t\mapsto y^{\ast}(t)=a y(tT)+ {\bf b}$,
 where $a>0$ and ${\bf b}\in L_0$ are chosen so that $y^{\ast}\in\mathcal{A}_p$. Then (\ref{e:Brun.4.1})
 leads to
 \begin{equation}\label{e:Brun.5}
 1=A(y^{\ast})=a^2A(y)=\frac{a^2qT}{2}.
 \end{equation}
 Moreover, $(H_D^{\ast}(-J_0\dot{y}^{\ast}(t)))^{\frac{p}{2}}
 =(aT)^p\frac{q^{p}}{2^p}$.
 Now Step 1 tells us that $I_p(y^\ast)\ge \mu_p$ and so $(aT)^p\frac{q^{p}}{2^p}\ge\mu_p$.
 This, (\ref{e:Brun.4.1}) and (\ref{e:Brun.5}) lead to $A(y)\ge \mu_p^{\frac{2}{p}}$.

 Summarizing the four steps we have
\begin{eqnarray*}
&&\min\{A(x)>0\,|\,x\;\text{is a generalized}\;\tau_0\hbox{-brake closed characteristic on}\;\partial D\}\\
&&=(\min_{x\in\mathcal{A}_p}I_p)^{\frac{2}{p}}.
\end{eqnarray*}
The desired result follows from this and Theorem~\ref{th:convex}.
\end{proof}
\begin{remark}\label{rem:Brun.1.5}
{\rm  Checking the proof of Proposition~\ref{prop:Brun.1} it is easily seen that
for a minimizer $u$ of  $I_p|_{\mathcal{A}_p}$ ($p>1$) there exists ${\bf a}_0\in L_0$
such that
$$
x^\ast(t)=\left(c_{\rm EHZ,\tau_0}(D)\right)^{1/2}u(t)+ \frac{2}{p}\left(c_{\rm EHZ,\tau_0}(D)\right)^{(1-p)/2}{\bf a}_0
$$
gives a generalized $\tau_0$-brake closed characteristic on $\partial D$
with  $A(x^{\ast})=c_{\rm EHZ,\tau_0}(D)$, namely, $x^\ast$ is a $c_{\rm EHZ,\tau_0}$-carrier for $\partial D$.}
\end{remark}
The following is a corresponding result with \cite[Proposition~2.1]{AAO08}.

\begin{proposition}\label{prop:Brun.2}
  For $p_1>1$ and $p_2\ge 1$, there holds
  $$
  (c_{\rm EHZ,\tau_0}(D))^{\frac{p_2}{2}}=\min_{x\in\mathcal{A}_{p_1}}
\int_0^1(H_D^{\ast}(-J_0\dot{x}(t)))^{\frac{p_2}{2}}dt=\min_{x\in\mathcal{A}_{p_1}}\frac{1}{2^{p_2}}\int_0^1
  (h_{D}(-J_0\dot{x}))^{p_2}dt.
  $$
\end{proposition}

\begin{proof}
 Firstly, suppose $p_1\ge p_2>1$. Then $\mathcal{A}_{p_1}\subset \mathcal{A}_{p_2}$
and the first two steps in the proof of Proposition~\ref{prop:Brun.1} imply that
  $I_{p_1}|_{\mathcal{A}_{p_1}}$ has a minimizer $u\in \mathcal{A}_{p_1}$.
  It follows that
   \begin{eqnarray*}
  c_{\rm EHZ,\tau_0}(D)&=&\left(\int_0^1(H_D^{\ast}(-J_0\dot{u}(t)))^{\frac{p_1}{2}}dt\right)^
  {\frac{2}{p_1}}\\
  &\ge& \left(\int_0^1(H_D^{\ast}(-J_0\dot{u}(t)))^{\frac{p_2}{2}}dt\right)^
  {\frac{2}{p_2}}\\
  &\ge & \inf_{x\in\mathcal{A}_{p_1}}\left(\int_0^1(H_D^{\ast}(-J_0\dot{x}(t)))^{\frac{p_2}{2}}dt\right)^
  {\frac{2}{p_2}}\\
  &\ge & \inf_{x\in\mathcal{A}_{p_2}}\left(\int_0^1(H_D^{\ast}(-J_0\dot{x}(t)))^{\frac{p_2}{2}}dt\right)^
  {\frac{2}{p_2}}\\
  &=&c_{\rm EHZ,\tau_0}(D),
  \end{eqnarray*}
  where two equalities come from Proposition~\ref{prop:Brun.1} and the first inequality
  is because of H\"older's inequality.
  Hence the functional $\int_0^1(H_D^{\ast}(-J_0\dot{x}(t)))^{\frac{p_2}{2}}dt$ attains its minimum at $u$ on $\mathcal{A}_{p_1}$ and
  \begin{equation}\label{e:Brun.6}
  c_{\rm EHZ,\tau_0}(D)=\min_{x\in\mathcal{A}_{p_1}}
  \left(\int_0^1(H_D^{\ast}(-J_0\dot{x}(t)))^{\frac{p_2}{2}}dt\right)^{\frac{2}{p_2}}.
  \end{equation}

Next, if  $p_2\ge p_1>1$, then $\mathcal{A}_{p_2}\subset \mathcal{A}_{p_1}$
and we have $u\in \mathcal{A}_{p_2}$ minimizing
  $I_{p_2}|_{\mathcal{A}_{p_2}}$ such  that
   \begin{eqnarray*}
  c_{\rm EHZ,\tau_0}(D)&=&\left(\int_0^1(H_D^{\ast}(-J_0\dot{u}(t)))^{\frac{p_2}{2}}dt\right)^
  {\frac{2}{p_2}}\\
   &\ge & \inf_{x\in\mathcal{A}_{p_1}}\left(\int_0^1(H_D^{\ast}(-J_0\dot{x}(t)))^{\frac{p_2}{2}}dt\right)^
  {\frac{2}{p_2}}\\
  &\ge & \inf_{x\in\mathcal{A}_{p_1}}\left(\int_0^1(H_D^{\ast}(-J_0\dot{x}(t)))^{\frac{p_1}{2}}dt\right)^
  {\frac{2}{p_1}}\\
  &=&c_{\rm EHZ,\tau_0}(D).
  \end{eqnarray*}
This yields (\ref{e:Brun.6}) again.

  Finally, let $p_2=1$ and let $u\in \mathcal{A}_{p_1}$ minimize
  $I_{p_1}|_{\mathcal{A}_{p_1}}$. It is clear that
  \begin{eqnarray}\label{e:Brun.7}
  c_{\rm EHZ,\tau_0}(D)&=&\left(\int_0^1(H_D^{\ast}(-J_0\dot{u}(t)))^{\frac{p_1}{2}}dt\right)^
  {\frac{2}{p_1}}\nonumber\\
  &\ge& \left(\int_0^1(H_D^{\ast}(-J_0\dot{u}(t)))^{\frac{1}{2}}dt\right)^{2}\nonumber\\
  &\ge & \inf_{x\in\mathcal{A}_{p_1}}\left(\int_0^1(H_D^{\ast}(-J_0\dot{x}(t)))^{\frac{1}{2}}dt
  \right)^{2}.
  \end{eqnarray}
  Let $R_2$ be as in (\ref{e:dualestimate}).
 Then
 $$
 (H_D^{\ast}(-J\dot{x}(t)))^{\frac{p}{2}}\le (R_2|\dot{x}(t)|^2)^{\frac{p}{2}}\le (R_2+1)^{\frac{p_1}{2}}
 (1+|\dot{x}(t)|)^{p_1}\quad\forall t
 $$
 for any $1\le p\le p_1$. By (\ref{e:Brun.6})
 $$
  c_{\rm EHZ,\tau_0}(D)=\min_{x\in\mathcal{A}_{p_1}}
  \left(\int_0^1(H_D^{\ast}(-J_0\dot{x}(t)))^{\frac{p}{2}}dt\right)^{\frac{2}{p}},\quad 1<p\le p_1.
 $$
    Letting $p\downarrow 1$ and using Lebesgue dominated convergence theorem we get
  $$
  c_{\rm EHZ,\tau_0}(D)\le \inf_{x\in\mathcal{A}_{p_1}}\left(\int_0^1(H_D^{\ast}(-J_0\dot{x}(t)))^{\frac{1}{2}}dt\right)^
  {2}.
  $$
  This and (\ref{e:Brun.7}) show  that the functional $\mathcal{A}_{p_1}\ni x\mapsto\int_0^1(H_D^{\ast}(-J_0\dot{x}(t)))^{\frac{1}{2}}dt$ attains its minimum at $u$ and
 $$
  c_{\rm EHZ,\tau_0}(D)=\min_{x\in\mathcal{A}_{p_1}}
  \left(\int_0^1(H_D^{\ast}(-J_0\dot{x}(t)))^{\frac{1}{2}}dt\right)^{2}.
  $$
\end{proof}

%
%As in \cite[Corollary~2.3]{AAO08},
%Propositions~\ref{prop:Brun.1},~\ref{prop:Brun.2} have the following
%
%\begin{corollary}\label{cor:Brun.4}
%For fixed $p_1>1$ and $p_2\ge 1$, if  $u\in \mathcal{A}_{p_1}$ minimize
%  $I_{p_2}|_{\mathcal{A}_{p_1}}$, i.e.,
%$$
%\min_{x\in \mathcal{A}_{p_1}}\int_0^1(H_D^{\ast}(-J\dot{x}(t)))^{\frac{p_2}{2}}dt
%=\int_0^1(H_D^{\ast}(-J\dot{u}(t)))^{\frac{p_2}{2}}dt,
%$$
%then the function $H_D^{\ast}(-J\dot{u}(t))$ is equal to the constant $c_{\rm EHZ,\tau_0}(D)$.
%\end{corollary}
%\begin{proof}
%If $p_2>1$, then
%\begin{eqnarray*}
%c_{\rm EHZ,\tau_0}(D)&=&(\int_0^1(H_D^\ast(-J\dot{u}))^{p_2/2})^{2/p_2}\\
%&\ge& (\int_0^1(H_D^\ast(-J\dot{u}))^{1/2})^{2}\\
%&\ge&\min_{x\in\mathcal{A}_{p_1}}(\int_0^1(H_D^\ast(-J\dot{x}))^{1/2})^{2}\\
%&=&c_{\rm EHZ,\tau_0}(D)
%\end{eqnarray*}
%which implies that $(\int_0^1(H_D^\ast(-J\dot{u}))^{p_2/2})^{2/p_2}
%=(\int_0^1(H_D^\ast(-J\dot{u}))^{1/2})^{2}$ and hence $H_D^\ast(-J\dot{u})$ is constant.
%\end{proof}

\noindent{\it Proof of Theorem~\ref{th:Brun.2}}.
As done at the beginning of Section~\ref{sec:convex},
the original question can be boiled down to the case $\tau=\tau_0$.

Choose a real $p_1>1$. Then Proposition~\ref{prop:Brun.2} implies
\begin{eqnarray}\label{e:Brun.8}
  c_{\rm EHZ,\tau_0}(D+_pK)^{\frac{p}{2}}&=&\min_{x\in\mathcal{A}_{p_1}}\int_0^1
  \left(\frac{(h_{D+_pK}(-J_0\dot{x}))^2}{4}\right)^{\frac{p}{2}}\nonumber\\
  &=&\min_{x\in\mathcal{A}_{p_1}}\frac{1}{2^p}\int_0^1
  (h_{D+_pK}(-J_0\dot{x}))^{p}\nonumber\\
  &=&\min_{x\in\mathcal{A}_{p_1}}\frac{1}{2^p}\int_0^1
  ((h_{D}(-J_0\dot{x}))^{p}+(h_{K}(-J_0\dot{x}))^{p})\nonumber\\
  &\ge& \min_{x\in\mathcal{A}_{p_1}}\frac{1}{2^p}\int_0^1
  (h_{D}(-J_0\dot{x}))^{p}+\min_{x\in\mathcal{A}_{p_1}}\frac{1}{2^p}\int_0^1
  (h_{K}(-J_0\dot{x}))^{p}\nonumber\\
  &=&c_{\rm EHZ,\tau_0}(D)^{\frac{p}{2}}+c_{\rm EHZ,\tau_0}(K)^{\frac{p}{2}}.
\end{eqnarray}

Now suppose that $p>1$ and the equality in (\ref{e:BrunA}) holds.
We may require that the above $p_1$ satisfies $1<p_1<p$.
Since  there exists $u\in\mathcal{A}_{p_1}$ such that
\begin{eqnarray*}
  c_{\rm EHZ,\tau_0}(D+_pK)^{\frac{p}{2}}=\int_0^1
  \left(\frac{(h_{D+_pK}(-J_0\dot{u}))^2}{4}\right)^{\frac{p}{2}},
  \end{eqnarray*}
 the above computation yields
 \begin{eqnarray*}
  c_{\rm EHZ,\tau_0}(D+_pK)^{\frac{p}{2}}&=&\frac{1}{2^p}\int_0^1
  ((h_{D}(-J_0\dot{u}))^{p}+(h_{K}(-J_0\dot{u}))^{p})\\
  &\ge& \min_{x\in\mathcal{A}_{p_1}}\frac{1}{2^p}\int_0^1
  (h_{D}(-J_0\dot{x}))^{p}+\min_{x\in\mathcal{A}_{p_1}}\frac{1}{2^p}\int_0^1
  (h_{K}(-J_0\dot{x}))^{p}\\
  &=&c_{\rm EHZ,\tau_0}(D)^{\frac{p}{2}}+c_{\rm EHZ,\tau_0}(K)^{\frac{p}{2}}
\end{eqnarray*}
 and thus
\begin{eqnarray*}
&&c_{\rm EHZ,\tau_0}(D)^{\frac{p}{2}}=\min_{x\in\mathcal{A}_{p_1}}\frac{1}{2^p}\int_0^1
  (h_{D}(-J_0\dot{x}))^{p}=\frac{1}{2^p}\int_0^1
  (h_{D}(-J_0\dot{u}))^{p}\quad\hbox{and}%\\
\end{eqnarray*} \begin{eqnarray*}&&c_{\rm EHZ,\tau_0}(K)^{\frac{p}{2}}=\min_{x\in\mathcal{A}_{p_1}}\frac{1}{2^p}\int_0^1
  (h_{K}(-J_0\dot{x}))^{p}=\frac{1}{2^p}\int_0^1
  (h_{K}(-J_0\dot{u}))^{p}.
\end{eqnarray*}
These,  Propositions~\ref{prop:Brun.1}, \ref{prop:Brun.2}  and H\"older's inequality lead to
\begin{eqnarray*}
\min_{x\in\mathcal{A}_{p_1}}\left(\int_0^1
  (h_{D}(-J_0\dot{x}))^{p_1}\right)^{\frac{1}{p_1}}&=& 2(c_{\rm EHZ,\tau_0}(D))^{\frac{1}{2}} \\   &=&\min_{x\in\mathcal{A}_{p_1}}\left(\int_0^1
  (h_{D}(-J_0\dot{x}))^{p}\right)^{\frac{1}{p}}\\
  &=&\left(\int_0^1(h_{D}(-J_0\dot{u}))^{p}\right)^{\frac{1}{p}}\\
  &\ge&
  \left(\int_0^1
  (h_{D}(-J_0\dot{u}))^{p_1}\right)^{\frac{1}{p_1}},\\
 \min_{x\in\mathcal{A}_{p_1}}\left(\int_0^1
  (h_{K}(-J_0\dot{x}))^{p_1}\right)^{\frac{1}{p_1}}&=&2(c_{\rm EHZ,\tau_0}(K))^{\frac{1}{2}} \\ &=&\min_{x\in\mathcal{A}_{p_1}}\left(\int_0^1
  (h_{K}(-J_0\dot{x}))^{p}\right)^{\frac{1}{p}}\\
  &=&\left(\int_0^1(h_{K}(-J_0\dot{u}))^{p}\right)^{\frac{1}{p}}\\
  &\ge&\left(\int_0^1(h_{K}(-J_0\dot{u}))^{p_1}\right)^{\frac{1}{p_1}}.
 \end{eqnarray*}
 It follows that
 \begin{eqnarray*}
 2(c_{\rm EHZ,\tau_0}(D))^{\frac{1}{2}}&=&
  \left(\int_0^1(h_{D}(-J_0\dot{u}))^{p}\right)^{\frac{1}{p}}=
  \left(\int_0^1(h_{D}(-J_0\dot{u}))^{p_1}\right)^{\frac{1}{p_1}},\\
 2(c_{\rm EHZ,\tau_0}(K))^{\frac{1}{2}}&=&
  \left(\int_0^1(h_{K}(-J_0\dot{u}))^{p}\right)^{\frac{1}{p}}
  =\left(\int_0^1(h_{K}(-J_0\dot{u}))^{p_1}\right)^{\frac{1}{p_1}}.
 \end{eqnarray*}
% Recall that H\"older's inequality claims $\int_0^1(h_{K}(-J\dot{u}))^{p_1}\le \|(h_{K}(-J\dot{u}))^{p_1}\|_{L^r}$
% with $r=p/p_1$ and there holds equality if and only if
% there exist nonnegative real $C_1, C_2$, $C_1+C_2>0$, such that $C_1 ((h_{K}(-J\dot{u}(t)))^{p_1})^r=C_2$
% a.e. $t\in (0,1)$. Hence  $h_{D}(-J\dot{u})$ is equal
%to  $2c_{\rm EHZ,\tau_0}(D,\omega_0)^{\frac{1}{2}}$. Similarily,   $h_{K}(-J\dot{u})$
%is equal to  $2c_{\rm EHZ,\tau_0}(K,\omega_0)^{\frac{1}{2}}$.
%
%
%It follows that $u$ minimizes
%$$
%\int_0^1(H_D^{\ast}(-J\dot{x}(t)))^{\frac{p_1}{2}}dt\quad\hbox{and}\quad \int_0^1(H_K^{\ast}(-J\dot{x}(t)))^{\frac{p_1}{2}}dt
%$$
%over $\mathcal{A}_{p_1}$.
%%%%%%%%%%%%%%%%%%%%%%%%%%%%%%%%%%%%%%%%%%%%%%%%%%%%%
 By Remark~\ref{rem:Brun.1.5} there are
 ${\bf a}_D, {\bf a}_K\in L_0={\rm Fix}(\tau_0)$ such that
\begin{eqnarray*}
&&\gamma_D(t)=\left(c_{\rm EHZ,\tau_0}(D)\right)^{1/2}u(t)+ \frac{2}{p_1}\left(c_{\rm EHZ,\tau_0}(D)\right)^{(1-p_1)/2}{\bf a}_D,\\
&&\gamma_K(t)=\left(c_{\rm EHZ,\tau_0}(K)\right)^{1/2}u(t)+ \frac{2}{p_1}\left(c_{\rm EHZ,\tau_0}(D)\right)^{(1-p_1)/2}{\bf a}_K
\end{eqnarray*}
are  $c_{\rm EHZ,\tau_0}$ carriers for $\partial D$ and $\partial K$, respectively.
Clearly, they coincide up to  dilation and translation by elements  in
$L_0$.

Finally, suppose that $p\ge 1$ and there exist $c_{\rm EHZ,\tau_0}$ carriers
$\gamma_D:[0, T]\to\partial D$ and $\gamma_K:[0, T]\to\partial K$
satisfying $\gamma_D=\alpha\gamma_K+{\bf b}$
for some $\alpha\in\mathbb{R}\setminus\{0\}$ and
some ${\bf b}\in L_0$.
Then the latter and (\ref{e:action1}) imply $A(\gamma_D)=\alpha^2 A(\gamma_K)$.
Moreover by Step 4 in the proof of Proposition~\ref{prop:Brun.1} we can
construct $z_D$ and $z_K$ in $\mathcal{A}_{p_1}$ such that
\begin{eqnarray}\label{e:Brun.8.1}
&&c_{\rm EHZ,\tau_0}(D)^{\frac{p}{2}}=\min_{x\in\mathcal{A}_{p_1}}\frac{1}{2^p}\int_0^1
  (h_{D}(-J_0\dot{x}))^{p}=\frac{1}{2^p}\int_0^1
  (h_{D}(-J_0\dot{z}_D))^{p},\\
  &&c_{\rm EHZ,\tau_0}(K)^{\frac{p}{2}}=  \min_{x\in\mathcal{A}_{p_1}}\frac{1}{2^p}\int_0^1
  (h_{K}(-J_0\dot{x}))^{p}=\frac{1}{2^p}\int_0^1
  (h_{K}(-J_0\dot{z}_K))^{p}.\label{e:Brun.8.2}
\end{eqnarray}
Precisely, for suitable vectors ${\bf b}_D, {\bf b}_K\in L_0$ it holds that
\begin{eqnarray*}
z_D(t)=\frac{1}{\sqrt{A(\gamma_D)}}\gamma_D(Tt)+{\bf b}_D\quad\hbox{and}\quad
z_K(t)=\frac{1}{\sqrt{A(\gamma_K)}}\gamma_K(Tt)+{\bf b}_K.
\end{eqnarray*}
It follows that $\dot{z}_D(t)=\dot{z}_K$.
This, (\ref{e:Brun.8.1})-(\ref{e:Brun.8.2}) and (\ref{e:Brun.8}) lead to
\begin{eqnarray*}
  c_{\rm EHZ,\tau_0}(D+_pK)^{\frac{p}{2}}
  =c_{\rm EHZ,\tau_0}(D)^{\frac{p}{2}}+c_{\rm EHZ,\tau_0}(K)^{\frac{p}{2}}.
\end{eqnarray*}
\qed

%
%Let $\tau$ be a linear anti-symplectic involution on $\mathbb{R}^{2n}$,
%and let $D, K\subset \mathbb{R}^{2n}$ be two $\tau$-invariant convex bodies  containing  $0$ in their interiors. Then for any real $p\ge 1$ it holds that
%   \begin{equation}\label{e:BrunB}
%   \left(c_{\rm EHZ,\tau_0}(D+_pK,\omega_0)\right)^{\frac{p}{2}}\ge \left(c_{\rm EHZ,\tau_0}(D,\omega_0)\right)^{\frac{p}{2}}+ \left(c_{\rm EHZ,\tau_0}(K,\omega_0)\right)^{\frac{p}{2}}.
%   \end{equation}
%   Moreover, the equality holds if and only if there exist
%    $c_{\rm EHZ,\tau_0}$-carriers for $D$ and $K$,
%      $\gamma_D:[0,T]\rightarrow \partial D$ and  $\gamma_K:[0,T]\rightarrow \partial K$, such that
%      they coincide up to translation and dilation, i.e.,
%   $\gamma_D=\alpha\gamma_K+ {\bf b}$ for some
%   $\alpha\in\mathbb{R}$ and ${\bf b}\in L_0$.

\subsection{Proofs of  Corollaries~\ref{cor:Brun.2}, \ref{cor:Brun.4}}\label{sec:Brunn.2}

\begin{proof}[Proof of Corollary~\ref{cor:Brun.2}]
 As in the proof of \cite[Corollary~1.8]{AAO08} (i) follows from
Proposition~\ref{MonComf} and Corollary~\ref{cor:Brun.1} directly.
In order to show  the existence of the limit in (\ref{e:BrunC}) we take
 $p\in{\rm Fix}(\tau)\cap{\rm Int}(D)$ and
$q\in{\rm Fix}(\tau)\cap{\rm Int}(K)$.
Note that
$$
c_{\rm EHZ,\tau}(D+\varepsilon K)-c_{\rm EHZ,\tau}(D)=c_{\rm EHZ,\tau}((D-q)+\varepsilon K)-
c_{\rm EHZ,\tau}(D-q)
$$
and $K\subset R(D-q)=\{Rx\,|\,x\in D-q\}$ for some $R>0$ (since $0\in{\rm int}(D-q)$).
We may deduce that the function of $\varepsilon>0$  in (\ref{e:BrunC})
is bounded. This function is also decreasing by Corollary~\ref{cor:Brun.1}
(see reasoning \cite[pages 21-22]{AAO08}), and so the desired conclusion is obtained.

The first inequality in (\ref{e:BrunD}) easily follows from Corollary~\ref{cor:Brun.1}.
In order to prove the second one let us
fix a real $p_1>1$. By Proposition~\ref{prop:Brun.2} we have $u\in \mathcal{A}_{p_1}$ such
that
  \begin{eqnarray}\label{e:Brun.9}
  (c_{\rm EHZ,\tau}(D))^{\frac{1}{2}}=(c_{\rm EHZ,\tau}(D-q))^{\frac{1}{2}}
 &=&\min_{x\in\mathcal{A}_{p_1}}\frac{1}{2}\int_0^1
  h_{D-q}(-J_0\dot{x}))\nonumber\\
  &=&\frac{1}{2}\int_0^1h_{D-q}(-J_0\dot{u}))
  \end{eqnarray}
and that for some ${\bf a}_0\in L={\rm Fix}(\tau)$
\begin{eqnarray}\label{e:Brun.10}
x^\ast(t)=\left(c_{\rm EHZ,\tau}(D)\right)^{1/2}u(t)+ \frac{2}{p_1}\left(c_{\rm EHZ,\tau}(D)\right)^{(1-p_1)/2}{\bf a}_0
 \end{eqnarray}
is  a $c_{\rm EHZ,\tau}$ carrier for $\partial (D-q)$ by Remark~\ref{rem:Brun.1.5}.
Proposition~\ref{prop:Brun.2} also leads to
\begin{eqnarray}\label{e:Brun.11}
  (c_{\rm EHZ,\tau}(D+ \varepsilon K))^{\frac{1}{2}}&=&(c_{\rm EHZ,\tau}((D-q)+ \varepsilon (K-p)))^{\frac{1}{2}}\\
  &=&\min_{x\in\mathcal{A}_{p_1}}\frac{1}{2}\int_0^1
  (h_{D-q}(-J_0\dot{x})+ \varepsilon h_{K-p}(-J_0\dot{x}))\nonumber\\
  &\le& \frac{1}{2}\int_0^1
  h_{D-q}(-J_0\dot{u})+\frac{\varepsilon}{2}\int_0^1
  h_{K-p}(-J_0\dot{u})\nonumber\\
  &=&(c_{\rm EHZ,\tau}(D))^{\frac{1}{2}}+\frac{\varepsilon}{2}\int_0^1
  h_{K-p}(-J_0\dot{u})
\end{eqnarray}
because of (\ref{e:Brun.9}). Let $z_D(t)=x^\ast(t)+q$ for $0\le t\le 1$. Since $q$ and ${\bf a}_0$
are fixed points of $\tau$ it is easily checked that $z_D$ is
a $c_{\rm EHZ,\tau}$-carrier for $\partial D$. From (\ref{e:Brun.11}) it follows that
\begin{equation}\label{e:Brun.12}
\frac{(c_{\rm EHZ,\tau}(D+\varepsilon K))^{\frac{1}{2}}-(c_{\rm EHZ,\tau}(D))^{\frac{1}{2}}}{\varepsilon}\le
\frac{1}{2}\left(c_{\rm EHZ,\tau}(D)\right)^{-\frac{1}{2}}
\int_0^1h_{K-p}(-J_0\dot{z}_D).
\end{equation}
Since $h_{K-p}(-J_0\dot{z}_D)=h_{K}(-J_0\dot{z}_D)+\langle p, J_0\dot{z}_D\rangle_{\mathbb{R}^{2n}}$ (see page 37 and Theorem~1.7.5 in \cite{Sch93}) and
$$
\int_0^1\langle p, J_0\dot{z}_D\rangle_{\mathbb{R}^{2n}}=\langle p, J_0(z_D(1)-z_D(0))\rangle_{\mathbb{R}^{2n}}=0
$$
(by the fact $z_D(1)=z_D(0)$), letting $\varepsilon\to 0+$ in (\ref{e:Brun.12})
we arrive at the second inequality in (\ref{e:BrunD}).
\end{proof}
%\hfill$\Box$\vspace{2mm}

\begin{proof}[Proof of Corollary~\ref{cor:Brun.4}]
By the proof of \cite[Theorem~2]{BoLiMi88}, for any $\varepsilon\in (0, 1/2)$
there are orthogonal matrixes $A_j\in O(n)$, $j=1,\cdots,N$, such that
 \begin{eqnarray}\label{e:BrunN}
&&(1-\varepsilon)r_\Delta B^n(1)\subset \frac{1}{2N}\sum^N_{j=1}(A_j\Delta+ (-A_j)\Delta)\subset
(1-\varepsilon)r_\Delta B^n(1),\\
&&(1-\varepsilon)r_\Lambda B^n(1)\subset \frac{1}{2N}\sum^N_{j=1}(A_j\Lambda+ (-A_j)\Lambda)\subset
(1-\varepsilon)r_\Lambda B^n(1),\label{e:BrunO}
\end{eqnarray}
As above it follows from \cite[Theorem~1.4]{AAO14} that
\begin{eqnarray*}
&& c_{\rm EHZ}(\Delta\times\Lambda)\le c_{\rm EHZ}\left(\frac{1}{2N}\sum^N_{i=1}\Psi_{A_i}(\Delta\times\Lambda)+
\frac{1}{2N}\sum^N_{i=1}\Psi_{-A_i}(\Delta\times\Lambda)\right)\\
&\le& c_{\rm EHZ}\left(\frac{1}{2N}\sum^N_{i=1}((A_i\Delta)\times(A_i\Lambda))+
\frac{1}{2N}\sum^N_{i=1}((-A_i)\Delta\times(-A_i)\Lambda)\right)\\
&\le& c_{\rm EHZ}\left(\left(\frac{1}{2N}\sum^N_{j=1}(A_j\Delta+ (-A_j)\Delta)
\right)\times\left(\frac{1}{2N}\sum^N_{j=1}(A_j\Lambda+ (-A_j)\Lambda)\right)\right).
\end{eqnarray*}
Since the Hofer-Zehnder capacity $c_{\rm HZ}$ is continuous with respect to
the Hausdorff metric on the class of convex domains this and
(\ref{e:BrunH})-(\ref{e:BrunI}) lead to
\begin{eqnarray}\label{e:BrunP}
c_{\rm EHZ}(\Delta\times\Lambda)\le c_{\rm EHZ}\left((r_\Delta B^n(1))\times (r_\Lambda B^n(1))
\right).
\end{eqnarray}
Using the symplectomorphism
\begin{eqnarray}\label{e:BrunQ}
&&B^n(r_\Delta)\times B^n(r_\Lambda)\to B^n(\sqrt{r_\Delta r_\Lambda})\times B^n(\sqrt{r_\Delta r_\Lambda}),\\
&&\hspace{18mm} (q,p)\mapsto (\sqrt{r_\Lambda/r_\Delta}q, \sqrt{r_\Delta/r_\Lambda}p),\nonumber
\end{eqnarray}
we deduce
\begin{eqnarray}\label{e:BrunR}
c_{\rm EHZ}\left((r_\Delta B^n(1))\times (r_\Lambda B^n(1))\right)&=&c_{\rm EHZ}(\sqrt{r_\Lambda r_\Delta}(B^n(1)\times B^n(1))\nonumber\\
&=&r_\Lambda r_\Delta c_{\rm EHZ}(B^n(1)\times B^n(1)).
\end{eqnarray}
Hence (\ref{e:BrunL}) follows from (\ref{e:BrunH}),
(\ref{e:BrunP}) and (\ref{e:BrunR}).

Since the symplectomorphism in (\ref{e:BrunQ}) commutes with $\tau_0$ and $\hat{\tau_0}$,
(\ref{e:BrunM}) may be derived by the same arguments with (\ref{e:BrunI}).
\end{proof}
%\hfill$\Box$\vspace{2mm}

%\noindent{\bf Proof of Proposition~\ref{prop:Brun.3}}.

\section{Proofs of Theorems~\ref{th:billT.2},~\ref{th:billT.3}, \ref{th:billT.4}}\label{sec:BillT}
\setcounter{equation}{0}
%,~\ref{th:billT.4}

\noindent{\it Proof of Theorem~\ref{th:billT.2}.}\quad
%As done at the beginning of Section~\ref{sec:convex},
%the original question can be boiled down to the case $\tau=\tau_0$.
%In this case $\bar{p}=0$ and $\bar{q}$ can be an arbitrary point in ${\rm Int}(\Delta_1)\cap {\rm Int}(\Delta_2)$. Since $(\bar{q},0)$ and $(2\bar{q},0)$ are fixed points of $\tau_0$,
%Proposition~\ref{prop:EH.1.7}(ii) implies
%\begin{eqnarray*}
%c_{\rm EHZ,\tau_0}\bigl((\Delta_1-\bar{q})\times \Lambda\bigr)
%&=&c_{\rm EHZ,\tau_0}\bigl(\Delta_1\times \Lambda\bigr),\\
%c_{\rm EHZ,\tau_0}\bigl((\Delta_2-\bar{q})\times \Lambda\bigr)
%&=&c_{\rm EHZ,\tau_0}\bigl(\Delta_2\times \Lambda\bigr),\\
%c_{\rm EHZ,\tau_0}\bigl(\bigl((\Delta_1-\bar{q})+ (\Delta_2-\bar{q})\bigr)\times \Lambda\bigr)
%&=&c_{\rm EHZ,\tau_0}\bigl(\bigl(\Delta_1+ \Delta_2\bigr)\times \Lambda\bigr).
%\end{eqnarray*}
%Hence {\bf we may also assume $\bar{q}=0$ below}, that is,
%$0\in{\rm Int}(\Delta_1)\cap {\rm Int}(\Delta_2)$ and $\Lambda$ is  centrally symmetric.
For any $\lambda\in (0,1)$, since
\begin{eqnarray*}
&&\bigl(\lambda\Delta_1\bigr)\times \bigl(\lambda\Lambda\bigr)+
\bigl((1-\lambda)\Delta_2\bigr)\times \bigl((1-\lambda)\Lambda\bigr)\\
&=&\bigl(\lambda\Delta_1+(1-\lambda)\Delta_2\bigr)\times \bigl(\lambda\Lambda+
(1-\lambda)\Lambda\bigr)\\
&=&\bigl(\lambda\Delta_1+(1-\lambda)\Delta_2\bigr)\times \Lambda
\end{eqnarray*}
it follows from Corollary~\ref{cor:Brun.1} that
\begin{eqnarray}\label{e:BillT.1}
&&\bigl(c_{\rm EHZ,\tau_0}\bigl(\lambda(\Delta_1)\times \lambda(\Lambda)\bigr)\bigr)^{\frac{1}{2}}+
\bigl(c_{\rm EHZ,\tau_0}\bigl((1-\lambda)(\Delta_2)\times (1-\lambda)(\Lambda)\bigr)\bigr)^{\frac{1}{2}}\nonumber\\
&\le &\bigl(c_{\rm EHZ,\tau_0}\bigl(\bigl(\lambda\Delta_1+(1-\lambda)\Delta_2\bigr)\times \Lambda\bigr)\bigr)^{\frac{1}{2}},
\end{eqnarray}
which is equivalent  to
\begin{eqnarray}\label{e:BillT.2}
&&\lambda\bigl(c_{\rm EHZ,\tau_0}\bigl(\Delta_1\times \Lambda\bigr)\bigr)^{\frac{1}{2}}+
(1-\lambda)\bigl(c_{\rm EHZ,\tau_0}\bigl(\Delta_2\times \Lambda\bigr)\bigr)^{\frac{1}{2}}\nonumber\\
&\le &\bigl(c_{\rm EHZ,\tau_0}\bigl(\bigl(\lambda\Delta_1+(1-\lambda)\Delta_2\bigr)\times \Lambda\bigr)^{\frac{1}{2}}.
\end{eqnarray}
By this and the weighted arithmetic-geometric mean inequality
\begin{eqnarray*}
&&\lambda\bigl(c_{\rm EHZ,\tau_0}\bigl(\Delta_1\times \Lambda\bigr)\bigr)^{\frac{1}{2}}+
(1-\lambda)\bigl(c_{\rm EHZ,\tau_0}\bigl(\Delta_2\times \Lambda\bigr)\bigr)^{\frac{1}{2}}\\
&&\ge \left(\bigl(c_{\rm EHZ,\tau_0}\bigl(\Delta_1\times \Lambda\bigr)\bigr)^{\frac{1}{2}}\right)^\lambda
\left(\bigl(c_{\rm EHZ,\tau_0}\bigl(\Delta_2\times \Lambda\bigr)\bigr)^{\frac{1}{2}}\right)^{(1-\lambda)},
\end{eqnarray*}
we get
\begin{eqnarray}\label{e:BillT.3}
&&\left(\bigl(c_{\rm EHZ,\tau_0}\bigl(\Delta_1\times \Lambda\bigr)\bigr)^{\frac{1}{2}}\right)^\lambda
\left(\bigl(c_{\rm EHZ,\tau_0}\bigl(\Delta_2\times \Lambda\bigr)\bigr)^{\frac{1}{2}}\right)^{(1-\lambda)}
\nonumber\\
&\le& \bigl(c_{\rm EHZ,\tau_0}\bigl(\bigl(\lambda\Delta_1+(1-\lambda)\Delta_2\bigr)\times \Lambda\bigr)^{\frac{1}{2}}.
\end{eqnarray}
Replacing $\Delta_1$ and $\Delta_2$ by $\Delta_1':=\lambda^{-1}\Delta_1$ and $\Delta_2':=(1-\lambda)^{-1}\Delta_2$,
respectively, we arrive at
\begin{equation}\label{e:BillT.4}
\left(\bigl(c_{\rm EHZ,\tau_0}\bigl(\Delta'_1\times \Lambda\bigr)\bigr)^{\frac{1}{2}}\right)^\lambda
\left(\bigl(c_{\rm EHZ,\tau_0}\bigl(\Delta'_2\times \Lambda\bigr)\bigr)^{\frac{1}{2}}\right)^{(1-\lambda)}
\le \bigl(c_{\rm EHZ,\tau_0}\bigl(\bigl(\Delta_1+\Delta_2\bigr)\times \Lambda\bigr)^{\frac{1}{2}}.
\end{equation}
For any $\mu>0$, since
$$
\phi:(\Delta_1\times\Lambda, \mu\omega_0)\to ((\mu\Delta_1)\times\Lambda, \omega_0),\;(x,y)\mapsto (\mu x,y)
$$
is a symplectomorphism which commutes with $\tau_0$, (i) and (ii) of Proposition~\ref{MonComf} lead to
\begin{eqnarray*}
&&c_{\rm EHZ,\tau_0}\bigl(\Delta'_1\times \Lambda\bigr)=\lambda^{-1}c_{\rm EHZ,\tau_0}\bigl(\Delta_1\times \Lambda\bigr),\\
&& c_{\rm EHZ,\tau_0}\bigl(\Delta'_2\times \Lambda\bigr)=(1-\lambda)^{-1}c_{\rm EHZ,\tau_0}\bigl(\Delta_2\times \Lambda\bigr).
\end{eqnarray*}
Let us choose $\lambda\in (0,1)$ such that $\Upsilon:=c_{\rm EHZ,\tau_0}\bigl(\Delta'_1\times \Lambda\bigr)=c_{\rm EHZ,\tau_0}\bigl(\Delta'_2\times \Lambda\bigr)$, i.e.,
\begin{equation}\label{e:BillT.5}
\lambda=\frac{c_{\rm EHZ,\tau_0}(\Delta_1\times \Lambda)}{
c_{\rm EHZ,\tau_0}(\Delta_1\times \Lambda)+ c_{\rm EHZ,\tau_0}(\Delta_2\times \Lambda)}.
\end{equation}
Then (\ref{e:linesymp6}) for $\tau=\tau_0$ may be obtained as follows:
\begin{eqnarray}\label{e:BillT.6}
\zeta^{\tau_0}_\Lambda(\Delta_1+\Delta_2)&=&c_{\rm EHZ,\tau_0}\bigl(\bigl(\Delta_1+\Delta_2\bigr)\times \Lambda\bigr)\nonumber\\
&\ge&\left(c_{\rm EHZ,\tau_0}\bigl(\Delta'_1\times \Lambda\bigr)\right)^\lambda
\left(c_{\rm EHZ,\tau_0}\bigl(\Delta'_2\times \Lambda\bigr)\right)^{(1-\lambda)}\nonumber\\
&=&\Upsilon=\lambda\Upsilon+(1-\lambda)\Upsilon\nonumber\\
&=&\lambda c_{\rm EHZ,\tau_0}\bigl(\Delta'_1\times \Lambda\bigr)+(1-\lambda)c_{\rm EHZ,\tau_0}\bigl(\Delta'_2\times \Lambda\bigr)
\nonumber\\
&=&c_{\rm EHZ,\tau_0}\bigl(\Delta_1\times \Lambda\bigr)+c_{\rm EHZ,\tau_0}\bigl(\Delta_2\times \Lambda\bigr)\nonumber\\
&=&\zeta^{\tau_0}_\Lambda(\Delta_1)+ \zeta^{\tau_0}_\Lambda(\Delta_2).
\end{eqnarray}

The final claim follows from Corollary~\ref{cor:Brun.1}.
\qed

\begin{proof}[Proof of Theorem~\ref{th:billT.3}]
 By the assumptions and Proposition~\ref{MonComf}(ii) we have
\begin{eqnarray}\label{e:BillT.8}
\zeta^{\tau_0}(\Delta)&=&c_{\rm EHZ,\tau_0}(\Delta\times B^n(1))\nonumber\\
&\ge& c_{\rm EHZ,\tau_0}(B^{n}(\bar{q},r)\times B^n(1))\nonumber\\
&=&c_{\rm EHZ,\tau_0}(B^{n}(r)\times B^n(1))
\end{eqnarray}
for any ball $B^{n}(\bar{q},r)\subset\Delta$ (since $(\bar{q},0)$ is a fixed point of $\tau_0$).
As in the arguments from (\ref{e:BrunP}) to (\ref{e:BrunR}) we use Proposition~\ref{MonComf}(i) to get
\begin{eqnarray*}
c_{\rm EHZ,\tau_0}(B^{n}(r)\times B^n(1))=c_{\rm EHZ,\tau_0}(B^n(\sqrt{r})\times B^n(\sqrt{r}))
=4r
\end{eqnarray*}
and hence $\zeta^{\tau_0}(\Delta)\ge 4r(\Delta)$.
Similarly,   we deduce
\begin{eqnarray*}
\xi^{\tau_0}(\Delta)&=&c_{\rm EHZ,\tau_0}(\Delta\times B^n(1))\\
&\le& c_{\rm EHZ,\tau_0}(B^{n}(\bar{q},R)\times B^n(1))\nonumber\\
&=&c_{\rm EHZ,\tau_0}(B^{n}(R)\times B^n(1))=4R.
\end{eqnarray*}
for any ball $B^{n}(\bar{q}, R)\supseteq\Delta$.
 This and Theorem~\ref{th:billT.1} yield $\xi^{\tau_0}(\Delta)\le 4R(\Delta)$.

Note that the symplectomorphism in (\ref{e:BrunQ})
also commutes with $\hat{\tau_0}$. The same arguments yield the final results.
\end{proof}
%\hfill$\Box$\vspace{2mm}

\begin{proof}[Proof of Theorem~\ref{th:billT.4}]
For any $u\in S^{n-1}_\Delta$, $\Delta$ sits between support planes $H(\Delta,u)$ and $H(\Delta,-u)$,
 the hyperplane $H_u$  is between  $H(\Delta,u)$ and $H(\Delta,-u)$ and
 has distance ${\rm width}(\Delta)/2$ to $H(\Delta,u)$ and $H(\Delta,-u)$ respectively.
 Choose any ${\bf O}\in O(n)$ such that ${\bf O}u=e_1=(1,0,\cdots,0)\in\mathbb{R}^n$.
 Then  the composition of translation $(q,v)\mapsto (q-\bar{q},v)$
and $\Psi_{\bf O}$ defined in (\ref{e:BrunF.1}),
$$
\Psi_{{\bf O}, \bar{q}}:\mathbb{R}^n_q\times \mathbb{R}^n_p\to \mathbb{R}^n_q\times
\mathbb{R}^n_p,\;(q, v)\mapsto ({\bf O}(q-\bar{q}), {\bf O}v),
$$
 maps $\Delta\times B^n(1)$ into $Z^{2n}_\Delta$. From this and Proposition~\ref{prop:EH.1.7}
 and $\Psi_{\bf O}\tau_0=\tau_0\Psi_{\bf O}$
it follows that
\begin{eqnarray*}
\zeta^{\tau_0}(\Delta)&=&c_{\rm EHZ,\tau_0}(\Delta\times B^n)
\le  c_{\rm EHZ,\tau_0}(Z^{2n}_\Delta).
\end{eqnarray*}
Note that $Z^{2n}_\Delta\subset\mathbb{R}^n_q\times \mathbb{R}^n_p\equiv\mathbb{R}^{2n}$
may be identified with symplectic product
$$
([-{\rm width}(\Delta)/2, {\rm width}(\Delta)/2]
\times[-1,1])\times\mathbb{R}^{2(n-1)}\subset\mathbb{R}^{2}\times \mathbb{R}^{2(n-1)}.
$$
 Hence Theorem~\ref{th:EHproduct} yields
$$
c_{\rm EHZ,\tau_0}(Z^{2n}_\Delta, \omega_0)=c_{\rm EHZ,\tau_0}([-{\rm width}(\Delta)/2, {\rm width}(\Delta)/2]
\times[-1,1]).
$$
Since $[-{\rm width}(\Delta)/2, {\rm width}(\Delta)/2]\times[-1,1]$ can be approximated by
centrally symmetric $2$-dimensional ellipsoids, Corollary~\ref{cor:ellipsoid+} gives rise to
$$
c_{\rm EHZ,\tau_0}([-{\rm width}(\Delta)/2, {\rm width}(\Delta)/2]
\times[-1,1])=2{\rm width}(\Delta).
$$
The desired result is proved.
\end{proof}
%\hfill$\Box$\vspace{2mm}

\appendix
\section{Some facts  on symplectic matrixes}\label{app:A}\setcounter{equation}{0}

\begin{lemma}[\hbox{\cite[Lemma~5]{AlbersF12}}]\label{lem:alb}
Let $A^T$ denote the transpose of $A\in{\rm GL}(n,\mathbb{R})$. Then
$$
\{\Psi\in{\rm Sp}(2n,\mathbb{R})\,|\,\Psi\tau_0=\tau_0\Psi\}=\left\{
\left(
  \begin{array}{cc}
    A & 0 \\
    0 & (A^T)^{-1} \\
  \end{array}
\right)\,|\, A\in{\rm GL}(n,\mathbb{R})
\right\}.
$$
\end{lemma}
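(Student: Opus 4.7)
The plan is to prove this by a direct block-matrix computation in two stages, using the explicit matrix forms
$$
\tau_0=\begin{pmatrix} I_n & 0 \\ 0 & -I_n \end{pmatrix}, \qquad J_0=\begin{pmatrix} 0 & -I_n \\ I_n & 0 \end{pmatrix}
$$
arising from $\tau_0(x,y)=(x,-y)$ and the convention fixed earlier in the paper.

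First, I would write an arbitrary $\Psi \in \mathbb{R}^{2n\times 2n}$ in the block form $\Psi=\begin{pmatrix} A & B \\ C & D\end{pmatrix}$ with $A,B,C,D\in \mathbb{R}^{n\times n}$ and expand both sides of the commutation relation $\Psi\tau_0=\tau_0\Psi$. A direct multiplication shows the left-hand side equals $\begin{pmatrix} A & -B \\ C & -D\end{pmatrix}$ and the right-hand side equals $\begin{pmatrix} A & B \\ -C & -D\end{pmatrix}$, so commutation with $\tau_0$ is equivalent to $B=0$ and $C=0$. Thus $\Psi$ must be block-diagonal, $\Psi=\mathrm{diag}(A,D)$.

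Second, I would impose the symplectic identity $\Psi^T J_0\Psi=J_0$ on this block-diagonal $\Psi$. A short computation gives
$$
\Psi^T J_0\Psi=\begin{pmatrix} 0 & -A^T D \\ D^T A & 0\end{pmatrix},
$$
so $\Psi\in \mathrm{Sp}(2n,\mathbb{R})$ is equivalent to $A^TD=I_n$ (and, equivalently, $D^TA=I_n$). In particular $A\in \mathrm{GL}(n,\mathbb{R})$ and $D=(A^T)^{-1}$, which is precisely the right-hand side of the claim. Conversely, for any $A\in \mathrm{GL}(n,\mathbb{R})$ the block-diagonal matrix $\Psi=\mathrm{diag}(A,(A^T)^{-1})$ manifestly commutes with $\tau_0$ (both are block-diagonal) and satisfies $\Psi^T J_0\Psi=J_0$ by the same computation, giving the reverse inclusion.

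There is no real obstacle here: the whole argument is a two-line block computation, with the only thing to be careful about being the bookkeeping of signs in the commutation with $\tau_0$ and the transpose in $(A^T)^{-1}$. Since this lemma is stated as a known fact from \cite{AlbersF12}, I would present the proof essentially in the two-step form above, without further elaboration.
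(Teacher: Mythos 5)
Your proof is correct and is the standard direct block-matrix computation: the first block identity forces $\Psi$ to be block-diagonal, and the symplectic condition then forces $D=(A^T)^{-1}$. The paper itself gives no proof — it simply cites \cite[Lemma~5]{AlbersF12} — so there is no competing argument to compare against, and your two-step calculation is exactly what one would write in to fill that gap.
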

It is easily checked that this is true if $\tau_0$ is replaced by $\hat{\tau}_0$.

\begin{theorem}[\hbox{\cite[Theorem~10.2.5]{Xu92}}]\label{th:xu}
  For two semi-positive definite real symmetric matrixes $A,B\in\mathbb{R}^{n\times n}$
   there exists a nonsingular real matrix $P\in \mathbb{R}^{n\times n}$ such that
   \begin{eqnarray*}
   P^TAP&=&{\rm diag}(\lambda_1,\cdots,\lambda_r, 0_{n-r}),\\
   P^TBP&=&{\rm diag}(1-\lambda_1,\cdots, 1-\lambda_r, 0_{n-r}),
   \end{eqnarray*}
   where $r={\rm rank}(A+B)$ and $1\ge \lambda_1\ge\lambda_2\ge\cdots\ge\lambda_r\ge 0$.
\end{theorem}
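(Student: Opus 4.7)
The plan is to reduce simultaneous congruence to orthogonal diagonalization on the range of $A+B$. The idea is to normalize $A+B$ first, then peel off the remaining degree of freedom.

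First I would exploit the fact that $A+B$ is semi-positive definite of rank $r$ to find $Q\in{\rm GL}(n,\mathbb{R})$ with
\[
Q^T(A+B)Q={\rm diag}(I_r,0_{n-r}).
\]
This is standard (via Gram--Schmidt or the usual Sylvester-type reduction for symmetric bilinear forms). Set $A':=Q^TAQ$ and $B':=Q^TBQ$; both are still semi-positive definite, and their sum is ${\rm diag}(I_r,0_{n-r})$.

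Next, write $A'$ and $B'$ in block form with the $(r,n-r)$ partition,
\[
A'=\begin{pmatrix}A_{11}&A_{12}\\A_{12}^T&A_{22}\end{pmatrix},\qquad B'=\begin{pmatrix}B_{11}&B_{12}\\B_{12}^T&B_{22}\end{pmatrix}.
\]
From $A'+B'={\rm diag}(I_r,0)$ one gets $A_{22}+B_{22}=0$, and since both $A_{22},B_{22}$ are themselves semi-positive definite (as principal submatrices of psd matrices), they must each vanish. Now I would invoke the standard fact that a semi-positive definite matrix with a zero diagonal block has the corresponding off-diagonal blocks equal to zero (indeed, $w^TA_{11}w+2tw^TA_{12}v\ge 0$ for all $t\in\mathbb{R}$ forces $A_{12}=0$, and likewise for $B_{12}$). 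Hence $A'={\rm diag}(A_{11},0)$, $B'={\rm diag}(B_{11},0)$, with $A_{11}+B_{11}=I_r$ and both $A_{11},B_{11}$ symmetric psd; in particular $0\preceq A_{11}\preceq I_r$.

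Finally I would orthogonally diagonalize the $r\times r$ symmetric matrix $A_{11}$: pick orthogonal $U\in{\rm O}(r)$ with $U^TA_{11}U={\rm diag}(\lambda_1,\dots,\lambda_r)$ and $1\ge\lambda_1\ge\cdots\ge\lambda_r\ge 0$ (the bounds on $\lambda_j$ follow from $0\preceq A_{11}\preceq I_r$). Then automatically
\[
U^TB_{11}U=U^T(I_r-A_{11})U={\rm diag}(1-\lambda_1,\dots,1-\lambda_r).
\]
Setting $P:=Q\cdot{\rm diag}(U,I_{n-r})$ delivers the required pair of congruences simultaneously.

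The only slightly non-routine point is the ``zero diagonal block forces zero off-diagonal block'' step, which is where semi-positive definiteness is genuinely used; everything else is either a standard normal-form reduction or the spectral theorem. I expect no other obstacle, and the final $r={\rm rank}(A+B)$ and ordering of the $\lambda_j$'s are built into the construction.
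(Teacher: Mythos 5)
Your argument is correct and complete. The paper itself supplies no proof of Theorem~\ref{th:xu}: it is quoted verbatim from \cite[Theorem~10.2.5]{Xu92} and used as a black box, so there is nothing in the text to compare against. Your route — normalize $A+B$ by congruence to ${\rm diag}(I_r,0_{n-r})$, use that a psd matrix with a vanishing diagonal block must have vanishing off-diagonal blocks to conclude the $(n-r)$-block of each transformed matrix is entirely zero, then orthogonally diagonalize the $r\times r$ block of $A$ (which automatically co-diagonalizes that of $B$ since they sum to $I_r$) — is the standard proof of this simultaneous-congruence fact, and every step you invoke (Sylvester reduction, the off-diagonal vanishing lemma with the argument $t\mapsto w^TA_{11}w+2tw^TA_{12}v\ge 0$, the spectral theorem, and the bound $0\preceq A_{11}\preceq I_r$ giving $\lambda_j\in[0,1]$) is justified. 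The final assembly $P=Q\cdot{\rm diag}(U,I_{n-r})$ indeed realizes both congruences simultaneously with $r={\rm rank}(A+B)$ and the stated ordering.
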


\begin{proposition}\label{prop:xu}
A matrix $S\in\mathbb{R}^{2n\times 2n}$ commutes with $\tau_0$ if and only if
 $$
   S=\left(
     \begin{array}{cc}
       S_{11} & 0 \\
       0 & S_{22}
     \end{array}
   \right)
   $$
where $S_{11}, S_{12}\in\mathbb{R}^{n\times n}$. If $S$ is positive definite (so are $S_{11}$ and $S_{12}$),
then there exists nonsingular real matrix $P\in \mathbb{R}^{n\times n}$
and positive numbers  $1>\lambda_1\ge\lambda_2\ge\cdots\ge\lambda_n>0$ such that
 \begin{eqnarray*}
  && P^TS_{11}P={\rm diag}(\lambda_1,\cdots,\lambda_n),\\
  && P^T(S_{22})^{-1}P={\rm diag}(1-\lambda_1,\cdots, 1-\lambda_n),\\
  &&\Psi^TS\Psi={\rm diag}(\lambda_1,\cdots,\lambda_n, 1/(1-\lambda_1),\cdots, 1/(1-\lambda_n)),
   \end{eqnarray*}
      where
      $$
   \Psi=\left(
     \begin{array}{cc}
       P & 0 \\
       0 & (P^T)^{-1}
     \end{array}
   \right)
   $$
   is a symplectic matrix. Clearly there exists symplectic matrix $\Phi$ such that $\Phi\circ\tau_0=\tau_0\circ\Phi$ and
   $$
   \Phi^T{\rm diag}(\lambda_1,\cdots,\lambda_n, 1/(1-\lambda_1),\cdots, 1/(1-\lambda_n))\Phi={\rm diag}(r_1^2,\cdots,r_n^2, r_1^2,\cdots, r_n^2),
   $$
   where $r_1\le\cdots\le r_n$.
   Then $\widehat{\Phi}:=\Psi\Phi$ is a symplectic matrix that commutes with $\tau_0$ and satisfies $\widehat{\Phi}^TS\widehat{\Phi}={\rm diag}(r_1^2,\cdots,r_n^2, r_1^2,\cdots, r_n^2)$.
\end{proposition}

\section{Proof of (\ref{e:EH.4.9})}\label{app:B}
\setcounter{equation}{0}

Recall that $\phi^t$ is the flow of the Liouville vector field $X$. Then $(\phi^s)^\ast\omega_0=e^s\omega_0$,
and for any $p$ near ${\mathcal S}$ we have $X(p)=\frac{d}{ds}|_{s=0}\phi^s(p)$ and thus
 \begin{eqnarray}\label{e:AppB.1}
   d\phi^\nu(p)[X(p)]=\frac{d}{ds}\Big|_{s=0}\phi^\nu(\phi^s(p))
    =\frac{d}{ds}\Big|_{s=0}\phi^{s}(\phi^\nu(p))=X(\phi^\nu(p)).
\end{eqnarray}
% we derive  $\omega_{0}(d\phi^s(x)[u], d\phi^s(x)[v])=
% e^s\omega_0(u,v)$ for any $x\in {\mathcal S}$ and $u,v\in \mathbb{R}^{2n}$. In particular, this implies
%  \begin{eqnarray}\label{e:B.4.6}
% &&(\phi^\tau)^\ast(\omega_0|_{{\mathcal S}_\tau})=e^\tau\omega_0|_{\mathcal S},\\
% &&X=\psi_\ast(\frac{\partial}{\partial s}),\;{\rm i.e.}, \;d\psi(\tau,x)[\frac{\partial}{\partial s}|_{(\tau,x)}]=X(\psi(\tau,x)),\;\forall (\tau,x)\in(-\varepsilon, \varepsilon)\times{\mathcal S}.\label{e:B.4.7}
%\end{eqnarray}
Put $\lambda:=\omega_0(X,\cdot)=i_X\omega_0$.
  Then $d\lambda=d(i_X\omega_0)=d(i_X\omega_0)+ i_X(d\omega_0)=L_X\omega_0=\omega_0$. For
  any $p$ near ${\mathcal S}$,
  %$p=\phi^\nu(x)=\psi(\nu,x)$,
%  (\ref{e:EH.4.7}) implies  $K_\psi(\phi^\nu(x))=\nu$.
 a straightforward computation yields
  \begin{eqnarray}\label{e:B.4.8}
 \lambda_p(X_{K_\psi})&=&(\omega_0)_p(X(p), X_{K_\psi}(p))\nonumber\\
 &=&dK_\psi(X(p))=\frac{d}{ds}\Big|_{s=0}K_\psi(\phi^s(p))
 =\frac{d}{ds}\Big|_{s=0}s=1.
\end{eqnarray}
%$\omega(X_H, v)=-dH(v)$
Moreover, by (\ref{e:EH.4.7}) we have  $K_\psi(\psi(\nu,y))=\nu$
for all $(\nu,y)\in (-\delta,\delta)\times {\mathcal S}$, and so
 \begin{eqnarray}\label{e:B.4.5}
(\mathcal{L}_{{\mathcal S}_\nu})_x=X_{K_\psi}(x)\mathbb{R},\quad\forall
      x\in {\mathcal S}_\nu.
\end{eqnarray}
Since $d\phi^\nu(x):(\mathcal{L}_{\mathcal S})_x\to (\mathcal{L}_{{\mathcal S}_\nu})_{\phi^\nu(x)}
$ is an isomorphism for any $x\in {\mathcal S}$, equation (\ref{e:B.4.5}) implies
  \begin{eqnarray}\label{e:B.4.10}
  X_{K_\psi}(\phi^\nu(x))=ad\phi^\nu(x)[X_{K_\psi}(x)]
  \end{eqnarray}
  for some $a\in\mathbb{R}$. Let $\lambda$ act two sides of this equation.
Using (\ref{e:B.4.8}) we arrive at
 \begin{eqnarray}\label{e:B.4.10+}
 1=\lambda(X_{K_\psi}(\phi^\nu(x)))=a\lambda (d\phi^\nu(x)[X_{K_\psi}(x)]).
  \end{eqnarray}
On the other hand %from (\ref{e:B.4.10}) we derive
 \begin{eqnarray*}
  \lambda\left(d\phi^\nu(p)[X_{K_\psi}(p)]\right)&=&
  i_X\omega_0\left(d\phi^\nu(p)[X_{K_\psi}(p)]\right)\\
 &=&\omega_0(X(\phi^\nu(p)), d\phi^\nu(p)[X_{K_\psi}(p)])\\
  &=&\omega_0(d\phi^\nu(p)[X(p)], d\phi^\nu(p)[X_{K_\psi}(p)])\\
  &=&((\phi^\nu)^\ast\omega_0)(X(p), X_{K_\psi}(p))\\
  &=&e^\nu\omega_0(X(p), X_{K_\psi}(p))\\
  &=&e^\nu (i_X\omega_0)(X_{K_\psi}(p))\\
 &=&e^\nu\lambda(X_{K_\psi}(p))=e^\nu,
\end{eqnarray*}
where the third and final equalities come from (\ref{e:AppB.1}) and (\ref{e:B.4.8}), respectively.
%(Actually, $(\phi^\tau)^\ast\lambda=e^\tau\lambda$.)
This and (\ref{e:B.4.10+}) produce  $a=e^{-\nu}$ and hence (\ref{e:EH.4.9})
by (\ref{e:B.4.10}).

\section*{Acknowledgments} We would like to
thank Professor Jean-Claude Sikorav  for sending us his beautiful \cite{Sik90}
and explaining some details.
The authors are deeply grateful to the anonymous referees for giving many very helpful comments
and suggestions to improve the exposition.

\medskip
\begin{tabular}{l}
 School of Mathematical Sciences, Beijing Normal University\\
 Laboratory of Mathematics and Complex Systems, Ministry of Education\\
 Beijing 100875, The People's Republic of China\\
 E-mail address: rrjin@mail.bnu.edu.cn,\hspace{5mm}gclu@bnu.edu.cn\\
\end{tabular}

\medskip
% The data information below will be filled by AIMS editorial staff
Received  June 2019; revised February 2020.
\medskip

\end{document}